\documentclass[a4paper, english]{amsart}
\usepackage[utf8]{inputenc}

\usepackage{amsthm}
\usepackage{amsfonts}
\usepackage{mathtools}
\usepackage{amssymb}
\usepackage{paralist}
\usepackage[all,cmtip]{xy}
\usepackage{enumitem}
\usepackage{kantlipsum}

\usepackage[colorlinks=true, linkcolor=blue, citecolor=blue, linktocpage=true]{hyperref}
\usepackage[capitalise, nameinlink]{cleveref}
\crefname{subsection}{Subsection}{Subsections}
\crefname{equation}{}{}

\renewcommand\le\leqslant
\renewcommand\ge\geqslant

\setlength{\textwidth}{\paperwidth}
\addtolength{\textwidth}{-6cm}
\calclayout

\usepackage[backend=bibtex, style=alphabetic]{biblatex} 
\addbibresource{Literatur.bib} 
\AtBeginBibliography{\footnotesize}

\setlist[itemize]{align=parleft,left=0pt..12pt,topsep=0pt,label={\(-\)}}
\setlist[enumerate]{align=parleft,left=0pt..18pt,topsep=0pt}

\newcommand{\fg}[0]{\mathfrak{g}}

\newcommand{\bC}[0]{\mathbb{C}}
\newcommand{\bN}[0]{\mathbb{N}}
\newcommand{\bZ}[0]{\mathbb{Z}}
\newcommand{\bR}[0]{\mathbb{R}}
\newcommand{\cA}[0]{\mathcal{A}}
\newcommand{\cB}[0]{\mathcal{B}}
\newcommand{\cF}[0]{\mathcal{F}}
\newcommand{\cG}[0]{\mathcal{G}}
\newcommand{\cO}[0]{\mathcal{O}}

\newcommand{\tH}[0]{\textnormal{H}}
\newcommand{\compO}[0]{\widehat{\mathcal{O}}}
\newcommand{\compA}[0]{\widehat{\mathcal{A}}}
\DeclareMathOperator{\sheafHom}{\mathcal{H}\hspace{-1pt}\textit{om}}
\DeclareMathOperator{\sheafEnd}{\mathcal{E}\hspace{-1pt}\textit{nd}}

\DeclareMathOperator{\End}{End}
\DeclareMathOperator{\Hom}{Hom}
\DeclareMathOperator{\Aut}{Aut}

\usepackage{amsthm}

\numberwithin{equation}{section}
\renewcommand*{\theequation}{%
  \ifnum\value{section}>0 %
    \thesection.%
  \fi
    \arabic{equation}%
}

\makeatletter
\def\th@plain{%
  \thm@notefont{}
  \itshape 
}
\def\th@definition{%
  \thm@notefont{}
  \normalfont 
}
\makeatother

\newtheorem{theoremcounter}{theoremcounter}[section]
\newtheorem{maintheoremcounter}{maintheoremcounter}

\theoremstyle{plain}

\newtheorem{lemma}[theoremcounter]{Lemma}
\newtheorem{proposition}[theoremcounter]{Proposition}
\newtheorem{theorem}[theoremcounter]{Theorem}

\newtheorem{maintheorem}[maintheoremcounter]{Theorem}

\newtheoremstyle{example_style}
  {5pt} 
  {5pt} 
  {} 
  {} 
  {\itshape} 
  {.} 
  {10pt} 
  {} 

\theoremstyle{example_style}

\newenvironment{example}
  {\pushQED{\qed}\examplex}
  {\popQED\endexamplex}

\theoremstyle{definition}

\newtheorem{definition}[theoremcounter]{Definition}
\newtheorem{notation}[theoremcounter]{Notation}

\theoremstyle{example_style}

\newenvironment{remark}
  {\pushQED{\qed}\remarkx}
 {\popQED\endremarkx}

\title[Geometrization of generalized \(r\)-matrices]{Geometrization of solutions of the generalized classical Yang-Baxter equation and a new proof of the Belavin-Drinfeld trichotomy}
\author{Raschid Abedin}\address{
Universit\"at Paderborn\\
Institut f\"ur Mathematik \\
Warburger Stra\ss{}e 100 \\
33098 Paderborn \\
Germany
}
\email{rabedin@math.uni-paderborn.de}

\begin{document}

\maketitle

\begin{abstract}
    We study solutions to a generalized version of the classical Yang-Baxter equation (CYBE) with values in a central simple Lie algebra over a field of characteristic 0 from an algebro-geometric perspective.
    In particular, we describe such solutions using cohomology free sheaves of Lie algebras on projective curves. 
    This framework leads to a new proof of the Belavin-Drinfeld trichotomy, which asserts that over the ground field \(\bC\) any non-degenerate solution of the CYBE is either elliptic, trigonometric or rational. Furthermore, we obtain new structural results for solutions of the generalized CYBE over arbitrary fields of characteristic 0. For example, we prove that such solutions can be extended in an appropriate way to rational maps on the product of two curves.\\
\end{abstract}

\section*{Introduction}
Let \(\fg\) be a finite-dimensional simple complex Lie algebra. The classical Yang-Baxter equation (CYBE) with one spectral parameter is the functional equation
\begin{equation}\label{eq:intro_one_parameter_CYBE}
    [r^{12}(x),r^{13}(x+y)]+[r^{12}(x),r^{23}(y)] + [r^{13}(x+y),r^{23}(y)] = 0,
\end{equation}
where \(r\colon U \to \fg \otimes \fg\) is a meromorphic map for some connected open subset \(U\) of \(\bC\). The commutators on the left-hand side of \cref{eq:intro_one_parameter_CYBE} are thereby understood in the triple tensor product of the universal enveloping algebra \(\textnormal{U}(\fg)\) of \(\fg\) and e.g.\ \(r^{13}(z) = (r(z))^{13}\) where \((a\otimes b)^{13} = a \otimes 1\otimes b \in \textnormal{U}(\fg)^{\otimes 3}\) for all \(a,b\in \fg\). Solutions of \cref{eq:intro_one_parameter_CYBE} are called \(r\)-matrices and originate in the study of the quantum inverse scattering method as the quasi-classical limit of solutions of the (quantum) Yang-Baxter equation. They have found diverse  applications in mathematical physics, most importantly in the theory of classical integrable systems (see  e.g.\ \cite{faddeev_takhtajan,babelon_bernard_talon,adler_moerbeke_vanhaecke}) and quantum groups (see e.g.\ \cite{chari_pressley,etingof_schiffmann}). The simplest solution of \cref{eq:intro_one_parameter_CYBE} is Yang's \(r\)-matrix \(r_{\textnormal{Yang}}(x,y) = \frac{\gamma}{x-y}\), where \(\gamma\in \fg \otimes \fg\) is the Casimir element of \(\fg\).

Fundamental results in the structure theory of \(r\)-matrices were achieved by Belavin and Drinfeld in \cite{belavin_drinfeld_solutions_of_CYBE_paper} where they examined solutions \(r\) of \cref{eq:intro_one_parameter_CYBE} which are non-degenerate, i.e.\ \(r(z_0)\in\fg\otimes\fg\) induces an isomorphism \(\fg^*\to \fg\) for some \(z_0\) in the domain of definition of \(r\). They prove that these are automatically skew-symmetric, i.e.\ \(r(z) = -\tau(r(-z))\) for \(\tau(a\otimes b) = b\otimes a\), and extend uniquely to meromorphic functions \(\bC \to \fg\otimes \fg\) which are, up to a certain equivalence, either elliptic, trigonometric (i.e.\ rational functions of exponentials) or rational. A comprehensive coordinate-free rework of the proof of this trichotomy can be found in \cite{kierans_kreussler}. In \cite{belavin_drinfeld_solutions_of_CYBE_paper} the authors also classify all elliptic and trigonometric \(r\)-matrices and note that solving the classification problem of rational \(r\)-matrices is hopeless since it contains the problem of classifying commuting square matrices. In \cite{stolin_sln,stolin_maximal_orders} Stolin observed that rational solutions of \cref{eq:intro_one_parameter_CYBE} are determined by certain Lagrangian Lie algebras and developed the structure theory of these \(r\)-matrices based upon this observation.

In general, it is more natural to consider the CYBE in two parameters 
\begin{equation}\label{eq:intro_two_parameter_CYBE}
    [r^{12}(x_1,x_2),r^{13}(x_1,x_3)]+[r^{12}(x_1,x_2),r^{23}(x_2,x_3)] + [r^{13}(x_1,x_3),r^{23}(x_2,x_3)] = 0,
\end{equation}
which has meromorphic maps \(r\colon U \times U \to \fg \otimes \fg\) as variables. For \(r(x,y) = s(x-y)\), it is easy to see that this equation reduces to  \cref{eq:intro_one_parameter_CYBE}, so it is unambiguous to call solutions of \cref{eq:intro_two_parameter_CYBE} \(r\)-matrices as well. In \cite{belavin_drinfeld_diffrence_depending} it is shown that every non-degenerate solution of \cref{eq:intro_two_parameter_CYBE} comes from one of \cref{eq:intro_one_parameter_CYBE} in this way up to certain equivalence relations, but it is in general unclear why these relations preserve the type (i.e.\ ellitpic, trigonometric, rational) of this solution making the trichotomy unadjusted to this context at first glance. However, as a consequence every non-degenerate \(r\)-matrix \(r\) is seen to be skew-symmetric in the sense that \(r(x,y) =-\tau(r(y,x))\).  

From the point of view of many applications the skew-symmetry of non-degenerate \(r\)-matrices is unnecessary (see e.g.\ the works of Skrypnyk  \cite{skrypnyk_spin_chains,skrypnyk_infinite_dimensional_Lie_algebras} and references therein), namely one can consider solutions of the generalized CYBE (GCYBE) 
\begin{equation}\label{eq:intro_two_parameter_GYBE}
    [r^{12}(x_1,x_2),r^{13}(x_1,x_3)]+[r^{12}(x_1,x_2),r^{23}(x_2,x_3)] + [r^{32}(x_3,x_2),r^{13}(x_1,x_3)] = 0,
\end{equation}
which are called generalized \(r\)-matrices. The name stems from the fact that the non-degenerate \(r\)-matrices are precisely the skew-symmetric generalized ones, while e.g.\ \(yr_{\textnormal{Yang}
}(x,y)\) is a non-degenerate non-skew-symmetric generalized \(r\)-matrix. 

It can be shown that any non-degenerate generalized \(r\)-matrix \(r\colon U \times U \to \fg \otimes \fg\), probably after shrinking \(U\), is of the form 
\begin{equation}\label{eq:intro_standard_form}
    r(x,y) = \frac{\lambda(y)}{x-y}\gamma + r_0(x,y),
\end{equation}
where \(\gamma\) is the Casimir element of \(\fg\) and \(\lambda\colon U \to \fg ,r_0\colon U \times U \to \fg \otimes \fg\) are appropriate holomorphic maps. For skew-symmetric \(r\) this is mentioned in \cite{belavin_drinfeld_diffrence_depending}, for the general case see \cref{prop:complex_rmatrix_standard_form}. The standard form \cref{eq:intro_standard_form} can be used to introduce formal versions of non-degenerate generalized \(r\)-matrices when \(\bC\) is replaced by some field \(\Bbbk\) of characteristic 0. Namely, one can consider elements in \((\fg \otimes \fg)(\!(x)\!)[\![y]\!]\) of the form \cref{eq:intro_standard_form} solving a formal version of the GCYBE. These series will be called formal generalized \(r\)-matrices and are especially interesting for \(\Bbbk = \bR\) from the point of view of integrable systems (see e.g.\ \cite{babelon_bernard_talon}) and for \(\Bbbk = \bC(\!(\hbar)\!)\) from the point of view of quantum groups (see e.g.\ \cite{karolinsky_pianzola_stolin}). This paper is dedicated to the study of formal generalized \(r\)-matrices based on the algebro-geometric approach which was developed in \cite{burban_galinat} as a far reaching expansion of \cite{cherednik_definition_of_tau}. 

\subsection*{Content and results.} Let \(\fg\) be a semi-simple finite-dimensional Lie algebra over a field \(\Bbbk\) of characteristic 0.
In \cref{sec:formal_generalized_rmatrices} we introduce formal generalized \(r\)-matrices and define an appropriate notion of equivalence between them, which allows to treat equivalent formal generalized \(r\)-matrices interchangeably for our purposes. Moreover, we will discuss skew-symmetric formal generalized \(r\)-matrices, which are simply called formal \(r\)-matrices, and we show that a formal generalized \(r\)-matrix \(r\) is determined by a Lie algebra \(\fg(r) \subseteq \fg(\!(z)\!)\) complementary to \(\fg[\![z]\!]\). This construction is already known for generalized \(r\)-matrices; see e.g.\ \cite{cherednik_becklund_darboux,skrypnyk_infinite_dimensional_Lie_algebras}.

In \cref{sec:sheaves_of_algebras} we derive some results in the theory of sheaves of (not necessarily associative) algebras. \Cref{subsec:local_triviality_of_sheaves_of_Lie_algebras} is thereby devoted to the proof of algebro-geometric analogs of the results in \cite{kirangi_lie_algebra_bundles,kirangi_semi_simple} about local triviality of Lie algebra bundles, which may be of independent interest. More precisely, we prove that a sheaf of algebras on a reduced \(\Bbbk\)-scheme of finite type with certain assumptions on its fibers is locally trivial in the \'etale topology; see \cref{thm:etale_trivial_sheaves_of_Lie_algebras}. In addition, we give a sufficient condition in \cref{thm:etale_triviality_due_to_semisimple_fiber} for these assumptions to be at least locally satisfied. In \cref{subsec:geometry_of_lattices} the results of \cite{ostapenko} are combined with  projectification schemes which were already used in e.g.\ \cite{mumford,mulase} to see that certain subalgebras of the algebra of Laurent series with coefficients in a (not necessarily associative) central (\cref{def:central_algebra}), simple, finite-dimensional \(\Bbbk\)-algebra can be geometrized to sheaves of algebras on projective curves over \(\Bbbk\). 

Henceforth, assume that \(\fg\) is central and simple and fix a formal generalized \(r\)-matrix \(r\). In \cref{sec:geometrization} we apply the aforementioned geometrization procedure to \(\fg(r)\) and discuss the consequent geometric properties of \(r\). The results presented in \cref{subsec:local_triviality_of_sheaves_of_Lie_algebras} and \cref{subsec:geometry_of_lattices} state that the affine scheme \(\textnormal{Spec}(O)\), where
\[O \subseteq \textnormal{Mult}(\fg(r)) \coloneqq \{f \in \Bbbk(\!(z)\!)\mid f\fg(r) \subseteq \fg(r)\}\]
is an arbitrary unital subalgebra of finite codimension, admits a completion by a \(\Bbbk\)-rational smooth point \(p\) to an integral projective curve \(X\) over \(\Bbbk\). Furthermore, \(\fg(r)\) defines a coherent cohomology-free sheaf \(\cA\) of Lie algebras on \(X\) which is \'etale \(\fg\)-locally free (\cref{def:local_triviality}) at \(p\).
The most important properties of \(X\) are summarized in the following statement; see \cref{prop:geometry_of_generalized_rmatrices} \& \cref{thm:geometry_of_formal_rmatrices}.

\begin{maintheorem}\label{mainthmA}
Let \(\fg\) be a finite-dimensional central simple Lie algebra over a field \(\Bbbk\) of characteristic 0 and \(r \in (\fg \otimes \fg)(\!(x)\!)[\![y]\!]\) be a formal generalized \(r\)-matrix. Chose a unital subalgebra \(O \subseteq \textnormal{Mult}(\fg(r))\) of finite codimension and let \(X\) be the completion of \(\textnormal{Spec}(O)\) to an integral projective curve mentioned above (see \cref{prop:geometrization_of_klattices} for details). The following results are true.
\begin{enumerate}
    \item \(X\) has geometric genus at most one.
    \item If \(X\) has geometric genus one, \(X\) is elliptic and \(r\) is skew-symmetric up to equivalence.
    \item If \(r\) is skew-symmetric, \(O\) can be chosen in such a way that \(X\) has arithmetic genus one. \hfill \(\Box\)
\end{enumerate}
\end{maintheorem}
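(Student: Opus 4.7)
My plan rests on one structural device: pass through the normalization $\pi\colon \tilde{X}\to X$ and combine the cohomology-freeness of $\mathcal{A}$ with the non-degeneracy of the Killing form of $\fg$ supplied by the \'etale $\fg$-local triviality at $p$. Write $n := \dim \fg$ and $\tilde{\mathcal{A}} := (\pi^\ast\mathcal{A})/(\textnormal{torsion})$, a rank-$n$ vector bundle on the smooth projective curve $\tilde{X}$. The Killing form globalizes to a sheaf morphism $K\colon \mathcal{A}\otimes \mathcal{A}\to \mathcal{O}_X$, and its non-degeneracy in an \'etale neighbourhood of $p$ makes the pulled-back form $\tilde K$ generically non-degenerate on $\tilde{X}$. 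Hence the associated morphism $\tilde{\mathcal{A}}\to \tilde{\mathcal{A}}^{\vee}$ is an injection with torsion cokernel, which yields $\deg \tilde{\mathcal{A}}\le 0$.

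For part (1), this bound must be paired with an Euler-characteristic inequality. The torsion-freeness of $\mathcal{A}$ provides an injection $\mathcal{A}\hookrightarrow \pi_\ast\tilde{\mathcal{A}}$ with torsion cokernel $\mathcal{Q}$ supported on the singular locus of $X$. Finiteness of $\pi$ gives $\chi(\pi_\ast\tilde{\mathcal{A}}) = \chi(\tilde{\mathcal{A}})$, and cohomology-freeness $\chi(\mathcal{A}) = 0$ gives $\chi(\tilde{\mathcal{A}}) = \chi(\mathcal{Q})\ge 0$. Setting $g := p_g(X) = g(\tilde{X})$, Riemann-Roch on $\tilde{X}$ reads
\[
    0 \le \chi(\tilde{\mathcal{A}}) = \deg \tilde{\mathcal{A}} + n(1-g) \le n(1-g),
\]
which forces $g \le 1$.

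For part (2), if $g=1$ both inequalities saturate: $\deg \tilde{\mathcal{A}} = 0$, and $\chi(\mathcal{Q}) = 0$ forces $\mathcal{Q} = 0$, so $\mathcal{A}\cong \pi_\ast\tilde{\mathcal{A}}$; moreover $\tilde{\mathcal{A}}\hookrightarrow \tilde{\mathcal{A}}^\vee$ is an injection of degree-zero vector bundles on a smooth curve and hence an isomorphism. So the Killing form is non-degenerate on the whole of $\tilde{X}$. Using the equivalence relation on formal generalized $r$-matrices to replace $O$ by an algebra whose geometrization is the normalization, one may assume $X = \tilde{X}$ is a smooth elliptic curve. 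The decisive observation is that on a smooth elliptic curve $\omega_X\cong \mathcal{O}_X$, so the Killing-form isomorphism $\mathcal{A}\cong \mathcal{A}^\vee$ coincides with an isomorphism $\mathcal{A}\cong \sheafHom(\mathcal{A},\omega_X)$. Through Serre duality at $p$ and the identification of the residue at $p$ with the pairing $\langle f,g\rangle := \textnormal{res}_{z=0}\, K(f(z),g(z))\,dz$ on $\fg(\!(z)\!)$, this twisted self-duality translates into the Lagrangian property of $\fg(r)$, which is equivalent to the skew-symmetry of $r$.

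For part (3), the Lagrangian condition on $\fg(r)$ coming from skew-symmetry is used in reverse to engineer the curve. I would show that skew-symmetry singles out a canonical enlargement $O\subseteq O^{\star}\subseteq \textnormal{Mult}(\fg(r))$, constructed as the ``residue-dual'' of $O$ under the pairing $\langle\cdot,\cdot\rangle$; the completed curve $X^{\star}$ then realises the twisted self-duality $\mathcal{A}^{\star}\cong \sheafHom(\mathcal{A}^{\star},\omega_{X^{\star}})$ at the level of sheaves, and Riemann-Roch together with $\chi(\mathcal{A}^{\star})=0$ pins down $p_a(X^{\star}) = 1$. The main obstacle throughout parts (2) and (3) is exactly this algebra--geometry dictionary: precisely matching the algebraic residue/Lagrangian pairing on $\fg(r)$ with the geometric Serre-duality pairing induced by $\omega_X$ at $p$, and ensuring that the equivalence relation on formal generalized $r$-matrices is compatible with the implicit changes of $O$ that the argument requires.
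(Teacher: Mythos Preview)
Your argument for part~(1) is essentially the paper's: pass to the normalization, use the Killing form to bound $\deg\tilde{\cA}\le 0$, and combine with $\chi(\tilde{\cA})\ge 0$ (coming from $\chi(\cA)=0$ and torsion cokernel of $\cA\hookrightarrow\pi_*\tilde{\cA}$) in Riemann--Roch. The paper phrases this as $\textnormal{h}^1(\tilde{\cA})=\textnormal{h}^1(\tilde{\cA}^*)=0$ and then applies Riemann--Roch to both sheaves, but the content is the same.

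For part~(2) there are two imprecisions. First, you invoke an equivalence to ``replace $O$ by an algebra whose geometrization is the normalization''. The paper does not use an equivalence here: it shows directly that when $\tilde g=1$ the saturation $\chi(\mathcal Q)=0$ forces $\fg(r)=\widetilde{\fg(r)}$, hence $\textnormal{Mult}(\fg(r))=\tilde O$ is already integrally closed, so the maximal choice $O=\textnormal{Mult}(\fg(r))$ already gives $X=\tilde X$. Second, the passage from $\tilde{\cA}\cong\tilde{\cA}^\vee$ to the Lagrangian condition is not Serre duality at $p$ but the residue theorem: for $a,b\in\Gamma(X\setminus\{p\},\cA)$ the section $K(a,b)\eta$ is a global regular $1$-form away from $p$, so $\textnormal{res}_p K(a,b)\eta=0$. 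To turn this into $\kappa_0(\zeta(a),\zeta(b))=0$ one must first apply a coordinate transformation so that $c^*(\widehat\eta_p)=\textnormal{d}z$; this is exactly the ``up to equivalence'' in the statement, and it is not optional.

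Part~(3) is where your proposal has a genuine gap. The paper's argument is a case distinction. If $\tilde g=1$ the maximal $O$ already works by part~(2). If $\tilde g=0$, one has $\tilde O=\Bbbk[u]$ for some $u$ of order $-1$, and skew-symmetry ($\fg(r)^\bot=\fg(r)$) combined with the chain $\tilde O^\bot\fg(r)\subseteq\fg(r)^\bot$ gives $\tilde O^\bot\subseteq\textnormal{Mult}(\fg(r))$. A direct residue calculation shows $u'\tilde O\subseteq\tilde O^\bot$, and then $O:=\Bbbk+u'\tilde O=\Bbbk[u',u'u]$ is a unital subalgebra of $\textnormal{Mult}(\fg(r))$ with $\dim(\Bbbk(\!(z)\!)/(\Bbbk[\![z]\!]+O))=1$. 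Your ``residue-dual enlargement $O\subseteq O^\star$'' does not capture this: the crucial point is not to enlarge an arbitrary $O$, but to exhibit a specific subalgebra of $\textnormal{Mult}(\fg(r))$ of codimension exactly one in $\Bbbk(\!(z)\!)/\Bbbk[\![z]\!]$, and the explicit formula $\Bbbk[u',u'u]$ (with $u'$ of order two) is what makes this work.
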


\noindent
An integral projective curve over \(\Bbbk\) of arithmetic genus one with a \(\Bbbk\)-rational smooth point is an irreducible cubic plane curve and hence it is either elliptic, rational with a single nodal singularity or rational with a single cuspidal singularity; see \cref{rem:curves_of_arithmetic_genus_one}. Therefore, \cref{mainthmA}.(3) yields a geometric splitting of formal \(r\)-matrices into three categories by the underlying curve, i.e.\ we obtain a geometric trichotomy. 

We can chose a smooth open neighbourhood \(C\) of \(p\) admitting a non-vanishing 1-form \(\eta\) and such that \(\cA|_C\) is \'etale \(\fg\)-locally free. In \cref{subsec:geometric_rmatrix} we will see that \(((X,\cA),(C,\eta))\) is an appropriate refinement of the geometric datum used in \cite{burban_galinat} to construct algebro-geometric solutions of the GCYBE, which is also admissible if \(\Bbbk\) is not algebraically closed. In particular, we can use the properties of \(((X,\cA),(C,\eta))\) to construct a special section \(\rho \in \Gamma(C\times C\setminus\Delta,\cA \boxtimes\cA)\), where \(\Delta\) is the diagonal of \(C\), solving a geometric version of the GCYBE. This section is called geometric \(r\)-matrix of \(((X,\cA),(C,\eta))\) and can be seen as an extension of \(r\) in the following sense; see \cref{thm:geometric_rmatrix_Taylor_series}.

\begin{maintheorem}\label{mainthmB}
Let \(\fg\) be a finite-dimensional central simple Lie algebra over a field \(\Bbbk\) of characteristic 0 and \(r \in (\fg \otimes \fg)(\!(x)\!)[\![y]\!]\) be a formal generalized \(r\)-matrix. If we assign a geometric datum \(((X,\cA),(C,\eta))\) to \(r\) as above and denote by \(\rho\) the associated geometric \(r\)-matrix, the following results are true.
\begin{enumerate}
    \item There is a natural injection \(\Gamma(C\times C\setminus \Delta,\cA \boxtimes \cA) \to (\fg \otimes \fg)(\!(x)\!)[\![y]\!]\), which can be thought of as a Taylor expansion, mapping \(\rho\) to \(\lambda(y)r(x,y)\) for some \(\lambda \in \Bbbk[\![z]\!]^\times\).
    
    \item If \(r\) is skew-symmetric, we may assume \(\lambda = 1\).\hfill \(\Box\)
\end{enumerate}
\end{maintheorem}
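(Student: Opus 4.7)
The plan is to first construct the Taylor expansion map and establish its injectivity, then identify the image of $\rho$ by matching the geometric construction of $\rho$ against the algebraic definition of $r$ via $\fg(r)$.

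Since $p$ is a $\Bbbk$-rational smooth point of $X$, the complete local ring $\compO_{X,p}$ is $\Bbbk[\![z]\!]$, and the \'etale $\fg$-local freeness of $\cA$ at $p$ established in the setup together with the fact that the residue field at $p$ equals $\Bbbk$ yield an isomorphism $\compA_{p} \cong \fg[\![z]\!]$ of Lie algebras over $\Bbbk[\![z]\!]$. A section $s \in \Gamma(C \times C \setminus \Delta, \cA \boxtimes \cA)$ may then be pulled back to the formal punctured neighbourhood of $(p,p)$: first expand in the second variable $y$, using the trivialization on the second factor, to obtain a formal series with coefficients in sections of $\cA$ on a punctured formal neighbourhood of $p$ in the first factor; then expand in the first variable $x$, where the pole along the diagonal translates, under the convention $|y|<|x|$, into a Laurent series with coefficients in $\fg \otimes \fg[\![y]\!]$. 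This produces the claimed element of $(\fg \otimes \fg)(\!(x)\!)[\![y]\!]$. Injectivity follows from coherence of $\cA \boxtimes \cA$ together with integrality of $C \times C$: a section whose full Taylor expansion at $(p,p)$ vanishes is zero on a formal neighbourhood, hence zero globally.

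To identify the image of $\rho$, I would exploit that by the construction recalled in \cref{sec:geometrization}, the subspace of $\fg(\!(z)\!)$ corresponding to sections of $\cA$ on the formal punctured disc coming from global sections on $X \setminus \{p\}$ equals $\fg(r)$, and that by \cref{sec:formal_generalized_rmatrices} the series $r$ is determined by $\fg(r)$ up to the scalar ambiguity absorbed in $\lambda(y)$. On the other hand, the geometric $r$-matrix $\rho$ of $((X,\cA),(C,\eta))$ is characterized as the unique section with a simple pole along $\Delta$ whose principal part is controlled by $\eta$ and the Killing form, and whose regular part is pinned down by cohomology-freeness of $\cA$. Matching these two characterizations forces the Taylor expansion of $\rho$ to equal $\lambda(y) r(x,y)$ for a unit $\lambda \in \Bbbk[\![z]\!]^\times$ measuring the discrepancy between the normalization of $\eta$ at $p$ and the parameter $z$ used to write $r$ in the form \cref{eq:intro_standard_form}. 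In the skew-symmetric case of (2), the Killing form produces a canonical self-duality pairing on $\cA$ that rigidifies $\eta$ so as to match the Casimir $\gamma$ in \cref{eq:intro_standard_form} exactly, forcing $\lambda = 1$.

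The main obstacle will be to show that the ambiguity in the Taylor expansion of $\rho$ really reduces to a single unit $\lambda(y) \in \Bbbk[\![z]\!]^\times$ depending only on the second variable, rather than a more complicated factor. This requires exploiting the asymmetry in the expansion process: the diagonal pole sits at a single point of the first factor, so $\cA$ on the first factor is trivialized only up to a gauge of $\Aut(\fg)$-valued power series in $y$, and this gauge must be compatible with the fixed trivialization along the second factor; tracking this compatibility through the cohomology-free characterization of the regular part of $\rho$ is the technical heart of the argument.
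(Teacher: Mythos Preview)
Your overall strategy is correct and matches the paper's approach: construct the Taylor expansion map $\jmath^*$ landing in $(\fg(r)\otimes\fg)[\![y]\!]$, then match $\jmath^*\rho$ against $r$ using that $\fg(r)$ determines $r$ up to a rescaling. However, you have misidentified the technical crux, and this leads your sketch astray in two places.

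First, there is no gauge ambiguity. The trivialization $\zeta\colon\compA_p\to\fg[\![z]\!]$ is part of the geometric datum $\textnormal{GD}(O,\fg(r))$ and is fixed once and for all; it is not ``only up to a gauge of $\Aut(\fg)$-valued power series.'' So the worry in your final paragraph is a red herring. The scalar $\lambda(y)$ does not arise from a gauge freedom but from two explicit sources: the normalization of $\eta$ at $p$, i.e.\ $c^*(\widehat\eta_p)=\lambda_1(z)\textnormal{d}z$, and the normalization of $r$, i.e.\ $r(x,y)-\lambda_2(y)r_{\textnormal{Yang}}(x,y)\in(\fg\otimes\fg)[\![x,y]\!]$; one then has $\lambda=(\lambda_1\lambda_2)^{-1}$.

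Second, the actual work you are missing is the local computation showing that $\jmath^*\rho$ is itself in standard form, with singular part $\lambda_1(y)^{-1}r_{\textnormal{Yang}}(x,y)$. This uses the local description of $\rho$ near the diagonal (\cref{lem:geomstdform}): on a suitable affine $U\ni p$ one has $\rho|_{U\times U}=\frac{1\otimes\mu}{u-v}\chi+t$ with $\chi$ a preimage of $\textnormal{id}_{\cA|_U}$, and expanding via $\jmath^*$ one checks $\widetilde\chi(z,z)=\gamma$ and $\eta=\mu^{-1}\textnormal{d}u$ forces $\lambda_1=\widetilde u'/\widetilde\mu$. Once this is in hand, the proof is a single line: both $\jmath^*\rho$ and $\lambda(y)r(x,y)$ lie in $(\fg(r)\otimes\fg)[\![y]\!]$ and have the same singular part, so their difference lies in
\[
(\fg(r)\otimes\fg)[\![y]\!]\cap(\fg\otimes\fg)[\![x,y]\!]=\{0\},
\]
using $\fg(r)\cap\fg[\![z]\!]=0$. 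No compatibility tracking through cohomology is needed.

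For part~(2), your explanation via self-duality is not the mechanism. The reason one can take $\lambda=1$ in the skew-symmetric case is that \cref{thm:geometry_of_formal_rmatrices} shows one may choose $\eta$ so that $c^*(\widehat\eta_p)=\textnormal{d}z$ (i.e.\ $\lambda_1=1$), and after normalizing $r$ one has $\lambda_2=1$ as well. The proof of that choice of $\eta$ does use the Killing form, but indirectly: one shows $c^*(\widehat\eta_p)=w'\textnormal{d}z$ with $w'\in\textnormal{Mult}(\fg(r))\cap\Bbbk[\![z]\!]^\times=\Bbbk^\times$, via the residue theorem and $\fg(r)^\bot=\fg(r)$.
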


\noindent
If \(r\) is skew-symmetric, \(\cA\) is \'etale \(\fg\)-locally free on the smooth locus of \(X\) and admits a global geometric analog of the Killing form (see \cref{thm:geometry_of_formal_rmatrices}), while \(\rho\) turns out to be skew-symmetric in an appropriate sense. We obtain a refinement of the geometric axiomatization of skew-symmetry presented in \cite[Theorem 4.3]{burban_galinat}, which does not assume \(\Bbbk\) to be algebraically closed. 

The \'etale \(\fg\)-local triviality of \(\cA\) in \(p\) combined with \cref{mainthmB} yields the following extension property of generalized \(r\)-matrices; see \cref{subsec:rational_extension_of_formal_generalized_rmatrices}.

\begin{maintheorem}\label{mainthmC}
Let \(\fg\) be a finite-dimensional central simple Lie algebra over a field \(\Bbbk\) of characteristic 0 and \(r \in (\fg \otimes \fg)(\!(x)\!)[\![y]\!]\) be a formal generalized \(r\)-matrix.
There exists a smooth integral projective curve \(Y\) over \(\Bbbk\) such that, up to equivalence, \(r\) extends to a rational section of \((\fg \otimes \fg)\otimes \cO_{Y\times Y}\). In particular, if \(\Bbbk\) is algebraically closed, \(r\) is equivalent to a certain Taylor expansion of a rational function \(Y(\Bbbk) \times Y(\Bbbk) \to \fg \otimes \fg\), where \(Y(\Bbbk)\) is the algebraic set of closed points of \(Y\).\hfill \(\Box\)
\end{maintheorem}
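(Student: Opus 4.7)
The plan is to construct $Y$ as the smooth projective completion of an étale neighborhood of $p$ on which the sheaf $\cA$ of Theorem~A becomes $\fg$-trivial, and then to extend the geometric $r$-matrix $\rho$ from Theorem~B by rationality across the product $Y\times Y$.

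First I would let $\nu\colon Y_0\to X$ be the normalization of $X$. By Theorem~\ref{mainthmA}(1), $Y_0$ is a smooth integral projective curve over $\Bbbk$ of genus at most one; since $p$ is a $\Bbbk$-rational smooth point of $X$, it lifts uniquely to a $\Bbbk$-rational point $p_0\in Y_0$, and $\nu^*\cA$ is étale $\fg$-locally free at $p_0$. Using this étale local triviality, I would choose an open neighborhood $U\subseteq Y_0$ of $p_0$, a finite étale morphism $h\colon V\to U$ with $V$ an integral smooth curve, a $\Bbbk$-rational lift $q\in V$ of $p_0$, and an isomorphism of sheaves of Lie algebras $h^*(\nu^*\cA|_U)\cong \fg\otimes \cO_V$. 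Let $Y$ be the (unique) smooth projective completion of $V$; the map $V\to Y_0$ extends to a finite morphism $f\colon Y\to Y_0$, étale over $U$, and I write $\cA_Y := (\nu\circ f)^*\cA$, which restricts to $\fg\otimes \cO_V$ on the dense open subset $V\subseteq Y$.

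Next I would pull back $\rho\in \Gamma(C\times C\setminus \Delta,\cA\boxtimes\cA)$ via $f\times f$, obtaining a section of $\cA_Y\boxtimes\cA_Y$ on $(f\times f)^{-1}(C\times C\setminus \Delta)$. Under the trivialization $\cA_Y|_V\cong \fg\otimes \cO_V$, the restriction to $V\times V$ minus the relevant incidence locus is a regular $(\fg\otimes \fg)$-valued section on a dense open subset of $Y\times Y$, and hence extends uniquely---since $Y\times Y$ is a smooth integral projective surface---to a rational section $\tilde\rho$ of $(\fg\otimes \fg)\otimes \cO_{Y\times Y}$.

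Finally I would check that $\tilde\rho$ extends $r$ up to equivalence. By Theorem~\ref{mainthmB}, the Taylor expansion of $\rho$ at $(p,p)$ along the local parameter dual to $\eta$ is $\lambda(y)r(x,y)$ for some $\lambda\in \Bbbk[\![z]\!]^\times$. The étale morphism $f$ identifies the formal neighborhood of $(q,q)$ with that of $(p,p)$, so the Taylor expansion of $\tilde\rho$ at $(q,q)$ along $f^*\eta$ recovers this same series up to the discrepancy between the étale trivialization of $\cA_Y$ on $V$ and the formal trivialization of $\cA$ at $p$ used in Theorem~\ref{mainthmB}; this discrepancy is a formal gauge transformation with values in $\Aut(\fg)$, which I would absorb into the equivalence of formal generalized $r$-matrices from Section~\ref{sec:formal_generalized_rmatrices}. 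In the algebraically closed case, $Y(\Bbbk)$ is Zariski-dense in $Y$, and so $\tilde\rho$ restricts to a rational map $Y(\Bbbk)\times Y(\Bbbk)\to \fg\otimes \fg$ whose Taylor expansion at $(q,q)$ is equivalent to $r$. The main obstacle is precisely this last identification: verifying that the automorphism-valued formal gauge transformation interpolating the two trivializations is of the type admitted by the equivalence relation on formal generalized $r$-matrices, rather than an uncontrolled outer symmetry.
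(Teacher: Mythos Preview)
Your approach is essentially the paper's: take an \'etale neighbourhood on which \(\cA\) trivializes as \(\fg\otimes\cO\), pull back the geometric \(r\)-matrix \(\rho\), and view the result as a rational section on the smooth projective completion (the paper does the \'etale step first and normalizes/completes afterwards, cf.\ \cref{subsec:rational_extension_of_formal_generalized_rmatrices} and the remark following \cref{thm:extension_of_generalized_rmatrices}; you normalize first and then cover, which is harmless since the \'etale trivialization already lives over the smooth locus).

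Two points deserve correction. First, you assert the existence of a \(\Bbbk\)-rational lift \(q\in V\) of \(p\); the definition of \'etale \(\fg\)-local freeness only guarantees some closed point \(q\) over \(p\), whose residue field \(\Bbbk(q)\) may be a nontrivial finite extension of \(\Bbbk\). The paper handles this by proving that the Taylor expansion of \(\varrho\) at \(q\) is equivalent to the base-changed series \(r_{\Bbbk(q)}\) (\cref{thm:extension_of_generalized_rmatrices}); your claim that \(f\) ``identifies the formal neighbourhood of \((q,q)\) with that of \((p,p)\)'' is false when \(\Bbbk(q)\neq\Bbbk\), since the former lives over \(\Bbbk(q)[\![z]\!]\). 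One can in fact secure a \(\Bbbk\)-rational \(q\) by invoking the formal trivialization \(\zeta\colon\widehat\cA_p\cong\fg[\![z]\!]\) together with Artin approximation over the henselization of \(\cO_{X,p}\), but this requires an argument you have not supplied.

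Second, your closing worry is misplaced: the gauge part of an equivalence is an arbitrary \(\varphi\in\Aut_{\Bbbk[\![z]\!]\textnormal{-alg}}(\fg[\![z]\!])\), which already encompasses all outer automorphisms of \(\fg\). The discrepancy between the \'etale trivialization \(\psi\) and the formal trivialization \(\zeta\) is exactly such a \(\varphi\), and together with the coordinate change coming from \(\widehat f^\sharp\) and the rescaling \(\lambda\) from \cref{thm:geometric_rmatrix_Taylor_series} it assembles into an equivalence \((\lambda,w,\varphi)\) in the sense of \cref{sec:formal_generalized_rmatrices}; there is no obstacle here.
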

\noindent
In \cref{sec:real_and_complex_generalized_rmatrices} we consider the consequences of the established results for non-degenerate solutions of the GCYBE \cref{eq:intro_two_parameter_GYBE} as well as a similar real analytic notion. In \cref{prop:analytic_and_formal_theories_are_equivalent} we show that the theory of non-degenerate real and complex analytic generalized \(r\)-matrices up to an analytic notion of equivalence coincides with the formal theory over \(\bR\) and \(\bC\) respectively. As already mentioned above, \(\cA\) is \'etale \(\fg\)-locally free on the smooth locus of \(X\), if \(r\) is skew-symmetric. This smooth locus is explicitly known due to \cref{mainthmA}.
Using the results on torsors over smooth affine curves in \cite{pianzola} and studying acyclic sheaves of Lie algebras over complex elliptic curves via factors of automorphy combined with \cref{mainthmB} results in the following version of the Belavin-Drinfeld trichotomy; see \cref{thm:Belavin-Drinfeld_trichotomy}.
\begin{maintheorem}\label{mainthmD}
Let \(\fg\) be a finite-dimensional simple complex Lie algebra and \(r \in (\fg \otimes \fg)(\!(x)\!)[\![y]\!]\) be a formal \(r\)-matrix. Then \(r\) is equivalent to the Taylor series in the second variable in \(0\) of a non-degenerate \(r\)-matrix \(\bC \times \bC \to \fg \otimes \fg\) which is
\begin{itemize}
    \item elliptic in both variables if and only if \(X\) is elliptic,
    \item a rational function of exponentials if and only if \(X\) is nodal and
    \item rational if and only if \(X\) is cuspidal. \hfill \(\Box\)
\end{itemize}
\end{maintheorem}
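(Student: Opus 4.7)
The plan is to combine the geometric trichotomy of \cref{mainthmA}.(3) with the Taylor expansion principle of \cref{mainthmB}.(2), and in each of the three resulting cases to extend the geometric \(r\)-matrix \(\rho\) to a global meromorphic function on \(\bC \times \bC\) of the asserted analytic type. First, since \(r\) is skew-symmetric, \cref{mainthmA}.(3) permits choosing \(O\) so that \(X\) has arithmetic genus one, and \cref{rem:curves_of_arithmetic_genus_one} classifies \(X\) as either elliptic, rational nodal, or rational cuspidal. By \cref{prop:analytic_and_formal_theories_are_equivalent} it suffices to produce in each case a non-degenerate analytic \(r\)-matrix \(\bC \times \bC \to \fg \otimes \fg\) whose formal expansion at the distinguished point equals \(r\); combined with \cref{mainthmB}.(2), this reduces the entire problem to meromorphically extending \(\rho\) from \(C \times C \setminus \Delta\) to \(\bC \times \bC\).

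The crucial geometric input is that, for skew-symmetric \(r\), the sheaf \(\cA\) is \'etale \(\fg\)-locally free on the whole smooth locus \(X^{\mathrm{sm}}\) (\cref{thm:geometry_of_formal_rmatrices}). In the cuspidal case \(X^{\mathrm{sm}} \cong \mathbb{A}^1_{\bC}\), so \cref{mainthmC} directly provides a rational extension \(\bC \times \bC \to \fg \otimes \fg\); its Taylor series at \((0,0)\) agrees with \(r\) by construction. In the nodal case \(X^{\mathrm{sm}} \cong \mathbb{G}_{m,\bC}\); the results of \cite{pianzola} on torsors over smooth affine curves trivialize \(\cA|_{X^{\mathrm{sm}}}\) in the \'etale topology, and pulling \(\rho\) back along the universal cover \(\exp \times \exp \colon \bC \times \bC \to \bC^\times \times \bC^\times\) yields a meromorphic function whose equivariance under \(2\pi i \bZ \times 2\pi i \bZ\) is governed by a factor of automorphy valued in \(\Aut(\fg)\). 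The finite pole structure inherited from \(X^{\mathrm{sm}}\) then forces this extension to be a rational function of \(e^x\) and \(e^y\).

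In the elliptic case \(X\) is itself smooth and \(\cA\) is a cohomology-free \(\fg\)-locally free sheaf on \(X\). Passing to the universal cover \(\bC \to X^{\mathrm{an}}\) with deck group the period lattice \(\Lambda\), \(\cA\) is described by a factor of automorphy \(\Lambda \to \Aut(\fg)\), and \(\rho\) therefore pulls back to a \(\Lambda \times \Lambda\)-equivariant meromorphic function \(\bC \times \bC \to \fg \otimes \fg\) with poles along the diagonal and its \(\Lambda\times\Lambda\)-translates --- an elliptic function in both variables by definition. The ``only if'' implications are then automatic: elliptic functions, rational functions of exponentials, and rational functions are pairwise mutually exclusive (e.g.\ by pole counting in a fundamental domain and by growth comparison along \(\bR\)), so the type of \(X\) is intrinsic to \(r\) up to equivalence.

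The main obstacle will be the elliptic case. One must show that, beyond the Atiyah-style classification of \(\fg\)-locally free sheaves on an elliptic curve, the specific acyclicity of \(\cA\) forces the factor of automorphy to take values in \(\Aut(\fg)\) itself (and not merely in an outer quotient), and one must check that the resulting quasi-periodic extension of \(\rho\) really glues to a single meromorphic section satisfying the GCYBE on all of \(\bC \times \bC\) rather than just a fundamental parallelogram. A parallel but milder issue arises in the nodal case at the two points of the normalization lying over the node, where monodromy control is needed to exclude essential singularities in \(e^x\) and \(e^y\).
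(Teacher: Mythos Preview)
Your overall architecture matches the paper's: reduce to the three curve types via \cref{mainthmA}.(3), trivialize \(\cA\) on the smooth locus, pull \(\rho\) back to \(\bC\times\bC\), and invoke \cref{mainthmB}.(2). But two genuine gaps remain.

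\textbf{Elliptic case.} You write that the pullback of \(\rho\) is ``\(\Lambda\times\Lambda\)-equivariant \dots\ an elliptic function in both variables by definition''. This is not so: equivariance under a factor of automorphy \(\Phi\colon\Lambda\to\Aut_{\bC\textnormal{-alg}}(\fg)\) means \(\varrho(x+\lambda,y+\lambda')=(\Phi(\lambda)\otimes\Phi(\lambda'))\varrho(x,y)\), which is quasi-periodicity, not periodicity. Your diagnosis of the obstacle is also off: the factor of automorphy \emph{always} lands in \(\Aut_{\bC\textnormal{-alg}}(\fg)\) (this is what \'etale \(\fg\)-local freeness gives), so ``not merely an outer quotient'' is not the issue. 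What must be shown is that \(\Phi\) can be taken \emph{constant} and of \emph{finite order}; only then does \(\varrho\) become \(n\Lambda\)-periodic for some \(n\). The paper establishes constancy by first passing to a difference-depending gauge (\cref{prop: formal_rmatrix_depends_on_difference}), so that \(\fg(r)\) is closed under \(d/dz\), and then differentiating the quasi-periodicity relation to kill \(\partial_z\Phi\). Finiteness then follows from \(\textnormal{h}^0(\cA)=0\) (no common fixed vector) via an algebraic-group argument. Neither of these ideas appears in your outline.

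\textbf{The ``only if'' direction.} Mutual exclusivity of the three analytic types is not enough: you must argue that if \(r\) is equivalent to \(\Theta\varrho\) with \(\varrho\) elliptic (resp.\ trigonometric, rational), then \(X\) is elliptic (resp.\ nodal, cuspidal). The analytic type of \(\varrho\) constrains \(\fg(r)\), not \(X\) directly. The paper closes this gap by observing that a cofinite subalgebra \(O\subseteq\textnormal{Mult}(\fg(r))\) can be produced from non-commutative polynomials in \(\textnormal{ad}(a_i)\) with \(a_i\in\fg(r)\) (this is the content of \cref{thm:multipliers_are_klattice}), so \(O\) inherits the analytic type of \(\fg(r)\); since \(\textnormal{Q}(O)\) is the function field of \(X\), \cref{lem:multipliers_are_rational} then pins down the curve. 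Your appeal to ``pole counting'' and ``growth comparison'' does not supply this link.

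The nodal and cuspidal cases in your sketch are closer to correct, though the paper is more explicit: it uses the classification of \(\Aut(\fg)\)-torsors over \(\mathbb{G}_m\) and \(\mathbb{A}^1\) (\cref{thm:weakly_g_locally_free_Lie_algebra}) to write down \(\cA|_{\breve X}\) concretely, and then reads off the form of \(\varrho\) from the local standard form of \(\rho\) in \cref{lem:geomstdform}, rather than arguing abstractly about pole structure or monodromy.
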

\noindent
This theorem gives a new implicit proof of the geometrization results for non-degenerate complex analytic \(r\)-matrices, which were carried out, using the Belavin-Drinfeld trichotomy, for elliptic solutions in \cite{burban_heinrich}, for rational solutions in \cite{burban_galinat} and recently for trigonometric solutions independently in \cite{polishchuck_trigonometric_geometrisation} and \cite{abedin_burban}. \Cref{mainthmD} also formalizes the geometric intuition of the Belavin-Drinfeld trichotomy mentioned in \cite[Section 4]{drinfeld_quantum_groups}.

\subsection*{Acknowledgments} 
This work was supported  by the DFG project Bu--1866/5--1. 
I am grateful to Igor Burban for introducing me to the topic and providing useful discussions as well as helpful remarks and advices. Furthermore, I would like to thank Stepan Maximov for taking the time to proofread parts of this document. 

\subsection*{Notation and conventions}
Throughout this document \(\Bbbk\) denotes a field of characteristic \(0\) with algebraic closure \(\overline{\Bbbk}\). By convention the set of natural numbers \(\bN\) excludes \(0\) and  \(\bN_0 = \bN \cup \{0\}\). 

\subsubsection*{Algebra.} For a unital commutative ring \(R\) and \(R\)-modules \(M,N\), the space of \(R\)-linear maps \(M \to N\) (resp. \(M \to M\)) is denoted by \(\Hom_R(M,N)\) (resp. \(\End_R(M)\)), while the tensor product of \(M\) and \(N\) is written as \(M \otimes_R N\). For \(R = \Bbbk\) the indices are omitted. The invertible elements of \(R\) are denoted by \(R^\times\), and \(M^* \coloneqq \Hom_R(M,R)\) is the dual module of \(M\).  If \(R\) is a domain, \(\textnormal{Q}(R) \coloneqq (R\setminus \{0\})^{-1}R\) denotes its quotient field and we write \(\textnormal{Q}(M) \coloneqq M \otimes_R \textnormal{Q}(R)\). Let \(f\colon R \to R'\) be a morphism of unital commutative rings and \(M'\) be an \(R'\)-module. We say that a map \(g \colon M\to M'\) is \(f\)-equivariant if it is a group homomorphism satisfying \(g(r m) = f(r) g(m)\) for all \(r\in R,m\in M\).

In this text, an \(R\)-algebra \(A\) satisfies no additional assumptions, i.e.\ \(A = (A,\mu_A)\) consists of an \(R\)-module \(A\) equipped with a multiplication map \(\mu_A \colon A \otimes_R A \to A\). In particular, a Lie algebra over \(R\) is an \(R\)-algebra. The group of invertible \(R\)-algebra endomorphisms of \(A\), i.e.\ invertible \(R\)-linear maps \(f \colon A \to A\) satisfying \(f\mu_A = \mu_A(f\otimes f)\), will be denoted by \(\Aut_{R\textnormal{-alg}}(A)\). We note that \(\oplus\) will always denote the direct sum of modules and not of algebras. If \(A\) is a Lie algebra with adjoint representation \(\textnormal{ad}\colon A \to \textnormal{End}_R(A)\), we write \([a \otimes 1, t] \coloneqq (\textnormal{ad}(a) \otimes \textnormal{id}_A)t\) for all \(a \in A,t \in A \otimes_R A\) and define \([1 \otimes a, t]\) and \([a \otimes 1 + 1 \otimes a,t]\) in a similar fashion.

\subsubsection*{Algebraic geometry.} Let \(X = (X,\cO_X)\) be a ringed space and \(\cF,\cG\) be two \(\cO_X\)-modules. For a morphism \(f\colon X \to Y = (Y,\cO_Y)\) of ringed spaces, we denote the additional structure morphism by \(f^\flat\colon \cO_Y \to f_*\cO_X\) and write \(f^\sharp \colon f^{-1}\cO_Y \to \cO_X\) for the induced morphism. The set of \(\cO_X\)-module homomorphisms \(\cF \to \cG\) (resp. \(\cF \to \cF\)) is denoted by \(\Hom_{\cO_X}(\cF,\cG)\) (resp. \(\End_{\cO_X}(\cF)\)) while its sheaf counterpart is denoted by \(\sheafHom_{\cO_X}(\cF,\cG)\) (resp. \(\sheafEnd_{\cO_X}(\cF))\). In particular, we write \(\cF^* \coloneqq \sheafHom_{\cO_X}(\cF,\cO_X)\). The tensor product of \(\cF\) and \(\cG\) is written as \(\cF\otimes_{\cO_X}\cG\). By abuse of notation, we write \(f^*= f^*_\cF\colon \cF \to f_*f^*\cF\) for the canonical morphism. 

Assume that \(X\) and \(Y\) are \(S\)-schemes. The fiber product of \(X\) and \(Y\) over \(S\) is denoted by \(X\times_S Y\) and \(\cF|_p\) is the fiber of \(\cF\) in a point \(p\in X\). If \(S = \textnormal{Spec}(\Bbbk)\), the index \(S\) is omitted and \(\textnormal{H}^n(\cF)\) denotes the \(n\)-th global cohomology group of \(\cF\), while \(\textnormal{h}^n(\cF)\) denotes its dimension over \(\Bbbk\), if said space is finite-dimensional. In particular, \(\textnormal{H}^0(\cF) = \Gamma(X,\cF)\) is the space of global sections of \(\cF\).

\subsubsection*{Formal series.} For a module \(M\) over a unital commutative ring \(R\), the module of formal power series in the formal variable \(z\) with coefficients in \(M\) is denoted by \(M[\![z]\!]\). The module \(R[\![z]\!]\) 
is again a unital commutative ring and \(M[\![z]\!]\) is a \(R[\![z]\!]\)-module. Then \(M(\!(z)\!) \coloneqq M[\![z]\!][z^{-1}]\) is the module of formal Laurent series. We note that if \(M\) is an \(R\)-algebra the module \(M[\![z]\!]\) (resp. \(M(\!(z)\!)\)) is naturally an \(R[\![z]\!]\)-algebra  (resp. \(R(\!(z)\!)\)-algebra). Elements \(p\) in \(M(\!(z)\!)\) (resp. \(M(\!(x_1)\!)\dots(\!(x_k)\!)\)) will sometimes be denoted with the formal variable (resp. variables) for convenience, i.e.\ \(p = p(z)\) (resp. \(p = p(x_1,\dots,x_k)\)). Moreover, a generic element \(p \in M(\!(z)\!)\) is written \(p(z) = \sum_{k \gg -\infty}p_kz^k\) and \(p'(z) = \sum_{k \gg -\infty} kp_kz^{k-1}\) denotes the formal derivative of \(p\). Finally, if \(p(z) \in mz^{-k} + z^{-k+1}M[\![z]\!]\), it is said to be of order \(k\) with main part \(mz^{-k}\).

\section{Survey on formal generalized \(r\)-matrices}\label{sec:formal_generalized_rmatrices}

\subsection{Formal generalized \(r\)-matrices}\label{subsec:formal_rmatrices}
Let \(\fg\) be a semi-simple Lie algebra of dimension \(d \in \bN\) over \(\Bbbk\) and \(\{b_i\}_{i = 1}^d \subset \fg\) be an orthonormal basis with respect to the Killing form \(\kappa\) of \(\fg\). The Casimir element of \(\fg\) is given by \(\gamma = \sum_{i = 1}^d b_i \otimes b_i \in \fg \otimes \fg\).

Consider \((x-y)^{-1}\) as the series \(\sum_{k = 0}^\infty x^{-k-1}y^k\in \Bbbk(\!(x)\!)[\![y]\!]\). Then
\begin{align}\label{eq:Yangs_rmatrix}
    r_{\textnormal{Yang}}(x,y)\coloneqq \frac{\gamma}{x-y} = \sum_{k = 0}^\infty\sum_{i = 1}^d x^{-k-1}b_i \otimes y^kb_i,
\end{align}
is an element of 
\((\fg\otimes \fg)(\!(x)\!)[\![y]\!] \cong (\fg(\!(x)\!)\otimes \fg)[\![y]\!] \cong (\fg\otimes \fg)\otimes \Bbbk(\!(x)\!)[\![y]\!]\). It is the formal version of Yang's \(r\)-matrix mentioned in the introduction.

\begin{definition}\label{def:formal_standard_from}
A series \(r \in (\fg\otimes \fg)(\!(x)\!)[\![y]\!]\) is said to be in \emph{standard form} if 
\begin{equation}\label{eq:standard_form}
r(x,y) = \frac{\lambda(y)}{x-y}\gamma +r_0(x,y) = \lambda(y)r_{\textnormal{Yang}}(x,y) + r_0(x,y)
\end{equation}
for some \(\lambda \in \Bbbk[\![z]\!]^\times\) and \(r_0\in (\fg\otimes \fg)[\![x,y]\!]\). In this case 
\begin{itemize}
    \item \(r\) is called \emph{normalized} if \(\lambda = 1\),
    \item \(\overline{r}(x,y)  \coloneqq \lambda(x)r_{\textnormal{Yang}}(x,y) -\tau(r_0(y,x)) \in (\fg \otimes \fg)(\!(x)\!)[\![y]\!]\), where \(\tau\) is the \(\Bbbk[\![x,y]\!]\)-linear extension of the linear automorphism of \(\fg \otimes \fg\) defined by \(a\otimes b \mapsto b\otimes a\), and
    \item \(r\) is called \emph{skew-symmetric} if \(\overline{r} = r\).
\end{itemize}
\end{definition}
\begin{remark}\label{rem:formal_standard_forms}
For a general \(r = r(x,y) \in (\fg\otimes \fg)(\!(x)\!)[\![y]\!]\) there is no appropriate \(\overline{r}\) since switching \(x\) and \(y\) does not define a map of \(\Bbbk(\!(x)\!)[\![y]\!]\) into itself. Therefore, a well-defined notion of skew-symmetry in this context needs additional assumptions, e.g.\ that \(r\) is in standard form.
\end{remark}

\begin{lemma}\label{lem:series_vanishing_at_diagonal}
Let \(V\) be a \(\Bbbk\)-vector space and \(f(x,y) \in V[\![x,y]\!]\) satisfy \(f(z,z) = 0\) in \(V[\![z]\!]\). Then \(f\) is divisible by \(x-y\), i.e.\ \(f(x,y) = (x-y)g(x,y)\) for some \(g \in V[\![x,y]\!]\). Furthermore, if \(f(x,y) = h(x) - h(y)\) for some \(h \in \Bbbk[\![z]\!]\), the identity \(g(z,z) = h'(z)\) holds.
\end{lemma}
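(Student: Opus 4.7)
The plan is to reduce both claims to a bigraded bookkeeping exercise and an application of the identity $x^k-y^k=(x-y)(x^{k-1}+x^{k-2}y+\dots+y^{k-1})$.

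First, decompose $f$ into total-degree homogeneous components. Writing $f(x,y) = \sum_{n \geq 0} f_n(x,y)$ with $f_n \in V[x,y]$ homogeneous of total degree $n$, the assumption $f(z,z) = 0$ translates, by comparing coefficients of $z^n$, into $f_n(z,z) = 0$ for each $n$. It then suffices to produce, for each $n \geq 1$, a homogeneous $g_{n-1} \in V[x,y]$ of total degree $n-1$ with $(x-y)g_{n-1}=f_n$, and set $g \coloneqq \sum_{n \geq 1} g_{n-1}$. This is a well-defined element of $V[\![x,y]\!]$ because only finitely many of the $g_{n-1}$ contribute to each fixed monomial $x^iy^j$.

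For the homogeneous case, write $f_n = \sum_{i=0}^{n} a_{i,n-i}\, x^i y^{n-i}$ with $\sum_{i=0}^n a_{i,n-i} = 0$. I would introduce $b_i \coloneqq -\sum_{k=0}^{i} a_{k,n-k}$ for $0 \leq i \leq n-1$ and set $g_{n-1} \coloneqq \sum_{i=0}^{n-1} b_i\, x^i y^{n-1-i}$. A direct expansion of $(x-y)g_{n-1}$ recovers the recursion $b_{i-1}-b_i = a_{i,n-i}$ with the boundary conditions $b_{-1} = 0$ and $b_n = 0$, the latter being precisely the hypothesis $\sum_i a_{i,n-i}=0$. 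Uniqueness of $g$ (should one care) is automatic since $x-y$ is a non-zero-divisor in $\Bbbk[\![x,y]\!]$ and $V[\![x,y]\!] = V \otimes \Bbbk[\![x,y]\!]$ is torsion-free over it.

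For the second part, if $f(x,y) = h(x) - h(y)$ with $h(z) = \sum_{k \geq 0} c_k z^k$, I would use the factorization $x^k - y^k = (x-y)\sum_{i=0}^{k-1} x^i y^{k-1-i}$ to read off $g(x,y) = \sum_{k \geq 1} c_k \sum_{i=0}^{k-1} x^i y^{k-1-i}$. Specializing $x=y=z$ in each summand gives $k c_k z^{k-1}$, hence $g(z,z) = \sum_{k \geq 1} k c_k z^{k-1} = h'(z)$, matching the derivative. There is no genuine obstacle here; the argument is elementary bookkeeping, and the only point requiring a moment's care is checking that the homogeneous pieces assemble into a bona fide formal power series, which, as noted, follows from the degree constraint.
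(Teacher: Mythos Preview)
Your proof is correct and follows essentially the same strategy as the paper: reduce to total-degree homogeneous components, handle the polynomial case, and treat the second claim via the explicit factorization of $x^k-y^k$. The only cosmetic difference is that for the homogeneous polynomial step the paper invokes division with remainder by the monic polynomial $x-y\in\Bbbk[y][x]$ (and then kills the remainder by evaluating at $x=y$), whereas you write down the quotient coefficients directly via the partial sums $b_i$; both arrive at the same place.
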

\begin{proof}
Let \(V[x,y]_\ell\) denote the homogeneous elements of total degree \(\ell\) in \(\Bbbk[x,y]\) for all \(\ell \in \bN_0\). 
Then \(f = \sum_{\ell = 0}^\infty f_\ell\), where \(f_\ell \in V[x,y]_\ell\). Since \(x-y\) is homogeneous of total degree one, it suffices to prove the claim for \(f = f_\ell \in V[x,y]_\ell\). The polynomial \(x-y \in \Bbbk[x,y] = \Bbbk[y][x]\) is monic, so the polynomial division algorithm provides \(g\in V[x,y]\) and \(r\in V[y]\) such that
\begin{equation}
    f(x,y) = (x-y)g(x,y) + r(y).
\end{equation}
Therefore, \(0 = f(z,z) = r(z)\) proves \(f(x,y) = (x-y)g(x,y)\). Note that in the special case \(f(x,y) = h(x)-h(y)\) for \(h(z) = az^\ell\), a direct calculation verifies that
\begin{equation}
    g(x,y) = a\sum_{k = 0}^{\ell-1}x^k y^{\ell-1-k}.
\end{equation}
Hence, \(g(z,z) = \ell az^{\ell-1} = h'(z)\) in this case, proving the second part of the statement.
\end{proof}

\begin{remark}\label{rem:formal_standard_forms2}
Let \(r=r(x,y) = f(x,y)r_\textnormal{Yang}(x,y) + r_0(x,y) \in (\fg \otimes\fg)(\!(x)\!)[\![y]\!]\) for some \(f \in \Bbbk[\![x,y]\!]^\times\) and \(r_0 \in (\fg \otimes\fg)[\![x,y]\!]\). Then 
\begin{equation}
    r(x,y) = f(y,y)r_\textnormal{Yang}(x,y)+  \frac{f(x,y)-f(y,y)}{x-y}\gamma +r_0(x,y)
\end{equation}
combined with \cref{lem:series_vanishing_at_diagonal} shows that \(r\) is in standard form. In particular, \(\overline{r}\) is also in standard form.
\end{remark}

\begin{notation}\label{not:ij_notations}
Let \(ij \in \{12,13,23\}\), \(\textnormal{U}(\fg)\) be the universal enveloping algebra of \(\fg\) and \(R \coloneqq \Bbbk(\!(x_1)\!)(\!(x_2)\!)[\![x_3]\!]\). For \(t = t_1 \otimes t_2 \in \fg \otimes \fg\) the assignments 
\[t^{12} = t \otimes 1,t^{13} = t_1 \otimes 1 \otimes t_2\textnormal{ and }t^{23} = 1 \otimes t\]
define linear maps \((\cdot)^{ij}\colon \fg \otimes \fg \to \textnormal{U}(\fg) \otimes \textnormal{U}(\fg) \otimes \textnormal{U}(\fg)\)
and the image of \(s \in (\fg\otimes \fg)(\!(x)\!)[\![y]\!]\) under
\begin{align*}
\xymatrix{(\fg\otimes \fg)(\!(x)\!)[\![y]\!] \cong  (\fg\otimes \fg)(\!(x_1)\!)[\![x_3]\!] \subseteq (\fg\otimes \fg)\otimes R  \ar[rr]^-{(\cdot)^{ij}\otimes \textnormal{id}_R} && (\textnormal{U}(\fg) \otimes \textnormal{U}(\fg) \otimes \textnormal{U}(\fg))\otimes R}
\end{align*}
is denoted by \(s^{ij}\).
\end{notation}

\begin{definition}\label{def:formal_generalized_rmatrices}
A series \(r \in (\fg\otimes \fg)(\!(x)\!)[\![y]\!]\) is called a \emph{(normalized) formal generalized \(r\)-matrix}, if it is in (normalized) standard form and solves the \emph{formal generalized classical Yang-Baxter equation} (formal GCYBE)
\begin{equation}\label{eq:formal_GCYBE}
    \textnormal{GCYB}(r) = 0 \textnormal{, where }\textnormal{GCYB}(r) \coloneqq [r^{12},r^{13}] + [r^{12},r^{23}] + [r^{13},\overline{r}^{23}].
\end{equation}
Here we used \cref{not:ij_notations} and note that the brackets in \(\textnormal{GCYB}(r)\) are the usual commutators in the associative \(R\)-algebra \((\textnormal{U}(\fg) \otimes \textnormal{U}(\fg) \otimes \textnormal{U}(\fg))\otimes R\).
\end{definition}

\begin{example}\label{ex:Yangs_rmatrix}
\(r_{\textnormal{Yang}}\) is a skew-symmetric formal generalized \(r\)-matrix since
\begin{equation}
    \begin{split}
        &(x_1-x_2)(x_1-x_3)(x_2-x_3)\textnormal{GCYB}(r_{\textnormal{Yang}})(x_1,x_2,x_3) \\&= ((x_2-x_3)-(x_1-x_3)+(x_1-x_2))[\gamma^{12},\gamma^{13}] = 0.
    \end{split}
\end{equation}
Here we used the fact that \([a\otimes 1 + 1 \otimes a,\gamma] = 0\) for all \(a \in \fg\) implies that
\([\gamma^{12},\gamma^{13}] = - [\gamma^{12},\gamma^{23}] = [\gamma^{13},\gamma^{23}]\) holds. It is easy to see that for all \(\lambda \in \Bbbk[\![z]\!]\) and \(\widetilde{r}(x,y) \coloneqq \lambda(y)r_\textnormal{Yang}(x,y)\) we have:
\begin{equation}
    \textnormal{GCYB}(\widetilde{r})(x_1,x_2,x_3) = \lambda(x_2)\lambda(x_3)\textnormal{GCYB}(r_{\textnormal{Yang}})(x_1,x_2,x_3) = 0.
\end{equation}
Therefore, \(\widetilde{r}\) is a generalized \(r\)-matrix and this series is not skew-symmetric if \(\lambda \notin \Bbbk^\times\).
\end{example}

\begin{remark}\label{rem:formal_GCYBE_domain}
Defining \(r^{32}(x_3,x_2) \coloneqq -\overline{r}^{23}(x_2,x_3)\) in \cref{eq:formal_GCYBE} results in the analog form of the GCYBE used in the introduction. Observe that e.g.\
\begin{align*}
    [(s_1\otimes s_2)^{13},(t_1\otimes t_2)^{23}] = s_1 \otimes t_1 \otimes s_2t_2 - s_1 \otimes t_1 \otimes t_2s_2 = s_1 \otimes t_1 \otimes [s_2,t_2] \in \fg\otimes \fg \otimes \fg
\end{align*}
for all \(s_1,s_2,t_1,t_2 \in \fg\). This and similar calculations show that 
\(\textnormal{GCYB}(r) \in (\fg \otimes \fg \otimes \fg)(\!(x_1)\!)(\!(x_2)\!)[\![x_3]\!]\) for every \(r \in (\fg \otimes \fg)(\!(x)\!)[\![y]\!]\)  in standard form.  This can be further refined. Since \([a \otimes 1 + 1 \otimes a,\gamma] = 0\) for all \(a \in \fg\), the same identity also holds for all \(a \in \fg[\![z]\!]\). We can use this to derive e.g.\ that
\begin{equation}
    [r_0^{12},\gamma^{13}] - [\gamma^{13},\tau(r_0)^{23}] = [r_0^{12}+ \tau(r_0)^{23},\gamma^{13}] \in (\fg \otimes \fg \otimes \fg)[\![x_1,x_2,x_3]\!]
\end{equation}
vanishes for \(x_1 = x_3\), where \(r_0\) is determined by \(r(x,y) = \lambda(y)r_\textnormal{Yang}(x,y) + r_0(x,y)\).
Combining this and similar identities with \cref{lem:series_vanishing_at_diagonal} and the fact that \(\lambda(y)r_\textnormal{Yang}(x,y)\) is a generalized \(r\)-matrix (see \cref{ex:Yangs_rmatrix})  implies that \(\textnormal{GCYB}(r) \in (\fg \otimes \fg \otimes \fg)[\![x_1,x_2,x_3]\!]\) for all \(r \in (\fg \otimes \fg)(\!(x)\!)[\![y]\!]\) in standard form.
\end{remark}

\begin{remark}\label{rem:field_extensions}
Let \(\Bbbk'\) be an arbitrary field extension of \(\Bbbk\) and \(r \in (\fg \otimes \fg)(\!(x)\!)[\![y]\!]\) be a formal generalized \(r\)-matrix. Then \(\fg_{\Bbbk'} \coloneqq \fg \otimes \Bbbk'\) is a semi-simple Lie algebra over \(\Bbbk'\) and the image \(r_{\Bbbk'}\) of \(r\) under the canonical map \[(\fg \otimes \fg)(\!(x)\!)[\![y]\!] \to (\fg_{\Bbbk'}\otimes_{\Bbbk'}\fg_{\Bbbk'})(\!(x)\!)[\![y]\!]\]
is again a formal generalized \(r\)-matrix.
\end{remark}

\begin{notation}
For
\(\varphi \in \textnormal{Aut}_{\Bbbk[\![z]\!]\textnormal{-alg}}(\fg[\![z]\!])\) the image of \(\varphi\) under
\[\textnormal{Aut}_{\Bbbk[\![z]\!]\textnormal{-alg}}(\fg[\![z]\!])\subseteq \textnormal{End}_{\Bbbk[\![z]\!]}(\fg[\![z]\!]) \cong \textnormal{End}(\fg)[\![z]\!]\]
is again denoted by \(\varphi = \varphi(z)\). Furthermore, \[\varphi\otimes \varphi \in \textnormal{End}(\fg)[\![z]\!]\otimes_{\Bbbk[\![z]\!]}\textnormal{End}(\fg)[\![z]\!] \cong (\End(\fg)\otimes \End(\fg))[\![z]\!]\] is also denoted by \(\varphi(z) \otimes \varphi(z)\) while \(\varphi(x) \otimes \varphi(y)\) denotes the image of \(\varphi \otimes \varphi \in \textnormal{End}(\fg)[\![z]\!] \otimes \textnormal{End}(\fg)[\![z]\!]\) under the canonical injection
\(\textnormal{End}(\fg)[\![z]\!] \otimes \textnormal{End}(\fg)[\![z]\!] \to (\textnormal{End}(\fg)\otimes\textnormal{End}(\fg))[\![x,y]\!].\)
\end{notation}

\begin{definition}
A series \(\widetilde{r} \in (\fg \otimes\fg)(\!(x)\!)[\![y]\!]\) is called \emph{equivalent} to \(r \in  (\fg \otimes\fg)(\!(x)\!)[\![y]\!]\) if \begin{equation}
    \widetilde{r}(x,y) = \mu(y)(\varphi(x) \otimes \varphi(y))r(w(x),w(y)),
\end{equation}
where the triple \((\mu,w,\varphi)\) is called an \emph{equivalence} and consists of a series \(\mu \in \Bbbk[\![z]\!]^\times\) called \emph{rescaling}, a formal parameter \(w \in z\Bbbk[\![z]\!]^\times\) called \emph{coordinate transformation} and a map \(\varphi \in \textnormal{Aut}_{\Bbbk[\![z]\!]\textnormal{-alg}}(\fg[\![z]\!])\) called \emph{gauge transformation}. Furthermore, \(\widetilde{r}\) is called \emph{gauge} (resp. \emph{coordinate}) \emph{equivalent} to \(r\) if \(\mu = 1\) and \(w = z\) (resp. \(\varphi = \textnormal{id}_{\fg[\![z]\!]}\)).
\end{definition}

\begin{lemma}\label{lemm:formal_equivalence}
The following results are true.
\begin{enumerate}
    \item Equivalences preserve the property of being a formal generalized \(r\)-matrix.
    \item Equivalences with constant rescaling part preserve skew-symmetry.
    \item Every series in standard form is coordinate equivalent to one in normalized standard form.
\end{enumerate}
\end{lemma}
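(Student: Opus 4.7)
The plan is to decompose an arbitrary equivalence $(\mu,w,\varphi)$ into three elementary building blocks---pure rescaling $(\mu,z,\textnormal{id}_{\fg[\![z]\!]})$, pure coordinate transformation $(1,w,\textnormal{id}_{\fg[\![z]\!]})$, and pure gauge transformation $(1,z,\varphi)$---and to verify each of the three claims separately for each building block. Since each building block will be shown to preserve the standard form, their composition is well-defined and recovers the general equivalence.

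For part (1) I would first check that each building block preserves the standard form. Rescaling by $\mu(y)$ merely multiplies $\lambda$ by $\mu$ and $r_0$ by $\mu(y)$, yielding $\widetilde\lambda=\mu\lambda\in\Bbbk[\![z]\!]^\times$. For the coordinate transformation $w\in z\Bbbk[\![z]\!]^\times$, \cref{lem:series_vanishing_at_diagonal} furnishes a factorization $w(x)-w(y)=(x-y)g(x,y)$ with $g\in\Bbbk[\![x,y]\!]$ and $g(y,y)=w'(y)$; since $w'(0)\neq 0$, the series $g$ is a unit in $\Bbbk[\![x,y]\!]$, so \cref{rem:formal_standard_forms2} puts $r(w(x),w(y))$ into standard form with $\widetilde\lambda(y)=\lambda(w(y))/w'(y)$. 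The gauge transformation $\varphi$ preserves $\lambda$ because the Casimir is invariant under every Lie algebra automorphism, whence $(\varphi(x)\otimes\varphi(y))\gamma-\gamma$ vanishes at $x=y$ and is divisible by $x-y$ by \cref{lem:series_vanishing_at_diagonal}. Next I would derive the transformation rules for the bar operation, namely that in the three cases $\overline{\widetilde r}(x,y)$ equals $\mu(x)\overline r(x,y)$, $\overline r(w(x),w(y))$, and $(\varphi(x)\otimes\varphi(y))\overline r(x,y)$, respectively. Together these rules let $\textnormal{GCYB}(\widetilde r)$ factor through $\textnormal{GCYB}(r)$: in the rescaling case as $\mu(x_2)\mu(x_3)\textnormal{GCYB}(r)$, in the coordinate case as $\textnormal{GCYB}(r)\bigl(w(x_1),w(x_2),w(x_3)\bigr)$, and in the gauge case as $(\varphi(x_1)\otimes\varphi(x_2)\otimes\varphi(x_3))\textnormal{GCYB}(r)$, the last relying on the fact that $\varphi$ extends to an algebra automorphism of $\textnormal{U}(\fg)[\![z]\!]$ and therefore preserves commutators.

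Part (2) now falls out of the same transformation rules: for $\mu\in\Bbbk^\times$ one has $\mu(x)=\mu(y)=\mu$, so $\overline{\mu r}=\mu\overline r$, while coordinate and gauge transformations commute with the bar outright. For part (3), given $r$ with standard-form coefficient $\lambda\in\Bbbk[\![z]\!]^\times$, I seek $w\in z\Bbbk[\![z]\!]^\times$ satisfying $\lambda(w(y))/w'(y)=1$, i.e.\ solving the formal ODE $w'(y)=\lambda(w(y))$ with $w(0)=0$. The formal antiderivative $F(u)\coloneqq\int_0^u\lambda(v)^{-1}\,dv$ lies in $z\Bbbk[\![z]\!]^\times$ since $\textnormal{char}(\Bbbk)=0$ and $\lambda(0)\neq 0$ force $F'(0)\neq 0$, and its compositional inverse (which exists in $z\Bbbk[\![z]\!]^\times$) is the desired $w$.

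The main technical obstacle is the coordinate-change identity $\overline{\widetilde r}(x,y)=\overline r(w(x),w(y))$. Since the definition of $\overline r$ is asymmetric in $x$ and $y$---using $\lambda(x)r_{\textnormal{Yang}}(x,y)$ in place of $\lambda(y)r_{\textnormal{Yang}}(x,y)$---proving this equality demands computing and matching the standard-form expansions of both sides term by term. The crucial input is the identity
\[
h(x,y)+h(y,x)=-\frac{w'(x)-w'(y)}{(x-y)\,w'(x)\,w'(y)},
\]
which follows from the manifest symmetry $g(x,y)=g(y,x)$ together with the two expansions $1/g(x,y)=1/w'(y)+(x-y)h(x,y)$ and, after swapping $x$ and $y$, $1/g(x,y)=1/w'(x)-(x-y)h(y,x)$. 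Everything else is bookkeeping guided by \cref{lem:series_vanishing_at_diagonal} and \cref{rem:formal_standard_forms2}.
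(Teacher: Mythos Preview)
Your proposal is correct and follows essentially the same conceptual path as the paper's proof: verify that an equivalence preserves the standard form, then show that $\textnormal{GCYB}(\widetilde r)$ factors through $\textnormal{GCYB}(r)$ via the composite formula $\mu(x_2)\mu(x_3)(\varphi(x_1)\otimes\varphi(x_2)\otimes\varphi(x_3))\textnormal{GCYB}(r)(w(x_1),w(x_2),w(x_3))$, and solve the ODE $w'=\lambda\circ w$ for part~(3). The differences are organizational rather than substantive. You decompose the equivalence into three elementary building blocks and treat each separately, making the transformation rules for $\overline r$ explicit (in particular the coordinate-change identity, which you correctly reduce to the two expansions of $1/g(x,y)$ around $x=y$ and $y=x$); the paper handles the general equivalence in one line and leaves these verifications to the reader. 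For part~(3) you invert the formal antiderivative $F(u)=\int_0^u\lambda(v)^{-1}\,\mathrm{d}v$, whereas the paper constructs the solution by a direct coefficient recursion; both solve the same ODE and are equally valid in characteristic~$0$.
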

\begin{proof}
Let \(r(x,y) = \lambda(y)r_\textnormal{Yang}(x,y) + r_0(x,y)\) for \(\mu \in \Bbbk[\![z]\!]^\times,r_0\in (\fg \otimes \fg)[\![x,y]\!]\) and \(\widetilde{r}\in (\fg \otimes \fg)(\!(x)\!)[\![y]\!]\) be equivalent to \(r\) via an equivalence \((\mu,w,\varphi)\).
Using \cref{lem:series_vanishing_at_diagonal} and \((\varphi(z)\otimes \varphi(z))\gamma = \gamma\) (see e.g.\ \cref{rem:Laurent_Killing_form} below), we can deduce that 
\begin{equation}\label{eq:rYang_coordinate_transform}
  (\varphi(x)\otimes \varphi(y))r_\textnormal{Yang}(w(x),w(y)) - w'(y)^{-1}r_\textnormal{Yang}(x,y) \in (\fg \otimes \fg)[\![x,y]\!],  
\end{equation}
hence \(\widetilde{r}\) is in standard form. 
It is easy to see that
\[\textnormal{GCYB}(\widetilde{r})(x_1,x_2,x_3) = \mu(x_2)\mu(x_3)(\varphi(x_1)\otimes \varphi(x_2)\otimes \varphi(x_3))\textnormal{GCYB}(r)(w(x_1),w(x_2),w(x_3)) = 0,\] 
since \(\textnormal{GCYB}(r) = 0\). This proves \emph{(1)}. The proof of \emph{(2)} is clear.

Let us put \(\mu = 1\) and \(\varphi = \textnormal{id}_{\fg[\![z ]\!]}\). There exists a unique \(u \in z\Bbbk[\![z]\!]^\times\) such that \(u'(z) = \lambda(u(z))\) and setting \(w = u\) proves \emph{(3)} under consideration of \cref{eq:rYang_coordinate_transform}. Indeed, if we write \(u(z) = \sum_{k = 1}^\infty u_kz^k\) and \(\lambda(z) = \sum_{k = 0}^\infty \lambda_kz^k\), it holds that \(\lambda(u(z)) = \sum_{k = 0}^\infty c_k z^k\), where \(c_0 = \lambda_0\) and
\begin{equation}
    c_k = \sum_{\ell = 1}^k \sum_{\substack{
     (j_1,\dots,j_\ell)\in\bN^\ell\\j_1 + \dots + j_\ell = k}}\lambda_\ell u_{j_1}\dots u_{j_\ell}.
\end{equation}
for all \(k \in \bN\) (note that we exclude \(0\) from \(\bN\)), hence the coefficients of \(u\) can be determined inductively by \(u_{k + 1} = \frac{c_k}{k+1}\) for all \(k \in \bN_0\).
\end{proof}

\begin{remark}
The study of formal generalized \(r\)-matrices will be pursued up to equivalence, i.e.\ equivalent \(r\)-matrices will be treated as interchangeable, where in light of \cref{lemm:formal_equivalence}.\emph{(2)} only equivalences with constant rescaling part will be used if skew-symmetry is relevant to the context. In particular, \cref{lemm:formal_equivalence}.\emph{(3)} would permit us to restrict our attention to normalized generalized \(r\)-matrices. Nevertheless, this will be done only if necessary, since rescalings appear naturally in the algebro-geometric context; see e.g.\ \cref{thm:geometric_rmatrix_Taylor_series} below.
\end{remark}

\subsection{Lie subalgebras of \(\fg(\!(z)\!)\) complementary to \(\fg[\![z]\!]\)}\label{subsec:lie_subalgebras_of_rmatrices} It was noted in \cite{cherednik_becklund_darboux} (see also \cite[Proposition 6.2]{etingof_schiffmann}) that it is possible to assign a Lie subalgebra of \(\fg(\!(z)\!)\) complementary to \(\fg[\![z]\!]\) to each non-degenerated solution of \cref{eq:intro_one_parameter_CYBE}. In the following, we will discuss the formal analog of this construction.
For a series 
\[s = s(x,y) = \sum_{k = 0}^\infty\sum_{i = 1}^d s_{k,i}(x) \otimes b_iy^k \in (\fg\otimes \fg)(\!(x)\!)[\![y]\!]\]
it is always possible to define the vector subspace 
\begin{align}
    \fg(s) \coloneqq \textnormal{Span}_{\Bbbk}\{s_{k,i}(z) \mid k \in \bN_0, i \in \{1,\dots,d\}\}
\end{align}
of \(\fg(\!(z)\!)\). It is the smallest subspace of \(\fg(\!(z)\!)\) satisfying \(s \in (\fg(s) \otimes \fg)[\![y]\!]\) and does not depend on the choice of basis \(\{b_i\}_{i = 1}^d\).

\begin{proposition}\label{prop:Lie_subalgebra_of_generalized_rmatrix} The assignment
\(r \mapsto \fg(r)\) gives a bijection between normalized formal generalized \(r\)-matrices and Lie subalgebras \(W \subseteq \fg(\!(z)\!)\) satisfying \(\fg(\!(z)\!) = \fg[\![z]\!] \oplus W\). 
\end{proposition}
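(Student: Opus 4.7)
The plan is to prove both directions of the bijection by exhibiting an explicit inverse, and then reducing the substance of the claim to the equivalence between the GCYBE and bracket-closedness of \(\fg(r)\) inside \(\fg(\!(z)\!)\).

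First, given a normalized formal generalized \(r\)-matrix \(r = r_\textnormal{Yang} + r_0\) with \(r_0 \in (\fg \otimes \fg)[\![x,y]\!]\), I would expand \(r_0 = \sum_{k \ge 0}\sum_{i=1}^d r_{0,k,i}(x) \otimes b_i y^k\) and observe that \(\fg(r)\) is the \(\Bbbk\)-span of the elements \(s_{k,i}(z) = z^{-k-1} b_i + r_{0,k,i}(z)\). Their images in \(\fg(\!(z)\!)/\fg[\![z]\!]\) form the canonical basis \(\{z^{-k-1} b_i\}_{k \ge 0,\, 1 \le i \le d}\), which immediately yields the direct sum decomposition \(\fg(\!(z)\!) = \fg[\![z]\!] \oplus \fg(r)\) on the level of vector spaces.

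For the inverse, let \(W \subseteq \fg(\!(z)\!)\) be any complement of \(\fg[\![z]\!]\). For each pair \((k,i)\) let \(w_{k,i} \in W\) denote the unique preimage of \(z^{-k-1} b_i\) under the induced isomorphism \(W \to \fg(\!(z)\!)/\fg[\![z]\!]\), and define \(r_W(x,y) \coloneqq \sum_{k \ge 0}\sum_{i=1}^d w_{k,i}(x) \otimes b_i y^k\). Decomposing \(w_{k,i}(z) = z^{-k-1} b_i + f_{k,i}(z)\) with \(f_{k,i} \in \fg[\![z]\!]\) shows that \(r_W - r_\textnormal{Yang} \in (\fg \otimes \fg)[\![x,y]\!]\), so \(r_W\) is in normalized standard form and tautologically satisfies \(\fg(r_W) = W\). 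The uniqueness of the lifts \(w_{k,i}\) also forces \(r_{\fg(r)} = r\), making the two assignments mutually inverse bijections on series in normalized standard form.

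The principal and most delicate step is then the equivalence \(\textnormal{GCYB}(r) = 0 \iff [\fg(r),\fg(r)] \subseteq \fg(r)\). Substituting \(r = \sum_{k,i} w_{k,i}(x) \otimes b_i y^k\) into the three commutators of \cref{eq:formal_GCYBE} and computing inside \((\textnormal{U}(\fg)^{\otimes 3})(\!(x_1)\!)(\!(x_2)\!)[\![x_3]\!]\), the first commutator yields \(\sum [w_{k,i}(x_1), w_{\ell,j}(x_1)] \otimes b_i \otimes b_j \cdot x_2^k x_3^\ell\), while the remaining two contribute terms of the form \(w_{k,i}(x_1) \otimes [b_i, w_{\ell,j}(x_2)] \otimes b_j \cdot x_2^k x_3^\ell\) and \(w_{k,i}(x_1) \otimes \overline{w}_{\ell,j}(x_2) \otimes [b_i, b_j] \cdot x_3^{k+\ell}\). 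Using the adjoint invariance \([\gamma, a \otimes 1 + 1 \otimes a] = 0\) of the Casimir one transports brackets between tensor slots, so that the entire expression is controlled by the Lie brackets \([w_{k,i}(z), w_{\ell,j}(z)] \in \fg(\!(z)\!)\). Invoking \cref{ex:Yangs_rmatrix} cancels the purely singular contributions, and after this reorganization the vanishing of the GCYBE is equivalent, coefficient by coefficient in \(x_2^m x_3^n\), to the requirement that every \([w_{k,i}(z), w_{\ell,j}(z)]\) has trivial \(\fg[\![z]\!]\)-component under \(\fg(\!(z)\!) = \fg[\![z]\!] \oplus W\), i.e.\ to \(W\) being closed under the bracket. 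The main obstacle is the careful bookkeeping around the asymmetric term \(\overline{r}^{23}\): its singular part matches \(r_\textnormal{Yang}\) but its regular part is the transpose of \(r_0\), and it is precisely this asymmetry that makes the generalized CYBE (rather than the classical one) equivalent to the Lie subalgebra condition in the non-skew-symmetric setting.
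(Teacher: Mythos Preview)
Your proposal is correct and follows the same route as the paper. The construction of the inverse \(W \mapsto r_W\) and the observation that the first commutator \([r^{12},r^{13}]\) carries exactly the brackets \([w_{k,i},w_{\ell,j}]\) in the first tensor slot match the paper verbatim.

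The one place where your sketch is more compressed than the paper is the phrase ``the entire expression is controlled by the Lie brackets'' together with ``\cref{ex:Yangs_rmatrix} cancels the purely singular contributions''. In the paper these are two separate, cleanly stated facts which are then intersected: first, using Casimir invariance and \cref{lem:series_vanishing_at_diagonal} one shows that \([r^{12},r^{23}] + [r^{13},\overline{r}^{23}]\) always lies in \((\fg(r)\otimes\fg\otimes\fg)[\![x_2,x_3]\!]\), i.e.\ its first tensor slot is automatically in \(W\) regardless of whether the GCYBE holds; second, \cref{rem:formal_GCYBE_domain} shows that \(\textnormal{GCYB}(r)\) always lies in \((\fg\otimes\fg\otimes\fg)[\![x_1,x_2,x_3]\!]\). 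The equivalence then drops out of the single line
\[
\textnormal{GCYB}(r) \in (W\otimes\fg\otimes\fg)[\![x_2,x_3]\!] \cap (\fg\otimes\fg\otimes\fg)[\![x_1,x_2,x_3]\!] = \{0\},
\]
using \(W\cap\fg[\![x_1]\!]=0\). Your formulation that the \((m,n)\)-coefficient of \(\textnormal{GCYB}(r)\) equals the \(\fg[\![z]\!]\)-projection of the corresponding bracket is a correct and slightly sharper repackaging of the same two ingredients, but you should make explicit that both the regularity in \(x_1\) and the ``first slot in \(W\)'' property for the last two summands are needed, and that each requires its own argument.
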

\begin{proof}
Let \(r(x,y) = \sum_{k = 0}^\infty \sum_{i = 1}^dr_{k,i}(x)\otimes y^kb_i\)
be a normalized formal generalized \(r\)-matrix. The identity \(\fg(\!(z)\!) = \fg[\![z]\!] \oplus \fg(r)\) is a direct consequence of \(r_{k,i} - z^{-k-1}b_i \in \fg[\![z]\!]\) for all \(k \in \bN_0,i \in \{1,\dots,d\}\). It remains to show that \(\fg(r)\) is a subalgebra of \(\fg(\!(z)\!)\). The fact that
\([z^ka\otimes 1 + 1 \otimes z^ka,\gamma] = 0\) holds for all \(a \in \fg,k\in \bN_0\)
forces 
\begin{equation}
    [r^{12}+r^{13},\gamma^{23}](x_1,x_2,x_3) = \sum_{k = 0}^\infty \sum_{i = 1}^d r_{k,i}(x_1) \otimes [x_2^kb_i \otimes 1 + 1 \otimes x_3^kb_i,\gamma] \in (\fg(r) \otimes \fg \otimes \fg)[\![x_2,x_3]\!]
\end{equation}
to vanish for \(x_2 = x_3\). Therefore, 
\cref{lem:series_vanishing_at_diagonal}  implies that 
\begin{equation}\label{eq:second_half_of_GCYB}
    [r^{12},r^{23}] + [r^{13},\overline{r}^{23}]\in (\fg(r) \otimes \fg \otimes \fg)[\![x_2,x_3]\!].
\end{equation}
This combined with \(0 = \textnormal{GCYB}(r) = [r^{12},r^{13}] + [r^{12},r^{23}] + [r^{13},\overline{r}^{23}]\) shows that
\begin{align}\label{eq:first_half_of_GCYB}
    &\sum_{k,\ell = 0}^\infty \sum_{i,j=1}^d [r_{k,i}(x_1),r_{\ell,j}(x_1)] \otimes x_2^k b_i \otimes x_3^\ell b_j = [r^{12},r^{13}](x_1,x_2,x_3) 
\end{align}
is an element of \((\fg(r) \otimes \fg \otimes \fg)[\![x_2,x_3]\!]\).
We conclude that \([r_{k,i},r_{\ell,j}] \in \fg(r)\) for all \(k,\ell \in \bN_0, i,j \in \{1,\dots,d\}\). In particular, \(\fg(r)\) is a subalgebra of \(\fg(\!(z)\!)\).

Let us consider now a Lie subalgebra  \(W \subset \fg(\!(z)\!)\) satisfying \(\fg(\!(z)\!) = \fg[\![z]\!] \oplus W\). For every \(k \in \bN_0, i \in \{1,\dots,d\}\) there is an unique element \(r^W_{k,i} \in W\) such that \(r^W_{k,i}-b_iz^{-k-1} \in \fg[\![z]\!]\). By construction,
\begin{equation}
    r^W=r^W(x,y) \coloneqq \sum_{k = 0}^\infty \sum_{i=1}^d r^W_{k,i}(x) \otimes b_iy^k \in (\fg \otimes \fg)(\!(x)\!)[\![y]\!]
\end{equation}
is in normalized standard form and satisfies \(\fg(r^W) = W\). Furthermore, we can see that \(r^{\fg(r)} = r\). Thus, it remains to show that \(\textnormal{GCYB}(r^W) = 0\). In \cref{rem:formal_GCYBE_domain} it was noted that
\(\textnormal{GCYB}(r^W) \in (\fg \otimes \fg \otimes \fg)[\![x_1,x_2,x_3]\!]\).
Since \(\fg(r^W) = W\) is closed under the Lie bracket, \cref{eq:second_half_of_GCYB} and \cref{eq:first_half_of_GCYB} show that \(\textnormal{GCYB}(r^W) \in (W \otimes \fg \otimes \fg)[\![x_2,x_3]\!]\).
Summarized, we obtain
\begin{equation}
    \textnormal{GCYB}(r^W) \in(W \otimes \fg \otimes \fg)[\![x_2,x_3]\!]\cap  (\fg \otimes \fg \otimes \fg)[\![x_1,x_2,x_3]\!] = \{0\},
\end{equation}
since \(\fg[\![z]\!] \cap W = \{0\}\), concluding the proof.
\end{proof}
\begin{remark}
There are different methods to assign Lie algebras to certain classes of \(r\)-matrices, which should not be confused with the universal method described here. For example, in \cite{KPSST} the authors assign subalgebras of \(\fg(\!(z^{-1})\!)\times \fg\) to so-called quasi-trigonometric \(r\)-matrices and in \cite{abedin_burban} trigonometric solutions of the CYBE \cref{eq:intro_two_parameter_CYBE} are related to subalgebras of \(\mathfrak{L} \times \mathfrak{L}\), where \(\mathfrak{L}\) is a twisted loop algebra. 
\end{remark}

\begin{lemma}\label{lem:generators_of_g(r)}
Let \(r\) be a normalized formal generalized \(r\)-matrix. Then \(\fg(r)\) is generated as a Lie algebra by \(\fg(r)\cap z^{-1}\fg[\![z]\!] = \{(1 \otimes \alpha)r(z,0)\mid \alpha\in \fg^*\}\).
\end{lemma}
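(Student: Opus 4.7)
The plan is, first, to make both descriptions of the generating set explicit in terms of the leading coefficients of $r$, and second, to show by induction on the pole order that every Laurent coefficient of $r$ lies in the Lie subalgebra $\mathfrak{h}\subseteq\fg(r)$ generated by these leading coefficients.

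Writing $r(x,y)=\sum_{k\ge 0}\sum_{i=1}^d r_{k,i}(x)\otimes y^k b_i$, the normalization forces $r_{k,i}(z)-z^{-k-1}b_i\in\fg[\![z]\!]$. Setting $y=0$ isolates $r(z,0)=\sum_i r_{0,i}(z)\otimes b_i$, so $(1\otimes\alpha)r(z,0)=\sum_i\alpha(b_i)r_{0,i}(z)$; varying $\alpha\in\fg^*$ produces exactly $\mathrm{Span}_\Bbbk\{r_{0,i}\}_i$. To see that this span also equals $\fg(r)\cap z^{-1}\fg[\![z]\!]$, I would argue that a general element $v=\sum_{s,m}\xi_{s,m}r_{s,m}$ of $\fg(r)$ has $z^{-S-1}$-coefficient $\sum_m\xi_{S,m}b_m$, where $S$ is the maximal index with $\xi_{S,\cdot}\neq 0$, so the linear independence of $\{b_m\}$ in $\fg$ forces $\xi_{s,m}=0$ whenever $s\ge 1$ if $v\in z^{-1}\fg[\![z]\!]$. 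The same main-part reasoning yields more generally $\fg(r)\cap z^{-k-1}\fg[\![z]\!]=\mathrm{Span}_\Bbbk\{r_{s,m}:s\le k\}$, which will be the key filtration for the induction.

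Let $\mathfrak{h}$ denote the Lie subalgebra of $\fg(r)$ generated by $\{r_{0,j}\}_j$. I would induct on $k$ to show that $r_{k,i}\in\mathfrak{h}$ for all $i$, the base case $k=0$ being trivial. For the inductive step, the bracket $[r_{k,i},r_{0,j}]$ lies in $\mathfrak{h}$ by the inductive hypothesis, and in $\fg(r)$ by \cref{prop:Lie_subalgebra_of_generalized_rmatrix}. Its leading term comes from the leading terms of the two factors, namely $z^{-k-2}[b_i,b_j]=z^{-k-2}\sum_m c_{ij}^m b_m$, where $[b_i,b_j]=\sum_m c_{ij}^m b_m$. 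Hence the difference $[r_{k,i},r_{0,j}]-\sum_m c_{ij}^m r_{k+1,m}$ lies in $\fg(r)\cap z^{-k-1}\fg[\![z]\!]=\mathrm{Span}\{r_{s,m}:s\le k\}\subseteq\mathfrak{h}$, so $\sum_m c_{ij}^m r_{k+1,m}\in\mathfrak{h}$ for all $i,j$.

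Finally, semisimplicity of $\fg$ enters to extract each individual $r_{k+1,m}$: since $\fg=[\fg,\fg]$, the brackets $\{[b_i,b_j]\}_{i,j}$ span $\fg$, which is precisely the assertion that the vectors $(c_{ij}^1,\ldots,c_{ij}^d)\in\Bbbk^d$ span $\Bbbk^d$. Consequently the combinations $\{\sum_m c_{ij}^m r_{k+1,m}\}_{i,j}$ span $\mathrm{Span}\{r_{k+1,m}\}_m$, so each $r_{k+1,m}\in\mathfrak{h}$, closing the induction. I expect the main obstacle to be conceptual rather than computational: it is the ``unitriangular'' filtration identity $\fg(r)\cap z^{-k-1}\fg[\![z]\!]=\mathrm{Span}\{r_{s,m}:s\le k\}$ that allows one to read off the leading behaviour of brackets inside $\fg(r)$. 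The GCYBE itself enters only through the fact, already established in \cref{prop:Lie_subalgebra_of_generalized_rmatrix}, that $\fg(r)$ is closed under brackets, while semisimplicity of $\fg$ is indispensable in the final linear-algebra step.
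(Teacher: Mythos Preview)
Your proposal is correct and follows essentially the same approach as the paper: induct on pole order, bracket an element of pole order $k+1$ with one of pole order $1$, use the filtration $\fg(r)\cap z^{-m}\fg[\![z]\!]$ to peel off the leading term, and invoke $[\fg,\fg]=\fg$ to close the induction. Your version is more explicit in coordinates (working with the basis elements $r_{k,i}$ and the structure constants $c_{ij}^m$) and spells out the filtration identity $\fg(r)\cap z^{-k-1}\fg[\![z]\!]=\mathrm{Span}\{r_{s,m}:s\le k\}$, whereas the paper argues coordinate-free with arbitrary $a_1,a_2\in\fg$ and leaves that identity implicit, but the substance is the same.
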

\begin{proof}
Let \(W\) be the Lie subalgebra of \(\fg(r)\) generated by \(\fg(r)\cap z^{-1}\fg[\![z]\!]\) and assume that for some \(m \in \bN\) we have  \(\fg(r)\cap z^{-m}\fg[\![z]\!] \subseteq W\). For every pair \(a_1,a_2 \in \fg\) exist unique \(\widetilde{a}_1,\widetilde{a}_2 \in W\) and \(s \in \fg(r)\) such that
\begin{align}
  \widetilde{a}_1(z) -a_1z^{-1},
\widetilde{a}_1(z) - a_2z^{-m}, s - [a_1,a_2]z^{-m-1} \in \fg[\![z]\!].
\end{align}
Since \([\widetilde{a}_1,\widetilde{a}_2] \in W\) and  \(s-[\widetilde{a}_1,\widetilde{a}_2] \in \fg(r)\cap z^{-m}\fg[\![z]\!]\subseteq W\), we see that \(s\in W\). Therefore, \([\fg,\fg] = \fg\) implies that \(\fg(r)\cap z^{-m-1}\fg[\![z]\!] \subseteq W\) and \(W = \fg(r)\) is verified by induction on \(m\).
\end{proof}

\begin{lemma}\label{lem:equivalence_on_subalgebra_level}
Let \(\widetilde{r} \in (\fg \otimes \fg)(\!(x)\!)[\![y]\!]\) be equivalent to a formal generalized \(r\)-matrix \(r\) via an equivalence \((\mu,w,\varphi)\). Then \(\fg(\widetilde{r})\) is the image of \(\fg(r)\) under the map \(\varphi_w \in \Aut_{\Bbbk\textnormal{-alg}}(\fg(\!(z)\!))\) defined by \(a(z) \mapsto \varphi(z)a(w(z))\).   
\end{lemma}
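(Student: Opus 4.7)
The plan is to expand both sides of the defining relation
\[
\widetilde{r}(x,y) \;=\; \mu(y)\,(\varphi(x)\otimes\varphi(y))\,r(w(x),w(y))
\]
in the basis \(\{b_i\}\) and in powers of \(y\), and then simply read off the components that generate \(\fg(\widetilde{r})\) in terms of those generating \(\fg(r)\). Before doing so, one should confirm that \(\varphi_w\) really is an element of \(\Aut_{\Bbbk\textnormal{-alg}}(\fg(\!(z)\!))\). I would write \(\varphi_w = \varphi\circ w_*\), where \(w_*\colon a(z)\mapsto a(w(z))\) is the coordinate change and the original \(\varphi\in \Aut_{\Bbbk[\![z]\!]\textnormal{-alg}}(\fg[\![z]\!])\) extends uniquely to a \(\Bbbk(\!(z)\!)\)-algebra automorphism of \(\fg(\!(z)\!) = \fg[\![z]\!]\otimes_{\Bbbk[\![z]\!]}\Bbbk(\!(z)\!)\). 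Since \(w\in z\Bbbk[\![z]\!]^\times\) has a compositional inverse \(w^{-1}\), both factors are invertible, giving \(\varphi_w^{-1}(a)(z) = \varphi(w^{-1}(z))^{-1}\,a(w^{-1}(z))\); the fact that \(\varphi(w^{-1}(z))\) is still a \(\Bbbk[\![z]\!]\)-algebra endomorphism of \(\fg[\![z]\!]\) is a short power-series calculation reducing to the identity \(\varphi_m([a,b]) = \sum_{k+\ell=m}[\varphi_k(a),\varphi_\ell(b)]\) for the Taylor coefficients of \(\varphi\).

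For the computation, write the defining expansion \(r(x,y) = \sum_{k\ge 0, i} r_{k,i}(x)\otimes b_i y^k\), so that \(\Bbbk[\![z]\!]\)-linearity of \(\varphi(y)\) gives
\[
\widetilde{r}(x,y) \;=\; \sum_{k,i}\, \varphi_w(r_{k,i})(x) \;\otimes\; \mu(y)\,w(y)^k\,\varphi(y)(b_i).
\]
Each series \(\mu(y)w(y)^k\varphi(y)(b_i)\in\fg[\![y]\!]\) can be expanded as \(\sum_{\ell, j}\lambda^{j}_{k,i,\ell}\,y^\ell b_j\) with scalars \(\lambda^{j}_{k,i,\ell}\in\Bbbk\), and collecting coefficients yields
\[
\widetilde{r}_{\ell,j}(x) \;=\; \sum_{k,i}\lambda^{j}_{k,i,\ell}\,\varphi_w(r_{k,i})(x).
\]
Hence \(\fg(\widetilde{r}) = \textnormal{Span}_\Bbbk\{\widetilde{r}_{\ell,j}\}\subseteq \varphi_w(\fg(r))\).

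To obtain the opposite inclusion I would invert the equivalence rather than try to directly invert the matrix \((\lambda^{j}_{k,i,\ell})\): substituting \(x\mapsto w^{-1}(x)\), \(y\mapsto w^{-1}(y)\) in the defining relation shows that \(r\) is equivalent to \(\widetilde{r}\) via \((\mu^{-1}\circ w^{-1},\,w^{-1},\,\varphi(w^{-1}(\cdot))^{-1})\); by the first step applied to this inverse equivalence, \(\fg(r)\subseteq \varphi_w^{-1}(\fg(\widetilde{r}))\), and applying \(\varphi_w\) gives \(\varphi_w(\fg(r))\subseteq\fg(\widetilde{r})\). The only genuine technicality in the entire argument is the automorphism check for \(\varphi(w^{-1}(z))\); everything else is bookkeeping, and the result follows without any appeal to the GCYBE itself.
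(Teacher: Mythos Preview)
Your argument is correct and more elementary than the paper's own proof. The key computation --- that the coefficient of \(y^\ell b_j\) in \(\mu(y)w(y)^k\varphi(y)(b_i)\) vanishes for \(k>\ell\) because \(w(y)^k\in y^k\Bbbk[\![y]\!]\) --- is exactly what makes the sum defining \(\widetilde{r}_{\ell,j}\) finite and hence an honest element of \(\varphi_w(\fg(r))\); it would be worth making that finiteness explicit. The inversion step is also clean: the triple \((\mu^{-1}\!\circ w^{-1},\,w^{-1},\,\varphi(w^{-1}(\cdot))^{-1})\) is indeed a valid equivalence, and one checks directly that the associated automorphism is \(\varphi_w^{-1}\).

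The paper proceeds differently. It first observes that a pure rescaling \(\widetilde{s}(x,y)=\lambda(y)s(x,y)\) does not change \(\fg(s)\) (a short triangular-matrix argument), reduces to the normalized case, and then invokes \cref{lem:generators_of_g(r)}: the Lie subalgebra \(\fg(r)\) is generated by \(\fg(r)\cap z^{-1}\fg[\![z]\!]=\{(1\otimes\alpha)r(z,0)\mid\alpha\in\fg^*\}\). It then shows that \(\varphi_w\) carries this generating set bijectively onto \(\fg(\widetilde{r})\cap z^{-1}\fg[\![z]\!]\), whence \(\varphi_w(\fg(r))=\fg(\widetilde{r})\).

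The trade-off: the paper's route uses that \(\fg(r)\) is a Lie subalgebra (so implicitly the GCYBE via \cref{prop:Lie_subalgebra_of_generalized_rmatrix}) and \([\fg,\fg]=\fg\), but gains the structural fact that \(\varphi_w\) matches the order-one generators. Your route is purely linear-algebraic bookkeeping, never uses the GCYBE or the Lie structure of \(\fg(r)\), and therefore actually proves the stronger statement that \(\fg(\widetilde{s})=\varphi_w(\fg(s))\) for \emph{any} \(s\in(\fg\otimes\fg)(\!(x)\!)[\![y]\!]\) and any equivalence \((\mu,w,\varphi)\).
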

\begin{proof}
First note that for any \(s(x,y) = \sum_{k = 0}^\infty \sum_{i = 1}^ds_{k,i}(x) \otimes b_iy^k \in (\fg \otimes \fg)(\!(x)\!)[\![y]\!]\) and \(\lambda(z) = \sum_{k = 0}^\infty \lambda_kz^k \in \Bbbk[\![z]\!]^\times\) we have
\[\widetilde{s}(x,y) \coloneqq \lambda(y)s(x,y) = \sum_{k = 0}^\infty \sum_{i = 1}^d \left( \sum_{\ell = 0}^k\lambda_\ell s_{k-\ell,i}(x)\right) \otimes b_iy^k\]
and hence \(\fg(\widetilde{s}) = \fg(s)\). Therefore, we may assume that \(r\) is normalized and \(\mu(z) = w'(z)\). Then \(\widetilde{r}\) is also a normalized generalized \(r\)-matrix, as can be seen in the proof of \cref{lemm:formal_equivalence}.

Since \(\varphi_w\colon \fg(\!(z)\!) \to \fg(\!(z)\!)\) defined by \(a(z) \mapsto \varphi(z)a(w(z))\) is a \(\Bbbk\)-linear automorphism of Lie algebras and 
\(\fg(r) \cap z^{-1}\fg[\![z]\!] = \{(1 \otimes \alpha)r(z,0)\mid \alpha \in \fg^*\}\)
generates \(\fg(r)\) by \cref{lem:generators_of_g(r)}, \(\varphi_w(\fg(r))\) is generated by \(\varphi_w(\fg(r) \cap z^{-1}\fg[\![z]\!])\). We have
\begin{align*}
    \varphi_w((1\otimes \alpha)r(z,0)) = (\varphi(z)\otimes \alpha)r(w(z),0)= \left(1\otimes \left(\mu(0)^{-1}\alpha\varphi(0)^{-1}\right)\right)\widetilde{r}(z,0),
\end{align*}
where \(w(0) = 0\), \(\mu(0) \in \Bbbk^\times\) and \(\varphi(0)\in\Aut_{\Bbbk\textnormal{-alg}}(\fg)\) was used. Since \(\alpha \mapsto \mu(0)^{-1}\alpha\phi(0)^{-1}\) defines an linear automorphism of \(\fg^*\), we see that 
\begin{align*}
    \varphi_w(\fg(r) \cap z^{-1}\fg[\![z]\!]) = \fg(\widetilde{r}) \cap z^{-1}\fg[\![z]\!]
\end{align*}
and hence \(\phi(\fg(r)) = \fg(\widetilde{r})\) by applying \cref{lem:generators_of_g(r)}.
\end{proof}

\begin{remark}
\Cref{lem:equivalence_on_subalgebra_level} implies that for a formal generalized \(r\)-matrix \(r \in (\fg \otimes \fg)(\!(x)\!)[\![y]\!]\), \(\lambda \in \Bbbk[\![z]\!]^\times\) and \(\widetilde{r}(x,y) = \lambda(y)r(x,y)\), we have \(\fg(r) = \fg(\widetilde{r})\). In particular, \(\fg(r)\) is a Lie subalgebra of \(\fg(\!(z)\!)\) complementary to \(\fg[\![z]\!]\) for all not necessarily normalized formal generalized \(r\)-matrices \(r\) by virtue of \cref{prop:Lie_subalgebra_of_generalized_rmatrix}. 
\end{remark}

\begin{example}\label{ex:homogeneous_Lie_subalgebras}
It is easy to see that \(\fg(r_\textnormal{Yang}) = z^{-1}\fg[z^{-1}]\). This subalgebra is stable under multiplication by \(z^{-1}\). More generally, a subalgebra \(W \subseteq \fg(\!(z)\!)\) satisfying \(\fg(\!(z)\!) = \fg[\![z]\!] \oplus W\)  is called \emph{homogeneous} if \(z^{-1}W \subseteq W\). Such a subalgebra is automatically a deformation of \(\fg(r_\textnormal{Yang})\) in the sense that there exists \(A \in \End(\fg)[\![z]\!]\) such that \(A(0) = \textnormal{id}_\fg\) and 
\[W = A\fg(r_{\textnormal{Yang}}) = \textnormal{Span}_\Bbbk\{z^{-k-1}A(z)b_i\mid k \in \bN_0,i \in \{1,\dots,d\}\}.\]
The series \(A\) is thereby uniquely determined by \(z^{-1}A(z)b_i \in W\) for all \(i \in \{1,\dots,d\}\). 
The condition on \(A\) in order for \(W\) to be a Lie algebra is examined in \cite{golubchik_sokolov_compatible_Lie_brackets} and turns out to be related to the notion of \emph{compatible Lie brackets}.

Let \(r\) be the normalized formal generalized \(r\)-matrix such that \(W = \fg(r)\).
Since \(\overline{r}\) is also in normalized standard form, we can see that \(\fg(\!(z)\!) = \fg[\![z]\!] \oplus \fg(\overline{r})\), hence there also exists a unique invertible series \(\overline{A} \in \End(\fg)[\![z]\!]\) such that \(\overline{A}(0)=\textnormal{id}_\fg\) and \(\fg(\overline{r}) = \overline{A}\fg(r_{\textnormal{Yang}})\). We will show in \cref{ex:formula_for_rmatrices_of_homogeneous_Lie_algebras} below that
\begin{align}\label{eq:formula_for_rmatrices_of_homogeneous_Lie_algebras}
    r(x,y) = \frac{A(x)\otimes \overline{A}(y)}{x-y}\gamma,
\end{align}
where \(A(x)\otimes \overline{A}(y)\) is viewed as an element of \((\End(\fg) \otimes\End(\fg))[\![x,y]\!]\). 
In particular, \(r\) is skew-symmetric if and only if \(A = \overline{A}\).
\end{example}

\subsection{Skew-symmetry and formal \(r\)-matrices} This section is dedicated to the study of skew-symmetric formal generalized \(r\)-matrices. These obviously satisfy a formal version of the classical Yang-Baxter equation \cref{eq:intro_two_parameter_CYBE} and in fact turn out to be exactly solutions of this equation.

\begin{definition}\label{def:formal_rmatrix}
A series \(r \in (\fg\otimes \fg)(\!(x)\!)[\![y]\!]\) is called a
\emph{(normalized) formal \(r\)-matrix} if it is in (normalized) standard form and solves the \emph{formal classical Yang-Baxter equation} (formal CYBE)
\begin{align}\label{eq:formal_CYBE}
    \textnormal{CYB}(r) = 0\textnormal{, where }\textnormal{CYB}(r) \coloneqq [r^{12},r^{13}] + [r^{12},r^{23}] + [r^{13},r^{23}].
\end{align}
Here \cref{not:ij_notations} were used 
and the brackets in \(\textnormal{CYB}(r)\) are the usual commutators in the associative \(R\)-algebra \((\textnormal{U}(\fg) \otimes \textnormal{U}(\fg) \otimes \textnormal{U}(\fg))\otimes R\).
\end{definition}

\begin{proposition}\label{prop:formal_rmatrices_are_skew-symmetric}
A series \(r \in (\fg\otimes \fg)(\!(x)\!)[\![y]\!]\) is a formal \(r\)-matrix if and only if it is a skew-symmetric formal generalized \(r\)-matrix.
\end{proposition}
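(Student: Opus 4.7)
The ``if'' direction is immediate: skew-symmetry means \(\overline{r} = r\), so \(\textnormal{CYB}(r)\) and \(\textnormal{GCYB}(r)\) literally coincide and both vanish. For the converse, suppose \(\textnormal{CYB}(r) = 0\) with \(r\) in standard form. By \Cref{lemm:formal_equivalence}.\emph{(3)}, and since coordinate equivalence preserves both the CYBE (by the same calculation as in the proof of \Cref{lemm:formal_equivalence}.\emph{(1)}) and skew-symmetry (by \Cref{lemm:formal_equivalence}.\emph{(2)}), we may assume \(r\) is normalized, \(r = r_\textnormal{Yang} + r_0\). The plan is to first show that \(r\) is a formal generalized \(r\)-matrix, and then to deduce skew-symmetry as a consequence.

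The first step adapts the proof of \Cref{prop:Lie_subalgebra_of_generalized_rmatrix} by replacing \(\overline{r}^{23}\) with \(r^{23}\): the \(\fg\)-invariance of \(\gamma\) forces \([r^{12} + r^{13}, \gamma^{23}]\) to vanish at \(x_2 = x_3\), and \Cref{lem:series_vanishing_at_diagonal} then gives
\([r^{12}, r^{23}] + [r^{13}, r^{23}] \in (\fg(r) \otimes \fg \otimes \fg)[\![x_2, x_3]\!]\).
Combined with \(\textnormal{CYB}(r) = 0\), this places \([r^{12}, r^{13}]\) in the same space, so each bracket \([r_{k,i}(z), r_{\ell,j}(z)]\) lies in \(\fg(r)\). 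Hence \(\fg(r)\) is a Lie subalgebra complementary to \(\fg[\![z]\!]\), and the bijection of \Cref{prop:Lie_subalgebra_of_generalized_rmatrix} identifies \(r\) as the unique normalized formal generalized \(r\)-matrix associated with \(\fg(r)\); in particular \(\textnormal{GCYB}(r) = 0\).

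For the second step, set \(s \coloneqq r - \overline{r}\); this lies in \((\fg \otimes \fg)[\![x, y]\!]\) because \(r\) and \(\overline{r}\) share the polar part \(\gamma/(x - y)\) in the normalized case. Subtracting \(\textnormal{GCYB}(r) = 0\) from \(\textnormal{CYB}(r) = 0\) yields \([r^{13}, s^{23}] = 0\). Multiplying by \((x_1 - x_3)\) and using that \((x_1 - x_3) r(x_1, x_3) = \gamma + (x_1 - x_3) r_0(x_1, x_3)\) is regular at the diagonal, evaluation at \(x_1 = x_3\) reduces the problem to \([\gamma^{13}, s^{23}(x_2, x_3)] = 0\). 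Coefficient-wise this becomes an equation in \(\fg^{\otimes 3}\), and the \(\Bbbk\)-linear map \(u \mapsto [\gamma^{13}, u^{23}]\) from \(\fg \otimes \fg\) to \(\fg^{\otimes 3}\) is injective: this reduces to the implication \(\sum_i b_i \otimes [b_i, y] = 0 \Rightarrow y = 0\), which holds by semisimplicity (pair the second slot with the Killing form to recover \(\textnormal{ad}(y)\) and use \(Z(\fg) = 0\)). So \(s = 0\), and \(r\) is skew-symmetric. The non-routine point is this last residue-and-injectivity argument; the rest is bookkeeping from the bijection \(r \leftrightarrow \fg(r)\).
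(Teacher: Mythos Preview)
Your proof is correct and follows essentially the same route as the paper's: show \(\fg(r)\) is a subalgebra (hence \(\textnormal{GCYB}(r)=0\) via \Cref{prop:Lie_subalgebra_of_generalized_rmatrix}), subtract to obtain \([r^{13},(r-\overline r)^{23}]=0\), and kill the pole to reduce to \([\gamma^{13},(r-\overline r)^{23}]=0\). The only cosmetic differences are that you normalize \(r\) up front (the paper instead absorbs the factor \(\lambda(x_3)\) by ``multiplying with an element of \(\Bbbk[\![x_3]\!]^\times\)''), and that for the final step the paper applies the map \(a_1\otimes a_2\otimes a_3\mapsto a_2\otimes[a_1,a_3]\) together with the fact that the quadratic Casimir acts as \(\textnormal{id}_\fg\), whereas you argue injectivity of \(u\mapsto[\gamma^{13},u^{23}]\) via \(Z(\fg)=0\); these are equivalent.
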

\begin{proof}
It is obvious that a formal generalized \(r\)-matrix \(r\) solves the formal CYBE \cref{eq:formal_CYBE} if \(\overline{r} = r\), so we have to prove the contrary, i.e.\ that each formal \(r\)-matrix solves the formal GCYBE \cref{eq:formal_GCYBE} and satisfies \(\overline{r} = r\). The equations \cref{eq:second_half_of_GCYB}, where the \(\overline{r}\) is replaced by \(r\), and \cref{eq:first_half_of_GCYB} imply that \(\fg(r) \subseteq \fg(\!(z)\!)\) is a Lie subalgebra since \(\textnormal{CYB}(r) = 0\). Therefore, \cref{prop:Lie_subalgebra_of_generalized_rmatrix} states that \(\textnormal{GCYB}(r) = 0\). In particular, we have:
\begin{align}\label{eq:proof_of_skew_symmetry}
    0 = \textnormal{CYB}(r)-\textnormal{GCYB}(r) = [r^{13},r^{23}- \overline{r}^{23}].
\end{align}
Multiplying \cref{eq:proof_of_skew_symmetry} with \(x_1-x_3\), setting \(x_1 = x_3\) and probably multiplying with an element of \(\Bbbk[\![x_3]\!]^\times\) results in
\([\gamma^{13},r^{23}- \overline{r}^{23}] = 0\).
Application of the map \(a_1 \otimes a_2 \otimes a_3 \longmapsto a_2 \otimes [a_1,a_3]\) gives the desired \(\overline{r} = r\). Here we used the following fact:
\begin{equation}\label{eq:fact_about_Casimir_element}
    \textnormal{The image of }\gamma\textnormal{ under }\mu\colon \fg \otimes \fg \to \End(\fg) \textnormal{ defined by }a_1\otimes a_2 \mapsto \textnormal{ad}(a_1)\textnormal{ad}(a_2)\textnormal{ is } \textnormal{id}_\fg.
\end{equation}
Indeed, if \(\fg = \bigoplus_{i = 1}^k\fg_i\) is the decomposition of \(\fg\) into simple ideals, we have  \(\gamma = \sum_{i = 1}^k\gamma_i\), where \(\gamma_i\) is the Casimir element of \(\fg_i\), so we may assume that \(\fg\) is simple. Furthermore, an element \(f \in \End(\fg)\) with the property \(f \otimes \textnormal{id}_{\overline{\Bbbk}} = \textnormal{id}_{\fg \otimes \overline{\Bbbk}}\), where \(\overline{\Bbbk}\) is the algebraic closure of \(\Bbbk\), already satisfies \(f = \textnormal{id}_\fg\), hence we may assume \(\Bbbk = \overline{\Bbbk}\). The endomorphism \(\mu(\gamma) = \sum_{i = 1}^d \textnormal{ad}(b_i)^2\) is the quadratic Casimir operator of the adjoint representation, which is a multiple of the identity due to Schur's Lemma and equals the identity since \(\textnormal{Tr}(\textnormal{id}_\fg) = d =\sum_{i = 1}^d\kappa(b_i,b_i) = \textnormal{Tr}(\mu(\gamma)).\)
\end{proof}

\begin{notation}
Let us denote the \(\Bbbk(\!(z)\!)\)-bilinear extension \(\fg(\!(z)\!)\times \fg(\!(z)\!)\to \Bbbk(\!(z)\!)\) of the Killing form \(\kappa\) of \(\fg\) with the same symbol. Then \(\fg(\!(z)\!)\) is equipped with the \(\Bbbk\)-bilinear form \(\kappa_0\) defined by
\begin{align}
    \kappa_0(s,t) \coloneqq \textnormal{res}_0\kappa(s,t)\textnormal{d}z = \sum_{k+\ell = -1}\kappa(s_k,t_\ell) \textnormal{ for all }s = \sum_{k \gg -\infty}s_kz^k,t = \sum_{k \gg -\infty}t_kz^k \in \fg(\!(z)\!),
\end{align}
where \( \textnormal{res}_0 f \textnormal{d}z = f_{-1}\) for any series \(f = \sum_{k \gg -\infty}f_kz^k \in  \Bbbk(\!(z)\!)\).  
\end{notation}

\begin{remark}\label{rem:Laurent_Killing_form}
The \(\Bbbk(\!(z)\!)\)-bilinear extension of \(\kappa\) is the Killing form of \(\fg(\!(z)\!)\) as a Lie algebra over \(\Bbbk(\!(z)\!)\). Therefore, \(\gamma\) can also be understood as the Casimir element of \(\fg(\!(z)\!)\). In particular, \([a \otimes 1 + 1 \otimes a,\gamma] = 0\) for all \(a \in \Bbbk(\!(z)\!)\) and \((\varphi(z) \otimes \varphi(z))\gamma = \gamma\) for all 
\begin{equation}
    \varphi \in \textnormal{Aut}_{\Bbbk[\![z]\!]\textnormal{-alg}}(\fg[\![z]\!]) \subseteq \textnormal{Aut}_{\Bbbk(\!(z)\!)\textnormal{-alg}}(\fg(\!(z)\!)).
\end{equation}
Moreover, \(\kappa_0\) is symmetric, non-degenerate and invariant, i.e.\ \(\kappa_0([a,b],c) = \kappa_0(a,[b,c])\) for all \(a,b,c\in\fg(\!(z)\!)\).
\end{remark}

\begin{lemma}\label{lem:orthogonal_complement_of_g(r)}
Let \(r\) be a normalized formal generalized \(r\)-matrix. Then \(\fg(r)^\bot = \fg(\overline{r})\) and \(r\) is skew-symmetric if and only if \(\fg(r)^\bot = \fg(r)\). 
\end{lemma}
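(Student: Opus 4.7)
The plan is to establish the equality $\fg(r)^\bot = \fg(\overline{r})$ by two inclusions and then derive the skew-symmetry criterion from this equality together with the bijection from \cref{prop:Lie_subalgebra_of_generalized_rmatrix}. Writing $r(x,y) = r_\textnormal{Yang}(x,y) + r_0(x,y)$ with $r_0 \in (\fg \otimes \fg)[\![x,y]\!]$, we have $\overline{r}(x,y) = r_\textnormal{Yang}(x,y) - \tau r_0(y,x)$. Unpacking the expansions $r(x,y) = \sum_{k,i} r_{k,i}(x) \otimes y^k b_i$ and $\overline{r}(x,y) = \sum_{k,i} \overline{r}_{k,i}(x) \otimes y^k b_i$, both $r_{k,i}$ and $\overline{r}_{k,i}$ share the leading singular term $b_i z^{-k-1}$, while their holomorphic parts are expressible in terms of the coefficients of $r_0$ (for $\overline{r}_{k,i}$ after applying $\tau$, swapping variables, and changing sign).

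For the inclusion $\fg(\overline{r}) \subseteq \fg(r)^\bot$, I will compute $\kappa_0(r_{k,i}, \overline{r}_{\ell, j})$ directly. Only exponents $(p, q)$ with $p + q = -1$ contribute to $\kappa_0$, and each of $r_{k,i}, \overline{r}_{\ell, j}$ has a unique negative-power term. Hence exactly two contributions survive: the main part $b_i z^{-k-1}$ of $r_{k,i}$ paired with the $z^k$-coefficient of $\overline{r}_{\ell, j}$, and the $z^\ell$-coefficient of $r_{k,i}$ paired with the main part $b_j z^{-\ell -1}$ of $\overline{r}_{\ell, j}$. Expressing both contributions through the coefficients of $r_0$ and using orthonormality of $\{b_i\}$ with respect to $\kappa$, the two terms cancel precisely because of the $\tau$ and the minus sign in the definition of $\overline{r}$. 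This direct bookkeeping is the main technical step of the proof.

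For the reverse inclusion, first note that $\fg(\!(z)\!) = \fg[\![z]\!] \oplus \fg(\overline{r})$, since the argument in the proof of \cref{prop:Lie_subalgebra_of_generalized_rmatrix} producing this decomposition uses only that the series is in normalized standard form. Given $v \in \fg(r)^\bot$, decompose $v = a + b$ with $a \in \fg[\![z]\!]$ and $b \in \fg(\overline{r})$; the first inclusion gives $b \in \fg(r)^\bot$ and hence $a \in \fg(r)^\bot \cap \fg[\![z]\!]$. Writing $a = \sum_{m \ge 0} a_m z^m$, the fact that $a \in \fg[\![z]\!]$ ensures that only the main part $b_i z^{-k-1}$ of $r_{k,i}$ contributes, so $\kappa_0(a, r_{k,i}) = \kappa(a_k, b_i)$. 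Vanishing of this for all $k, i$ and non-degeneracy of $\kappa$ force $a = 0$, whence $v = b \in \fg(\overline{r})$.

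For the skew-symmetry criterion, the forward direction is immediate. Conversely, assume $W \coloneqq \fg(r) = \fg(r)^\bot = \fg(\overline{r})$. Both $r$ and $\overline{r}$ are in normalized standard form, and $r_{k,i}, \overline{r}_{k,i} \in W$ satisfy $r_{k,i} - b_i z^{-k-1}, \overline{r}_{k,i} - b_i z^{-k-1} \in \fg[\![z]\!]$. Since $W \cap \fg[\![z]\!] = 0$, there is a unique element of $W$ with main part $b_i z^{-k-1}$, so $r_{k,i} = \overline{r}_{k,i}$ for all $k, i$ and hence $r = \overline{r}$. The only genuine obstacle I anticipate is the index-heavy computation in the first inclusion; all other steps are structural and follow immediately from results already proved.
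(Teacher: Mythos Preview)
Your proposal is correct and follows essentially the same approach as the paper's proof: the same coefficient computation for $\kappa_0(r_{k,i},\overline{r}_{\ell,j})=0$, the same reduction of the reverse inclusion to $\fg(r)^\bot\cap\fg[\![z]\!]=0$, and the same uniqueness argument for the skew-symmetry criterion. The only cosmetic differences are that the paper packages $\fg(r)^\bot\cap\fg[\![z]\!]=0$ as $(\fg[\![z]\!]\oplus\fg(r))^\bot=0$ (using $\fg[\![z]\!]^\bot=\fg[\![z]\!]$) rather than your explicit pairing with $r_{k,i}$, and writes the final uniqueness step as $r-\overline{r}\in(\fg(r)\otimes\fg)[\![y]\!]\cap(\fg\otimes\fg)[\![x,y]\!]=\{0\}$ rather than coefficient-by-coefficient.
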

\begin{proof}
Let us write 
\begin{equation}
    r(x,y) = \sum_{k = 0}^\infty \sum_{i = 1}^d r_{k,i}(x)\otimes y^kb_i\textnormal{ and }\overline{r}(x,y) = \sum_{k = 0}^\infty \sum_{i = 1}^d \overline{r}_{k,i}(x)\otimes y^kb_i
\end{equation}
as well as \(r_{k,i}(z) = z^{-k-1}b_i + \sum_{\ell = 0}^\infty\sum_{j = 1}^d r_{k,i}^{\ell,j}z^\ell b_j\). Then we have
\begin{align}
    r(x,y)-r_{\textnormal{Yang}}(x,y) = \sum_{k,\ell = 0}^\infty \sum_{i,j=1}^d r_{k,i}^{\ell,j} x^\ell b_j \otimes y^kb_i
\end{align}
and hence \(\overline{r}_{\ell,j}(z) = z^{-\ell-1}b_j - \sum_{k = 0}^\infty\sum_{i = 1}^d r_{k,i}^{\ell,j}z^k b_i\). Therefore, we can deduce
\begin{equation}
    \kappa_0(r_{k,i},\overline{r}_{\ell,j}) = r_{k,i}^{\ell,j} -r_{k,i}^{\ell,j} = 0.
\end{equation}
This implies that \(\fg(\overline{r}) \subseteq \fg(r)^\bot\). Now \(0 = \fg[\![z]\!] \cap \fg(r)^\bot = (\fg[\![z]\!] \oplus \fg(r))^\bot\) and \(\fg(\!(z)\!) = \fg[\![z]\!] \oplus \fg(\overline{r})\), since \(\overline{r}\) is in normalized standard form, show that \(\fg(r)^\bot = \fg(\overline{r})\). In particular, we see that \(r = \overline{r}\) implies \(\fg(r)^\bot = \fg(r)\). On the other hand, since both \(r\) and \(\overline{r}\) are of normalized standard form, \(\fg(r) = \fg(r)^\bot = \fg(\overline{r})\) forces 
\begin{equation}
    r- \overline{r} \in (\fg(r) \otimes \fg)[\![y]\!] \cap (\fg \otimes \fg)[\![x,y]\!] = \{0\}.
\end{equation}
We can conclude that \(\fg(r)^\bot = \fg(r)\) if and only if \(\overline{r} = r\).
\end{proof}

\begin{example}\label{ex:formula_for_rmatrices_of_homogeneous_Lie_algebras}
We now have all ingredients to prove \cref{eq:formula_for_rmatrices_of_homogeneous_Lie_algebras}. Recall the setting in \cref{ex:homogeneous_Lie_subalgebras}: we have invertible series \(A,\overline{A} \in \textnormal{End}(\fg)[\![z]\!]\) satisfying \(A(0) = \overline{A}(0) = \textnormal{id}_\fg\) and \(\fg(r) = z^{-1}A\fg[z^{-1}]\), \(\fg(\overline{r}) = z^{-1}\overline{A}\fg[z^{-1}]\) for a normalized formal generalized \(r\)-matrix \(r\). It is easy to see that \(\fg(\overline{r})= \fg(r)^\bot\) implies that
\[\textnormal{res}_0z^k\kappa(Aa_1,\overline{A}a_2)\textnormal{d}z = \begin{cases}\kappa(a_1,a_2)& k = 1\\ 0&k\neq 1 \end{cases}\]
for all \(a_1,a_2 \in \fg\). From this we can deduce that \(\kappa(Aa_1,\overline{A}a_2) = \kappa(a_1,a_2)\) for all \(a_1,a_2 \in \fg(\!(z)\!)\) and as a consequence \((A(z) \otimes \overline{A}(z))\gamma = \gamma\). Therefore,
\[\widetilde{r}(x,y) \coloneqq \frac{A(x) \otimes \overline{A}(y)}{x-y}\gamma \in (\fg \otimes \fg)(\!(x)\!)[\![y]\!]\]
is in normalized standard form by virtue of \cref{lem:series_vanishing_at_diagonal}. Furthermore, it is straight forward to verify that \(\widetilde{r} \in (\fg(r) \otimes \fg)[\![y]\!]\) and hence
\[r-\widetilde{r} \in (\fg(r) \otimes \fg)[\![y]\!]\cap (\fg \otimes \fg)[\![x,y]\!] = \{0\},\]
where we used that \(r\) and \(\widetilde{r}\) are both of normalized standard form.
\end{example}

Using \cref{lem:orthogonal_complement_of_g(r)} it is possible to equip the Lie algebra associated to normalized formal \(r\)-matrix with additional structure, namely a dual bracket defining a Lie bialgebra structure. Let us recall what this means.
\begin{definition}\label{def:Lie_bialgebra}
A Lie algebra \(\mathfrak{l}\) over \(\Bbbk\) equipped with a skew-symmetric map \(\delta \colon \mathfrak{l} \to \mathfrak{l} \otimes \mathfrak{l}\), called \emph{Lie cobracket}, is called \emph{Lie bialgebra} if \(\delta^*\colon (\mathfrak{l} \otimes \mathfrak{l})^* \to \mathfrak{l}^*\) restricted to \(\mathfrak{l}^*\otimes \mathfrak{l}^* \subseteq (\mathfrak{l} \otimes \mathfrak{l})^*\) defines a Lie algebra structure on \(\mathfrak{l}^*\) and \(\delta\) is a 1-cocycle, i.e.\
\begin{align*}
    \delta([a,b]) = [a\otimes 1 + 1 \otimes a, \delta(b)] - [b\otimes 1 + 1 \otimes b,\delta(a)]
\end{align*}
for all \(a,b\in\mathfrak{l}\).
\end{definition}

The following statement is a reformulation of \cite[Proposition 6.2]{etingof_schiffmann} for normalized formal \(r\)-matrices.

\begin{proposition}\label{prop:Lie_bialgebra_of_formal_rmatrix}
For a normalized formal \(r\)-matrix \(r\in (\fg \otimes \fg)(\!(x)\!)[\![y]\!]\) the Lie algebra \(\fg(r)\) equipped with the linear map \(\delta\colon \fg(r) \to \fg(r) \otimes \fg(r)\) defined by 
\begin{align*}
    &\delta(a)(x,y) \coloneqq [a(x) \otimes 1 + 1 \otimes a(y), r(x,y)]
\end{align*}
for all \(a \in \fg(r)\) is a Lie bialgebra. 
\end{proposition}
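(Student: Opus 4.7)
The plan is to verify the four Lie bialgebra axioms---(i) $\delta(a)\in\fg(r)\otimes\fg(r)$, (ii) $\delta$ is skew-symmetric, (iii) the transpose $\delta^*$ restricted to $\fg(r)^*\otimes\fg(r)^*$ defines a Lie bracket on $\fg(r)^*$, and (iv) $\delta$ is a 1-cocycle---by passing through the Manin triple picture $(\fg(\!(z)\!),\fg[\![z]\!],\fg(r))$ with respect to $\kappa_0$, which is available by virtue of \cref{lem:orthogonal_complement_of_g(r)}. Conditions (ii) and (iv) admit short direct arguments, while (i) and (iii) are both consequences of a single key pairing identity.

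For the 1-cocycle condition (iv), applying the Jacobi identity inside $(\textnormal{U}(\fg)^{\otimes 3})\otimes R$ to $[a(x)\otimes 1+1\otimes a(y),\,\delta(b)(x,y)]$ and using that $c\mapsto c(x)\otimes 1+1\otimes c(y)$ is a Lie algebra homomorphism, one finds the cocycle relation with both sides equal to $\bigl[[a,b](x)\otimes 1+1\otimes[a,b](y),\,r(x,y)\bigr]$. For skew-symmetry (ii), the compatibility of $\tau$ with swapping spectral variables yields
\[
    \delta(a)(x,y)+\tau(\delta(a)(y,x)) = \bigl[a(x)\otimes 1+1\otimes a(y),\,r(x,y)+\tau(r(y,x))\bigr];
\]
this vanishes by a short formal-series manipulation using $\overline{r}=r$ together with the invariance $[c\otimes 1+1\otimes c,\gamma]=0$.

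The heart of the argument lies in the identity
\[
    (\kappa_0\otimes\kappa_0)\bigl(\delta(a),\,\alpha(x)\otimes\beta(y)\bigr) = \kappa_0\bigl(a,\,[\alpha,\beta]_{\fg[\![z]\!]}\bigr)\qquad\text{for all }\alpha,\beta\in\fg[\![z]\!],
\]
together with the vanishing of the corresponding pairings whenever $\alpha$ or $\beta$ is replaced by an element of $\fg(r)$. Both are established by direct computation using the standard form $r=r_{\textnormal{Yang}}+r_0$ and invariance of $\gamma$. Combined with the Lagrangian decomposition $\fg(\!(z)\!)=\fg[\![z]\!]\oplus\fg(r)$ and the self-orthogonality relations $\fg[\![z]\!]^\perp=\fg[\![z]\!]$ and $\fg(r)^\perp=\fg(r)$, these pairing facts determine $\delta(a)$ uniquely as an element of $\fg(r)\otimes\fg(r)$, proving (i); and they identify, under the isomorphism $\fg(r)^*\cong\fg[\![z]\!]$ induced by $\kappa_0$, the restriction $\delta^*|_{\fg(r)^*\otimes\fg(r)^*}$ with the Lie bracket of $\fg[\![z]\!]$, so the co-Jacobi identity in (iii) reduces to Jacobi in $\fg[\![z]\!]$. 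The main obstacle will be the formal-series bookkeeping in this pairing identity: elements of $\fg(r)$ carry negative powers of $z$, so the commutator defining $\delta(a)$ lives a priori in an enlarged space with poles in both variables, and one must verify that the singular contributions of $\gamma/(x-y)$ cancel through invariance of $\gamma$ so that the pairings above are well-defined scalars.
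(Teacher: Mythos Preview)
Your plan is largely on target and matches the paper for (iii) and (iv): the paper also identifies $\delta^*$ with the Lie bracket of $\fg[\![z]\!]$ via $\kappa_0$, and cites the Jacobi identity for the cocycle condition. The difference lies in how (i) is handled.

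Your orthogonality argument would pin down $\delta(a)$ inside the completed space $(\fg\otimes\fg)(\!(x)\!)(\!(y)\!)$ (the pairing $\kappa_0\otimes\kappa_0$ is separating there), but it does \emph{not} by itself show that $\delta(a)$ lands in the \emph{algebraic} tensor product $\fg(r)\otimes\fg(r)$. Knowing that $(\kappa_0\otimes\kappa_0)(\delta(a),\alpha\otimes\beta)=0$ whenever $\alpha$ or $\beta$ lies in $\fg(r)$ only tells you the ``$\fg[\![z]\!]$-components'' vanish; the remaining piece still lives a priori in a completion of $\fg(r)\otimes\fg(r)$, not in $\fg(r)\otimes\fg(r)$ itself. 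To close this gap you would have to exhibit, for each generator $r_{m,n}$, an explicit \emph{finite} sum $D\in\fg(r)\otimes\fg(r)$ with the same pairings as $\delta(r_{m,n})$---and writing down such a $D$ is essentially the paper's step.

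The paper avoids this issue by rewriting $\textnormal{CYB}(r)=0$ as $[r^{13}+r^{23},r^{12}]=[r^{13},r^{23}]$. The left side equals $\sum_{k,i}\delta(r_{k,i})(x_1,x_2)\otimes b_ix_3^k$, while the right side is $\sum_{k,\ell,i,j} r_{k,i}(x_1)\otimes r_{\ell,j}(x_2)\otimes[b_i,b_j]x_3^{k+\ell}$, which lies visibly in $(\fg(r)\otimes\fg(r)\otimes\fg)[\![x_3]\!]$. Comparing coefficients of $b_nx_3^m$ gives $\delta(r_{m,n})$ as a finite sum of tensors $r_{k,i}\otimes r_{\ell,j}$ with $k+\ell=m$, settling (i) in one stroke. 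The same identity, paired against $b_ix_1^k\otimes b_jx_2^\ell\otimes r_{m,n}(x_3)$ via $\kappa_0^{\otimes 3}$, then yields exactly your key pairing formula for (iii). So your pairing identity is correct and central, but it is more naturally a \emph{consequence} of this CYBE rewriting than an independent route to (i).
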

\begin{proof}
If \(r(x,y) = \sum_{k = 0}^\infty \sum_{i = 1}^d r_{k,i}(x)\otimes b_iy^k\), \(\textnormal{CYB}(r) = 0\) can be written as \([r^{13}+r^{23},r^{12}]= [r^{13},r^{23}]\), which is equivalent to
\begin{align}\label{eq:cybedeterminesdelta}
    \sum_{k= 0}^\infty \sum_{i = 1}^d \delta(r_{k,i})(x_1,x_2) \otimes b_ix^k= \sum_{k,\ell = 0}^\infty \sum_{i,j = 1}^d r_{k,i}(x_1) \otimes r_{\ell,j}(x_2) \otimes [b_ix_3^{k},b_jx_3^{\ell}].
\end{align}
This shows that \(\delta\colon \fg(r) \to \fg(r) \otimes \fg(r)\) is well-defined and the Jacobi-identity in \(\fg(\!(z)\!)\) implies that \(\delta\) is a 1-cocycle. Combining \(\kappa_0(r_{k,i}(z),b_jz^\ell) = \delta_{ij}\delta_{k\ell}\) for all \(i,j \in \{1,\dots,d\},k,\ell \in \bN_0\) with the fact that \(\fg(\!(z)\!) = \fg[\![z]\!]\oplus \fg(r)\) shows that \(a \mapsto \kappa_0(a,\cdot)\) defines an isomorphism \(\kappa_0^\textnormal{a}\colon \fg[\![z]\!]\to \fg(r)^*\).
Applying \(\kappa_0^{\otimes 3}(\cdot,b_{i}x_1^{k} \otimes b_{j}x_2^{\ell} \otimes r_{m,n}(x_3))\) to \cref{eq:cybedeterminesdelta} yields
\begin{align}\label{eq:Lie_bialgebra_of_formal_rmatrix}
    \kappa_0^{\otimes2}(\delta(r_{m,n})(x,y),b_{i}x^{k} \otimes b_{j}y^{\ell}) = \kappa_0(r_{m,n}(z),[b_{i}z^{k},b_{j}z^{\ell}]).
\end{align}
Here we wrote 
\[B^{\otimes n}(v_1\otimes \dots \otimes v_n,w_1\otimes \dots \otimes w_n) = B(v_1,w_1) \dots B(v_n,w_n)\]
for a bilinear form \(B\) on a vector space \(V\) and \(v_1,\dots,v_n,w_1,\dots,w_n \in V\).
Equation \cref{eq:Lie_bialgebra_of_formal_rmatrix} implies that \(\delta^*\) is identified with the standard Lie bracket of \(\fg[\![z]\!]\) after identifying \(\fg(r)^*\) with \(\fg[\![z]\!]\) via \(\kappa_0^\textnormal{a}\), hence \(\fg(r)\) is a Lie bialgebra if equipped with \(\delta\).
\end{proof}
\begin{remark}
For a normalized formal \(r\)-matrix \(r\) the triple \((\fg(\!(z)\!),\fg[\![z]\!],\fg(r))\) is a  so-called \emph{Manin triple} and the property  \cref{eq:Lie_bialgebra_of_formal_rmatrix} implies that this Manin triple \emph{determines} the Lie bialgebra structure of \(\delta\). 
\end{remark}

The Lie bialgebra structure on the Lie algebra associated to a normalized formal \(r\)-matrix can be used to derive the following version of the result of \cite{belavin_drinfeld_diffrence_depending}.

\begin{proposition}\label{prop: formal_rmatrix_depends_on_difference}
Let \(r \in (\fg \otimes \fg)(\!(x)\!)[\![y]\!]\) be a normalized formal \(r\)-matrix. There exists \(s \in z^{-1}(\fg\otimes\fg)[\![z]\!]\) and \(\varphi \in \textnormal{Aut}_{\Bbbk[\![z]\!]\textnormal{-alg}}(\fg[\![z]\!])\) such that \(s(x-y) = (\varphi(x) \otimes \varphi(y))r(x,y)\). Furthermore, \(\fg(s) \coloneqq \varphi(\fg(r))\) is closed under the formal derivative \(\textnormal{d}/\textnormal{d}z\colon\fg(\!(z)\!) \to \fg(\!(z)\!)\), \(a(z) \mapsto a'(z)\).
\end{proposition}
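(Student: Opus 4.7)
The plan is to construct the gauge transformation $\varphi$ as the solution of an ODE driven by a single element $d\in\fg[\![z]\!]$, and then read off $s$ from the relation $\tilde r(x,y) := (\varphi(x)\otimes\varphi(y))r(x,y) = s(x-y)$. Observe that $\tilde r$ depends only on $x-y$ if and only if $(\partial_x+\partial_y)\tilde r = 0$. Differentiating the definition of $\tilde r$ and collecting terms, this condition is equivalent to the identity
\[(\partial_x+\partial_y)r(x,y) = -[d(x)\otimes 1 + 1\otimes d(y),\,r(x,y)]\]
for $d\in\fg[\![z]\!]$ chosen so that $\textnormal{ad}(d) = \varphi^{-1}\varphi'$; this identification is legitimate since $\fg$ is semi-simple, so every derivation of $\fg$ is inner. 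Conversely, once $d$ is known, the ODE $\varphi'(z) = \varphi(z)\circ\textnormal{ad}(d(z))$ with $\varphi(0)=\textnormal{id}_\fg$ is solvable order-by-order in $z$, and the resulting $\varphi$ lies in $\Aut_{\Bbbk[\![z]\!]\textnormal{-alg}}(\fg[\![z]\!])$ because $\textnormal{ad}(d(z))$ is a derivation of $\fg$ for each $z$.

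Producing $d$ is the main obstacle. The plan is to translate its existence into a statement about the Lagrangian complement $\fg(r)\subseteq\fg(\!(z)\!)$ from \cref{lem:orthogonal_complement_of_g(r)}: the identity above holds for a given $d$ if and only if the modified derivation $\partial_z+\textnormal{ad}(d)$ preserves $\fg(r)$. Using the Killing-form duality $\kappa_0^{\textnormal{a}}\colon\fg[\![z]\!]\xrightarrow{\sim}\fg(r)^*$ from the proof of \cref{prop:Lie_bialgebra_of_formal_rmatrix}, this in turn amounts to representing the bilinear form $\omega(a,b):=\kappa_0(a,b')$ on $\fg(r)$ as a Lie-algebra coboundary. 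A short calculation using the invariance of $\kappa_0$, its skew-symmetry with respect to $\partial_z$ (\cref{rem:Laurent_Killing_form}), and the Leibniz rule verifies that $\omega$ is a skew-symmetric 2-cocycle; exhibiting it as a coboundary is the key technical input and uses the CYBE itself, in a manner parallel to the derivation of the 1-cocycle $\delta$ in the proof of \cref{prop:Lie_bialgebra_of_formal_rmatrix}, producing $d$ explicitly in terms of the coefficients of the regular part of $r$.

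Once $\varphi$ has been obtained, set $\tilde r := (\varphi(x)\otimes\varphi(y))r$. The gauge transformation preserves the normalized standard form, thanks to $(\varphi(z)\otimes\varphi(z))\gamma=\gamma$ (\cref{rem:Laurent_Killing_form}) together with \cref{lem:series_vanishing_at_diagonal}, so the regular part of $\tilde r$ lies in $(\fg\otimes\fg)[\![x,y]\!]$; the property $(\partial_x+\partial_y)\tilde r=0$ then forces this regular part to lie in $(\fg\otimes\fg)[\![x-y]\!]$, whence $\tilde r(x,y)=s(x-y)$ for a unique $s\in z^{-1}(\fg\otimes\fg)[\![z]\!]$. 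For the final assertion, \cref{lem:equivalence_on_subalgebra_level} gives $\fg(s)=\varphi(\fg(r))=\fg(\tilde r)$, and the expansion $s(x-y) = \sum_{k\ge 0}\tfrac{(-y)^k}{k!}s^{(k)}(x)$ identifies each $y^k$-coefficient function of $\tilde r$ with a scalar multiple of the $k$-th derivative of its $y=0$-coefficient, so $\fg(\tilde r)$ is manifestly closed under $\textnormal{d}/\textnormal{d}z$.
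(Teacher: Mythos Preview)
Your strategy is correct and lands on the same element \(d\in\fg[\![z]\!]\) as the paper, but the packaging differs in an instructive way. The paper avoids the cohomological detour entirely: it introduces the \emph{canonical derivation} \(D=[\,,\,]\circ\delta\) of the Lie bialgebra \(\fg(r)\) (which automatically maps \(\fg(r)\) to itself) and computes directly that \(D=\textnormal{d}/\textnormal{d}z-\textnormal{ad}(h(z,z))\), where \(h\) is the image of the regular part \(r_0(x,y)\) under the bracket map. This immediately gives your \(d=-h(z,z)\) without having to argue that any \(2\)-cocycle is a coboundary. The ODE step and the final Taylor-expansion argument (your last paragraph) then match the paper's Steps~3--4 essentially verbatim.

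The one place your write-up is genuinely incomplete is the sentence ``exhibiting it as a coboundary is the key technical input and uses the CYBE itself\ldots producing \(d\) explicitly.'' This is precisely the content of the paper's Step~2, and it does not follow from abstract cohomology: \(H^2(\fg(r),\Bbbk)\) need not vanish (think of the central extension of a current algebra), so you must actually produce \(d\). If you unwind what ``\(\omega(a,b)=\kappa_0(a,b')\) is a coboundary'' means via the isomorphism \(\kappa_0^{\textnormal a}\colon\fg[\![z]\!]\to\fg(r)^*\), you are asking for \(d\in\fg[\![z]\!]\) with \(\kappa_0(a',b)=-\kappa_0([d,a],b)\) for all \(a,b\in\fg(r)\); equivalently, \(\partial_z+\textnormal{ad}(d)\) preserves the Lagrangian \(\fg(r)\). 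The paper's computation of \(D\) supplies exactly this \(d\), and the needed identities (\([a\otimes 1+1\otimes a,\gamma]=0\), the fact that \(\gamma\mapsto\textnormal{id}_\fg\) under \(a\otimes b\mapsto\textnormal{ad}(a)\textnormal{ad}(b)\), and \cref{lem:series_vanishing_at_diagonal}) are where the ``CYBE input'' you allude to actually enters. Filling this in would make your proof complete and essentially identical to the paper's; as written, the hardest step is asserted rather than proved.
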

\begin{proof}
\textbf{Step 1.} \emph{Setup. }
Let \(\delta \colon \fg(r) \to \fg(r) \otimes \fg(r)\) be as in \cref{prop:Lie_bialgebra_of_formal_rmatrix}. Consider the \emph{canonical derivation} \(D\colon \fg(r) \to \fg(r)\) of the Lie bialgebra \(\fg(r)\), i.e.\ the composition of \(\delta\) with the Lie bracket \([,]\colon\fg(r) \otimes \fg(r) \to \fg(r)\). As the name suggests, the 1-cocycle condition of \(\delta\) implies that \(D\) is a derivation of \(\fg(r)\). \\
\textbf{Step 2.} \emph{\(D = \textnormal{d}/\textnormal{d}z - \textnormal{ad}(h(z,z))\) for an appropriate \(h(x,y)\in(\fg\otimes \fg)[\![x,y]\!]\). }
The linear map \(\fg \otimes \fg \to \textnormal{End}(\fg)\) defined by \(a_1 \otimes a_2 \mapsto \textnormal{ad}(a_1)\textnormal{ad}(a_2)\) maps \(\gamma\) to \(\textnormal{id}_\fg\); see \cref{eq:fact_about_Casimir_element}. Combining this with the fact that \([a \otimes 1 + 1 \otimes a,\gamma] = 0\) for all \(a \in \fg[\![z]\!]\) and \cref{lem:series_vanishing_at_diagonal} results in
\begin{align}
    \left[a(x)\otimes 1 + 1\otimes a(y),\frac{\gamma}{x-y} \right] = \left[\frac{a(x)-a(y)}{x-y} \otimes 1,\gamma\right] \stackrel{[,]}{\longmapsto} a'(z).
\end{align}
If we write \(h(x,y) \in \fg[\![x,y]\!]\) for the image of \(r(x,y) - r_\textnormal{Yang}(x,y) \in (\fg \otimes \fg)[\![x,y]\!]\) under the \(\Bbbk[\![x,y]\!]\)-linear extension of the Lie bracket \(\fg \otimes \fg \to \fg\) and use the fact that
\begin{align}
    [a \otimes 1 + 1 \otimes a, c\otimes d] = [a,c]\otimes d + c \otimes [a,d]\stackrel{[,]}\longmapsto [[a,c],d] + [c,[a,d]] = [a,[c,d]]
\end{align}
holds for all \(a,b,c \in \fg\), we can conclude that
\(D(a)(z) = a'(z) - [h(z,z),a(z)]\). \\
\textbf{Step 3.} \emph{There exists \(\varphi\in\textnormal{Aut}_{\Bbbk[\![z]\!]\textnormal{-alg}}(\fg[\![z]\!])\) such that \(\varphi(\fg(r))\) is closed under the formal derivative. }
It is not hard to see by induction on the coefficients that there exists a unique \(\psi \in \textnormal{End}(\fg)[\![z]\!]\) satisfying \(\psi'(z) = \textnormal{ad}(h(z,z))\psi(z)\) and \(\psi(0) = \textnormal{id}_\fg\). For every \(a_1,a_2 \in \fg\) the series
\begin{equation}
    c_1(z)\coloneqq \psi(z)[a_1,a_2], c_2(z)\coloneqq [\psi(z)a_1,\psi(z)a_2] \in \fg[\![z]\!]
\end{equation}
satisfy \(c_i'(z) = [h(z,z),c_i(z)]\) and \(c_i(0) = [a_1,a_2]\) for \(i \in \{1,2\}\). Therefore, a coefficient comparison forces \(c_1 = c_2\) and thus \(\psi\) defines an element of \(\textnormal{Aut}_{\Bbbk[\![z]\!]\textnormal{-alg}}(\fg[\![z]\!])\). Let \(\varphi(z) \coloneqq \psi(z)^{-1}\in\textnormal{End}(\fg)[\![z]\!]\), i.e.\ \(\varphi(z)\psi(z) = \textnormal{id}_{\fg}\), and note that \(\varphi(z)\) also defines an element of \(\textnormal{Aut}_{\Bbbk[\![z]\!]\textnormal{-alg}}(\fg[\![z]\!])\). Consider the normalized formal \(r\)-matrix \(\widetilde{r}(x,y) \coloneqq (\varphi(x)\otimes \varphi(y))r(x,y)\) and let \(\widetilde{\delta}\) be the Lie cobracket from \cref{prop:Lie_bialgebra_of_formal_rmatrix} of \(\fg(\widetilde{r})\). Clearly \(\widetilde{\delta}(a)(x,y) = (\varphi(x) \otimes \varphi(y))\delta(\psi(a))(x,y)\) for all \(a \in \fg(\widetilde{r})\). Therefore, the canonical derivation \(\widetilde{D}\) of \(\fg(\widetilde{r})\) reads
\begin{align*}
    \widetilde{D}(a)(z) = \varphi(z)D(\psi(a))(z) &= \varphi(z)(\psi'(z)a(z) + \psi(z)a'(z)- [h(z,z),\psi(z)a(z)]) \\&= a'(z) + \varphi(z)([h(z,z),\psi(z)a(z)]-[h(z,z),\psi(z)a(z)]) = a'(z).
\end{align*}
In particular, \(\widetilde{D}\colon \fg(\widetilde{r}) \to \fg(\widetilde{r})\) is the restriction of the formal derivative to \(\fg(\widetilde{r})\). \\
\textbf{Step 4.} \emph{\(s(z) \coloneqq \widetilde{r}(z,0)\) concludes the proof. }
Since \(\fg(\widetilde{r})\) is closed under \(\widetilde{D}\), which is the restriction of the formal derivative to \(\fg(\widetilde{r})\), we have
\[(1 \otimes \kappa(b_i,\cdot))\frac{(-1)^k}{k!}\widetilde{r}^{(k)}(z,0) \in \fg(\widetilde{r}) \cap (b_iz^{-k-1} + \fg[\![z]\!])\]
for all \(i \in \{1,\dots,d\}\), \(k \in \bN_0\), where \(\widetilde{r}^{(k)}(z,0) \coloneqq \widetilde{D}^k \widetilde{r}(z,0)\). The proof of \cref{prop:Lie_subalgebra_of_generalized_rmatrix} and the expansion of \(\widetilde{r}(x-y,0)\) as a series imply that
\[\widetilde{r}(x,y) = \sum_{k = 0}^\infty\frac{(-1)^k}{k!}\widetilde{r}^{(k)}(x,0)y^k = \widetilde{r}(x-y,0)\]
and we can see that putting \(s(z) \coloneqq \widetilde{r}(z,0)\) concludes the proof.
\end{proof}

 \section{Some results about sheaves of algebras}\label{sec:sheaves_of_algebras}

\subsection{Definition and generalities}
In \cref{sec:geometrization} we will assign a geometric datum to any formal generalized \(r\)-matrix. The main ingredient of this datum consists of a sheaf of Lie algebras on a projective curve constructed from the subalgebra associated to that formal generalized \(r\)-matrix. In this section we will discuss all definitions and properties related to sheaves of Lie algebras that are required in the subsequent sections. Although we are interested in sheaves of Lie algebras, we will consider general sheaves of algebras as long as it does not obscure the constructions or restricts the scope of the statements. 

\begin{remark}
In this text, an \(R\)-algebra \(A\) over some unital commutative ring \(R\) does not necessarily satisfy any additional axioms, i.e.\ \(A\) is simply a pair \((A,\mu_A)\) consisting of a \(R\)-module \(A\) equipped with an \(R\)-linear map \(\mu_A\colon A\otimes_R A \to A\) called multiplication. In particular, a Lie algebra over \(R\) is a \(R\)-algebra.
\end{remark}

\begin{definition}\label{def:sheaf_of_algebras}
Let \(X = (X,\cO_X)\) be a ringed space.
\begin{itemize}
    \item A \emph{sheaf of algebras} \(\cA\) on \(X\) is a pair \((\cA,\mu_\cA)\) consisting of an \(\cO_X\)-module \(\cA\) equipped with an \(\cO_X\)-linear morphism \(\mu_\cA\colon\cA \otimes_{\cO_X} \cA \to \cA\) called \emph{multiplication of \(\cA\)}. 
    \item A \emph{morphism  of sheaves of algebras} \(f\colon\cA \to \cB\) is an \(\cO_X\)-linear morphism satisfying \(f\mu_\cA = \mu_{\cB} (f\otimes f)\).
    \item A sheaf of algebras \(\cA\) on \(X\) is called \emph{sheaf of Lie algebras} if \( [,]_\cA \coloneqq\mu_\cA\) defines a Lie algebra structure on all local section. A \emph{morphism of sheaves of Lie algebras} is simply a morphism of sheaves of algebras. The adjoint representations of the local sections of \(\cA\) induce an \(\cO_X\)-linear morphism \(\textnormal{ad}_\cA \colon \cA \to \sheafEnd_{\cO_X}(\cA)\) called \emph{adjoint representation of \(\cA\)}.
\end{itemize}
\end{definition}

\begin{remark}
Let \(f \colon Y \to X\) be a morphism of locally ringed spaces, \(\cA\) (resp. \(\mathcal{B}\)) be a sheaf of algebras on \(X\) (resp. \(Y\)). The sheaf of algebras \(f^*\cA\) (resp. \(f_*\mathcal{B}\)) is naturally a sheaf of algebras on \(Y\) (resp. \(X\)) with the multiplication defined through
\begin{align}
    &f^*\cA \otimes_{\cO_Y} f^*\cA \stackrel{\cong}\longrightarrow f^*(\cA \otimes_{\cO_X}\cA)\stackrel{f^*\mu_\cA}\longrightarrow \cA &(\textnormal{resp. }
    f_*\mathcal{B} \otimes_{\cO_X} f_*\mathcal{B} \longrightarrow f_*(\mathcal{B} \otimes_{\cO_Y}\mathcal{B})\stackrel{f_*\mu_\mathcal{B}}\longrightarrow \mathcal{B}),
\end{align}
respectively, where the unlabeled arrows are the canonical ones. If \(\cA\) (resp. \(\mathcal{B}\)) is a sheaf of Lie algebras, it is easy to see that \(f^*\cA\) (resp. \(f_*\cB\)) is a sheaf of Lie algebras.

In particular, for any \(p \in X\) with residue field \(\Bbbk(p)\), the stalk \(\cA_p\) (resp. the fiber \(\cA|_p\)) is naturally an \(\cO_{X,p}\)-algebra (resp. \(\Bbbk(p)\)-algebra), which is a Lie algebra if \(\cA\) is a sheaf of Lie algebras. Indeed, this can be seen as a special case of the inverse image construction by choosing \(Y = (\{p\},\cO_{X,p})\) (resp. \(Y = (\{p\},\Bbbk(p))\)) and considering the inverse image with respect to the canonical morphism \(f \colon Y \to X\).
\end{remark}

It will turn out to be useful that the Killing form of a Lie algebra admits a geometric analog.

\begin{definition}\label{def:sheaf_Killing_form}
The Killing form \(K_\mathcal{A}\) of a finite locally free sheaf \(\mathcal{A}\) of Lie algebras on a ringed space \(X = (X,\cO_X)\) is the \(\cO_X\)-bilinear morphism
\begin{align}\label{eq:definition_sheaf_Killing_form}
    \xymatrix{\mathcal{A} \times \mathcal{A} \ar[rr]^-{\textnormal{ad}_{\mathcal{A}} \times \textnormal{ad}_{\mathcal{A}}}&& \sheafEnd_{\cO_X}(\mathcal{A}) \times \sheafEnd_{\cO_X}(\mathcal{A}) \ar[r]& \sheafEnd_{\cO_X}(\mathcal{A}) \ar[r]^-{\textnormal{Tr}_\cA}& \cO_X},
\end{align}
where \(\textnormal{ad}_\cA\) is the adjoint representation of \(\cA\), the second map is given by composition and \(\textnormal{Tr}_\cA\) is the sheaf trace of \(\cA\).
\end{definition}

\begin{lemma}\label{lem:fiber_of_sheaf_Killing_form}
Let \(f \colon Y \to X\) be a morphism of locally ringed spaces and \(\cA\) be a finite locally free sheaf of Lie algebras on \(\cA\) with Killing form \(K_\cA\). Then \(f^*K_\cA\) is the Killing form of the finite locally free sheaf of algebras \(f^*\cA\) on \(Y\). In particular, for any point \(p \in X\) the fiber \(K_{\cA}|_p\) coincides with the Killing form of \(\cA|_p\). 
\end{lemma}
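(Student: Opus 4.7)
The strategy is to verify that every ingredient appearing in the definition \cref{eq:definition_sheaf_Killing_form} of the Killing form is compatible with pullback along \(f\), and then to assemble these compatibilities. Since everything is \(\cO_X\)-linear, the pullback functor \(f^*\) is right exact and symmetric monoidal on quasi-coherent sheaves, so it commutes with tensor products and with the formation of the product \(\cA \times \cA\). Moreover, for a finite locally free sheaf \(\cA\) there is a canonical isomorphism \(f^*\sheafEnd_{\cO_X}(\cA) \cong \sheafEnd_{\cO_Y}(f^*\cA)\) (coming from \(f^*\sheafHom_{\cO_X}(\cA,\cA) \cong \sheafHom_{\cO_Y}(f^*\cA,f^*\cA)\), which holds since \(\cA\) is locally free of finite rank), and the pullback of the trace morphism \(\textnormal{Tr}_\cA\colon \sheafEnd_{\cO_X}(\cA) \to \cO_X\) coincides with \(\textnormal{Tr}_{f^*\cA}\) under this identification, since the trace is preserved locally under base change.

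The next step is to check that \(f^*\textnormal{ad}_\cA\) is identified with \(\textnormal{ad}_{f^*\cA}\) under the isomorphism above. This is essentially by construction: the adjoint representation is determined by the multiplication, and since the multiplication of \(f^*\cA\) is defined via \(f^*\mu_\cA\) (precomposed with the canonical isomorphism \(f^*\cA \otimes_{\cO_Y} f^*\cA \cong f^*(\cA \otimes_{\cO_X}\cA)\)), the adjoint representation pulls back accordingly. Finally, composition of endomorphisms is compatible with pullback, again because the canonical map is an isomorphism when \(\cA\) is finite locally free.

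Putting these compatibilities together, the pullback of the composite \cref{eq:definition_sheaf_Killing_form} defining \(K_\cA\) agrees with the analogous composite built from \(f^*\cA\), which is by definition \(K_{f^*\cA}\). Thus \(f^*K_\cA = K_{f^*\cA}\). The statement about fibers is the special case \(Y = (\{p\}, \Bbbk(p))\) with \(f\) the canonical morphism, since then \(f^*\cA = \cA|_p\) and \(f^*K_\cA = K_\cA|_p\).

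The main obstacle is purely bookkeeping: ensuring that the canonical morphism \(f^*\sheafHom_{\cO_X}(\cA,\cA) \to \sheafHom_{\cO_Y}(f^*\cA,f^*\cA)\) is genuinely an isomorphism and that it intertwines pullback of trace with the trace on \(Y\). Both facts are standard for finite locally free sheaves (they can be checked on an open cover where \(\cA\) is free, reducing to the analogous statement for free modules), and this finite local freeness is exactly the hypothesis in the statement.
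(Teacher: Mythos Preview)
Your proposal is correct and follows essentially the same approach as the paper: both arguments hinge on the canonical isomorphism \(f^*\sheafEnd_{\cO_X}(\cA) \cong \sheafEnd_{\cO_Y}(f^*\cA)\) for finite locally free \(\cA\), then verify that this isomorphism intertwines \(f^*\textnormal{ad}_\cA\) with \(\textnormal{ad}_{f^*\cA}\), \(f^*\textnormal{Tr}_\cA\) with \(\textnormal{Tr}_{f^*\cA}\), and respects composition, before specializing to \(Y = (\{p\},\Bbbk(p))\) for the fiber statement. The paper checks the isomorphism on stalks while you suggest checking on a trivializing open cover, but this is a cosmetic difference.
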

\begin{proof}
The canonical map \(\chi \colon f^*\sheafEnd_{\cO_X}(\cA) \longrightarrow \sheafEnd_{\cO_Y}(f^*\cA)\)  coincides with the isomorphism
\begin{equation}
    \End_{\cO_{X,f(q)}}(\cA_{f(q)})\otimes_{\cO_{X,f(q)}}\cO_{Y,q} \cong \End_{\cO_{Y,q}}(\cA_{f(q)}\otimes_{\cO_{X,f(q)}}\cO_{Y,q})
\end{equation}
in the stalk in any point \(q\in Y\), where we used that \(\cA\) is finite locally free. This shows that
\begin{align}
    f^*\cA \stackrel{f^*\textnormal{ad}_\cA}\longrightarrow f^*\sheafEnd_{\cO_X}(\cA) \stackrel{\chi}\longrightarrow \sheafEnd_{\cO_Y}(f^*\cA)
    \textnormal{ and }
    f^*\sheafEnd_{\cO_X}(\cA) \stackrel{\chi}\longrightarrow \sheafEnd_{\cO_Y}(f^*\cA) \stackrel{\textnormal{Tr}_{f^*\cA}}\longrightarrow \cO_Y,
\end{align}
coincide with \(\textnormal{ad}_{f^*\cA}\) and \(f^*\textnormal{Tr}_\cA\) respectively and that \(\chi\) is compatible with the composition of endomorphisms of sheaves. Therefore, applying \(f^*\) to \cref{eq:definition_sheaf_Killing_form} and using \(\chi\) implies that \(f^*K_\cA\) coincides with the Killing form of \(f^*\cA\). The observation that the functor \((\cdot)|_p\) can be realized by the inverse image via \((\{p\},\Bbbk(p))\to X\), where \(\Bbbk(p)\) is the residue field of \(p\), concludes the proof.
\end{proof}

\subsection{Local triviality of sheaves of algebras}\label{subsec:local_triviality_of_sheaves_of_Lie_algebras}
Let \(\cA\) be a coherent sheaf of algebras on a \(\Bbbk\)-scheme \(X\) of finite type. There are several different notions of local triviality for \(\cA\).

\begin{definition}\label{def:local_triviality}
Let \(A\) be a finite-dimensional \(\Bbbk\)-algebra.
\begin{itemize}
    \item \(\cA\) is called \emph{weakly \(A\)-locally free in \(p \in X\)} if \(\cA|_p \cong A \otimes \Bbbk(p)\) as \(\Bbbk(p)\)-algebras, where \(\Bbbk(p)\) is the residue field of \(p\).
    \item \(\cA\) is called \emph{formally \(A\)-locally free in \(p \in X\)} if \(\widehat{\cA}_p \cong A \otimes \compO_{X,p}\) as \(\compO_{X,p}\)-algebras.
    \item \(\cA\) is called \emph{\'etale \(A\)-locally free in \(p \in X\)} if there exists an \'etale morphism \(f\colon U\to X\) of \(\Bbbk\)-schemes such that \(p \in f(U)\) and \(f^*\cA \cong A \otimes \cO_U\) as sheaves of algebras.
    \item \(\cA\) is called \emph{Zarsiki \(A\)-locally free in \(p \in X\)} if there exists an open neighbourhood \(U\) of \(p\) such that \(\cA|_U \cong A \otimes \cO_U\) as sheaves of algebras.
    \item \(\cA\) is called \emph{weakly} (resp. \emph{formally, \'etale, Zariski}) \emph{\(A\)-locally free} if it has this property in all \(p \in X\).
\end{itemize}
\end{definition}

\begin{remark}\label{rem:local_triviality_of_sheaves_of_algebras}
Let \(p \in X\) and \(A\) be a finite-dimensional \(\Bbbk\)-algebra.
Obviously, \(\cA\) is weakly \(A\)-locally free in \(p \in X\) if it is formally \(A\)-locally free in \(p\). 
Since open immersions are \'etale, \(\cA\) is both formally and \'etale \(A\)-locally free in \(p\) if it is Zariski \(A\)-locally free in \(p\).
Furthermore, if \(\Bbbk = \overline{\Bbbk}\) and \(p\) is closed, \(\cA\) is formally \(A\)-locally free in \(p\) if it is \'etale \(A\)-locally free in \(p\).
\end{remark}

\begin{remark}\label{rem:etale_morphisms_wlog_surjective}
Let \(A\) be a finite-dimensional \(\Bbbk\)-algebra.
Note that if \(\cA\) is \'etale \(A\)-locally free in a point \(p \in X\) it is already \'etale \(A\)-locally free in an open neighbourhood of \(p\), since \'etale morphisms are open.  
Assume we have a set of \'etale morphisms \(f_i\colon U_i \to X\) for \(i \in \{1,\dots,n\}\) such that \(X = \bigcup_{i = 1}^n f_i(U_i)\). Then \(f\coloneqq \coprod_{i = 1}^nf_i \colon Y \coloneqq \coprod_{i = 1}^nU_i \to X\) is surjective and \'etale. If \(U_i\) is affine for all \(i \in \{1,\dots,n\}\), \(Y\) is also affine. This shows that if \(\cA\) is \'etale \(A\)-locally free, there exists a surjective \'etale morphism \(f \colon Y \to X\) of \(\Bbbk\)-schemes such that \(Y\) is affine and \(f^*\cA \cong \fg \otimes \cO_Y\). Here we used that \(X\) is quasi-compact.
\end{remark}

Let \(\overline{X} \coloneqq X \times \textnormal{Spec}(\overline{\Bbbk})\) and \(\pi \colon \overline{X} \to X\) be the canonical projection, where we recall that \(\overline{\Bbbk}\) denotes the algebraic closure of \(\Bbbk\). Fix a finite-dimensional \(\Bbbk\)-algebra \(A\) and write \(\overline{A} \coloneqq A \otimes \overline{\Bbbk}\).

\begin{lemma}\label{lem:etale_locally_triviallity_wlog_over_algebraic_closure}
The sheaf \(\cA\) is \'etale \(A\)-locally free in a point \(p \in X\) if and only if \(\pi^*\cA\) is \'etale \(\overline{A}\)-locally free in all \(q \in \pi^{-1}(p)\).
\end{lemma}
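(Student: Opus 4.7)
The plan is to handle the forward direction by base change (which is straightforward) and the backward direction by a descent argument along the tower of finite separable subextensions of \(\overline{\Bbbk}/\Bbbk\).

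For the implication \((\Rightarrow)\), I would start with an \'etale morphism \(f \colon U \to X\) with \(p \in f(U)\) and \(f^*\cA \cong A \otimes \cO_U\), and form the cartesian base change
\[
\overline{U} \coloneqq U \times_X \overline{X}, \qquad \overline{f} \colon \overline{U} \to \overline{X}, \qquad \pi_U \colon \overline{U} \to U.
\]
Since \'etale morphisms and inverse images of sheaves of algebras are stable under base change, \(\overline{f}\) is \'etale and
\[
\overline{f}^*(\pi^*\cA) \;\cong\; \pi_U^*(f^*\cA) \;\cong\; \pi_U^*(A \otimes \cO_U) \;\cong\; \overline{A} \otimes \cO_{\overline{U}}.
\]
Faithful flatness of \(\pi\) (hence surjectivity on points) gives \(\overline{f}(\overline{U}) = \pi^{-1}(f(U)) \supseteq \pi^{-1}(p)\), so every \(q \in \pi^{-1}(p)\) lies in \(\overline{f}(\overline{U})\), as required.

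For \((\Leftarrow)\), I would fix some \(q_0 \in \pi^{-1}(p)\) (nonempty by the surjectivity of \(\pi\)) and choose, by hypothesis, an \'etale morphism \(\overline{g} \colon \overline{V} \to \overline{X}\) with \(q_0 \in \overline{g}(\overline{V})\) and \(\overline{g}^*(\pi^*\cA) \cong \overline{A} \otimes \cO_{\overline{V}}\); by \cref{rem:etale_morphisms_wlog_surjective} we may take \(\overline{V}\) affine. The key observation is that \(\overline{X} = \varprojlim_{\Bbbk'} X_{\Bbbk'}\) with affine transition maps, where \(X_{\Bbbk'} \coloneqq X \otimes_\Bbbk \Bbbk'\) and \(\Bbbk'\) ranges over the finite separable subextensions of \(\overline{\Bbbk}/\Bbbk\). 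By the limit theorems of Grothendieck (EGA IV, \S 8), the affine \'etale morphism \(\overline{g}\) descends to an \'etale morphism \(g' \colon V' \to X_{\Bbbk'}\) over some finite separable \(\Bbbk'\). Moreover, since \(\cA\) is coherent and hence of finite presentation, after possibly enlarging \(\Bbbk'\) the algebra isomorphism \(\overline{g}^*(\pi^*\cA) \cong \overline{A} \otimes \cO_{\overline{V}}\) also descends to an isomorphism of sheaves of algebras \((g')^*(\cA|_{X_{\Bbbk'}}) \cong A \otimes \cO_{V'}\).

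To conclude, I would set \(f\) to be the composition \(V' \to X_{\Bbbk'} \to X\) with the canonical projection; since \(\Bbbk'/\Bbbk\) is finite separable, \(X_{\Bbbk'} \to X\) is \'etale, hence so is \(f\). The image of \(f\) contains \(p\) because \(q_0\) lifts to some point of \(V' \times_{X_{\Bbbk'}} \overline{X}\), whose image in \(X\) equals \(\pi(q_0) = p\). Finally \(f^*\cA = (g')^*(\cA|_{X_{\Bbbk'}}) \cong A \otimes \cO_{V'}\) by construction, so \(\cA\) is \'etale \(A\)-locally free in \(p\). The main obstacle is this simultaneous descent: one must ensure that a single finite separable extension \(\Bbbk'\) handles both the \'etale morphism and the algebra isomorphism at once, which relies on the finite presentation of \(\cA\) over the \(\Bbbk\)-scheme \(X\) of finite type and a careful application of the EGA IV, \S 8 limit results.
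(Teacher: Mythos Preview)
Your proof is correct and follows the same overall strategy as the paper: the forward direction is base change, and the backward direction descends the \'etale trivialization from \(\overline{\Bbbk}\) to a finite (separable) extension \(\Bbbk'/\Bbbk\), then uses that \(X_{\Bbbk'}\to X\) is \'etale in characteristic \(0\).

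The difference is in how the descent is carried out. You invoke the limit formalism of EGA~IV, \S8 to descend both the affine \'etale morphism and the algebra isomorphism through the projective system \(\overline{X}=\varprojlim_{\Bbbk'} X_{\Bbbk'}\); this is clean and conceptual, and your remark about finite presentation (together with the fact that the multiplication map \(\mu_\cA\) is itself a morphism of coherent sheaves, so the algebra structure descends along with the module isomorphism) is exactly the point that makes the machinery apply. The paper instead performs the descent by hand: it writes \(X=\textnormal{Spec}(R)\) affine, presents \(\overline{S}\) by explicit generators and relations, encodes the algebra isomorphism by an invertible matrix over \(\overline{S}\), and observes that only finitely many elements of \(\overline{\Bbbk}\) occur in these data, so one may take \(\Bbbk'\) to be the subfield they generate. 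The paper's argument is more elementary and self-contained (and makes transparent why \'etaleness persists via \cite[Proposition~17.7.1.(ii)]{egaIV_part_4}), while yours is shorter once one is willing to cite the EGA limit results.
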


\begin{proof}
\textbf{Step 1. }\emph{Setup. } The "only if" part follows directly from the fact that the property of being \'etale is stable under base change; see \cite[Proposition 17.3.3.(iii)]{egaIV_part_4}. It remains to prove the "if" part. \'Etale \(A\)-local triviality is a local property, so we can assume that \(M = \Gamma(X,\cA)\) is a free \(R\)-algebra for \(X = \textnormal{Spec}(R)\), where \(R \coloneqq \Bbbk[x_1,\dots,x_m]/I\) for some ideal \(I \subseteq \Bbbk[x_1,\dots,x_m]\). Then \(\overline{X} \cong \textnormal{Spec}(\overline{R})\) for \(\overline{R} \coloneqq \overline{\Bbbk}[x_1,\dots,x_m]/\overline{\Bbbk}I\) and the natural injective morphism \(\iota\colon R \to \overline{R}\) induces \(\pi\colon \overline{X} \to X\). By definition, \(p\) is a prime ideal of \(R\). Fix \(\overline{q} \in \pi^{-1}(p)\), i.e.\ \(\overline{q} \subset \overline{R}\) is a prime ideal such that \(\iota^{-1}(\overline{q}) = p\). The \'etale \(A\)-local triviality of \(\pi^*\cA\) in \(\overline{q}\) can now be formulated as: there exists an \'etale morphism \(\overline{f}\colon \overline{R} \to \overline{S}\) of \(\overline{\Bbbk}\)-algebras, such that \(\overline{q} = \overline{f}^{-1}(\overline{r})\) for some prime ideal \(\overline{r}\subset S\), and an isomorphism
\[\psi \colon M \otimes_R \overline{S} \cong (M\otimes_R \overline{R}) \otimes_{\overline{R}} \overline{S} \to \overline{A}\otimes_{\overline{\Bbbk}} \overline{S} \cong A \otimes \overline{S}\]
of \(\overline{S}\)-algebras. We may assume that \(\overline{S} = \overline{\Bbbk}[x_1,\dots,x_n]/(s_1,\dots,s_{k})\) for some \(s_1,\dots,s_{k}\in\overline{\Bbbk}[x_1,\dots,x_n]\). The morphism \(\overline{f}\) is completely determined by the images \(f_i \coloneqq \overline{f}(x_i + \overline{\Bbbk}I) \in \overline{S}\) of \(x_i + \overline{\Bbbk}I \in\overline{R}\) for \(i \in \{1,\dots,m\}\). We can describe \(\psi\) 
by a matrix \(\widetilde{\psi} \in \textnormal{Mat}_{d\times d}(\overline{S})\) after choosing a basis of \(A\). Here \(d = \dim(A)\). \\
\textbf{Step 2. }\emph{\(\overline{\Bbbk}\) can be replaced by some finite field extension \(\Bbbk'\) of \(\Bbbk\). } Since \(\overline{\Bbbk}\) is algebraic over \(\Bbbk\), we can choose a finite field extension \(\Bbbk'\) of \(\Bbbk\) such that \(s_1,\dots,s_{k} \in \Bbbk'[x_1,\dots,x_n]\), \(f_1,\dots,f_m,\det(\tilde{\psi})^{-1} \in S\) and \(\widetilde{\psi} \in \textnormal{Mat}_{d\times d}(S)\) for
\[R' \coloneqq \Bbbk'[x_1,\dots,x_m]/\Bbbk'I \textnormal{ and } S \coloneqq \Bbbk'[x_1,\dots,x_n]/(s_1,\dots,s_{k}).\]
Here we used that only finitely many elements of \(\overline{\Bbbk}\) appeared in these constructions. The assignment \(x_i + \Bbbk'I \mapsto f_i\) for \(i \in \{1,\dots,m\}\) defines a \(\Bbbk\)-algebra morphism \(f\colon R' \to S\) making the diagram
\begin{equation}\label{eq:diagram_etale_tiviality_wlog_algebraic_closure}
    \xymatrix{R' \ar[r]^f \ar[d]_{\iota'} & S \ar[d]^{\jmath}\\\overline{R}\ar[r]_{\overline{f}}&\overline{S}}
\end{equation}
commutative, where the vertical maps are the canonical ones. In particular, it holds that \(f^{-1}(r) = q\) for \(q \coloneqq \iota^{\prime,-1}(\overline{q})\) and \(r  \coloneqq \jmath^{-1}(\overline{r})\). \\
\textbf{Step 3. }\emph{Concluding the proof. } Since  \(\overline{f}\) can be identified with \(f\otimes_{\Bbbk'} \textnormal{id}_{\overline{\Bbbk}}\), \(f\) is \'etale since \(\overline{f}\) is; see \cite[Proposition 17.7.1.(ii)]{egaIV_part_4}. Furthermore, \(R \to R'\) is \'etale since \(\Bbbk \to \Bbbk'\) is finite, the characteristic of \(\Bbbk\) is \(0\) and \(R' \cong R \otimes \Bbbk'\). The composition \(g \colon R \to S\) of the canonical morphism \(\iota'' \colon R \to R'\) with \(f\) is thus \'etale and satisfies \(g^{-1}(r) = \iota^{\prime\prime,-1}(f^{-1}(r)) = \iota^{-1}(\overline{f}^{-1}(\overline{r})) = p\), where we used \(\iota = \iota'\iota''\) and \cref{eq:diagram_etale_tiviality_wlog_algebraic_closure}.
The matrix \(\widetilde{\psi}\) defines a morphism
\(M \otimes_R S \to A \otimes S\)
of \(S\)-algebras, which is bijective due to \(\textnormal{det}(\tilde{\psi})^{-1} \in S\). This is equivalent to the fact that \(\cA\) is \'etale \(A\)-locally free in the point \(p\).
\end{proof}

The following result can be seen as an algebro-geometric version of the result on local triviality of Lie algebra bundles in \cite{kirangi_lie_algebra_bundles}. 
    
\begin{theorem}\label{thm:etale_trivial_sheaves_of_Lie_algebras} Let \(A\) be a finite-dimensional \(\Bbbk\)-algebra and \(\cA\) be a finite locally free sheaf of algebras on a reduced \(\Bbbk\)-scheme \(X\) of finite type. Then
\(\cA\) is \'etale \(A\)-locally free if and only if \(\pi^*\cA\) is weakly \(\overline{A}\)-locally free in all closed points \(p\in X\).
\end{theorem}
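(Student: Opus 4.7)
The ``only if'' direction is formal: if $f\colon U \to X$ is an \'etale morphism with $f^*\cA \cong A \otimes \cO_U$ as sheaves of algebras, then pulling back along $\pi$ trivializes $\pi^*\cA$ \'etale-locally over $\overline{X}$, so in particular $\pi^*\cA$ is weakly $\overline{A}$-locally free at every closed point of $\overline{X}$ (\cref{rem:local_triviality_of_sheaves_of_algebras}).

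For the non-trivial ``if'' direction, the plan is to translate the smooth-Lie-group argument of \cite{kirangi_lie_algebra_bundles} into algebraic geometry by constructing the bundle of algebra frames on $X$ and showing it is an \'etale-locally trivial torsor. Concretely, let $P$ represent the $X$-functor sending $(T \xrightarrow{f} X)$ to the set of $\cO_T$-algebra isomorphisms $A \otimes_\Bbbk \cO_T \xrightarrow{\sim} f^*\cA$. Since $\cA$ is finite locally free, $P$ is representable by a locally closed subscheme of the affine $X$-scheme $\underline{\Hom}_{\cO_X}(A \otimes_\Bbbk \cO_X, \cA)$ (``multiplicative'' being a closed condition, ``invertible'' an open one), so $P \to X$ is of finite type. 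The affine $\Bbbk$-group $G \coloneqq \Aut_{\Bbbk\textnormal{-alg}}(A) \subset \mathrm{GL}(A)$ is smooth by Cartier's theorem in characteristic $0$, and it acts on $P$ from the left by composition in such a way that the map $(g,p) \mapsto (gp,p)$ identifies $G \times_\Bbbk P \cong P \times_X P$; in other words, $P$ is a $G$-pseudo-torsor over $X$.

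The hypothesis provides precisely the input needed to promote this pseudo-torsor to a torsor. For every closed point $q \in \overline{X}$, the assumption $(\pi^*\cA)|_q \cong \overline{A}$ makes the fiber $(\pi^*P)|_q$ nonempty, and since $\pi$ is surjective and integral this yields nonempty fibers of $P$ over every closed point of $X$. Because $X$ is Jacobson and the image of $P \to X$ is constructible by Chevalley, every fiber of $P \to X$ is nonempty, hence isomorphic after base change to $\Bbbk(p)^{\text{alg}}$ to a $G$-torsor and in particular smooth of dimension $\dim G$. Combined with the pseudo-torsor identification $P \times_X P \cong G \times_\Bbbk P$ (which shows one projection $P \times_X P \to P$ is smooth, being a base change of $G \to \textnormal{Spec}(\Bbbk)$), one concludes that $P \to X$ is itself smooth. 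Since $G$ is a smooth affine $\Bbbk$-group scheme, every fppf $G$-torsor is trivializable in the \'etale topology by a standard descent result, so $P \to X$ admits \'etale-local sections. Unpacking the definition of $P$, each such section is an \'etale morphism $U \to X$ together with an isomorphism $A \otimes_\Bbbk \cO_U \xrightarrow{\sim} \cA|_U$ of sheaves of algebras, proving \'etale $A$-local freeness.

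The main technical obstacle is the smoothness of $P \to X$: the pseudo-torsor structure readily produces smoothness of one projection $P \times_X P \to P$, but descending this to smoothness of $P \to X$ itself requires verifying flatness of $P \to X$, which is where the reducedness of $X$ and the equidimensionality of the fibers of $P \to X$ (both consequences of the hypothesis combined with the smoothness of $G$) must be exploited. Everything else is a formal consequence of the torsor framework.
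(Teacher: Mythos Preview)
Your frame-bundle approach and the paper's approach study essentially the same object. After reducing to $\Bbbk=\overline\Bbbk$ via \cref{lem:etale_locally_triviallity_wlog_over_algebraic_closure} and trivializing $\cA$ Zariski-locally as an $\cO_X$-module, the algebra structure becomes a morphism $\theta\colon X\to M=\Hom(\Bbbk^d\otimes\Bbbk^d,\Bbbk^d)$ landing in the orbit $\textnormal{GL}(d,\Bbbk)\cdot\mu_A$, and your $P$ coincides with the fiber product $\textnormal{GL}(d,\Bbbk)\times_{\textnormal{GL}(d,\Bbbk)\cdot\mu_A}X$ taken along $\theta$. Both arguments then finish by producing \'etale-local sections of $P\to X$; the difference lies entirely in how smoothness of $P\to X$ is established.

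The gap you flag in your final paragraph is real, and the proposed fix does not close it. Reducedness of the base together with fibers that are all smooth of the same dimension does \emph{not} force flatness: for $X=\textnormal{Spec}(\Bbbk[t])$ and $Y=\textnormal{Spec}(\Bbbk[t,u])\sqcup\textnormal{Spec}(\Bbbk[v])$, mapping to $X$ by $t\mapsto t$ on the first component and $t\mapsto 0$ on the second, every fiber is smooth of dimension~$1$ over a smooth base, yet the second component is not flat. One may hope that the pseudo-torsor constraint $P\times_X P\cong G\times_\Bbbk P$ rules out such extra components, but you have not supplied that argument, and the descent of smoothness from $P\times_X P\to P$ to $P\to X$ is circular without knowing $P\to X$ is already fppf.

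The paper avoids this entirely: it proves directly that the orbit map $o\colon\textnormal{GL}(d,\Bbbk)\to\textnormal{GL}(d,\Bbbk)\cdot\mu_A$ is smooth---it is faithfully flat as a quotient map, and its base change along itself is the smooth projection $\textnormal{GL}(d,\Bbbk)\times H\to\textnormal{GL}(d,\Bbbk)$ for the stabilizer $H=\Aut_{\Bbbk\textnormal{-alg}}(A)$, so smoothness descends by \cite[Proposition 17.7.4]{egaIV_part_4}. Then $P\to X$ is smooth as a base change of $o$ along $\theta$, and an \'etale quasi-section through any prescribed point exists by \cite[Corollaire 17.16.3]{egaIV_part_4}. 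So the cleanest repair of your sketch is not to argue abstractly from the torsor axioms but to recognize $P$ as this explicit pullback; once you do, the flatness issue dissolves.
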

\begin{proof}
\textbf{Step 1.} \emph{Setup. }
By \cref{lem:etale_locally_triviallity_wlog_over_algebraic_closure} we may assume that \(\Bbbk = \overline{\Bbbk}\) and so we have to show that \(\cA\) is \'etale \(A\)-locally free if and only if \(\cA|_p \cong A\) for all \(p \in X\) closed. The "only if" part was already discussed in \cref{rem:local_triviality_of_sheaves_of_algebras}. It remains to prove the "if" part.
Recall that an algebraic prevariety is a locally ringed space associated to the closed points of a reduced \(\Bbbk\)-scheme of finite type and that the category of reduced \(\Bbbk\)-schemes of finite type is equivalent to the category of algebraic prevarieties. We are therefore permitted to work in the latter category.

\'Etale local triviality is local. Therefore, we can assume that \(X\) is an affine variety, i.e.\ the ringed space associated to the closed points of an reduced affine \(\Bbbk\)-scheme. Let us identify \(A\) with a \(\Bbbk\)-algebra of the form \((\Bbbk^d,\mu_A)\). The \(\Gamma(X,\cO_X)\)-algebra \(\Gamma(X,\cA)\) can be identified with the \(\Gamma(X,\cO_X)\)-module of all regular maps \(X \to \Bbbk^d\) equipped with the multiplication map \(\mu_{\cA}\colon\cA \otimes_{\cO_X} \cA \to \cA\) defined by some regular map \(\theta \colon X \to M = \Hom(\Bbbk^d\otimes \Bbbk^d,\Bbbk^d)\) via \(\mu_{\cA}(a \otimes b)(p) = \theta(p)(a(p) \otimes b(p))\)
for all \(a,b\colon X \to \Bbbk^d\) regular and \(p \in X\).
By definition, \(\cA|_{p} = (\Bbbk^d,\theta(p))\) for all \(p \in X\). The group \(G = \textnormal{GL}(d,\Bbbk)\) acts on \(M\) by 
\begin{align}\label{eq:actionofG}
    &(L \cdot \vartheta)(v \otimes w) = L^{-1}\vartheta(Lv \otimes Lw) &\forall L \in G, \vartheta \in M, v,w \in \Bbbk^d.
\end{align}
The orbit \(G\cdot\mu_A\) coincides with the set of multiplications on \(\Bbbk^d\) determining an algebra structure isomorphic to \(A\). Therefore, \(\theta(X) \subseteq G\cdot \mu_A\) by assumption.\\
\textbf{Step 2.} \emph{The canonical map \(o \colon G \to G\cdot \mu_A\) is a surjective smooth morphism of algebraic prevarieties. }
Consider the stabiliser \(H\) of \(\mu_A\) in \(G\). The canonical map
\(o\colon G \to G/H \cong G\cdot \mu_A\) defined by \(L \mapsto L \cdot \mu_A\) is a faithfully flat morphism of algebraic prevarieties and the induced morphism \(G \times_{G\cdot \mu_A} G \to G \times H\) is an isomorphism; see e.g. \cite{milne_algebraic_groups}.
Note that the pull-back diagram
\begin{align}
    \xymatrix{G \times_{G\cdot \mu_A} G \cong G \times H \ar[r]\ar[d]& G\ar[d]\\G \ar[r]_o& G\cdot \mu_A},
\end{align}
combined with the fact that \(H\) and hence \(G \times H \to G\) is smooth and \(o\) is flat, implies that \(o\) is smooth; see \cite[Proposition 17.7.4]{egaIV_part_4}.  \\
\textbf{Step 3.} \emph{For all \(p \in X\) there exists an \'etale morphism \(f\colon Y \to X\) and a morphism \(s\colon Y \to G\) such that \(p \in f(Y)\) and \(o s = \theta f\). }
Consider the pull-back diagram
\begin{align}
    \xymatrix{G \times_{G\cdot \mu_A}X \ar[r]\ar[d]_g &G \ar[d]^o \\X \ar[r]_-\theta& G\cdot \mu_A},
\end{align}
The morphism \(g\) is surjective and smooth since \(o\) is; see e.g.\ \cite[Proposition 6.15.(3)]{goertz_wedhorn}.
Let \(p \in X\) be an arbitrary point.
Using the construction in \cite[Corollaire 17.16.3]{egaIV_part_4}, we see that there exists a locally closed affine subvariety \(Y \subseteq G \times_{G\cdot \mu_A}X\) such that \(f \coloneqq g|_Y\) is \'etale and \(p \in f(Y)\).  Let \(s\) be the restriction of the canonical projection \(G \times_{G\cdot \mu_A}X \to G\) to \(Y\). By construction \(o s = \theta f\).\\   
\textbf{Step 4.} \emph{\(s\) induces an isomorphism \(\psi \colon f^*\cA \to A \otimes \cO_Y\). } We can identify \(\Gamma(Y,f^*\cA)\)
with the  \(\Gamma(Y,\cO_Y)\)-module of all regular maps \(Y \to \Bbbk^d\) equipped with the multiplication \(\mu_{f^*\cA}\) determined by \(\mu_{f^*\cA}(a\otimes b)(q) = \theta(f(q))(a(q)\otimes b(q))\) for all \(a,b \colon Y \to \Bbbk^d\) regular and \(q \in Y\). Evaluating \(o s = \theta f\) in an arbitrary \(q \in Y\) results in \(s(q) \cdot \mu_A = \theta(f(q))\), hence 
\[s(q)\theta(f(q))(a(q)\otimes b(q)) = s(q)(s(q)\cdot \mu_A)(a(q)\otimes b(q)) = \mu_A(s(q)a(q)\otimes s(q)b(q))\]
for all \(a,b \colon Y \to \Bbbk^d\) regular and \(q\in Y\). This shows that the \(\Gamma(Y,\cO_Y)\)-linear automorphism \(\psi\) of \(\{a \colon Y \to \Bbbk^d\mid a \textnormal{ is regular}\}\) defined by \(\psi(a)(q) = s(q)a(q)\) for all \(a \colon Y \to \Bbbk^d\) regular and \(q \in Y\), induces an isomorphism \(\psi \colon f^*\cA \to A \otimes \cO_Y\) of sheaves of algebras.
\end{proof}

The following result can be seen as an algebro-geometric analog of \cite[Lemma 2.1]{kirangi_semi_simple}.

\begin{theorem}\label{thm:etale_triviality_due_to_semisimple_fiber}
Let \(\cA\) be a finite locally free sheaf of Lie algebras on a reduced \(\Bbbk\)-scheme \(X\) of finite type and \(\cA|_p\) be semi-simple for some closed point \(p \in X\). Then \(\cA\) is \'etale \(\cA|_p\)-locally free in \(p \in X\).
\end{theorem}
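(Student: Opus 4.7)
The plan is to reduce the statement to Theorem 3.5 by showing that some open neighborhood $U$ of $p$ in $X$ has the property that every closed point $q\in U$ satisfies $\cA|_q \cong \cA|_p$; this reduces the problem to proving openness of a single orbit in the variety of Lie brackets, which is a classical consequence of Whitehead's second lemma.

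First I would reduce to the case $\Bbbk = \overline{\Bbbk}$ by invoking Lemma 3.4 and noting that semisimplicity is preserved under the base change $\pi\colon\overline{X}\to X$: Lemma 3.2 identifies the Killing form of $\cA|_p$ with the fiber of $K_\cA$, hence its non-degeneracy persists after extension of scalars, so $(\pi^*\cA)|_q$ is semisimple for every $q\in\pi^{-1}(p)$. After passing to $\overline{\Bbbk}$ I would shrink $X$ to an affine open containing $p$ and mimic the setup of the proof of Theorem 3.5: trivialize $\cA$ as an $\cO_X$-module to identify its multiplication with a regular map $\theta\colon X\to M$, where $M=\Hom(\Bbbk^d\otimes\Bbbk^d,\Bbbk^d)$, which factors through the closed subvariety $L\subseteq M$ cut out by the Jacobi identity, i.e.\ the variety of Lie algebra structures on $\Bbbk^d$.

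The key step is then to show that the $G = \textnormal{GL}(d,\Bbbk)$-orbit $G\cdot\theta(p)$ is open in $L$. This is the classical rigidity of semisimple Lie algebras: the Zariski tangent space to $L$ at a point $\mu_0$ is the space of 2-cocycles $Z^2(\fg_0,\fg_0)$ for $\fg_0=(\Bbbk^d,\mu_0)$ (as one sees from a first-order Jacobi identity computation), while the tangent space to the orbit under the action \cref{eq:actionofG} equals the space of 2-coboundaries $B^2(\fg_0,\fg_0)$. For $\mu_0 = \theta(p)$, Whitehead's second lemma gives $H^2(\fg_0,\fg_0)=0$, so these tangent spaces coincide. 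Combined with smoothness of the orbit (as in Step 2 of the proof of Theorem 3.5), this forces $G\cdot\theta(p)$ to be open in $L$ at $\theta(p)$, and hence everywhere on the orbit by $G$-homogeneity.

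Granted the orbit openness, $U\coloneqq\theta^{-1}(G\cdot\theta(p))$ is a reduced open neighborhood of $p$ of finite type over $\Bbbk$, and by construction every fiber of $\cA|_U$ is isomorphic to $\cA|_p$ as a Lie algebra. Theorem 3.5 applied to $\cA|_U$ then yields \'etale $\cA|_p$-local freeness of $\cA$ on $U$, in particular at $p$. The main obstacle is the openness of the orbit: while morally it is the standard ``semisimple Lie algebras are rigid'' principle, one has to carefully verify the scheme-theoretic tangent space computation and use the smoothness of the orbit inside the (possibly singular and non-reduced) variety $L$ to promote tangent-space equality to an actual open inclusion.
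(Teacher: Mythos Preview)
Your proposal is correct and follows essentially the same route as the paper: reduce to $\overline{\Bbbk}$ via Lemma~3.4 and Lemma~3.2, trivialize $\cA$ to get a map $\theta$ into the variety of Lie brackets, use openness of the orbit $G\cdot\theta(p)$ to find a neighbourhood $U$ with constant fiber type, and apply Theorem~3.5. The only difference is that the paper outsources the orbit openness to \cite[Theorem~7.2]{nijenhuis_richardson} plus Whitehead's lemma, whereas you sketch the underlying tangent-space argument ($T_{\theta(p)}L=Z^2$, $T_{\theta(p)}(G\cdot\theta(p))=B^2$) yourself; your caveat about promoting tangent-space equality to openness is exactly what that citation handles.
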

\begin{proof}
\Cref{lem:fiber_of_sheaf_Killing_form} and Cartan's criterion for semi-simplicity imply  that \(\pi^*\cA|_q\) is semi-simple for all \(q \in \pi^{-1}(p)\) if and only if \(\cA|_{p}\) is. Therefore,
using \cref{lem:etale_locally_triviallity_wlog_over_algebraic_closure}, we may assume that \(\Bbbk\) is algebraically closed. As in Step 1 in the proof of \cref{thm:etale_trivial_sheaves_of_Lie_algebras}, we can proceed to work in the category of algebraic prevarieties, assume that \(X\) is an affine variety and that \(\cA\) is free of rank \(d\).
Let \(B\subseteq M = \textnormal{Hom}(\Bbbk^d\otimes \Bbbk^d,\Bbbk^d)\) be the affine subvariety of all of possible Lie brackets on \(\Bbbk^d\).
Then \(\Gamma(X,\cA)\) can be identified with the \(\Gamma(X,\cO_X)\)-module of all regular maps \(Y \to \Bbbk^d\) equipped with the Lie bracket \(\mu_\cA\) defined by a regular map \(\theta \colon X \to B\) via \(\mu_{\cA}(a \otimes b)(q) = \theta(q)(a(q) \otimes b(q))\)
for all \(a,b\colon X \to \Bbbk^d\) regular and \(q \in X\). 

The action of \(G = \textnormal{GL}(n,\Bbbk)\) on \(M\), introduced in Step 1 of the proof of \cref{thm:etale_trivial_sheaves_of_Lie_algebras}, restricts to an action on \(B\).
In particular, the orbit \(G\cdot\theta(p)\) coincides with the set of Lie brackets on \(\Bbbk^d\) determining an Lie algebra structure isomorphic to \(\cA|_p = (\Bbbk^d,\theta(p))\). Combining \cite[Theorem 7.2]{nijenhuis_richardson} with Whitehead's Lemma and the fact that \(\cA|_p\) is semi-simple, we see that \(G\cdot \theta(p) \subseteq B\) is open and as a consequence \(U \coloneqq \theta^{-1}(G\cdot \theta(p))\) is an open neighbourhood of \(p\). For all \(q \in U\), we have \(\cA|_q \cong (\Bbbk^d,\theta(p)) = \cA|_p\), so \cref{thm:etale_trivial_sheaves_of_Lie_algebras} asserts that \(\cA|_U\) is \'etale \(A\)-locally free.
\end{proof}

\subsection{Sheaves of algebras and lattices}\label{subsec:geometry_of_lattices} In this subsection we will combine the results of \cite{ostapenko} with a version of the geometrization scheme presented in \cite{mumford,mulase} to derive a connection between certain subalgebras of Laurent series algebras and sheaves of algebras on projective curves which are formally locally free in some distinguished rational smooth point. The application of this procedure to the subalgebra associated to a formal generalized \(r\)-matrix will result in algebro-geometric properties of said \(r\)-matrix in \cref{sec:geometrization}.
\begin{definition}\label{def:lattices}
    For a  finite-dimensional \(\Bbbk\)-algebra \(A\), a subspace \(W \subseteq A(\!(z)\!)\) is called \emph{\(A\)-lattice} if it is a subalgebra of \(A(\!(z)\!)\) satisfying
\[\dim(A[\![z]\!]\cap W) \eqqcolon h_0 < \infty\textnormal{ and }\dim(A(\!(z)\!)/(A[\![z]\!] + W)) \eqqcolon h_1 < \infty.\]
The pair \((h_0,h_1)\) is called \emph{index} of \(W\).
\end{definition}

\begin{example}
Let \(r \in (\fg \otimes \fg)(\!(x)\!)[\![y]\!]\) be a formal generalized \(r\)-matrix for some finite-dimensional semi-simple Lie algebra \(\fg\) over \(\Bbbk\). The Lie algebra \(\fg(r)\) is a \(\fg\)-lattice of index \((0,0)\).
\end{example}

\begin{definition}
For a vector space \(V\) over \(\Bbbk\), a subset \(M \subseteq V(\!(z)\!)\) and an integer \(k\), we put \(M_k \coloneqq M\cap z^{-k}V[\![z]\!]\). An element \(a(z) = vz^{-k}+\dots \in M_k\setminus M_{k-1}\) is said to have \emph{order} \(k\) and \emph{main part} \(vz^{-k}\).
\end{definition}

\begin{proposition}\label{prop:geometrization_of_klattices}
Let \(O\) be an unital \(\Bbbk\)-lattice of index \((h_0,h_1)\). 
\begin{enumerate}
    \item \(O\) has Krull dimension 1 and \(h_0 = 1\), i.e.\ \(O_0 = O \cap \Bbbk[\![z]\!] = \Bbbk\).
    \item \(\textnormal{gr}(O) \coloneqq \bigoplus_{k \in \bN_0}t^kO_k \subseteq O[t]\) is a graded unital \(\Bbbk\)-subalgebra and
    \(X\coloneqq\textnormal{Proj}(\textnormal{gr}(O))\) is an integral projective curve over \(\Bbbk\) of arithmetic genus \(h_1\).
    \item There exists a \(\Bbbk\)-rational smooth point \(p\) and a  \(\Bbbk\)-algebra isomorphism \(c\colon \compO_{X,p} \to \Bbbk[\![z]\!]\) inducing an isomorphism \(\textnormal{Spec}(O) \to X\setminus\{p\}\). More precisely, the extension of \(c\) to an isomorphism \( \textnormal{Q}(\widehat{\cO}_{X,p}) \to \Bbbk(\!(z)\!)\), which will again be denoted by \(c\), satisfies  \(c(\Gamma(X\setminus\{p\},\cO_X)) = O\).
\end{enumerate}
\end{proposition}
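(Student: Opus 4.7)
My plan is to establish (1), (2), (3) in sequence, relying throughout on the finite-codimensional nature of the lattice $O$ and on the finite generation of $O$ as a $\Bbbk$-algebra supplied by \cite{ostapenko}.

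For (1), the equality $h_0 = 1$ follows at once from a pole-order argument: if $g \in O \cap z\Bbbk[\![z]\!]$ were nonzero with $\textnormal{ord}_z(g) = m \ge 1$, then the powers $g^n$ would have strictly increasing $z$-orders $mn$ and so form an infinite linearly independent family in $O_0$, contradicting $h_0 < \infty$. Thus $O_0 = \Bbbk$. For the Krull dimension, the finite-codimension hypothesis $\dim_\Bbbk \Bbbk(\!(z)\!)/(\Bbbk[\![z]\!]+O) = h_1 < \infty$ forces $O$ to contain elements of arbitrarily negative $z$-order, any of which is transcendental over $\Bbbk$; hence $\textnormal{trdeg}_\Bbbk \textnormal{Q}(O) = 1$, and combined with finite generation of $O$, Noether normalization yields $\dim O = 1$.

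For (2), additivity of pole order gives $O_k O_\ell \subseteq O_{k+\ell}$, so $\textnormal{gr}(O) = \bigoplus_k t^k O_k$ is a graded $\Bbbk$-subalgebra of the domain $O[t]$, with $\textnormal{gr}(O)_0 = \Bbbk$. Homogeneous lifts of finitely many algebra generators of $O$ show that $\textnormal{gr}(O)$ is finitely generated as a graded $\Bbbk$-algebra, so $X \coloneqq \textnormal{Proj}(\textnormal{gr}(O))$ is an integral projective $\Bbbk$-scheme.

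For (3), I would first compute $\textnormal{gr}(O)_{(t)} = \bigcup_{k \ge 0} O_k = O$, identifying $D_+(t) \cong \textnormal{Spec}(O)$, which is one-dimensional by (1), so $X$ is a curve. The complement $V_+(t) = \textnormal{Proj}(\textnormal{gr}(O)/(t))$ is analyzed via the isomorphism of graded rings $\textnormal{gr}(O)/(t) \cong \Bbbk[S]$, where $S \coloneqq \{k \in \bN_0 : O_k \neq O_{k-1}\}$ (with $O_{-1} \coloneqq 0$) is a numerical subsemigroup of $\bN_0$ whose complement has cardinality $h_1$. Since $S$ is cofinite, a standard semigroup-ring argument shows that every nonzero homogeneous prime of $\Bbbk[S]$ contains the irrelevant ideal, so $V_+(t) = \{p\}$ is a single $\Bbbk$-rational point. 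To produce $c$, I would identify $\cO_{X,p}$ with a subring of $\Bbbk(\!(z)\!)$ via the embedding of $O$ and show that the maximal-ideal filtration on $\cO_{X,p}$ corresponds to the $z$-adic filtration on $\Bbbk[\![z]\!]$; the $h_1$-codimensionality gives density of this image in $\Bbbk[\![z]\!]$ in the $z$-adic topology, and completion produces $c \colon \compO_{X,p} \to \Bbbk[\![z]\!]$. Smoothness and $\Bbbk$-rationality of $p$ follow immediately, and $c(\Gamma(X \setminus \{p\}, \cO_X)) = O$ is built into the construction. Finally, \v{C}ech cohomology with the cover $\{D_+(t), U\}$ for an affine neighborhood $U$ of $p$ identifies $\tH^0(X, \cO_X) = O \cap \Bbbk[\![z]\!] = \Bbbk$ and $\tH^1(X, \cO_X) \cong \Bbbk(\!(z)\!)/(\Bbbk[\![z]\!]+O)$, the latter of dimension $h_1$.

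The main obstacle is the completion step $\compO_{X,p} \cong \Bbbk[\![z]\!]$: smoothness, rationality, and the arithmetic-genus computation all flow from this isomorphism, but establishing it requires a careful comparison of the intrinsic $\mathfrak{m}$-adic topology on the stalk with the $z$-adic topology inherited from the ambient $\Bbbk(\!(z)\!)$, leveraging both the semigroup structure of $S$ and the finite-codimensionality assumption.
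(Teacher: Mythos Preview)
Your overall architecture is sound and largely parallel to the paper's, but two points deserve comment.

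First, the appeal to \cite{ostapenko} for finite generation of $O$ is both unnecessary and logically awkward: in the paper, this proposition is proved before and independently of the Ostapenko-type result. The paper instead picks $f,g\in O$ of consecutive orders $r,r+1$ (which exist for $r\gg 0$ by the finite-codimension hypothesis), observes that $\Bbbk[f,g]\subseteq O$ has finite codimension since $f^{\ell_1-\ell_2}g^{\ell_2}$ realizes every large pole order, and deduces $\dim O=\dim\Bbbk[f,g]=1$ from the explicit relation $f^{r+1}-g^r\in\Bbbk[f,g]_{r(r+1)-1}$. The same elements $f,g$ are then recycled to prove finite generation of $\textnormal{gr}(O)$ directly. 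Your route via homogeneous lifts of generators of $O$ works too (pole order is a valuation, so monomials lift cleanly), but you should supply a self-contained reason that $O$ is finitely generated rather than cite \cite{ostapenko}.

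Second, and more importantly, the paper resolves your ``main obstacle'' with a one-line trick that your plan misses. After computing explicitly that
\[
\cO_{X,p}\;=\;\{a/h : a,h\in O,\ a/h\in\Bbbk[\![z]\!]\}\;=\;\textnormal{Q}(O)\cap\Bbbk[\![z]\!],
\]
the paper simply notes that $u\coloneqq f/g\in\textnormal{Q}(O)\cap z\Bbbk[\![z]\!]^\times$, so $\Bbbk[u]\subseteq\cO_{X,p}\subseteq\Bbbk[\![z]\!]$ and $\Bbbk[\![u]\!]=\Bbbk[\![z]\!]$, giving $c\colon\compO_{X,p}\to\Bbbk[\![z]\!]$ immediately. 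Smoothness and $\Bbbk$-rationality of $p$ are then automatic. Your proposed comparison of the $\mathfrak m$-adic and $z$-adic filtrations would eventually lead here, but the explicit uniformizer $u=f/g$ short-circuits that analysis entirely.

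Your numerical-semigroup description of $\textnormal{gr}(O)/(t)\cong\Bbbk[S]$ to show $V_+(t)=\{p\}$ is a pleasant alternative to the paper's argument, which instead shows that $D_+(t)\cup\{p\}$ is already proper (hence closed in $X$, hence all of $X$). Both the paper and you compute the arithmetic genus via what amounts to the two-term \v Cech/adelic sequence, so there is no real divergence there.
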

\begin{proof}
\textbf{Step 1.} \emph{\(O_0 = \Bbbk\), 
\(\textnormal{gr}(O) \subseteq O[t]\) is a graded unital \(\Bbbk\)-subalgebra and \(X\) is integral. } That \(\textnormal{gr}(O)\) is a graded \(\Bbbk\)-subalgebra of \(O[t]\) follows directly from \(O_kO_\ell \subseteq O_{k+\ell}\) for all \(k,\ell \in \bZ\). 
The space \(O_0 = O \cap \Bbbk[\![z]\!]\) contains \(\Bbbk\) since \(O\) is unital. If \(f \in O_0\setminus\Bbbk\) would exist, \(O_0\) would contain the infinite linearly independent set \(\left\{(f(z)-f(0))^n\mid n \in \bN\right\}\), contradicting \(\dim(O_0) < \infty\). Finally, \(X\) is integral, since \(O[t]\), and thus \(\textnormal{gr}(O)\), is an integral domain.\\
\textbf{Step 2.} \emph{\(O\) has Krull dimension 1, i.e.\ (1) is true. } The condition
\(\dim(\Bbbk(\!(z)\!)/(\Bbbk[\![z]\!] + O)) < \infty\) implies that for sufficiently large \(r \in \bN\) there exist elements \(f\) and \(g\) of \(O\) with main parts \(z^{-r}\) and \(z^{-r-1}\) respectively. \(O_0 = \Bbbk\) implies that the canonical projection \(O \to \Bbbk[z^{-1}]\) is injective. Consequently, \(\Bbbk[f,g] \subseteq O\) is of finite codimension, since for every \(\ell_1 \ge r\) and \(0\le \ell_2\le r-1\) the element \(f^{\ell_1-\ell_2}g^{\ell_2}\) has main part \(z^{-\ell_1 r-\ell_2}\). Therefore, the Krull dimension of \(O\) and the Krull dimension of \(\Bbbk[f,g]\) coincide. The latter is one, since, if \(h_1,\dots, h_k\) is a basis of \(\Bbbk[f,g]_{r(r+1)-1}\), we have
    \begin{align}
       \Bbbk[f,g]_{r(r+1)-1}\ni f^{r+1}-g^r=c_1h_1+\dots+c_kh_k
    \end{align}
for some \(c_1,\dots,c_k \in \Bbbk\), which is a polynomial relation of \(f\) and \(g\).\\
\textbf{Step 3.} \emph{Construction of \(p\) and \(c\). } 
By definition of \(X = \textnormal{Proj}(\textnormal{gr}(O))\), the homogeneous prime ideal \(p \coloneqq (t) = \bigoplus_{k \in\bN_0}t^{k+1}O_k\) generated by \(t \in \textnormal{gr}(O)\) is a point of \(X\). 
Observe that \(t^kh\) is an element of the homogeneous elements \(S\) in \(\textnormal{gr}(O)\setminus (t)\) if and only if \(h\) has order \(k\). This shows
\begin{equation}\label{eq:localization_of_X_at_p}
  \cO_{X,p} = (S^{-1}\textnormal{gr}(O))_0 = \{a/h\mid a,h \in O, a/h \in \Bbbk[\![z]\!]\} = \textnormal{Q}(O)\cap \Bbbk[\![z]\!].
\end{equation}
Choosing \(f,g\) as in Step 2 yields \(u \coloneqq f/g \in \textnormal{Q}(O)\cap z\Bbbk[\![z]\!]^{\times}\). Therefore, \(\Bbbk[u] \subseteq  \textnormal{Q}(O)\cap \Bbbk[\![z]\!] \subseteq \Bbbk[\![z]\!]\) and \(\Bbbk[\![u]\!] = \Bbbk[\![z]\!]\) results in an isomorphism \(c \colon \compO_{X,p} \to \Bbbk[\![z]\!]\). We conclude that \(p\) is \(\Bbbk\)-rational and smooth.  \\
\textbf{Step 4. }\emph{The affine open subset \(\textnormal{D}_+(t) \subseteq X\) is isomorphic to \(\textnormal{Spec}(O)\) and \(c(\Gamma(\textnormal{D}_+(t),\cO_X)) = O\). }
Since \(\Bbbk[\![z]\!]_{(u)} = \Bbbk(\!(z)\!)\) for \(u\) from Step 3, we can see from \cref{eq:localization_of_X_at_p} that the rational functions on \(X\) can be identified with \(\textnormal{Q}(O)\) via the isomorphism \(c \colon \textnormal{Q}(\compO_{X,p}) \to \Bbbk(\!(z)\!)\). More precisely, we can deduce that \(c(\Gamma(\textnormal{D}_+(t^kh),\cO_X)) = \textnormal{gr}(O)[(t^kh)^{-1}]_0 \subseteq \textnormal{Q}(O)\) for all \(t^kh \in \textnormal{gr}(O)\), i.e.\ \(c\) induces the natural isomorphisms \(\Gamma(\textnormal{D}_+(t^kh),\cO_X) \cong \textnormal{gr}(O)[(t^kh)^{-1}]_0\). In particular, we see that
\begin{equation}
    c(\Gamma(\textnormal{D}_+(t),\cO_X)) = \textnormal{gr}(O)[t^{-1}]_0 = O.
\end{equation}
Therefore, \(c\) defines an isomorphism \(\textnormal{Spec}(O) \to \textnormal{D}_+(t)\).\\
\textbf{Step 5.} \emph{\(X\) is an integral projective curve over \(\Bbbk\). }
Using the Step 1,2 and 4, it remains to show that \(X\) is a \(\Bbbk\)-scheme of finite type, since then \(\textnormal{dim}(X) = \dim(\textnormal{D}_+(t)) = 1\). Thus, we have to show that \(\textnormal{gr}(O)\) is a finitely-generated \(\Bbbk\)-algebra; see e.g.\ \cite[Lemma 13.9.(2) and Proposition 13.12]{goertz_wedhorn}. We will prove that each basis \(B\) of the finite dimensional space \(\bigoplus_{k = 0}^{r^2}t^kO_k\), containing \(t,t^rf\) and \(t^{r+1}g\), generates \(\textnormal{gr}(O)\), where \(f,g\) and \(r\) are as in Step 2. Let us write \(R\) for the \(\Bbbk\)-subalgebra of \(\textnormal{gr}(O)\) generated by \(B\). We prove by induction on \(k \ge r^2\) that \(t^kO_k \subseteq R\), which is obvious for \(k = r^2\). By induction assumption \(t^{k-1}O_{k-1} \subseteq R\), and thus \(t^{k}O_{k-1} \subseteq R\). Let \(h \in O\) have main part \(az^{-k}\) and \(\ell_1\ge r, 0\le \ell_2 \le r-1\) be such that \(k = \ell_1 r + \ell_2\). Then
\[t^kh-a(t^rf)^{\ell_1-\ell_2}(t^{r+1}g)^{\ell_2} = t^kh- at^kf^{\ell_1-\ell_2}g^{\ell_2} \in t^kO_{k-1} \subseteq R,\]
proving \(t^kh \in R\) and this concludes the induction.   \\
\textbf{Step 6.} \emph{\(\textnormal{D}_+(t) = X\setminus\{p\}\) and \(\textnormal{h}^1(\cO_X) = h_1\). } Since \(O \cap \Bbbk[\![z]\!] = \Bbbk\) by Step 1, the open subscheme \(D_+(t)\cup\textnormal{D}_+\left(t^{r+1} g\right)=D_+(t) \cup \{p\}\), where \(g\) was chosen in Step 2, of \(X\) is not affine and hence a proper \(\Bbbk\)-scheme by \cite[Chapter IV,Exercise 1.4]{hartshorne}. Therefore, the embedding \(\iota(\textnormal{Spec}(O)) \cup \{p\} \to X\) is closed, so \(X = \iota(\textnormal{Spec}(O)) \cup \{p\}\), since \(X\) is integral.
For every coherent sheaf \(\mathcal{F}\) on \(X\), we have an exact sequence
\begin{align}\label{eq:adelicsequence}
    0 \longrightarrow \tH^0(\mathcal{F}) \longrightarrow \Gamma(X\setminus\{p\},\mathcal{F})\oplus \widehat{\mathcal{F}}\longrightarrow \textnormal{Q}(\widehat{\mathcal{F}})\longrightarrow\tH^1(\mathcal{F})\longrightarrow 0;
\end{align}
see e.g.\ \cite[Proposition 3]{parshin}. Setting \(\mathcal{F} = \cO_X\) and applying \(c\) results in the desired \(\textnormal{h}^1(\cO_X) = \dim(\Bbbk(\!(z)\!)/(\Bbbk[\![z]\!] + O)) = h_1\).
\end{proof}

\begin{theorem}\label{thm:geometrization_of_Alattice}
Let \(O\) be an unital \(\Bbbk\)-lattice, \(X\), \(p \) and \(c\) be the associated geometric datum from \cref{prop:geometrization_of_klattices} and \(W\) be some \(A\)-lattice of index \((h_0,h_1)\) satisfying \(OW\subseteq W\) for some finite-dimensional \(\Bbbk\)-algebra \(A\).
\begin{enumerate}
    \item \(\textnormal{gr}(W) \coloneqq \bigoplus_{k\in\bZ}t^kW_k \subseteq W[t,t^{-1}]\) is a graded \(\textnormal{gr}(O)\)-subalgebra and the associated sheaf of algebras \(\cA\) on \(X\) is coherent and torsion-free.
    \item There is a natural \(c\)-equivariant isomorphism \(\zeta\colon \widehat\cA_p \to A[\![z]\!]\) such that the induced map \(\zeta \colon \textnormal{Q}(\widehat{\cA}_p) \to A(\!(z)\!)\) satisfies \(\zeta(\Gamma(X\setminus\{p\},\cA)) = W\).
    \item \(\textnormal{h}^0(\cA) = h_0 \) and \(\textnormal{h}^1(\cA) = h_1\).
\end{enumerate} 
\end{theorem}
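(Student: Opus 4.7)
I would construct $\cA$ as $\widetilde{\textnormal{gr}(W)}$, the quasi-coherent sheaf on $X = \textnormal{Proj}(\textnormal{gr}(O))$ associated to the graded $\textnormal{gr}(O)$-module $\textnormal{gr}(W)$. The inclusions $O_iW_j \subseteq W_{i+j}$ (from $OW \subseteq W$) and $W_iW_j \subseteq W_{i+j}$ (from $W$ being a subalgebra), both coming from additivity of order under multiplication in $A(\!(z)\!)$, confirm that $\textnormal{gr}(W) \subseteq W[t,t^{-1}]$ is a graded $\textnormal{gr}(O)$-subalgebra.

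For coherence in (1), the key observation is that the lattice condition $\dim A(\!(z)\!)/(A[\![z]\!]+W) = h_1 < \infty$ forces the leading-coefficient maps $W_k/W_{k-1}\hookrightarrow A$ to be isomorphisms for every $k$ larger than some bound $N_0$. Indeed, a dimension count in the pole-order decomposition of $A(\!(z)\!)/A[\![z]\!]$ shows that $c_k := kd + h_0 - \dim W_k$ is non-decreasing and bounded above by $h_1$, and its stabilization is equivalent to $W_k/W_{k-1}\cong A$. Using the element $f \in O$ of main part $z^{-r}$ from \cref{prop:geometrization_of_klattices}, multiplication by $f$ acts as a nonzero scalar on leading coefficients, yielding $W_k = fW_{k-r}+W_{k-1}$ for $k > N_0+r$. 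An induction then gives $W = \Bbbk[f]\cdot W_{N_0+r}$, and the analogous computation in the graded setting shows that $\textnormal{gr}(W)$ is generated over $\textnormal{gr}(O)$ by the finite-dimensional piece $\bigoplus_{k\le N_0+r}t^kW_k$, which is the coherence statement. Torsion-freeness of $\cA$ is automatic: $W \subseteq A(\!(z)\!)$ is $O$-torsion-free since $O\subseteq\Bbbk(\!(z)\!)$ and $\Bbbk(\!(z)\!)$ is a field, and this passes to stalks.

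For (2), I would define $\zeta$ starting from the natural inclusion $\cA_p = (S^{-1}\textnormal{gr}(W))_0 \hookrightarrow A[\![z]\!]$ given by $w/h \mapsto w/h$, which lies in $A[\![z]\!]$ because $hz^k\in\Bbbk[\![z]\!]^\times$ when $h\in O_k\setminus O_{k-1}$; this map is $c$-equivariant and respects multiplication, so it extends to a $c$-equivariant morphism $\zeta\colon\widehat{\cA}_p\to A[\![z]\!]$ of $\widehat{\cO}_{X,p}$-algebras. Flatness of $\widehat R$ over $R = \cO_{X,p}$ makes $\zeta$ injective, and Nakayama reduces surjectivity to bijectivity on fibers $\cA_p/\mathfrak m_p\cA_p \to A[\![z]\!]/zA[\![z]\!] = A$. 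Surjectivity on fibers follows from $W_k/W_{k-1}\cong A$ for $k > N_0$; the principal calculation is injectivity: any $w/h$ with vanishing constant term has $w\in W_{k-1}$, and using the local uniformizer $u = f/g \in \mathfrak m_p$ from \cref{prop:geometrization_of_klattices} one finds explicitly $w/h = u \cdot \bigl(wg/(hf)\bigr)$ with $wg/(hf)\in\cA_p$, placing $w/h$ in $\mathfrak m_p\cA_p$. The equality $\zeta(\Gamma(X\setminus\{p\},\cA)) = W$ then follows from the standard identification of $\widetilde{\textnormal{gr}(W)}(D_+(t))$ with $\textnormal{gr}(W)[t^{-1}]_0 = W$ once $\zeta$ is extended to the fraction fields.

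Part (3) drops out by substituting the identifications from (2) into the adelic sequence \cref{eq:adelicsequence} applied to $\cA$: it becomes $0 \to \tH^0(\cA) \to W\oplus A[\![z]\!] \to A(\!(z)\!) \to \tH^1(\cA) \to 0$ with middle map $(w,a)\mapsto w-a$, whose kernel is $W\cap A[\![z]\!]$ of dimension $h_0$ and whose cokernel is $A(\!(z)\!)/(W+A[\![z]\!])$ of dimension $h_1$. I expect the main obstacles to be the finite-generation argument in (1) and the injectivity on fibers in (2); both rest on the same mechanism of bootstrapping leading-term information out of the finite lattice index, using the low-order elements of $O$ provided by \cref{prop:geometrization_of_klattices}.
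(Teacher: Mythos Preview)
Your proposal is correct and follows essentially the same route as the paper. The paper's finite-generation argument in Step~2 is organized slightly differently (it bounds by $2r$ and uses an element of $O$ of order $k-r$ rather than iterating multiplication by a fixed $f$), and in Step~3 it establishes the isomorphism $\widehat\cA_p\cong A[\![z]\!]$ more tersely by observing $\Bbbk[\![z]\!](\textnormal{Q}(W)\cap A[\![z]\!])=A[\![z]\!]$ without spelling out the fiber computation; your explicit injectivity argument via $w/h=u\cdot(wg/(hf))$ is a valid and slightly more hands-on version of the same step.
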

\begin{proof}
\textbf{Step 1.} \emph{\(\textnormal{gr}(W) \subseteq W[t,t^{-1}]\) is a graded \(\textnormal{gr}(O)\)-subalgebra and \(\cA\) is torsion-free. } The fact that \(O_kW_\ell, \mu_A(W_k \otimes W_\ell) \subseteq W_{k + \ell}\) for all \(k,\ell \in \bZ\) immediately implies that \(\textnormal{gr}(W) \subseteq W[t,t^{-1}]\) is a graded \(\textnormal{gr}(O)\)-subalgebra. Here \(\mu_A\colon A(\!(z)\!) \otimes_{\Bbbk(\!(z)\!)} A(\!(z)\!) \to A(\!(z)\!)\) denotes the multiplication map of \(A(\!(z)\!)\), which can be identified with the \(\Bbbk(\!(z)\!)\)-linear extension of the multiplication map of \(A\). It is obvious that \(\cA\) is torsion-free since \(\textnormal{gr}(W)\) is.\\
\textbf{Step 2.} \emph{\(\cA\) is coherent. } Since \(X\) is noetherian, we have to prove that \(\textnormal{gr}(W)\) is finitely-generated by \cite[Proposition 5.11.(c)]{hartshorne}. Choose \(r \in \bN\) such that for all \(k \ge r\) and for all \(v \in A\) there exists an element in \(W\) with main part \(vz^{-k}\) as well as an element in \(O\) with main part \(z^{-k}\).
Since \(W_0 = W \cap A[\![z]\!]\) is finite-dimensional, \(W_{k}\) is too, for all \(k \in \bZ\) and \(W_{-k}=\{0\}\) for \(k\) sufficiently large. Let \(B\) be any basis of the finite-dimensional vector space \(\bigoplus_{k = -\infty}^{2r}t^kW_{k} \subseteq \textnormal{gr}(W)\) and \(M\) be the \(\textnormal{gr}(O)\)-submodule of \(\textnormal{gr}(W)\) spanned by \(B\). 
From \(\Bbbk = O_0\) we see that \(t^kW_k \subseteq M\) for all \(k \le 2r\). We show \(\textnormal{gr}(W) = M\) through proving \(t^kW_k \subseteq M\) by induction on \(k \ge 2r\), which has already been verified for \(k = 2r\). By induction assumption \(t^{k-1}W_{k-1} \subseteq M\), which immediately implies that \(t^kW_{k-1} \subseteq M\) since \(t \in \textnormal{gr}(O)\). Let \(a \in W\) have main part \(v z^{-k}\). There exists \(b \in W\) with main part \(vz^{-r}\) and \(h \in O\) with main part \(z^{-r-\ell}\) for \(\ell = k-2r\). Therefore, \(t^ka - t^{r+\ell}ht^{r}a \in t^kW_{k-1}\subseteq M\). The observations \(t^rb \in M\) and \(t^{r+\ell}h \in \textnormal{gr}(O)\) show that \(t^ka \in M\), which concludes the induction.\\
\textbf{Step 3.} \emph{Construction of \(\zeta\) and proof of \(\zeta(\Gamma(X\setminus\{p\},\cA)) = W\). } The same reasoning as in Step 3 of \cref{prop:geometrization_of_klattices} yields \(\cA_p \cong \textnormal{Q}(W) \cap A[\![z]\!]\).
For each \(v \in A\) exists \(k \in \bN\) such that there is some \(a \in W\) with main part \(vz^{-k}\) and \(h \in O\) with main part \(z^{-k}\). Therefore, \(a/h \in \textnormal{Q}(W) \cap A[\![z]\!]\) satisfies \(a(0) = v\). This shows that \(\Bbbk[\![z]\!](Q(W)\cap A[\![z]\!]) = A[\![z]\!]\). Using \(c\) we get \(\zeta\) as the composition
\[\widehat{\cA}_p \cong \cA_p \otimes_{\cO_{X,p}} \compO_{X,p} \cong (\textnormal{Q}(W) \cap A[\![z]\!]) \otimes_{\textnormal{Q}(O) \cap \Bbbk[\![z]\!]} \Bbbk[\![z]\!] \cong A[\![z]\!],\]
where the last isomorphism is given by multiplication. The same arguments as in Step 4 of the proof of \cref{prop:geometrization_of_klattices} imply that \(\zeta(\Gamma(\textnormal{D}_+(t),\cA)) = \textnormal{gr}(W)[t^{-1}]_0 =  W\), so \(\textnormal{D}_+(t) = X\setminus\{p\}\) concludes the proof.\\
\textbf{Step 4. }\emph{\(\textnormal{h}^0(\cA) = h_0, \textnormal{h}^1(\cA) = h_1\). }
Applying \(\zeta\) to \cref{eq:adelicsequence} for \(\mathcal{F} = \cA\) results in \(\textnormal{h}^0(\cA) = h_0\) and \( \textnormal{h}^1(\cA) = h_1\).
\end{proof}

\begin{notation}
Let \(A\) be a finite-dimensional \(\Bbbk\)-algebra, \(O\) be a unital \(\Bbbk\)-lattice and \(W\) be some \(A\)-lattice satisfying \(OW \subseteq W\). We write \(\textnormal{GD}(O,W) \coloneqq ((X,\cA),(p,c,\zeta))\) for the geometric datum associated to the pair \((O,W)\) by virtue of \cref{thm:geometrization_of_Alattice}.
\end{notation}

\begin{remark}\label{rem:glattice_yields_sheaf_of_Lie_algebras}
Let \(A\) be a finite-dimensional \(\Bbbk\)-algebra, \(O\) be an unital \(\Bbbk\)-lattice, \(W\) be some \(A\)-lattice satisfying \(OW \subseteq W\) and \(\textnormal{GD}(O,W) = ((X,\cA),(p,c,\zeta))\).
\begin{enumerate}
    \item If \(A\) is a Lie algebra, \(\cA\) is a sheaf of Lie algebras, since the sections of \(\cA\) can be identified with subalgebras of \(A(\!(z)\!)\) using \(\zeta\).
    \item \(\cA\) is formally \(A\)-locally free in \(p\), hence weakly \(A\)-locally free in \(p\). Therefore, \cref{thm:etale_triviality_due_to_semisimple_fiber} implies that \(\cA\) is \'etale \(A\)-locally free in \(p\) if \(A\) is a semi-simple Lie algebra.
\end{enumerate}
\end{remark}

\begin{remark}\label{rem:Alattice_from_geometry}
The construction of geometric data from lattices can be inverted: let \(A\) be a finite-dimensional \(\Bbbk\)-algebra and \(\cA\) be a torsion-free coherent sheaf of algebras over a projective curve \(X\) over \(\Bbbk\) equipped with a \(\Bbbk\)-rational point \(p\) and a formal trivialization \(c \colon \compO_{X,p} \to \Bbbk[\![z]\!]\) as well as a \(c\)-equivariant \(\Bbbk\)-algebra isomorphism \(\zeta \colon \compA_p \to A[\![z]\!]\). Then the exact sequence \cref{eq:adelicsequence} for \(\mathcal{F} \in \{\cO_X,\cA\}\) implies that \(O\coloneqq c(\Gamma(X\setminus\{p\},\cO_X)\) is a \(\Bbbk\)-lattice and \(W \coloneqq \zeta(\Gamma(X\setminus\{p\},\cA)\) is an \(A\)-lattice such that \(O W \subseteq W\).

Note that if \(A\) is \emph{analytically rigid} as algebra, i.e.\ there exists no non-trivial formal deformation of \(A\), then \(\cA|_p\cong A\) is already sufficient for the existence of an isomorphism \(\zeta\) for an arbitrary choice of \(c\). A basic example of an analytically rigid algebra is a semi-simple Lie algebra. This follows from the vanishing of the second Lie algebra cohomology with values in \(A\) due to Whitehead's Lemma; see \cite{hazewinkel_gerstenhaber}.
\end{remark}

The problem of the existence of a \(\Bbbk\)-lattice stabilizing some \(A\)-lattice is considered in \cite{ostapenko} for a finite-dimensional simple \(\Bbbk\)-algebra \(A\) in the case that \(\Bbbk\) is algebraically closed. However, the methods actually apply to simple Lie algebras over arbitrary fields of characteristic 0 under an additional assumption.

\begin{definition}\label{def:central_algebra}
Let \(A\) be a finite-dimensional \(\Bbbk\)-algebra and \(C\) be the centralizer of the subspace of \(\textnormal{End}(A)\) generated by all left and right multiplication maps of \(A\), i.e.\ \(L(ab) = aL(b) = L(a)b\) for all \(L \in C, a,b \in A\). \(C\) is called \emph{centroid} of \(A\) and \(A\) is said to be \emph{central} if \(C = \Bbbk \textnormal{id}_A\).
\end{definition}

\begin{remark}\label{rem:simple_is_central_over_algebraically_closed_field}
Every finite-dimensional simple \(\Bbbk\)-algebra is central if \(\Bbbk\) is algebraically closed. Indeed, \cite[Chapter X,Theorem 1]{jacobson} tells us that the centroid of a  finite-dimensional simple \(\Bbbk\)-algebra is a field and as a consequence a finite field extension of \(\Bbbk\). 
\end{remark}

\begin{theorem}[\cite{ostapenko}]\label{thm:multipliers_are_klattice}
Let \(A\) be a finite-dimensional, central, simple \(\Bbbk\)-algebra and \(W\) be some \(A\)-lattice. The unital \(\Bbbk\)-algebra 
\(\textnormal{Mult}(W)\coloneqq \{f \in \Bbbk(\!(z)\!)\mid fW \subseteq W\}\) 
is a \(\Bbbk\)-lattice.
\end{theorem}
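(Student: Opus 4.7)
The plan is to verify directly the three defining conditions of a \(\Bbbk\)-lattice for \(O \coloneqq \textnormal{Mult}(W)\): that \(O\) is a unital \(\Bbbk\)-subalgebra of \(K \coloneqq \Bbbk(\!(z)\!)\), and that both \(\dim_\Bbbk(O \cap L)\) and \(\dim_\Bbbk(K/(O+L))\) are finite, where \(L \coloneqq \Bbbk[\![z]\!]\). The subalgebra property is immediate from the definition: \(1 \in O\), and \((f+g)W \subseteq W\), \((fg)W = f(gW) \subseteq W\) for \(f,g \in O\).

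For the first finite-dimensionality, I would note that the \(A\)-lattice axioms force \(W \not\subseteq A[\![z]\!]\), since otherwise \(A[\![z]\!] + W = A[\![z]\!]\) would fail to have finite codimension in \(A(\!(z)\!)\). Fix \(w \in W\) of some positive order \(r\), i.e.\ \(w \in z^{-r}A[\![z]\!] \setminus z^{-r+1}A[\![z]\!]\). For any \(f \in O \cap z^{r+1}L\), the product \(fw\) lies in \(zA[\![z]\!]\), and is simultaneously in \(W\) since \(f \in O\). The resulting \(\Bbbk\)-linear map \(O \cap z^{r+1}L \to W \cap A[\![z]\!]\), \(f \mapsto fw\), is injective because \(V \coloneqq A(\!(z)\!) \cong A \otimes_\Bbbk K\) is \(K\)-free and \(w \neq 0\). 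Combined with \(\dim_\Bbbk(L/z^{r+1}L) = r+1\) and \(\dim_\Bbbk(W \cap A[\![z]\!]) = h_0\), this yields \(\dim_\Bbbk(O \cap L) \leq r + 1 + h_0 < \infty\).

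The main step is the finite codimension \(\dim_\Bbbk(K/(O+L)) < \infty\), which is where centrality and simplicity of \(A\) enter crucially. First, \(V = A \otimes_\Bbbk K\) is central simple over \(K\), and its centroid as a \(\Bbbk\)-algebra is exactly \(K\) acting by scalar multiplication; in particular every \(f \in K\) acts centroidally on \(V\), satisfying \(f(xy) = (fx)y = x(fy)\). Second, the \(A\)-lattice property applied to the associated graded of \(W/(W\cap L) \hookrightarrow V/L\) shows that for all but finitely many orders \(k\), every \(v \in A\) is the main part of some element of \(W\) of order \(k\); picking such elements at a common large order for a \(\Bbbk\)-basis of \(A\) produces \(w_1, \dots, w_d \in W\) that form a \(K\)-basis of \(V\), in particular \(KW = V\).

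To translate the condition \(fW \subseteq W\) into finitely many linear conditions on \(f\), I would use that, by centroidality and since \(W\) is a subalgebra, \(f(w_{i_1}\cdots w_{i_n}) = (fw_{i_1})w_{i_2}\cdots w_{i_n} \in W\) whenever \(fw_i \in W\) for all \(i\); this would reduce \(fW\subseteq W\) to \(fw_i\in W\) if the \(w_i\) generated \(W\) as a \(\Bbbk\)-subalgebra of \(V\). The subtle point, and the main obstacle of the proof, is that this generation need not hold on the nose, so one must argue more carefully, either by enlarging the finite set of test elements (still keeping it finite) using the central simple structure of \(V\), or by inductively constructing multipliers of the form \(z^{-N} + (\textnormal{positive-power corrections}) \in O\) for every sufficiently large \(N\). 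The latter approach, which underlies Ostapenko's argument, works by using the \(A\)-lattice property to realise each required main part inside \(W\) at sufficiently negative order, thereby solving at each step a finite-dimensional linear system whose consistency is guaranteed by the centroidal rigidity afforded by central simplicity of \(A\). The existence of such a sequence of multipliers forces the image of \(O\) in \(K/L\) to have finite codimension, completing the verification that \(O\) is a \(\Bbbk\)-lattice.
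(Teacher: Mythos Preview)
Your treatment of the first two conditions is fine: the unital subalgebra property is immediate, and your injectivity argument \(O \cap z^{r+1}L \hookrightarrow W \cap A[\![z]\!]\), \(f \mapsto fw\), correctly bounds \(\dim_\Bbbk(O \cap L)\). The paper in fact does not spell this part out.

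The genuine gap is in the main step, the finite codimension of \(O + L\) in \(K\). You correctly isolate the obstruction to your first idea (a finite set \(w_1,\dots,w_d \in W\) need not generate \(W\) as a \(\Bbbk\)-subalgebra), but neither of the two routes you then gesture at is carried far enough to constitute an argument. In particular, your description of Ostapenko's method as ``solving at each step a finite-dimensional linear system whose consistency is guaranteed by the centroidal rigidity'' does not match what actually happens and omits the decisive technical input. Two concrete ingredients are missing. First, by central simplicity of \(A\), the left and right multiplications \(\ell_a, r_a\) for \(a \in A\) generate all of \(\End(A)\) (Jacobson); consequently the subalgebra \(J \subseteq \End(A)(\!(z)\!)\) generated by \(\{\ell_a, r_a \mid a \in W\}\), which stabilises \(W\), contains for every \(L \in \End(A)\) and every sufficiently large \(k\) an element with main part \(z^{-k}L\). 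Second, one must extract \emph{scalar} operators from \(J\): the paper does this via Amitsur's central polynomial for \(\End(A)\), a homogeneous multilinear noncommutative polynomial \(P\) taking nonzero values in \(\Bbbk\,\textnormal{id}_A\) on \(\End(A)\). Evaluating \(P\) on suitable elements of \(J\) produces \(f_\ell\,\textnormal{id}_A \in J\) with \(f_\ell\) of main part \(z^{-\ell}\) for all large \(\ell\), and \(f_\ell \in \textnormal{Mult}(W)\) follows because \(J\) preserves \(W\).

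Without a substitute for the central-polynomial step, there is no mechanism in your sketch that forces the existence of multipliers of every large order; ``centroidal rigidity'' alone does not supply one. If you want to avoid Amitsur's result, you would need some other device from the structure theory of central simple algebras that manufactures scalars out of the multiplication algebra, and that device should be named explicitly.
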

\begin{proof}
\textbf{Step 1.} \emph{Setup. }
Let us denote by \(r_a\) (resp. \(\ell_a\)) 
the right (resp. left) multiplication by an element \(a \in A(\!(z)\!)\) considered as an element in \(\textnormal{End}_{\Bbbk(\!(z)\!)}(A(\!(z)\!)) \cong \textnormal{End}(A)(\!(z)\!)\), i.e.\ \(\ell_a(b) = ab = r_b(a)\) for all \(a,b \in A(\!(z)\!)\). Note that \(r_a,\ell_a \in \End(A)\) for \(a \in A\). The fact that \(A\) is central combined with \cite[Chapter X,Theorem 4.]{jacobson} implies that the subalgebra of \(\textnormal{End}(A)\) generated by \(\{r_a,\ell_a\mid a\in A\}\) equals \(\textnormal{End}(A)\). Let \(J \subseteq \textnormal{End}(A)(\!(z)\!)\) be the subalgebra generated by \(\{r_{a},\ell_{a}\mid a\in W\}\).\\
\textbf{Step 2.} \emph{For every \(L \in \End(A)\) there exists \(r(L) \in \bN_0\) such that for all \(k\in \bN_0\) there is an element of \(J\) with main part \(z^{-k-r(L)}L\). }
There exists a non-commutative polynomial \(f = f(x_1,\dots,x_q)\) and \(a_1,\dots,a_q\in A\) such that \(f(m_1,\dots,m_q) = L\), where \(m_i \in \{r_{a_i},\ell_{a_i}\}\) for all \(i\in\{1,\dots,q\}\). Let \(\{f_i\}_{i = 1}^q\) be the unique polynomials defined by the following inductive process: \(f_q\) is the sum of all monomials of \(f\) depending on \(x_q\) and, if \(f_{i+1},\dots,f_q\) are given, \(f_i\) is the sum of all monomials of \(f-f_{i+1}-\dots -f_q\) which depend on \(x_i\). By construction, \(f_i\) depends only on \(x_1,\dots,x_i\) and every monomial of \(f_i\) contains a factor \(x_i\). Let  \(f_{ij}\) denote the homogeneous component of \(f_i\) of degree \(j\) and \(g_{ij}(x_1,\dots,x_i;y_i)\)
be the polynomial where the left most \(x_i\) appearing in every monomial of \(f_{ij}\) is changed to \(y_i\). Since \(W\) is an \(A\)-lattice, we can chose \(s \in \bN\) such that for all \(i\in\{1,\dots,q\}\) and \(k \in \bN_0\) there exists \(b_i^k\) in \(W\) with main part \(z^{-s-k}a_i\). Let \(\widetilde{m}^k_i\) be the left (resp. right) multiplication by \(b_i^k\) if \(m_i\) is the left (resp. right) multiplication by \(a_i\). By construction
\(g_{ij}\left(\widetilde{m}_1^0,\dots,\widetilde{m}_i^0;\widetilde{m}_i^{k}\right)\) has main part \(z^{-k-sj}f_{ij}(m_1,\dots,m_i)\). Thus,
\begin{equation}
    \sum_{i = 1}^q \sum_{j = 1}^{\textnormal{deg}(f_i)}g_{ij}\left(\widetilde{m}_1^0,\dots,\widetilde{m}_i^0;\widetilde{m}_i^{k+s(\deg(f)-j)}\right)
\end{equation}
has main part \(z^{-k-s\deg(f)}L\) and putting \(r(L) \coloneqq s\deg(f)\) proves the statement.\\
\textbf{Step 3.} \emph{There exists \(r \in \bN_0\) such that for all \(L \in \End(A)\) and \(k \in \bN_0\) there is an element of \(J\) with main part \(z^{-r-k}L\). }
Let \(\{L_i\}_{i = 1}^n\) be a basis of \(\End(A)\). Using Step 2, the result follows by choosing \(r \coloneqq \textnormal{max}\{r(L_i)\mid i \in \{1,\dots,q\}\}\). \\
\textbf{Step 4. }\emph{\(\textnormal{Mult}(W)\) is a \(\Bbbk\)-lattice. }
In \cite{amitsur} the author constructs a non-commutative homogeneous polynomial \(P = P(x_1,\dots,x_q)\) in \(q \coloneqq 2\textnormal{dim}(A)^2\)-variables which is linear in each variable and non-vanishing with values in \(\Bbbk \textnormal{id}_A\) if evaluated on \(\textnormal{End}(A)\). In particular, we may choose \(L_1,\dots,L_q\in \textnormal{End}(A)\) such that \(P(L_1,\dots,L_q) = \textnormal{id}_A\). Let \(r\in\bN\) be the integer from Step 3, i.e.\ for all \(k \in \bN_0\) and \(i \in \{1,\dots,q\}\) we may chose  \(\widetilde{L}^k_i \in J\) with main part \(z^{-r-k}L_i\). We have
\begin{equation}\label{eq:constructing_multipliers}
    P\left(\widetilde{L}^0_1,\dots,\widetilde{L}^0_{q-1},\widetilde{L}^{\ell-rq}_q\right) = f_\ell\textnormal{id}_A
\end{equation}
for all \(\ell \ge rq\), where \(f_\ell \in \Bbbk(\!(z)\!)\) has main part \(z^{-\ell}\). Since the left-hand side of \cref{eq:constructing_multipliers} is an element of \(J\), we can see that \(f_\ell \in \textnormal{Mult}(W)\). This concludes the proof.
\end{proof}

\section{Algebraic geometry of formal generalized \(r\)-matrices}\label{sec:geometrization}

\subsection{Geometrization of generalized \(r\)-matrices}\label{subsec:geometrization_of_generalized_rmatrices}
Let \(\fg\) be a finite-dimensional, central, simple Lie algebra over \(\Bbbk\) and \(r \in (\fg \otimes \fg)(\!(x)\!)[\![y]\!]\) be a formal generalized \(r\)-matrix. The algebra \(\fg(r)\) is a \(\fg\)-lattice of index \((0,0)\) by \cref{prop:Lie_subalgebra_of_generalized_rmatrix}, so \Cref{thm:multipliers_are_klattice} implies that every unital subalgebra \(O\) of \(\textnormal{Mult}(\fg(r)) = \{f\in \Bbbk(\!(z)\!)\mid f\fg(r) \subseteq \fg(r)\}\) of finite codimension is a \(\Bbbk\)-lattice satisfying \(O\fg(r) \subseteq \fg(r)\). Therefore, \cref{thm:geometrization_of_Alattice} provides a geometric datum \(\textnormal{GD}(O,\fg(r))= ((X,\cA),(p,c,\zeta))\), where:
\begin{itemize}
    \item \(X\) is an integral projective curve over \(\Bbbk\) of arithmetic genus \(\textnormal{dim}(\Bbbk(\!(z)\!)/(\Bbbk[\![z]\!] + O))\) that comes equipped with a \(\Bbbk\)-algebra isomorphism \(c\colon \compO_{X,p} \to \Bbbk[\![z]\!]\) at a \(\Bbbk\)-rational smooth point \(p\in X\), which induces an isomorphism \(\textnormal{Spec}(O) \to X\setminus \{p\}\);
    \item \(\cA\) is a coherent sheaf of Lie algebras on \(X\) which satisfies \(\textnormal{h}^0(\cA) = 0 = \textnormal{h}^1(\cA)\), is
    \'etale \(\fg\)-locally free in \(p\) (see \cref{rem:glattice_yields_sheaf_of_Lie_algebras}) and admits a natural \(c\)-equivariant Lie algebra isomorphism \(\zeta \colon \widehat\cA_p \to \fg[\![z]\!]\) such that \(\zeta(\Gamma(X\setminus\{p\},\cA)) =\fg(r)\).
\end{itemize}

\begin{example}\label{ex:generalized_rmatrices_over_P^1}
Assume \(X \cong \mathbb{P}^1_\Bbbk\), then \(O = \textnormal{Mult}(\fg(r))\), since \(O\) is integrally closed, and it holds that \(\Bbbk(\!(z)\!) = \Bbbk[\![z]\!] + O\). This and \(O \cap \Bbbk[\![z]\!] = \Bbbk\) can be used to deduce that \(O = \Bbbk[u^{-1}]\) for an arbitrary \(u\in z\Bbbk[\![z]\!]^\times\) such that \(u^{-1} \in O\). Let \(w \in z\Bbbk[\![z]\!]^\times\) be the compositional inverse of \(u\), i.e.\ \(w(u(z)) = z\). Then \(\widetilde{r}(x,y) \coloneqq r(w(x),w(y))\) satisfies \(\textnormal{Mult}(\fg(\widetilde{r})) = \Bbbk[z^{-1}]\). In other words, \(\fg(\widetilde{r})\) is homogeneous in the sense of  \cref{ex:homogeneous_Lie_subalgebras}. In particular, the formal generalized \(r\)-matrices \(r\) for which \(X \cong \mathbb{P}^1_\Bbbk\) are exactly those equivalent to the ones described in \cref{ex:homogeneous_Lie_subalgebras}.
\end{example}

\begin{remark}\label{rem:field_extension_geometry}
Let \(\Bbbk'\) be a field extension of \(\Bbbk\) and let \(r_{\Bbbk'} \in (\fg_{\Bbbk'}\otimes_{\Bbbk'}\fg_{\Bbbk'})(\!(x)\!)[\![y]\!]\) be the generalized \(r\)-matrix given in \cref{rem:field_extensions}, where we note that \(\fg_{\Bbbk'} \coloneqq \fg \otimes_{\Bbbk}\Bbbk'\) is a central simple Lie algebra over \(\Bbbk'\) by \cite[Chapter X, Theorem 3]{jacobson}. If \(O_{\Bbbk'}\) denotes the image of \(O\otimes \Bbbk'\) under the multiplication map \(\Bbbk(\!(z)\!)\otimes \Bbbk' \to \Bbbk'(\!(z)\!)\), the geometric datum \(\textnormal{GD}(O_{\Bbbk'},\fg(r_{\Bbbk'})) = ((X_{\Bbbk'},\cA),(p_{\Bbbk'},c_{\Bbbk'},\zeta_{\Bbbk'}))\) satisfies the following compatibilities:
\begin{itemize}
    \item \(O_{\Bbbk'} \cong O \otimes \Bbbk'\) induces an isomorphism \(X_{\Bbbk'} \cong X \times \textnormal{Spec}(\Bbbk')\) such that the canonical map \(O \to O_{\Bbbk'}\) is compatible with the canonical morphism \(\pi \colon X_{\Bbbk'} \to X\). Furthermore, \(\pi(p_{\Bbbk'}) = p\) and the following diagram commutes 
    \[\xymatrix{\compO_{X,p} \ar[r]^{c}\ar[d]_{\widehat{\pi}^\sharp_p}& \Bbbk[\![z]\!]\ar[d] \\ \compO_{X_{\Bbbk'},p_{\Bbbk'}} \ar[r]_{c_{\Bbbk'}} & \Bbbk'[\![z]\!]}.\]
    \item The multiplication map \(\fg(r) \otimes \Bbbk' \cong \fg_{\Bbbk'}(r_{\Bbbk'})\) induces an isomorphism \(\pi^*\cA \cong \cA_{\Bbbk'}\) of sheaves of Lie algebras such that the following diagram commutes
    \[\xymatrix{\compA_{p} \ar[r]^{\zeta}\ar[d]& \fg[\![z]\!]\ar[d] \\ \compA_{\Bbbk',p_{\Bbbk'}} \ar[r]_{\zeta_{\Bbbk'}} & \fg_{\Bbbk'}[\![z]\!]}.\]
\end{itemize}
In particular, if \(\Bbbk \to \Bbbk'\) is Galois with Galois group \(G\), then \(G\) acts on \(X_{\Bbbk'}\) by automorphisms of \(X\)-schemes and on \(\cA_{\Bbbk'}\) by \(\cO_X\)-linear automorphisms of sheaves of Lie algebras in such a way that \(X\) and \(\cA\) are the respective fixed objects. 
\end{remark}

\begin{lemma}\label{lem:geometry_of_equivalences}
For \(i \in \{1,2\}\), let \(r_i \in (\fg \otimes \fg)(\!(x)\!)[\![y]\!]\) be a generalized \(r\)-matrix with associated geometric datum \(\textnormal{GD}(\textnormal{Mult}(\fg(r_i)),\fg(r_i)) = ((X_i,\cA_i),(p_i,c_i,\zeta_i))\). The series \(r_1\) and \(r_2\) are equivalent if and only if there exists an isomorphism \(f\colon X_2 \to X_1\) mapping \(p_2\) to \(p_1\) as well as an isomorphism \(\phi\colon \cA_1\to f_*\cA_2\) of sheaves of Lie algebras.
\end{lemma}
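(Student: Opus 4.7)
The plan is to pass back and forth between equivalences of formal generalized \(r\)-matrices (at the level of series) and isomorphisms of the associated geometric data, using the graded-algebra and \(\textnormal{Proj}\)-sheafification formalism of \cref{subsec:geometry_of_lattices}.

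For the "only if" direction, assume \(r_1\) and \(r_2\) are equivalent via \((\mu,w,\varphi)\). By \cref{lem:equivalence_on_subalgebra_level} the \(\Bbbk\)-algebra automorphism \(\varphi_w\) of \(\fg(\!(z)\!)\) defined by \(a(z)\mapsto \varphi(z)a(w(z))\) sends \(\fg(r_1)\) onto \(\fg(r_2)\), and a short check shows that the accompanying automorphism \(h(z)\mapsto h(w(z))\) of \(\Bbbk(\!(z)\!)\) sends \(\textnormal{Mult}(\fg(r_1))\) onto \(\textnormal{Mult}(\fg(r_2))\). Since \(w\in z\Bbbk[\![z]\!]^\times\), both automorphisms preserve the order filtrations and therefore induce graded \(\Bbbk\)-algebra isomorphisms \(\textnormal{gr}(O_1)\to \textnormal{gr}(O_2)\) and \(\textnormal{gr}(\fg(r_1))\to \textnormal{gr}(\fg(r_2))\). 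Applying \(\textnormal{Proj}\) together with \cref{prop:geometrization_of_klattices} and \cref{thm:geometrization_of_Alattice} yields \(f\colon X_2\to X_1\) and \(\phi\colon \cA_1\to f_*\cA_2\); the equality \(f(p_2)=p_1\) follows since the graded map takes the distinguished ideal \((t)\subset \textnormal{gr}(O_1)\) to \((t)\subset \textnormal{gr}(O_2)\).

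For the converse, given \(f\) and \(\phi\), the completion \(\widehat{f}^\sharp_{p_2}\colon \compO_{X_1,p_1}\to\compO_{X_2,p_2}\), conjugated by \(c_1\) and \(c_2\), gives a \(\Bbbk\)-algebra automorphism of \(\Bbbk[\![z]\!]\); let \(w\in z\Bbbk[\![z]\!]^\times\) be the image of \(z\). Similarly, conjugating the completion of \(\phi\) by \(\zeta_1\) and \(\zeta_2\) produces a Lie algebra isomorphism \(\Psi\colon \fg[\![z]\!]\to\fg[\![z]\!]\) satisfying \(\Psi(g(z)a(z))=g(w(z))\Psi(a(z))\). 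Putting \(\varphi(z)\cdot a\coloneqq \Psi(a)\) for \(a\in\fg\) defines a series \(\varphi\in\End(\fg)[\![z]\!]\) for which \(\Psi(a(z))=\varphi(z)a(w(z))\) on all of \(\fg[\![z]\!]\). The bracket compatibility of \(\Psi\) then shows that \(\varphi(z)\) respects the Lie bracket of \(\fg\), and \(\Psi|_{z=0}\) being the fiber isomorphism \(\cA_1|_{p_1}\to\cA_2|_{p_2}\) yields \(\varphi(0)\in\Aut(\fg)\); hence \(\varphi\in\Aut_{\Bbbk[\![z]\!]\textnormal{-alg}}(\fg[\![z]\!])\).

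It remains to show that \(\widetilde r(x,y)\coloneqq (\varphi(x)\otimes\varphi(y))r_1(w(x),w(y))\) differs from \(r_2\) only by a scalar factor \(\mu(y)\in\Bbbk[\![z]\!]^\times\), which will yield the equivalence \((\mu,w,\varphi)\). By \cref{lemm:formal_equivalence}, \(\widetilde r\) is a formal generalized \(r\)-matrix in standard form, and \cref{lem:equivalence_on_subalgebra_level} gives \(\fg(\widetilde r)=\varphi_w(\fg(r_1))\), where \(\varphi_w\) is the Laurent extension of \(\Psi\) to \(\fg(\!(z)\!)\). Because our lattices have index \((0,0)\), \(\textnormal{h}^0(\cA_i)=\textnormal{h}^1(\cA_i)=0\) (\cref{thm:geometrization_of_Alattice}(3)) and the sequence \cref{eq:adelicsequence} embeds \(\Gamma(X_i\setminus\{p_i\},\cA_i)\) into \(\textnormal{Q}(\widehat\cA_{i,p_i})\), identifying the image with \(\fg(r_i)\) under \(\zeta_i\). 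The sheaf morphism \(\phi\) simultaneously induces \(\widehat\phi_{p_2}\) (which by construction defines \(\Psi\)) and the isomorphism on global sections over \(X_i\setminus\{p_i\}\) (which realizes the identifications of \(\fg(r_i)\)); compatibility inside \(\textnormal{Q}(\widehat\cA_{i,p_i})\) therefore forces \(\varphi_w(\fg(r_1))=\fg(r_2)\), i.e.\ \(\fg(\widetilde r)=\fg(r_2)\). Finally, matching main parts term by term against the unique normalized \(r\)-matrix \(r^W\) with \(\fg(r^W)=\fg(r_2)\) from \cref{prop:Lie_subalgebra_of_generalized_rmatrix} shows that any generalized \(r\)-matrix in standard form with subalgebra \(W\) is of the form \(\lambda(y)r^W(x,y)\); applying this to both \(\widetilde r\) and \(r_2\) yields \(r_2(x,y)=\mu(y)\widetilde r(x,y)\) for some \(\mu\in\Bbbk[\![z]\!]^\times\). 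The main technical step is this penultimate compatibility, because it is what links the completion-level construction of \((w,\varphi)\) with the global-section realization of the \(\fg\)-lattices \(\fg(r_i)\).
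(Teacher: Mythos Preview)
Your proof is correct and follows essentially the same route as the paper's: both directions pass through the graded constructions of \cref{prop:geometrization_of_klattices} and \cref{thm:geometrization_of_Alattice}, and the converse extracts \((w,\varphi)\) from the completions \(\widehat{f}^\sharp_{p_2}\) and \(\widehat{\phi}_{p_1}\) exactly as the paper does. You spell out a bit more than the paper---in particular the verification that \(\varphi\in\Aut_{\Bbbk[\![z]\!]\textnormal{-alg}}(\fg[\![z]\!])\) and the extraction of the rescaling \(\mu\) via \cref{prop:Lie_subalgebra_of_generalized_rmatrix}---whereas the paper simply cites \cref{lem:equivalence_on_subalgebra_level} together with the one-line computation \(\zeta_2\widehat{\phi}_{p_1}\zeta_1^{-1}(\fg(r_1))=\fg(r_2)\) and leaves the rest implicit.
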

\begin{proof}
\("\implies "\) Let \((\lambda,w,\varphi)\) be an equivalence of \(r_1\) to \(r_2\). The assignment \(p(z) \mapsto p_w(z) \coloneqq p(w(z))\) defines an isomorphism \(O_1 \coloneqq \textnormal{Mult}(\fg(r_1)) \to \textnormal{Mult}(\fg(r_2)) \eqqcolon O_2\) resulting in a graded isomorphism 
\[O_1[t] \supseteq \textnormal{gr}(O_1) \stackrel{f^\flat}\longrightarrow \textnormal{gr}(O_2) \subseteq O_2[t]\]
which maps \(t\) to \(t\). We can see from e.g.\ \cite[Section 13.2]{goertz_wedhorn} that this induces an isomorphism \(f \colon X_2 \to X_1\). This isomorphism is seen to satisfy \(f(p_2) = p_1\) after recalling the construction of \(p_1\) and \(p_2\) in Step 3 of the proof of \cref{prop:geometrization_of_klattices}.
\Cref{lem:equivalence_on_subalgebra_level} provides an isomorphism \(\fg(r_1) \to \fg(r_2)\) defined by \(a(z) \mapsto \varphi(z)a(w(z))\). This induces a graded \(f^\flat\)-equivariant isomorphism \(\textnormal{gr}(\fg(r_1)) \to \textnormal{gr}(\fg(r_2))\). The process of associating a quasi-coherent sheaf to a graded module is functorial (see e.g.\ \cite[Section 13.4]{goertz_wedhorn}) and the sheaf associated to \(\textnormal{gr}(\fg(r_2))\) equipped with the \(\textnormal{gr}(O_1)\)-module structure induced by \(f^\flat\) is exactly \(f_*\cA_2\). Thus, we obtain an isomorphism \(\phi\colon \cA_1\to f_*\cA_2\).\\
\("\Longleftarrow"\) Let \(w\) be the image of \(z\) under 
\[\Bbbk[\![z]\!] \stackrel{c_1^{-1}}{\longrightarrow} \compO_{X_1,p_1} \stackrel{\widehat{f}^\sharp_p}\longrightarrow \compO_{X_2,p_2}\stackrel{c_2}{\longrightarrow} \Bbbk[\![z]\!].\]
Then the \(\Bbbk\)-linear isomorphism
\[\fg[\![z]\!] \stackrel{\zeta_1^{-1}}{\longrightarrow} \compA_{1,p_1} \stackrel{\widehat{\phi}_{p_1}}\longrightarrow \compA_{2,p_2}\stackrel{\zeta_2}{\longrightarrow} \fg[\![z]\!],\]
where we implicitly used \(\big(\widehat{f_*\cA_2}\big)_{p_1} \cong \compA_{2,p_2}\), takes the form \(a(z)\mapsto \varphi(z)a(w(z))\) for some \(\varphi \in \Aut_{\Bbbk[\![z]\!]\textnormal{-alg}}(\fg)\). \cref{lem:equivalence_on_subalgebra_level} and the calculation
\[\zeta_2\widehat{\phi}_{p_1}\zeta_1^{-1}(\fg(r_1)) = \zeta_2\widehat{\phi}_{p_1}(\Gamma(X_1\setminus\{p_1\},\cA_1)) = \zeta_2(\Gamma(X_2\setminus\{p_2\},\cA_2)) = \fg(r_2)\] 
verify that \(r_1\) is equivalent to \(r_2\).
\end{proof}

\begin{remark}
A special case of \cref{lem:geometry_of_equivalences} was used in \cite{abedin_stepan_classical_twists} to classify twists of the standard bialgebra structure on twisted loop algebras by reducing it to the classification of trigonometric \(r\)-matrices given in \cite{belavin_drinfeld_solutions_of_CYBE_paper}.
\end{remark}

\begin{lemma}\label{lem:normalization_of_rmatrix_geometry}
Let \(\widetilde{O}\) be the integral closure of \(O\) and put \(\widetilde{\fg(r)} \coloneqq \widetilde{O}\fg(r)\). Then \(\widetilde{O}\) is naturally a \(\Bbbk\)-lattice and  \(\widetilde{\fg(r)}\) is a \(\fg\)-lattice. Let \(\textnormal{GD}(\widetilde{O},\widetilde{\fg(r)}) = ((\widetilde{X},\widetilde{\cA}),(\widetilde{p},\widetilde{c},\widetilde{\zeta}))\) be the geometric datum associated to \(\widetilde{O}\) and \(\widetilde{\fg(r)}\) by virtue of \cref{thm:geometrization_of_Alattice}.
\begin{enumerate}
    \item The canonical map \(O \subseteq \widetilde{O}\) induces a morphism \(\nu \colon \widetilde{X} \to X\) such that \(\nu(\widetilde{p}) = p\) and \(\nu\) is the normalization of \(X\).
    \item The canonical inclusion \(\fg(r) \subseteq \widetilde{\fg(r)}\) induces an injective morphism \(\iota \colon \cA \to \nu_*\widetilde{\cA}\) of sheaves of Lie algebras having a torsion cokernel.
    \item The morphisms \(\nu\) and \(\iota\) induce isomorphisms \(\cO_{X,p} \cong \cO_{\widetilde{X},p}\) and \(\cA_p \cong \widetilde{\cA}_{\widetilde{p}}\) identifying \(\widetilde{c}\) and \(\widetilde{\zeta}\) with \(c\) and \(\zeta\).
\end{enumerate}  
\end{lemma}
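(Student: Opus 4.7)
The plan is to deduce the lemma from the functoriality of the graded \(\textnormal{Proj}\)-construction of \cref{thm:geometrization_of_Alattice}: the inclusions \(O \subseteq \widetilde{O}\) and \(\fg(r) \subseteq \widetilde{\fg(r)}\) respect the order filtration, so they induce graded inclusions \(\textnormal{gr}(O) \hookrightarrow \textnormal{gr}(\widetilde{O})\) and \(\textnormal{gr}(\fg(r)) \hookrightarrow \textnormal{gr}(\widetilde{\fg(r)})\), and applying \(\textnormal{Proj}\) together with the associated-sheaf construction will then produce \(\nu\) and \(\iota\) automatically. The main obstacle is the preliminary claim that \(\widetilde{O}\) is a \(\Bbbk\)-lattice and \(\widetilde{\fg(r)}\) is a \(\fg\)-lattice, which is what is needed for \cref{thm:geometrization_of_Alattice} to apply to \((\widetilde{O}, \widetilde{\fg(r)})\).

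For this, I would first note that since \(O\) is a one-dimensional finitely generated \(\Bbbk\)-algebra by \cref{prop:geometrization_of_klattices}, its normalization \(\widetilde{O}\) is a finitely generated \(O\)-module and \(\widetilde{O}/O\) is supported on the singular locus of \(\textnormal{Spec}(O)\), hence finite-dimensional over \(\Bbbk\). From \(O \subseteq \widetilde{O}\), the space \(\Bbbk(\!(z)\!)/(\Bbbk[\![z]\!] + \widetilde{O})\) is a quotient of \(\Bbbk(\!(z)\!)/(\Bbbk[\![z]\!] + O)\) and hence finite-dimensional; and \((\widetilde{O} \cap \Bbbk[\![z]\!])/\Bbbk\) injects into \(\widetilde{O}/O\), so \(\widetilde{O} \cap \Bbbk[\![z]\!]\) is a finite-dimensional \(\Bbbk\)-subalgebra of \(\Bbbk[\![z]\!]\) and must equal \(\Bbbk\) because any non-constant \(f \in \Bbbk[\![z]\!]\) admits linearly independent powers \((f - f(0))^n\). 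For \(\widetilde{\fg(r)}\), the coherence of the sheaf \(\cA\) associated to \((O,\fg(r))\) by \cref{thm:geometrization_of_Alattice} makes \(\fg(r) = \Gamma(X\setminus\{p\},\cA)\) a finitely generated \(O\)-module, so \(\widetilde{\fg(r)} = \widetilde{O}\fg(r)\) is finitely generated over \(O\) and the cokernel \(\widetilde{\fg(r)}/\fg(r)\) is supported on the singular locus, hence finite-dimensional over \(\Bbbk\). Since \(\fg(r) \cap \fg[\![z]\!] = 0\), the intersection \(\widetilde{\fg(r)} \cap \fg[\![z]\!]\) injects into \(\widetilde{\fg(r)}/\fg(r)\) and is therefore finite-dimensional, while \(\fg(\!(z)\!) = \fg[\![z]\!] + \widetilde{\fg(r)}\) is immediate from \(\widetilde{\fg(r)} \supseteq \fg(r)\). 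The commutativity of \(\widetilde{O}\) gives \([fa,gb] = fg[a,b]\), so \(\widetilde{\fg(r)}\) is a Lie subalgebra of \(\fg(\!(z)\!)\), and \(\widetilde{O}\widetilde{\fg(r)} \subseteq \widetilde{\fg(r)}\) holds tautologically.

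Once the lattice properties are in place, claim (1) follows from graded functoriality: \(\nu(\widetilde{p}) = p\) because \((t) \cap \textnormal{gr}(O) = t\cdot\textnormal{gr}(O) = p\); the curve \(\widetilde{X}\) is normal since \(\widetilde{X}\setminus\{\widetilde{p}\} = \textnormal{Spec}(\widetilde{O})\) is so by construction and \(\widetilde{p}\) is smooth by \cref{prop:geometrization_of_klattices}, and \(\nu\) is birational via \(Q(\widetilde{O}) = Q(O)\), identifying \(\nu\) with the normalization of \(X\). For (2), the morphism \(\iota\) is Lie-algebra linear by construction, injective since \(\cA\) is torsion-free, and has torsion cokernel because the generic stalks agree: \(Q(\widetilde{\fg(r)}) = \widetilde{\fg(r)}\otimes_{\widetilde{O}} Q(\widetilde{O}) = \fg(r)\otimes_O Q(O) = Q(\fg(r))\). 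For (3), the explicit stalk descriptions \(\cO_{X,p} = Q(O)\cap\Bbbk[\![z]\!]\) and \(\cA_p = Q(\fg(r))\cap \fg[\![z]\!]\) from Step 3 of the proofs of \cref{prop:geometrization_of_klattices} and \cref{thm:geometrization_of_Alattice} agree verbatim with the corresponding descriptions at \(\widetilde{p}\) after substituting \(Q(\widetilde{O}) = Q(O)\) and \(Q(\widetilde{\fg(r)}) = Q(\fg(r))\), so \(\nu\) and \(\iota\) induce the claimed stalk isomorphisms; the trivializations \(c, \zeta\) and \(\widetilde{c}, \widetilde{\zeta}\), being assembled from the same underlying Laurent-series data, then coincide under these.
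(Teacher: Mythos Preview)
Your proposal is correct and follows essentially the same approach as the paper: both exploit the finiteness of \(O \to \widetilde{O}\), pass to the graded modules to build \(\nu\) and \(\iota\) via \(\textnormal{Proj}\)-functoriality, and identify the stalks at \(p\) and \(\widetilde{p}\) through the explicit description \(\cO_{X,p} \cong \textnormal{Q}(O)\cap \Bbbk[\![z]\!]\) and its analogue for \(\cA\). The only cosmetic difference is that the paper deduces the torsion cokernel from \(\iota\) being an isomorphism near \(p\), whereas you argue via equality of generic stalks; on an integral curve these are equivalent.
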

\begin{proof}
We have \(\widetilde{O}\subseteq \textnormal{Q}(\widetilde{O}) = \textnormal{Q}(O)  \subseteq  \Bbbk(\!(z)\!)\) is a unital subalgebra and it is well-known that \(O \to \widetilde{O}\) is finite. Thus, \(\widetilde{O}\) is a \(\Bbbk\)-lattice and \( \widetilde{\fg(r)}\) is a \(\fg\)-lattice. Let \(\textnormal{GD}(\widetilde{O},\widetilde{\fg(r)}) = ((\widetilde{X},\widetilde{\cA}),(\widetilde{p},\widetilde{c},\widetilde{\zeta}))\). Then \(\cO_{X,p} \cong \textnormal{Q}(O)\cap \Bbbk[\![z]\!] = \textnormal{Q}(\widetilde{O}) \cap \Bbbk[\![z]\!] \cong \cO_{\widetilde{X},\widetilde{p}}\) is compatible with \(O \to \widetilde{O}\), resulting in the morphism \(\nu \colon \widetilde{X} \to X\) mapping \(\widetilde{p}\) to \(p\).  The morphism \(\nu\) is finite and \(\widetilde{X}\) is smooth, so \(\nu \colon \widetilde{X} \to X\) is the normalization of \(X\). Now \(\fg(r) \to \widetilde{\fg(r)}\) induces an injective graded morphism \(\textnormal{gr}(\fg(r)) \to \textnormal{gr}\big(\widetilde{\fg(r)}\big)\) which results in the injective morphism \(\iota \colon\cA \to \nu_*\widetilde{\cA}\). Clearly, \(\iota\) is an isomorphism locally around \(p\), thus we see that the cokernel of \(\iota\) is a torsion sheaf. The canonical maps \(\nu^\sharp_p\colon \cO_{X,p} \to \cO_{\widetilde{X},p}\) and \(\iota_p\colon \cA_p \to \big(\nu_*\cA\big)_{\widetilde p} \cong \widetilde{\cA}_{\widetilde{p}}\) are isomorphisms and are easily seen to identify \(\widetilde{c}\) and \(\widetilde{\zeta}\) with \(c\) and \(\zeta\).
\end{proof}

\begin{theorem}\label{prop:geometry_of_generalized_rmatrices}
The following results are true.
\begin{enumerate}
    \item The genus \(\widetilde{g} \coloneqq \textnormal{h}^1(\cO_{\widetilde{X}})\) of \(\widetilde{X}\) is at most one.
    \item Let \(\Bbbk(\!(z)\!)\) be equipped with the \(\Bbbk\)-bilinear form defined by \((f,g)\mapsto \textnormal{res}_0fg\textnormal{d}z\). Then 
    \[\widetilde{O}^\bot\fg(r)\subseteq \widetilde{O}^\bot\widetilde{\fg(r)} \subseteq \widetilde{\fg(r)}^\bot \subseteq \fg(r)^\bot.\]
    \item If \(\widetilde{g}=1\) and \(O = \textnormal{Mult}(\fg(r))\), we have: \(\fg(r) = \widetilde{\fg(r)}\), \(X = \widetilde{X}\) and the Killing form of \(\widetilde\cA\) (see \cref{def:sheaf_Killing_form}) is perfect. Furthermore, there exists \(\lambda \in \Bbbk[\![z]\!]\) and \(w \in z\Bbbk[\![z]\!]^\times\) such that \(\widetilde{r}(x,y) = ´\lambda(y)r(w(x),w(y))\) is normalized and skew-symmetric. In particular, \(r\) is equivalent to the normalized formal \(r\)-matrix \(\widetilde{r}\).
\end{enumerate}
\end{theorem}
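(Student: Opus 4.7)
The three parts build on each other. Part (1) is a Riemann--Roch argument using the Killing form, part (2) is a direct interpretation of the Killing form on the smooth normalization, and the bulk of the work lies in part (3).

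For part (1), $\widetilde X$ is smooth and $\widetilde\cA$ is torsion-free coherent by \cref{thm:geometrization_of_Alattice}, hence locally free of rank $d = \dim\fg$. The short exact sequence $0 \to \cA \to \nu_*\widetilde\cA \to Q \to 0$ from \cref{lem:normalization_of_rmatrix_geometry}, together with $\chi(\cA) = 0$ (by construction of $\cA$ from a $\fg$-lattice of index $(0,0)$) and the torsion of $Q$, yields $\chi(\widetilde\cA) = \dim Q \ge 0$. On the other hand, \cref{rem:glattice_yields_sheaf_of_Lie_algebras} and \cref{lem:fiber_of_sheaf_Killing_form} imply that the Killing form $K_{\widetilde\cA}$ is non-degenerate at $\widetilde p$, so $\det K_{\widetilde\cA}$ is a non-zero global section of $(\det\widetilde\cA)^{\otimes -2}$ and $\deg\widetilde\cA \le 0$. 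Riemann--Roch $\chi(\widetilde\cA) = \deg\widetilde\cA + d(1-\widetilde g)$ then forces $\widetilde g \le 1$.

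For part (2), the first and third inclusions are immediate from $\fg(r) \subseteq \widetilde{\fg(r)}$ (the third reverses containment under orthogonal complement). The second inclusion reduces to the claim that $\kappa(\widetilde{\fg(r)},\widetilde{\fg(r)}) \subseteq \widetilde O$. Since $\widetilde\cA$ is locally free on the smooth curve $\widetilde X$, its Killing form is a well-defined morphism $\widetilde\cA\otimes_{\cO_{\widetilde X}}\widetilde\cA \to \cO_{\widetilde X}$; restricting to sections over the affine open $\widetilde X\setminus\{\widetilde p\}$ and using the identification $\widetilde\zeta$, this realises the restriction of $\kappa$ to $\widetilde{\fg(r)}\times\widetilde{\fg(r)}$ as taking values in $\Gamma(\widetilde X\setminus\{\widetilde p\},\cO_{\widetilde X}) = \widetilde O$.

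For part (3), assuming $\widetilde g = 1$ the bounds in part (1) collapse to equalities: $\deg\widetilde\cA = 0$ and $\chi(\widetilde\cA) = 0$, so $Q = 0$ and the embedding $\iota$ in \cref{lem:normalization_of_rmatrix_geometry} is an isomorphism. Taking sections over the affine complements of $p$ and $\widetilde p$ yields $\fg(r) = \widetilde{\fg(r)}$; combined with $\widetilde O \subseteq \textnormal{Mult}(\widetilde{\fg(r)}) = \textnormal{Mult}(\fg(r)) = O$ and the reverse inclusion $O \subseteq \widetilde O$, I obtain $O = \widetilde O$, so $X = \widetilde X$. The nonzero determinant of $K_{\widetilde\cA}$ is now a section of a trivial line bundle, hence nowhere vanishing, so $K_{\widetilde\cA}$ is perfect.

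For the skew-symmetry, the smooth genus-one curve $X$ with $\Bbbk$-rational point $p$ is an elliptic curve and $\omega_X \cong \cO_X$; fix a nowhere-vanishing $\eta \in \Gamma(X,\omega_X)$. Under $c$, $\eta$ corresponds to $h(z)\,dz$ for a unit $h \in \Bbbk[\![z]\!]^\times$. I produce a coordinate transformation $w \in z\Bbbk[\![z]\!]^\times$ together with an appropriate rescaling $\mu$ such that, after replacing $r$ by the equivalent normalized formal generalized $r$-matrix $\widetilde r(x,y) = \mu(y) r(w(x),w(y))$, the pull-back of $\eta$ corresponds to $dz$ under the trivialization of the geometric datum of $\widetilde r$. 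For $a,b \in \fg(\widetilde r) = \Gamma(X\setminus\{p\},\cA)$, part (2) gives $K_\cA(a,b) \in O$, so $K_\cA(a,b)\eta$ is a rational 1-form on $X$ regular on $X\setminus\{p\}$; the residue theorem on $X$ forces $0 = \textnormal{res}_p K_\cA(a,b)\eta = \textnormal{res}_0 \kappa(a,b)\,dz = \kappa_0(a,b)$. Hence $\fg(\widetilde r) \subseteq \fg(\widetilde r)^\bot$, and since both are complements to $\fg[\![z]\!]$ in $\fg(\!(z)\!)$, they coincide; \cref{lem:orthogonal_complement_of_g(r)} then gives $\overline{\widetilde r} = \widetilde r$. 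The main technical step will be the coordinate-rescaling bookkeeping: producing a single equivalence that simultaneously aligns $\eta$ with $dz$ in the formal parameter at the distinguished point and preserves the normalized standard form, so that the residue theorem on $X$ applies cleanly after the replacement.
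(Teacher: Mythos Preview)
Your proposal is correct and follows the same overall architecture as the paper's proof: Riemann--Roch plus the Killing form for (1), the observation that $\kappa(\widetilde{\fg(r)},\widetilde{\fg(r)})\subseteq\widetilde O$ for (2), and the residue theorem on the elliptic curve for the skew-symmetry in (3).

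There is one technical difference worth noting. In (1) and in the first half of (3) the paper argues via Serre duality: it first proves $\textnormal{h}^1(\widetilde\cA)=\textnormal{h}^1(\widetilde\cA^*)=0$, applies Riemann--Roch to both $\widetilde\cA$ and $\widetilde\cA^*$, and then, when $\widetilde g=1$, uses $\textnormal{h}^0(\widetilde\cA)=\textnormal{h}^1(\widetilde\cA^*)=0$ in the long exact sequence to kill the cokernel $Q$; perfection of $K_{\widetilde\cA}$ is again obtained by Serre duality applied to the sequence $0\to\widetilde\cA\to\widetilde\cA^*\to\textnormal{Cok}(K^\textnormal{a})\to 0$. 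You bypass Serre duality entirely: the inequality $\chi(\widetilde\cA)=\dim Q\ge 0$ from the normalization sequence, together with $\deg\widetilde\cA\le 0$ from the nonzero section $\det K_{\widetilde\cA}\in\textnormal{H}^0((\det\widetilde\cA)^{-2})$, already gives $\widetilde g\le 1$; and when $\widetilde g=1$ both inequalities are forced to be equalities, yielding $Q=0$ and (since a degree-zero line bundle with a nonzero section is trivial) nowhere vanishing of $\det K_{\widetilde\cA}$. This is a genuine streamlining---it avoids invoking duality theory on the possibly singular curve $X$ and keeps everything on $\widetilde X$---at the cost of not producing the intermediate vanishing statements $\textnormal{h}^1(\widetilde\cA)=\textnormal{h}^1(\widetilde\cA^*)=0$, which the paper records but does not need elsewhere in this proof.
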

\begin{proof}
\textbf{Step 1.} \emph{\(\textnormal{h}^1(\widetilde{\cA}) = 0 = \textnormal{h}^1(\widetilde{\cA}^*)\). } 
Using \(\textnormal{h}^1(\textnormal{Cok}(\iota)) = 0 = \textnormal{h}^1(\cA)\) in the long exact sequence in cohomology of
\begin{equation}\label{eq:normalizationshortexactseq}
0 \longrightarrow \cA \stackrel{\iota}\longrightarrow \nu_*\widetilde\cA \longrightarrow \textnormal{Cok}(\iota)\longrightarrow 0,
\end{equation}
implies that \(\textnormal{h}^1(\widetilde\cA) = 0\). Here we used \(\iota\) from \cref{lem:normalization_of_rmatrix_geometry} and the fact that \(\textnormal{Cok}(\iota)\) is a torsion sheaf.
Let \(K^\textnormal{a}\colon \widetilde\cA \to \widetilde\cA^*\) be the canonical \(\cO_{\widetilde X}\)-linear morphism induced by the Killing form \(K\) of \(\widetilde{\cA}\). \Cref{lem:fiber_of_sheaf_Killing_form} implies that \(K|_p\) is the Killing form of \(\widetilde\cA|_p \cong \cA|_p \cong \fg\). In particular, \(K|_p\) is non-degenerate, so \(K^\textnormal{a}|_p\) is an isomorphism. Therefore, \(\textnormal{Ker}(K^\textnormal{a})\) and \(\textnormal{Cok}(K^\textnormal{a})\) are  torsion sheaves. Since \(\widetilde\cA\) is locally free, this forces \(\textnormal{Ker}(K^\textnormal{a})\) to vanish. Using \(\textnormal{h}^1(\textnormal{Cok}(K^\textnormal{a})) = 0 = \textnormal{h}^1(\widetilde{\cA})\) in the long exact sequence in cohomology of
\begin{equation}\label{eq:tildeKexactseq}
0 \longrightarrow \widetilde\cA \longrightarrow \widetilde\cA^* \longrightarrow \textnormal{Cok}(K^\textnormal{a})\longrightarrow 0
\end{equation}
results in \(\textnormal{h}^1(\widetilde\cA^*) = 0\). \\
\textbf{Step 2.} \emph{\(\widetilde{g}\le 1\), i.e.\ (1) holds. } The Riemann-Roch theorem for \(\widetilde\cA\) and \(\widetilde\cA^*\) (e.g.\ in the version of \cite[Chapter 7, Exercise 3.3]{liu}) reads
\begin{align*}
    &0 \le \textnormal{h}^0(\widetilde{\cA})-\textnormal{h}^1(\widetilde{\cA}) = \textnormal{deg}(\textnormal{det}(\widetilde{\cA})) + (1-\widetilde g)\textnormal{rank}(\widetilde{\cA})\\&0 \le \textnormal{h}^0(\widetilde{\cA}^*)-\textnormal{h}^1(\widetilde{\cA}^*) = -\textnormal{deg}(\textnormal{det}(\widetilde{\cA})) + (1-\widetilde g)\textnormal{rank}(\widetilde{\cA}),
\end{align*}
where we used that \(\textnormal{det}(\widetilde{\cA}^*) = \textnormal{det}(\widetilde{\cA})^*\) implies \(\textnormal{deg}(\textnormal{det}(\widetilde{\cA}^*)) = -\textnormal{deg}(\textnormal{det}(\widetilde{\cA}))\). We conclude \(\widetilde g \le 1\).\\
\textbf{Step 3.} \emph{The statement in (2) holds. } The diagram
\begin{align}\label{eq:kappaKdiagram}
    \xymatrix{\Gamma(U,\widetilde\cA) \times \Gamma(U,\widetilde\cA) \ar[r]^-{K_U}\ar[d]_{\widetilde{\zeta} \times \widetilde{\zeta}}&\Gamma(U,\cO_{\widetilde{X}})\ar[d]^{\widetilde{c}}\\\fg(\!(z)\!)\times\fg(\!(z)\!)\ar[r]_-\kappa&\Bbbk(\!(z)\!)}
\end{align}
commutes for all affine open \(U \subseteq X\) such that \(\widetilde\cA|_U\) is free and consequently for all \(U\subseteq X\) open by a gluing argument. In particular, \(\kappa(a,b) \in \widetilde{c}(\Gamma(\widetilde{X}\setminus\{\widetilde{p}\},\cO_{\widetilde{X}})) = \widetilde{O}\) for all \(a,b \in \widetilde{\fg(r)}\). Hence, we see that 
\[\kappa_0(fa,b) = \textnormal{res}_0\kappa(fa,b)\textnormal{d}z = \textnormal{res}_0f\kappa(a,b)\textnormal{d}z = 0\]
for all \(f \in \widetilde{O}^\bot\). Therefore, \(fa \in \widetilde{\fg(r)}^\bot\) and we can complete the chain of inclusions by observing that \(\fg(r) \subseteq \widetilde{\fg(r)}\) implies \(\widetilde{\fg(r)}^\bot \subseteq \fg(r)\).\\
\textbf{Step 4.} \emph{If \(\widetilde{g} = 1\), we have  \(\textnormal{Mult}(\fg(r)) = \widetilde{O}\) and \(\textnormal{GD}(\widetilde{O},\fg(r)) =  ((\widetilde{X},\widetilde\cA),(\widetilde{p},\widetilde{c},\widetilde{\zeta}))\). } Let \(\Omega^1_{\widetilde{X}}\) be the sheaf of regular 1-forms on \(\widetilde{X}\). We have \(\textnormal{H}^1(\Omega^1_{\widetilde{X}}) = \Bbbk \cdot \eta\) for some global 1-form \(\eta\) on \(\widetilde{X}\) and this choice defines an isomorphism \(\Omega^1_{\widetilde{X}} \cong \cO_{\widetilde{X}}\). Serre duality (e.g.\ in the version \cite[Chapter 6, Remark 4.20 and Theorem 4.32]{liu}) and Step 1 provide that \(0 = \textnormal{h}^1(\widetilde{\cA}^*) = \textnormal{h}^0(\widetilde{\cA})\). Combined with the fact that \(\textnormal{h}^1(\cA) = 0\) and \(\textnormal{Cok}(\iota)\) is torsion, the long exact sequence of \cref{eq:normalizationshortexactseq} in cohomology implies that \(\iota \colon \cA \to \nu_*\widetilde{\cA}\) is an isomorphism. In particular, \(\widetilde{\fg(r)} = \fg(r)\), \(\textnormal{Mult}(\fg(r)) = \widetilde O\) and \(X = \widetilde{X}\) in case that \(O = \textnormal{Mult}(\fg(r))\).\\
\textbf{Step 5.} \emph{Concluding the proof of (3). }
Let \(\textnormal{d}u(z) = u'(z)\textnormal{d}z = c^*(\eta)\) for some global 1-form \(\eta\) on \(X\) (see \cref{rem:c*_construction_for_1_forms} below) and \(a,b \in \Gamma(X\setminus\{p\},\cA)\). If \(w \in z\Bbbk[\![z]\!]^\times\) is the series uniquely determined by \(u(w(z)) = z\) and \(\widetilde{a} \coloneqq \zeta(a),\widetilde{b} \coloneqq \zeta(b)\), we may calculate
\begin{align*}
    &\kappa_0\left(\widetilde{a}(w(z)),\widetilde{b}(w(z))\right) = \textnormal{res}_0 \kappa\left(\widetilde{a}(w(z)),\widetilde{b}(w(z))\right)\textnormal{d}z\\&= \textnormal{res}_0 \kappa\left(\widetilde{a}(z),\widetilde{b}(z)\right)\textnormal{d}u(z) = \textnormal{res}_p K(a,b)\eta = 0.
\end{align*}
Here the second to last equality uses \cref{eq:kappaKdiagram} and \cref{rem:c*_construction_for_1_forms} below while the last equality is due to the residue theorem \cite[Corollary of Theorem 3]{tate_residue_paper} under consideration of \(K(a,b)\eta \in \Gamma(X\setminus\{p\},\Omega_X)\). Thus, the image \(W\) of \(\fg(r)\) under \(a(z) \mapsto a(w(z))\) satisfies \(W^\bot = W\). \Cref{lem:equivalence_on_subalgebra_level} states that  \(W = \fg(\widetilde{r})\) for \(\widetilde{r}(x,y) = \lambda(y)r(w(x),w(y))\), where \(\lambda \in\Bbbk[\![z]\!]\) is arbitrary. This shows that \(\widetilde{r}\) is a formal \(r\)-matrix if we chose \(\lambda\) in such a way that \(\widetilde{r}\) is normalized; see \cref{lem:orthogonal_complement_of_g(r)}. That \(K\) is indeed perfect follows from \(\textnormal{Cok}(K^\textnormal{a}) = 0\), which is a consequence of using \(\textnormal{h}^1(\widetilde{\cA}) = \textnormal{h}^0(\widetilde{\cA}^*) = 0\) in the long exact sequence of \cref{eq:tildeKexactseq} in cohomology. Here we used Serre duality again.
\end{proof}
\begin{remark}\label{rem:c*_construction_for_1_forms}
Let \(\omega_X\) be the dualizing sheaf of \(X\). Since \(p\) is smooth, \(\omega_{X,p}\) can be identified with the K\"ahler differentials \(\Omega^1_{\cO_{X,p}/\Bbbk}\). Therefore, using e.g.\ \cite[Corollary 12.5 and Example 12.7]{kunz_kaehler_differentials}, we obtain a \(c\)-equivariant isomorphism
\(c^*\colon \widehat{\omega}_{X,p} \to \Bbbk[\![z]\!]\textnormal{d}z\). More precisely, the differential \(\cO_{X,p} \to \Omega^1_{\cO_{X,p}/\Bbbk} \cong \omega_{X,p}\) induces a continuous differential \(\textnormal{d} \colon \widehat{\cO}_{X,p}\to \widehat{\omega}_{X,p}\) whose image generates \(\widehat{\omega}_{X,p}\) and \(c^*(\textnormal{d}f) = c(f)'\textnormal{d}z\). The isomorphism \(c^*\) respects residues by \cite[Theorem 2]{tate_residue_paper}, i.e.\ \(\textnormal{res}_p \eta = \textnormal{res}_0 c^*\eta\) for all \(\eta \in \textnormal{Q}(\widehat{\omega}_{X,p})\). Here we extended \(c^*\) to a map \(\textnormal{Q}(\widehat{\omega}_{X,p}) \to \Bbbk(\!(z)\!)\textnormal{d}z\).
\end{remark}

\subsection{Geometric trichotomy of formal generalized \(r\)-matrices}
As in the previous subsection, let \(\fg\) be a finite-dimensional, central, simple Lie algebra over \(\Bbbk\). 

\begin{theorem}\label{thm:geometry_of_formal_rmatrices}
Let \(r \in (\fg \otimes \fg)(\!(x)\!)[\![y]\!]\) be a normalized formal \(r\)-matrix. There exists an unital \(\Bbbk\)-subalgebra \(O \subseteq \textnormal{Mult}(\fg(r)) = \{f\in\Bbbk(\!(z)\!)\mid f\fg(r) \subseteq \fg(r)\}\) of finite codimension such that the associated geometric datum \(\textnormal{GD}(O,\fg(r)) = ((X,\cA),(p,c,\zeta))\) has the following properties:
\begin{enumerate}
    \item \(X\) has arithmetic genus one.
    \item Let \(\breve{X}\) be the smooth locus of \(X\). The locally free sheaf \(\cA|_{\breve{X}}\) of Lie algebras is \'etale \(\fg\)-locally free and its Killing form extends uniquely to an invariant perfect pairing \(\cA \times \cA \to \cO_X\).
    \item There exists a global generator \(\eta\) of the dualizing sheaf \(\omega_X\) of \(X\) such that \(c^*(\widehat{\eta}_p) = \textnormal{d}z\) in the notation of \cref{rem:c*_construction_for_1_forms}.
\end{enumerate}
\end{theorem}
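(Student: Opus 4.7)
Put $O_0 \coloneqq \textnormal{Mult}(\fg(r))$ and denote by $\widetilde O_0$ its integral closure. Skew-symmetry of $r$ gives $\fg(r) = \fg(r)^\bot$ via \cref{lem:orthogonal_complement_of_g(r)}, which combined with \cref{prop:geometry_of_generalized_rmatrices}.(2) yields the crucial inclusion $\widetilde O_0^\bot \subseteq O_0$ that makes elements of $\widetilde O_0^\bot$ available as multipliers of $\fg(r)$. Since \cref{prop:geometry_of_generalized_rmatrices}.(1) forces the geometric genus $\widetilde g \coloneqq \textnormal{h}^1(\cO_{\widetilde X})$ to lie in $\{0, 1\}$, the construction of $O$ splits into two cases.

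If $\widetilde g = 1$, I would take $O \coloneqq O_0$. Then \cref{prop:geometry_of_generalized_rmatrices}.(3) yields $X = \widetilde X$ smooth of arithmetic genus one and $\cA = \widetilde \cA$ locally free with perfect Killing form, so (1) holds. Since $\breve X = X$, Cartan's criterion applied fiberwise (via \cref{lem:fiber_of_sheaf_Killing_form}) forces every fiber of $\cA$ to be semisimple; the orbit-structure argument from the proofs of \cref{thm:etale_trivial_sheaves_of_Lie_algebras} and \cref{thm:etale_triviality_due_to_semisimple_fiber}, combined with connectedness of $X$ and $\cA|_p \cong \fg$, promotes this to étale $\fg$-local triviality on all of $X$, yielding (2). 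If instead $\widetilde g = 0$, I would use \cref{lem:geometry_of_equivalences} and \cref{ex:generalized_rmatrices_over_P^1} to reduce to the case $\widetilde O_0 = \Bbbk[z^{-1}]$, whence $\widetilde O_0^\bot = z^{-2}\Bbbk[z^{-1}]$. Setting $O \coloneqq \Bbbk \oplus z^{-2}\Bbbk[z^{-1}]$ produces a unital $\Bbbk$-subalgebra of $\widetilde O_0$ of codimension one (quotient spanned by $z^{-1}$), contained in $O_0$ via $\Bbbk \subseteq O_0$ and $\widetilde O_0^\bot \subseteq O_0$. The arithmetic genus of $X$ is then $\widetilde g + \dim(\widetilde O_0/O) = 1$, giving (1); $X$ is a cuspidal curve with $\breve X = X \setminus \{p\}$, and \cref{lem:normalization_of_rmatrix_geometry} identifies $\cA|_{\breve X}$ with the restriction of $\nu_*\widetilde \cA$, so étale $\fg$-local triviality on $\breve X$ reduces to the smooth case handled above.

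The extension of the Killing form to a perfect pairing $\cA \otimes \cA \to \cO_X$ needed for (2) follows from Serre duality: arithmetic genus one forces $\omega_X$ to have degree zero with $\textnormal{h}^0(\omega_X) = 1$, hence $\omega_X \cong \cO_X$, under which the perfect Killing form on $\widetilde \cA$ descends through $\nu$ to the required pairing on $\cA$. For (3), any global generator $\eta$ of $\omega_X$ satisfies $c^*(\widehat\eta_p) = u'(z)\,dz$ for some $u \in z\Bbbk[\![z]\!]^\times$ determined by $du = c^*(\eta)$ as in \cref{rem:c*_construction_for_1_forms}, so a priori $c^*(\widehat\eta_p) \ne dz$. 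Following Step 5 of the proof of \cref{prop:geometry_of_generalized_rmatrices}, I replace $r$ by the equivalent normalized formal $r$-matrix $\widetilde r(x,y) \coloneqq u'(y) r(w(x), w(y))$ with $w \coloneqq u^{-1}$, which is again skew-symmetric by \cref{lemm:formal_equivalence}.(2); the transported subalgebra $O$ realizes the same abstract geometric datum but with the local coordinate renormalized so that $c^*(\widehat\eta_p) = dz$.

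The main obstacle is property (3): one must verify that this coordinate renormalization is simultaneously compatible with the case-dependent choice of $O$ producing arithmetic genus one. This is handled by working throughout up to equivalence (the convention made after \cref{lemm:formal_equivalence}) and invoking \cref{lem:geometry_of_equivalences}, which ensures that the construction of $O$ in either case transports cleanly under the equivalence $(u', w, \textnormal{id}_{\fg[\![z]\!]})$, so all three properties hold for the resulting $O \subseteq \textnormal{Mult}(\fg(r))$.
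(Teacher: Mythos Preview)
Your plan for (1) and the étale-local-triviality half of (2) follows the paper's strategy, but there are two genuine gaps, and they share a common cause: coordinate transformations do \emph{not} preserve the residue pairing $\kappa_0(a,b)=\textnormal{res}_0\kappa(a,b)\,dz$, so you cannot freely pass to an equivalent $r$-matrix in arguments that use $\fg(r)^\bot=\fg(r)$ or the specific isomorphism $c$.

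\textbf{Property (3).} Your proposal is to apply an equivalence $(u',w,\textnormal{id})$ to force $c^*(\widehat\eta_p)=dz$. First, the citation of \cref{lemm:formal_equivalence}.\emph{(2)} is wrong: that lemma requires a \emph{constant} rescaling, and $u'$ is not constant. More importantly, property (3) is a statement about the canonical $c$ attached to $\textnormal{GD}(O,\fg(r))$ for the \emph{given} $r$. If you prove (3) for an equivalent $\widetilde r$ and transport back via \cref{lem:geometry_of_equivalences}, the $c$'s differ by composition with $w$, so $c^*(\widehat\eta_p)$ picks up a factor $w'(z)$ and you have gained nothing. The paper's argument is substantive and does not change $r$: for any global generator $\eta$ of $\omega_X$ one writes $c^*(\widehat\eta_p)=w'(z)\,dz$, and the residue theorem together with $\fg(r)^\bot=\fg(r)$ forces $w'\in\textnormal{Mult}(\fg(r))\cap\Bbbk[\![z]\!]^\times=\Bbbk^\times$, whence a rescaling of $\eta$ gives (3) directly.

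\textbf{The genus-zero reduction.} Reducing via an equivalence to $\widetilde O_0=\Bbbk[z^{-1}]$ breaks the Lagrangian condition: if $\fg(\widetilde r)=\varphi_w(\fg(r))$ with $\varphi_w(a)(z)=a(w(z))$, a short computation gives $\fg(\widetilde r)^\bot=w'(z)\,\fg(\widetilde r)$, which equals $\fg(\widetilde r)$ only when $w'$ is constant. Thus after your reduction the key inclusion $(\Bbbk[z^{-1}])^\bot=z^{-2}\Bbbk[z^{-1}]\subseteq\textnormal{Mult}(\fg(\widetilde r))$ is no longer justified, and your proposed $O=\Bbbk\oplus z^{-2}\Bbbk[z^{-1}]$ need not consist of multipliers. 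The paper avoids this by working with the original $r$: writing $\widetilde O_0=\Bbbk[u]$ for some $u\in z^{-1}\Bbbk[\![z]\!]^\times$, one has $\widetilde O_0^\bot=u'\Bbbk[u]$ and takes $O=\Bbbk[u',u'u]\subseteq\textnormal{Mult}(\fg(r))$. Your argument for the extension and perfectness of the Killing form is also too sketchy---one must show $\kappa(\fg(r),\fg(r))\subseteq O$ (this uses $\fg(r)^\bot=\fg(r)$ again) and then invoke Serre duality with $\omega_X\cong\cO_X$ to kill the torsion cokernel of $\cA\to\cA^*$; the statement that the pairing ``descends through $\nu$'' does not supply either step.
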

\begin{proof}
\textbf{Step 1. }\emph{Setup. } Let \(\widetilde{O}\) be the integral closure of \(\textnormal{Mult}(\fg(r))\) and \(O \subseteq\textnormal{Mult}(\fg(r))\) be an, for the moment arbitrary, unital subalgebra of finite codimension. Note that \(\widetilde{O}\) is also the integral closure of \(O\) and the genus of the normalization of \(X\) is \(\widetilde{g}\coloneqq \dim(\Bbbk(\!(z)\!)/(\Bbbk[\![z]\!]+\widetilde{O}))\). \cref{prop:geometry_of_generalized_rmatrices} implies that \(\widetilde{g} \in \{0,1\}\).\\
\textbf{Step 2.} \emph{It is possible to chose \(O\) such that \(X\) has arithmetic genus one. } For \(\widetilde{g} = 1\) this is already proven in \cref{prop:geometry_of_generalized_rmatrices}, so we may assume \(\widetilde{g}=0\), i.e.\ \(\Bbbk[\![z]\!]+\widetilde{O} = \Bbbk(\!(z)\!)\). Since \(\widetilde{O} \cap \Bbbk[\![z]\!]=\Bbbk\), the canonical projection \(O \to \Bbbk[z^{-1}]\) is injective and as a consequence \(\widetilde{O} = \Bbbk[u]\) for an arbitrary \(u \in z^{-1}\Bbbk[\![z]\!]^\times \cap \widetilde{O}\). \cref{prop:geometry_of_generalized_rmatrices}.\emph{(2)} states that
\begin{align}\label{eq:complement_of_tilde_O_is_multiplier}
    \widetilde{O}^\bot\fg(r) \subseteq \fg(r)^\bot = \fg(r),
\end{align}
where \(\Bbbk(\!(z)\!)\) is equipped with the bilinear form \((f,g) \mapsto \textnormal{res}_0fg\textnormal{d}z\) and the last equality follows from \cref{prop:formal_rmatrices_are_skew-symmetric} and  \cref{lem:orthogonal_complement_of_g(r)}. 
In particular, \(\widetilde{O}^\bot \subseteq \textnormal{Mult}(\fg(r)) \subseteq \widetilde{O}\). Since for all \(k \in \bN_0\)
\begin{equation}
    \textnormal{res}_0 u^ku'\textnormal{d}z = \textnormal{res}_0\frac{1}{k+1}\left(u^{k+1}\right)'\textnormal{d}z = 0
\end{equation}
holds, where \((\cdot)'\) denotes the formal derivative with respect to \(z\), we see that \(u'\widetilde{O} \subseteq \widetilde{O}^\bot \subseteq \widetilde{O}\). Thus, \cref{eq:complement_of_tilde_O_is_multiplier} implies that \(\Bbbk + u'\widetilde{O} \subseteq \textnormal{Mult}(\fg(r))\). The fact that \(u' \in \widetilde{O}=\Bbbk[u]\) has order two can be used to see that \(\Bbbk + u'\widetilde{O} = \Bbbk[u',u'u]\) is an unital \(\Bbbk\)-subalgebra of \(\textnormal{Mult}(\fg(r))\) such that \(\dim(\Bbbk(\!(z)\!)/(\Bbbk[\![z]\!] + \Bbbk[u',u'u])) = 1\). Choosing \(O = \Bbbk[u',u'u]\) implies that \(X\) has arithmetic genus 1.\\
\textbf{Step 3.} \emph{Choosing \(O\) as in Step 2, the Killing form of \(\cA|_{\breve{X}}\) extends to a paring \(\cA \times \cA \to \cO_X\). } We have \(\widetilde{X} = X = \breve{X}\) for \(\widetilde{g} = 1\) by virtue of \cref{prop:geometry_of_generalized_rmatrices}, so we may assume \(\widetilde{g}=0\), i.e.\ \(\widetilde{O} = \Bbbk[u]\) and \(O = \Bbbk[u',u'u]\).  Since \(X = \breve{X} \cup (X \setminus \{p\})\), we have to define the paring \(\cA \times \cA \to \cO_X\) on the affine open set \(X\setminus \{p\}\) and show that it is compatible with the Killing form of \(\cA|_{\breve{X}}\). The diagram \cref{eq:kappaKdiagram} implies that \(\kappa\colon \fg(\!(z)\!) \times \fg(\!(z)\!)\to \Bbbk(\!(z)\!)\) restricts to a mapping \(\widetilde{\fg(r)} \times \widetilde{\fg(r)}\to \widetilde{O}\) and it suffices to show that this pairing restricts further to \(\fg(r) \times \fg(r) \to O\). Observe that \(\fg(r)^\bot = \fg(r)\) implies that \(\kappa\) restricts to a bilinear map
\begin{equation}
    \fg(r) \times \fg(r) \to P \coloneqq \{f \in \widetilde{O}\mid \textnormal{res}_0f\textnormal{d}z = 0\}.
\end{equation}
We have \( O \subseteq \widetilde{O}^\bot \subseteq P\), since \(1 \in \widetilde{O}\). The codimension of \(P\) in \(\widetilde{O}\) is one, i.e.\ \(\widetilde{O}/P\) is spanned by \(u + P\). The same is true for \(\widetilde{O}/O\) proving \(P = O\) and concluding the proof.\\
\textbf{Step 4.} \emph{The pairing \(\cA \times \cA \to \cO_X\) constructed in Step 3 is perfect. }
Similarly as in Step 1 of the proof of \cref{prop:geometry_of_generalized_rmatrices}, the morphism \(\cA \to \cA^*\) induced by \(\cA \times \cA \to \cO_X\) is injective with a torsion cokernel \(\mathcal{C}\). The long exact sequence in cohomology induced by
\[0\longrightarrow \cA \longrightarrow \cA^* \longrightarrow \mathcal{C}\longrightarrow 0\]
combined with \(\textnormal{h}^0(\cA) = 0 = \textnormal{h}^1(\cA)\) implies \(\textnormal{h}^0(\mathcal{C}) = \textnormal{h}^0(\cA^*) = \textnormal{h}^1(\cA) = 0\). Here we used the Serre duality (in e.g.\ the version \cite[Chapter 6, Remark 4.20 and Theorem 4.32]{liu}), where we note that \(\textnormal{h}^1(\cO_X) = 1\) and the fact that \(X\) is locally a complete intersection (see e.g.\ \cref{rem:curves_of_arithmetic_genus_one} below) imply that the dualizing sheaf of \(X\) is trivial. We conclude that \(\mathcal{C} = 0\). Thus, \(\cA \to \cA^*\) is an isomorphism. This is equivalent to saying that  \(\cA \times \cA \to \cO_X\) is perfect.\\
\textbf{Step 5.} \emph{Reduction of the remaining statements to the case that \(\Bbbk\) is algebraically closed. } It remains to prove that \(\cA|_{\Breve{X}}\) is \'etale \(\fg\)-locally free and that there exists \(\eta \in \Gamma(X,\omega_X)\) such that \(c^*(\widehat{\eta}_p)= \textnormal{d}z\). The latter statement is equivalent to \(c^*(\widehat{\eta}_p) \in \Bbbk \textnormal{d}z\) for all \(\eta \in\Gamma(X,\omega_X)\), since \(\omega_X\) is trivial. Let
\begin{equation}
    \textnormal{GD}(O_{\overline{\Bbbk}},\fg(r_{\overline{\Bbbk}})) = ((X_{\overline{\Bbbk}},\cA_{\overline{\Bbbk}}),(p_{\overline{\Bbbk}},c_{\overline{\Bbbk}},\zeta_{\overline{\Bbbk}})) \textnormal{ and } \pi \colon X_{\overline{\Bbbk}}\to X
\end{equation}
be as in \cref{rem:field_extension_geometry}, where we recall that \(\overline{\Bbbk}\) is the algebraic closure of \(\Bbbk\). Note that \(r_{\overline{\Bbbk}}\) is a normalized formal \(r\)-matrix, \(\pi\) factors as \(X_{\overline{\Bbbk}}\cong X\times \textnormal{Spec}(\overline{\Bbbk}) \to X\), where the second map is the canonical projection, and \(\cA_{\overline{\Bbbk}} \cong \pi^*\cA\). \cref{lem:etale_locally_triviallity_wlog_over_algebraic_closure} states that \(\cA|_{\breve{X}}\) is \'etale \(\fg\)-locally free if \(\cA_{\overline{\Bbbk}}|_{\breve{X}_{\overline{\Bbbk}}}\) is \'etale \(\fg \otimes \overline{\Bbbk}\)-locally free, where \(\pi^{-1}(\breve{X}) = \breve{X}_{\overline{\Bbbk}}\) is the smooth locus of \(X_{\overline{\Bbbk}}\). Furthermore, \(\pi^*\omega_X \cong \omega_{X_{\overline{\Bbbk}}}\) is the dualizing sheaf of \(X_{\overline{\Bbbk}}\) (see e.g.\ \cite[Theorem 3.6.1]{conrad}) and the image of \(c^*(\widehat{\eta}_p)\) under \(\Bbbk[\![z]\!]\textnormal{d}z \to \overline{\Bbbk}[\![z]\!]\textnormal{d}z\) equals \(c^*_{\overline{\Bbbk}}\big(\widehat{\pi^*\eta}\big)_{p_{\overline{\Bbbk}}}\), hence \(c^*(\widehat{\eta}_p) \in \Bbbk \textnormal{d}z\) if and only if \(c^*_{\overline{\Bbbk}}\big(\widehat{\pi^*\eta}\big)_{p_{\overline{\Bbbk}}} \in \overline{\Bbbk} \textnormal{d}z\). Summarized, we may assume that \(\Bbbk = \overline{\Bbbk}\).\\
\textbf{Step 6.} \emph{\(\cA|_{\breve{X}}\) is \'etale \(\fg\)-locally free. } By Step 5 we may assume that \(\Bbbk\) is algebraically closed. Using Step 4 and \cref{lem:fiber_of_sheaf_Killing_form}, we see that \(\cA|_q\) is semi-simple for all closed \(q \in \breve{X}\). Therefore, \(\cA\) is \'etale \(\cA|_q\)-locally free in each 
closed point \(q \in \breve{X}\). Consequently, \(\cA\) is weakly \(\cA|_q\)-locally free in all closed points in some open neighbourhood of \(q\), since \'etale maps are open; see also \cref{rem:local_triviality_of_sheaves_of_algebras}. This forces \(\cA|_q \cong \cA|_p \cong \fg\) for all \(q \in \breve{X}\) closed, since \(\breve{X}\) is connected. \Cref{thm:etale_trivial_sheaves_of_Lie_algebras} states that \(\cA|_{\breve{X}}\) is \'etale \(\fg\)-locally free.\\
\textbf{Step 7.} \emph{There exists \(\eta \in \Gamma(X,\omega_X)\) such that \(c^*(\widehat{\eta}_p) = \textnormal{d}z\). }
By Step 5 we may assume that \(\Bbbk\) is algebraically closed.
Let \(\eta\) be a non-zero global section of the dualizing sheaf \(\omega_X\) of \(X\) and \(c^*(\widehat{\eta}_p) = \textnormal{d}w(z) = w'(z)\textnormal{d}z\) for some \(w(z) \in z\Bbbk[\![z]\!]\). The dualizing sheaf can be identified with the sheaf of Rosenlicht regular 1-forms, i.e.\ \(\eta\) is a rational 1-form on \(\widetilde{X}\), where we recall that \(\nu \colon \widetilde{X}\to X\) is the normalization of \(X\), which is regular on \(\nu^{-1}(\breve{X})\) and satisfies 
\begin{align*}
    &\sum_{q \in \nu^{-1}(s)} \textnormal{res}_q f \eta = 0&\forall s \in X\textnormal{ singular and closed}, f\in \cO_{X,s};
\end{align*}
see e.g.\ \cite[Theorem 5.2.3]{conrad}.
The residue theorem on \(\widetilde{X}\) implies that \(\textnormal{res}_p f \eta = 0\) for all \(f \in \Gamma(X\setminus\{p\},\cO_X)\). Combining this with \cref{rem:c*_construction_for_1_forms} and using diagram \cref{eq:kappaKdiagram} results in
\[\textnormal{res}_0\kappa(\zeta(a),\zeta(b))w'\textnormal{d}z = \textnormal{res}_p K(a,b)\eta = 0\]
for all \(a,b\in \Gamma(X\setminus\{p\},\cA)\). This implies that \(w'\fg(r) \subseteq \fg(r)^\bot = \fg(r)\). In other words, we obtain \(w' \in \textnormal{Mult}(\fg(r)) \cap \Bbbk[\![z]\!]^\times = \Bbbk^\times\). We can conclude the proof by replacing \(\eta\) with \((w')^{-1}\eta \in \textnormal{H}^0(X,\omega_X)\).\\
\end{proof}

\begin{remark}\label{rem:curves_of_arithmetic_genus_one}
Every integral projective curve \(X\) over \(\Bbbk\) of arithmetic genus one with a \(\Bbbk\)-rational smooth point \(p\) is a plane cubic curve, i.e.\ determined by one cubic equation. This can be seen for example from \cref{eq:adelicsequence}: fix an isomorphism \(c\colon \compO_{X,p} \to \Bbbk[\![z]\!]\) write \(O \coloneqq c(\Gamma(X\setminus\{p\},\cO_X)\) and note that the codimension of \(\Bbbk[\![z]\!]+O\) in \(\Bbbk(\!(z)\!)\) is one. Then \(O \cap \Bbbk[\![z]\!] = c(\Gamma(X,\cO_X)) = \Bbbk\) implies that \(O = \Bbbk[f,g]\), where \(f\) has order 2 and \(g\) has order 3. After probably adjusting \(f\) and \(g\), we get \(g^2 = f^3 + af + b\) for some \(a,b \in \Bbbk\). This is a minimal polynomial relation between \(f\) and \(g\), i.e.\ \(O \cong \Bbbk[x,y]/(y^2 - x^3-ax-b)\). 

The curve \(X\) is smooth if and only if \(4a^3+27b^2 \neq 0\), in which case it is elliptic, and has a unique nodal (resp. cuspidal) singularity if \(4a^3 = -27b^2 \neq 0\) (resp. \(a = b = 0\)). In the singular cases \(X\) is rational, i.e.\ \(X\) has the normalization \(\nu \colon \mathbb{P}^1_\Bbbk \to X\).
\end{remark}

\subsection{The geometric \(r\)-matrix}\label{subsec:geometric_rmatrix}
Let \(\fg\) be a finite-dimensional, central, simple Lie algebra over \(\Bbbk\). Fix a generalized \(r\)-matrix \(r \in (\fg\otimes\fg)(\!(x)\!)[\![y]\!]\), an unital subalgebra \(O\subseteq \textnormal{Mult}(\fg(r))\) of finite codimension and write \(\textnormal{GD}(O,\fg(r)) = ((X,\cA),(p,c,\zeta))\) for the associated geometric datum.
The sheaf \(\cA\) is \'etale \(\fg\)-locally free in \(p\) by \cref{thm:etale_triviality_due_to_semisimple_fiber}. Since \'etale morphisms are open, we can chose a smooth open neighbourhood \(C\) of \(p\) such that \(\cA|_C\) is \'etale \(\fg\)-locally free. After probably shrinking \(C\), there exists a non-vanishing 1-form \(\eta\) on \(C\).
We have obtained a geometric datum \(((X,\cA),(C,\eta))\) consisting of
\begin{itemize}
    \item an integral projective curve \(X\) over \(\Bbbk\), a non-empty smooth open subscheme \(C \subseteq X\) and a non-vanishing 1-form \(\eta\) on C as well as
    \item a coherent sheaf of Lie algebras \(\cA\) on \(X\) such that \(\textnormal{h}^0(\cA) = 0 = \textnormal{h}^1(\cA)\) and \(\cA|_C\) is \'etale \(\fg\)-locally free.
\end{itemize}

\begin{remark}
For \(\Bbbk = \overline{\Bbbk}\) the geometric datum \(((X,\cA),(C,\eta))\) satisfies the axioms used in \cite{burban_galinat} to construct a geometric analog of a generalized \(r\)-matrix called \emph{geometric \(r\)-matrix}. Indeed, \cref{thm:etale_trivial_sheaves_of_Lie_algebras} implies that \(\cA|_C\) is \'etale \(\fg\)-locally free if and only if it is weakly \(\fg\)-locally free for \(\Bbbk = \overline{\Bbbk}\). Therefore, the above conditions can be seen as an appropriate generalization of the axioms used in \cite{burban_galinat} if one works over a not necessarily algebraically closed ground field. In the following we recall the construction of the geometric \(r\)-matrix and observe that it works in our generalized setting.
\end{remark}

\begin{remark}
If \(r\) is normalized and skew-symmetric and \(X,\eta\) are chosen as in \cref{thm:geometry_of_formal_rmatrices}, we may assume that \(C\) is the smooth locus of \(X\). If \(\Bbbk = \overline{\Bbbk}\), we can see that \(((X,\cA),(C,\eta))\) satisfies the geometric axiomatization of skew-symmetry given in \cite[Theorem 4.3]{burban_galinat}, where the third condition in said theorem can be seen as a consequence of the fact that the Killing form of \(\cA|_C\) extends to a pairing \(\cA \times \cA \to \cO_X\) by \cref{thm:geometry_of_formal_rmatrices}.\emph{(2)}; see \cite[Theorem 1.2.(2)]{galinat_thesis}. 
\end{remark}

Let \(\Delta\) denote the image of the diagonal embedding \(\delta \colon C \to X \times C\). The choice of non-vanishing 1-form \(\eta\) induces the so-called \emph{diagonal residue sequence}
\begin{align}\label{eq:resseq}
    0 \longrightarrow \cO_{X \times C} \longrightarrow \cO_{X\times C}(\Delta) \stackrel{\textnormal{res}^\eta_\Delta}{\longrightarrow} \delta_*\cO_C\longrightarrow 0.
    \end{align}
The map \(\textnormal{res}^\eta_\Delta\) is thereby determined as follows: for a closed point \(q\in C\) with local parameter \(u\) defined on an affine open subset \(U\) of \(C\), the sheaves \(\Omega^1_C\) and \(\cO_{X\times C}(-\Delta)\) are locally generated by \(\textnormal{d}u\) and
\[u-v \coloneqq u \otimes 1 - 1 \otimes u \in \Gamma(U,\cO_X) \otimes \Gamma(U,\cO_X) \cong \Gamma(U \times U,\cO_{X \times X})\]
around \(q\) and \((q,q)\) respectively; \(\textnormal{res}_\Delta^\eta\) maps \((u-v)^{-1}\) to \(\mu\), where \(\mu\) is defined by \(\eta_q = \mu \textnormal{d}u\).
Tensoring \cref{eq:resseq} with \(\cA \boxtimes \cA|_C \coloneqq \textnormal{pr}_1^*\cA \otimes_{\cO_{X\times C}}\textnormal{pr}_2^*\cA|_C,\), where \(X \stackrel{\textnormal{pr}_1}{\longleftarrow} X\times C \stackrel{\textnormal{pr}_2}{\longrightarrow} C\) are the canonical projections, gives rise to a short exact sequence
\begin{align}\label{eq:Resseq}
    0 \longrightarrow \cA \boxtimes \cA|_C \longrightarrow \cA \boxtimes \cA|_C(\Delta) \longrightarrow \delta_*(\cA|_C \otimes_{\cO_C} \cA|_C)\longrightarrow 0.
\end{align}
The K\"unneth formula implies that
\begin{equation}
    \begin{split}
        &\textnormal{H}^0(\cA \boxtimes \cA|_C) = \textnormal{H}^0(\cA) \otimes \textnormal{H}^0(\cA|_C) =  0 \textnormal{ and }
        \\&\textnormal{H}^1(\cA \boxtimes \cA|_C) = \textnormal{H}^1(\cA) \otimes \textnormal{H}^0( \cA|_C) \oplus \textnormal{H}^0(\cA) \otimes \textnormal{H}^1(\cA|_C) = 0,
    \end{split}
\end{equation}
where we used \(\textnormal{h}^0(\cA) = 0 = \textnormal{h}^1(\cA)\). The long exact sequence in cohomology induced by
\cref{eq:Resseq} yields an isomorphism \(R\colon\textnormal{H}^0(\cA \boxtimes \cA|_C(\Delta)) \to \textnormal{H}^0(\cA|_C \otimes \cA|_C)\). 

\Cref{rem:etale_morphisms_wlog_surjective} states that there exists a surjective \'etale morphism \(f \colon Y \to C\) such that \(f^*\cA|_C \cong \fg \otimes\cO_Y\), while \cref{lem:fiber_of_sheaf_Killing_form} asserts that the inverse image \(f^*K\) of the Killing form \(K\) of \(\cA|_C\) can be identified with the Killing form of \(\fg \otimes \cO_Y\). The pairing \(f^*K\) is prefect due to the simplicity of \(\fg\). Thus, the surjectivity of \(f\) implies that \(K\) is also perfect. This implies that the morphism 
\[\widetilde{K}\colon\cA|_C\otimes_{\cO_C}\cA|_C \to \sheafEnd_{\cO_C}(\cA|_C),\]
defined by \(a \otimes b \mapsto K_U(b,-)a\) for all affine open \(U \subseteq C\) and \(a,b\in\Gamma(U,\cA)\), is an isomorphism.
Summarized, we obtain an isomorphism \(\Phi\coloneqq \widetilde{K}R\colon\textnormal{H}^0(\cA \boxtimes \cA|_C(\Delta)) \to \textnormal{End}_{\cO_C}(\cA|_C)\).

\begin{definition}
The section \(\rho\coloneqq \Phi^{-1}(\textnormal{id}_{\cA|_C}) \in \textnormal{H}^0(\cA \boxtimes \cA|_C(\Delta))\) is called \emph{geometric \(r\)-matrix} of \(((X,\cA),(C,\eta))\).
\end{definition}

Let us begin the discussion of the geometric \(r\)-matrix by observing that geometric \(r\)-matrices, constructed from equivalent formal generalized \(r\)-matrices in the way presented above, are equivalent in an geometric sense.

\begin{lemma}\label{lem:equivalence_of_geometric_rmatrices}
For \(i \in \{1,2\}\), let \(r_i \in (\fg \otimes \fg)(\!(x)\!)[\![y]\!]\) be a generalized \(r\)-matrix and denote the associated geometric datum by
\(\textnormal{GD}(\textnormal{Mult}(\fg(r_i)),\fg(r_i)) = ((X_i,C_i),(p_i,c_i,\zeta_i))\). Assume that \(r_1\) and \(r_2\) are equivalent and \(f\colon X_2\to X_1\) and \(\phi \colon \cA_1 \to f_*\cA_2\) are the isomorphisms provided by \cref{lem:geometry_of_equivalences}. We can chose a geometric datum \(((X_i,\cA_i),(C_i,\eta_i))\) for \(i \in \{1,2\}\) as in the beginning of this section in such a way that \(f^{-1}(C_1) = C_2\) and \(f^*\eta_1 = \eta_2\). In this case, the associated geometric \(r\)-matrices \(\rho_1, \rho_2\) satisfy
\(\rho_2 = (f^*(\phi) \boxtimes f^*(\phi))(f\times f)^*\rho_1.\)
\end{lemma}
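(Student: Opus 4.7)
The plan is to argue that the construction of the geometric $r$-matrix is functorial with respect to the isomorphism $(f,\phi)$ provided by \cref{lem:geometry_of_equivalences}, so that $\rho_2$ is forced to be the image of $\rho_1$ under the induced natural transformations on all intermediate objects. First I would fix compatible choices of geometric data. Choose $C_1$ to be a smooth affine open neighbourhood of $p_1$ in $X_1$ on which $\cA_1$ is \'etale $\fg$-locally free and which admits a non-vanishing $1$-form $\eta_1$. Set $C_2 \coloneqq f^{-1}(C_1)$ and $\eta_2 \coloneqq f^*\eta_1$. Because $f$ is an isomorphism with $f(p_2)=p_1$, $C_2$ is a smooth neighbourhood of $p_2$ and $\eta_2$ is non-vanishing. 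Moreover, $\phi|_{C_1}$ restricts to an isomorphism $\cA_1|_{C_1} \to (f|_{C_2})_*(\cA_2|_{C_2})$, and adjunction along the isomorphism $f|_{C_2}$ produces an isomorphism $f^*\phi \colon (f|_{C_2})^*(\cA_1|_{C_1})\to \cA_2|_{C_2}$ of sheaves of Lie algebras, which in particular ensures \'etale $\fg$-local freeness of $\cA_2|_{C_2}$.

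Next I would trace the construction through the isomorphism $F \coloneqq f\times f\colon X_2\times C_2 \to X_1\times C_1$. Naturality of the diagonal gives $F^{-1}(\Delta_1) = \Delta_2$, and the equality $\eta_2 = f^*\eta_1$ translates into $F^*$ of the diagonal residue sequence \cref{eq:resseq} attached to $\eta_1$ being the diagonal residue sequence attached to $\eta_2$; this is a local check at closed points, where local parameters are transported by $f$ and the scalar $\mu$ describing the residue is preserved because $\eta_2 = f^*\eta_1$. Tensoring this identification of short exact sequences with $f^*\phi\boxtimes f^*\phi$ produces a commutative diagram of the short exact sequences \cref{eq:Resseq} attached to $((X_1,\cA_1),(C_1,\eta_1))$ and $((X_2,\cA_2),(C_2,\eta_2))$. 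Taking global sections and using the K\"unneth vanishing yields a commutative square
\[
\xymatrix{\tH^0(\cA_1\boxtimes\cA_1|_{C_1}(\Delta_1))\ar[r]^-{R_1}\ar[d]_{\Psi}& \tH^0(\cA_1|_{C_1}\otimes_{\cO_{C_1}}\cA_1|_{C_1})\ar[d]^{\Psi'}\\ \tH^0(\cA_2\boxtimes\cA_2|_{C_2}(\Delta_2))\ar[r]_-{R_2}& \tH^0(\cA_2|_{C_2}\otimes_{\cO_{C_2}}\cA_2|_{C_2})}
\]
where $\Psi \coloneqq (f^*\phi\boxtimes f^*\phi) F^*$ and $\Psi'\coloneqq (f^*\phi\otimes f^*\phi)(f|_{C_2})^*$.

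Finally, the isomorphism $f^*\phi$ of sheaves of Lie algebras preserves Killing forms by \cref{lem:fiber_of_sheaf_Killing_form}, so $K_2((f^*\phi)(a),(f^*\phi)(b)) = (f|_{C_2})^*K_1(a,b)$ for local sections $a,b$. A direct unpacking of the definition $\widetilde{K}(a \otimes b) = K(b,-)a$ then shows that the isomorphism $\widetilde{K}$ intertwines $\Psi'$ with conjugation by $f^*\phi$ on $\sheafEnd_{\cO_{C_2}}(\cA_2|_{C_2})$. Since this conjugation sends the pullback of $\textnormal{id}_{\cA_1|_{C_1}}$ to $\textnormal{id}_{\cA_2|_{C_2}}$, chasing $\rho_1$ through the commutative square above gives $\Phi_2(\Psi(\rho_1)) = \textnormal{id}_{\cA_2|_{C_2}} = \Phi_2(\rho_2)$, hence $\Psi(\rho_1) = \rho_2$ by bijectivity of $\Phi_2$, which is the claimed identity. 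The main technicality is the residue compatibility in the second paragraph; the remaining steps are naturality of exact sequences, cohomology and the Killing form, all of which are straightforward.
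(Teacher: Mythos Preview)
Your proposal is correct and follows essentially the same approach as the paper: both arguments pull back the diagonal residue sequence along $f\times f$ using $\eta_2=f^*\eta_1$, tensor with $f^*\phi\boxtimes f^*\phi$ to obtain a commutative diagram between the two copies of \cref{eq:Resseq}, and then use that the Killing form intertwines $f^*\phi\otimes f^*\phi$ with conjugation by $f^*\phi$ on endomorphisms (the paper phrases this via the fact that the $\kappa$-adjoint of a Lie algebra automorphism is its inverse), so that the identity is preserved and the defining equation $\Phi_2(\rho_2)=\mathrm{id}$ forces $\rho_2=(f^*\phi\boxtimes f^*\phi)(f\times f)^*\rho_1$.
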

\begin{proof}
It is obvious that we may chose the geometric datum in such a way that  \(f^{-1}(C_1) = C_2\) and \(f^*\eta_1 = \eta_2\).
Consider the commutative diagram
\begin{align}\label{eq:Resseq_for_equivalences}
    \xymatrix{0 \ar[r]& f^*\cA_1 \boxtimes f^*\cA_1|_C \ar[r]\ar[d]^{f^*(\phi) \boxtimes f^*(\phi)}& f^*\cA_1 \boxtimes f^*\cA_1|_C(\Delta_2) \ar[r]\ar[d]^{f^*(\phi) \boxtimes f^*(\phi)}& \delta_{2,*}\sheafEnd_{\cO_{C_2}}(f^*\cA_1|_{C_1})\ar[r]\ar[d]^{f^*(\phi) - f^*(\phi)^{-1}}& 0\\
    0 \ar[r]& \cA_2 \boxtimes \cA_2|_C \ar[r]& \cA_2 \boxtimes \cA_2|_C(\Delta_2) \ar[r]& \delta_{2,*}\sheafEnd_{\cO_{C_2}}(\cA_2|_{C_2})\ar[r]& 0},
\end{align}
where the upper row is \((f\times f)^*\) of \cref{eq:Resseq} for \(\cA= \cA_1\), the lower row is \cref{eq:Resseq} for \(\cA= \cA_2\), \(\Delta_2\) is the image of the diagonal embedding \(\delta_2\colon C_2 \to X_2 \times C_2\) and the isomorphisms \(\cA_i|_{C_i} \otimes \cA_i|_{C_i} \to \sheafEnd_{\cO_{C_i}}(\cA_i|_{C_i})\) for \(i \in \{1,2\}\) were used. Here, the commutativity of the right square follows form the fact that for a free Lie algebra \(\mathfrak{l}\) of finite rank over an unital commutative ring \(R\) with Killing form \(\kappa_\mathfrak{l}\), the adjoint of \(\psi \in \textnormal{Aut}_{R\textnormal{-alg}}(\mathfrak{l})\) with respect to \(\kappa_\mathfrak{l}\) coincides with \(\psi^{-1}\), so
\begin{equation}
    \widetilde{\kappa}_\mathfrak{l}(\psi(a) \otimes \psi(b)) = \psi\widetilde{\kappa}_\mathfrak{l}(a\otimes b)\psi^{-1}
\end{equation}
holds for \(\widetilde{\kappa}_\mathfrak{l}(a\otimes b) \coloneqq \kappa_\mathfrak{l}(b,\cdot)a \in \End_R(\mathfrak{l})\) and \(a,b \in \mathfrak{l}\).

It is straight forward to show that the inverse image along \(f\times f\) of the diagonal residue sequence \cref{eq:resseq} for \(X=X_1\) and \(\eta = \eta_1\) coincides with the diagonal residue sequence for \(X = X_2\) and \(\eta = f^*\eta_1 = \eta_2\). This implies that \((f\times f)^*\rho_1\) is mapped to \(\textnormal{id}_{f^*\cA_1|_{C_1}}\) under \(\textnormal{H}^0(f^*\cA_1 \boxtimes f^*\cA_1|_{C_1}(\Delta_2)) \to \End_{\cO_{C_2}}(f^*\cA_1|_{C_1})\). Thus, the commutativity of \cref{eq:Resseq_for_equivalences} implies that 
\begin{equation}
    (f^*(\phi) \boxtimes f^*(\phi))(f\times f)^*\rho_1\longmapsto \textnormal{id}_{\cA_2|_{C_2}}
\end{equation}
under \(\textnormal{H}^0(\cA_2 \boxtimes \cA_2|_{C_2}(\Delta_2)) \to \End_{\cO_{C_2}}(\cA_2|_{C_2})\), so \(\rho_2=(f^*(\phi) \boxtimes f^*(\phi))(f\times f)^*\rho_1\) holds.
\end{proof}

The geometric \(r\)-matrix has a simple pole along the diagonal with predetermined residue. This can be used to derive a local standard form in the vein of the standard form of formal generalized \(r\)-matrices.

\begin{lemma}\label{lem:geomstdform}
There exists an affine open neighbourhood \(U \subseteq C\) of \(p\) admitting a local parameter \(u \in \Gamma(U,\cO_X)\) of \(p\) such that \(\textnormal{d}u\) generates \(\Gamma(U,\Omega^1_C)\) and \(u-v\) generates  \(\Gamma(U\times U,\cO_{X\times C}(-\Delta))\).
Let \(\mu \in \Gamma(U,\cO_X)\) be defined by 
\(\eta = \mu^{-1}\textnormal{d}u\) and fix an arbitrary preimage \(\chi\) of \(\textnormal{id}_{\cA|_U}\) under the surjective map 
\begin{equation}\label{eq:preimage_of_identity}
    \Gamma(U\times U,\cA \boxtimes \cA|_C) \longrightarrow \Gamma(U,\cA|_C \otimes_{\cO_C}\cA|_C) \stackrel{\widetilde{K}_U}{\longrightarrow} \textnormal{End}_{\cO_U}(\cA|_U).
\end{equation}
There exists \(t \in \Gamma(U \times U,\cA\boxtimes \cA)\) such that
\(
    \rho|_{U\times U} = \frac{1\otimes \mu}{u-v}\chi + t.
\)
\end{lemma}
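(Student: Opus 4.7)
The plan is to reduce the statement to a residue computation in the diagonal residue sequence on \(U\times U\). First I would use that \(p\) is a \(\Bbbk\)-rational smooth point of \(C\subseteq X\) to find an affine open neighbourhood \(U\subseteq C\) of \(p\) equipped with a function \(u\in\Gamma(U,\cO_X)\) that is a local parameter at \(p\). Shrinking \(U\) if necessary, we may assume \(\textnormal{d}u\) is a nowhere vanishing section of \(\Omega^1_C|_U\), since \(\Omega^1_C\) is locally free of rank one; this renders \(u\colon U\to\mathbb{A}^1_\Bbbk\) \'etale, and a further Zariski-shrinking makes \(u\) injective on \(U\). With these choices, \(u-v = \textnormal{pr}_1^*u-\textnormal{pr}_2^*u\) is a local generator of \(\cO_{X\times C}(-\Delta)\) at every point of \(\Delta\cap(U\times U)\) and is nowhere vanishing off the diagonal, hence generates \(\Gamma(U\times U,\cO_{X\times C}(-\Delta))\), which establishes the first claim.

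Since \(U\times U\) is affine and \(\cA\boxtimes\cA\) is coherent, taking global sections of the diagonal residue sequence \cref{eq:Resseq} restricted to \(U\times U\) yields the exact sequence
\[
0 \longrightarrow \Gamma(U\times U, \cA\boxtimes\cA) \longrightarrow \Gamma(U\times U, \cA\boxtimes\cA(\Delta)) \stackrel{R_U}{\longrightarrow} \Gamma(U,\cA|_U\otimes_{\cO_U}\cA|_U) \longrightarrow 0.
\]
The element \(\sigma \coloneqq \tfrac{1\otimes\mu}{u-v}\chi\) is a well-defined section of \(\cA\boxtimes\cA(\Delta)|_{U\times U}\) because \((u-v)^{-1}\) generates \(\cO_{X\times C}(\Delta)|_{U\times U}\). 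To conclude that \(t \coloneqq \rho|_{U\times U}-\sigma\) lies in \(\Gamma(U\times U,\cA\boxtimes\cA)\), it then suffices by exactness to verify the identity \(R_U(\rho|_{U\times U}) = R_U(\sigma)\).

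By construction, \(\rho\) is the unique section of \(\cA\boxtimes\cA|_C(\Delta)\) satisfying \(\widetilde{K}(R(\rho)) = \textnormal{id}_{\cA|_C}\), so by naturality of the residue sequence under restriction we have \(\widetilde{K}_U(R_U(\rho|_{U\times U})) = \textnormal{id}_{\cA|_U}\). For \(\sigma\), the key point is that the map \(\textnormal{res}^\eta_\Delta\) sends \((u-v)^{-1}\) to the coefficient of \(\eta\) with respect to \(\textnormal{d}u\), which in the lemma's normalization \(\eta=\mu^{-1}\textnormal{d}u\) equals \(\mu^{-1}\); combining this with \(\delta^*(1\otimes\mu) = \mu\) and the \(\cO_{X\times C}\)-linearity of the tensored residue map gives
\[
R_U(\sigma) \;=\; \mu^{-1}\cdot\mu\cdot\delta^*\chi \;=\; \delta^*\chi.
\]
By the defining property of \(\chi\) as a preimage of \(\textnormal{id}_{\cA|_U}\) under \cref{eq:preimage_of_identity}, we obtain \(\widetilde{K}_U(R_U(\sigma)) = \widetilde{K}_U(\delta^*\chi) = \textnormal{id}_{\cA|_U}\). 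Since \(\widetilde{K}_U\) is injective, \(R_U(\rho|_{U\times U}) = R_U(\sigma)\), concluding the proof. The main subtlety is the bookkeeping between the two conventions for \(\eta\): the one used in the construction of \(\textnormal{res}^\eta_\Delta\) earlier in the text (where \(\eta=\mu\,\textnormal{d}u\)) and the reciprocal convention of the lemma (where \(\eta=\mu^{-1}\textnormal{d}u\)); this is precisely what accounts for the unconventional numerator \(1\otimes\mu\), rather than \(1\otimes\mu^{-1}\) or a constant, in the leading term of the local standard form.
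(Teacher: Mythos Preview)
Your proof is correct and follows essentially the same approach as the paper's: both construct \(U\) so that \(u-v\) cuts out the diagonal, then use exactness of the diagonal residue sequence on the affine open \(U\times U\) to conclude that \(\rho|_{U\times U}\) and \(\tfrac{1\otimes\mu}{u-v}\chi\) differ by a regular section once their images under \(R_U\) are seen to agree. Your construction of \(U\) (shrinking until \(u\) is \'etale and injective) is arguably cleaner than the paper's phrasing, and your explicit remark on the reciprocal convention for \(\mu\) clarifies a point the paper leaves implicit.
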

\begin{proof}
Let \(u \in \Gamma(V,\cO_X)\) be any local parameter of \(p\), where \(V\subseteq C\) is an affine open neighbourhood of \(p\). Chose \(U\) to be some affine open subset of the intersection of
\begin{itemize}
    \item the projection of the open set \(\{(q,q')\in V\times V\mid (u-v)(q,q') \neq 0\}\) to the first component with
    \item an open neighbourhood of \(p\) where \(\textnormal{d}u\) generates \(\Omega^1_C\). 
\end{itemize} 
It is straight forward to show that \(\textnormal{d}u\) generates \(\Gamma(U,\Omega^1_C)\) and \(u-v\) generates  \(\Gamma(U\times U,\cO_{X\times C}(-\Delta))\). By construction, \(\textnormal{res}_\Delta^\eta \frac{1\otimes \mu}{u-v} = 1\). Thus, we can see that \(\rho|_{U\times U}\) and \(\frac{1\otimes \mu}{u-v}\chi\) map to \(\textnormal{id}_{\cA|_U}\) under
\begin{equation}
    \Gamma(U\times U,\cA\boxtimes\cA|_C(\Delta)) \longrightarrow \Gamma(U,\cA|_C\otimes \cA|_C) \longrightarrow \textnormal{End}(\cA|_U).
\end{equation}
Applying the exact functor \(\Gamma(U \times U,\cdot)\) to \cref{eq:Resseq} implies that 
\(\rho|_{U\times U} - \frac{1\otimes \chi}{u-v}\chi\) is an element of  \(\Gamma(U \times U,\cA \boxtimes \cA)\), concluding the proof.
\end{proof}

Recall that \(c\) induces an isomorphism \(\textnormal{Q}(\widehat{\cO}_{X,p}) \to \Bbbk(\!(z)\!)\) which is again denoted by \(c\) and for all \(U \subseteq X\) open we have a natural inclusion \(\Gamma(U,\cO_X) \subseteq \textnormal{Q}(\widehat{\cO}_{X,p})\).

\begin{definition}\label{not:mathfrak_X}
Let \(\mathfrak{X}\) be the locally ringed space with underlying topological space \(X\) and sheaf of rings \(\cO_\mathfrak{X}\) defined by \(\Gamma(U,\cO_\mathfrak{X}) \coloneqq c(\Gamma(U,\cO_X))[\![y]\!]\) for all \(U \subseteq X\) open. Furthermore, let \(\jmath = (\jmath,\jmath^\flat)\colon\mathfrak{X} \to X \times X\) be the morphism of ringed space defined as follows: \(\jmath\) is the composition \(
X \to X\times \{p\} \to X \times X\) and for an affine open \(U \subseteq X\) and an affine open neighbourhood \(V \subseteq X\) of \(p\) the morphism \(\jmath^\flat\colon \Gamma(U\times V,\cO_{X\times X}) \to \Gamma(U,\cO_\mathfrak{X})\) is the projective limit over \(k\) of the morphisms
\[\Gamma(U \times V,\cO_{X\times X}) \cong \Gamma(U,\cO_X) \otimes \Gamma(V,\cO_X) \stackrel{c \otimes \overline{c}_k}{\longrightarrow} c(\Gamma(U,\cO_X)) \otimes \Bbbk[y]/(y^k),\]
where \(\overline{c}_k\) is the composition of \(c\) with the canonical map \(\Bbbk[\![z]\!] \to \Bbbk[z]/(z^k)\).
\end{definition}

\begin{remark}\label{rem:mathfrak_X}
\(\mathfrak{X}\) coincides, up to application of \(c\) and the identification \(X \cong X\times\{p\}\), with the formal scheme obtained by completing \(X\times X\) along \(X\times \{p\}\). In particular, \(\mathfrak{X}\) is indeed a locally ringed space.
Using the Künneth formula and \(\zeta\), it is possible to identify \(\jmath^*(\cA \boxtimes \cA)\) with the sheaf of Lie algebras on \(\mathfrak{X}\) defined by \(U \mapsto (\zeta(\Gamma(U,\cA)) \otimes \fg)[\![y]\!]\) for all \(U \subseteq X\) open. The canonical morphism \(\jmath^*\colon\cA \boxtimes \cA \to \jmath_*\jmath^*(\cA \boxtimes \cA)\) is injective, since, up to isomorphism, it amounts to the completion of a coherent sheaf at a sheaf of ideals.
In particular,
this construction yields an injective map
\begin{equation}\label{eq:geometric_taylor_series}
    \jmath^*\colon \Gamma(X \times C \setminus \Delta,\cA \boxtimes\cA) \longrightarrow \Gamma(X\setminus\{p\},\jmath^*(\cA \boxtimes\cA)) = (\fg(r) \otimes \fg)[\![y]\!],
\end{equation}
where \(\jmath^{-1}(X\times C \setminus\Delta) = X\setminus\{p\}\) and \(\zeta(\Gamma(X\setminus \{p\},\cA)) = \fg(r)\) was used. 
\end{remark}

The following statement can be seen as a generalization of \cite[Theorem 6.4]{burban_galinat}.

\begin{theorem}\label{thm:geometric_rmatrix_Taylor_series}
Let \(c^*(\widehat{\eta}_p) = \lambda_1(z)dz\), assume that \(r(x,y) - \lambda_2(y)r_{\textnormal{Yang}}(x,y)\in (\fg \otimes \fg)[\![x,y]\!]\) and put \(\lambda \coloneqq (\lambda_1\lambda_2)^{-1}\). The injective morphism \(\jmath^*\) maps \(\rho \in \textnormal{H}^0(\cA \boxtimes\cA|_C(\Delta)) \subseteq \Gamma(X \times C\setminus\Delta,\cA \boxtimes\cA)\) to \(\lambda(y)r(x,y)\). In particular, \(\jmath^*\rho = r\) if \(r\) is skew-symmetric and normalized and \(X, \eta\) are chosen as in \cref{thm:geometry_of_formal_rmatrices}.
\end{theorem}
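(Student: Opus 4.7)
The plan is to compute $\jmath^*\rho$ explicitly from the local description of $\rho$ near $(p,p)$ given by \cref{lem:geomstdform}, and to match it with $\lambda(y)r(x,y)$ modulo the ``regular'' subspace $(\fg\otimes\fg)[\![x,y]\!]$ of $(\fg\otimes\fg)(\!(x)\!)[\![y]\!]$. The key rigidity observation is that both $\jmath^*\rho$ and $\lambda(y)r(x,y)$ lie in $(\fg(r)\otimes\fg)[\![y]\!]$: the former by construction (\cref{rem:mathfrak_X}), and the latter because the coefficients of $r$ in $y$ span $\fg(r)$ by definition and multiplication by $\lambda(y)\in\Bbbk[\![y]\!]$ preserves this. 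The direct sum decomposition $\fg(\!(x)\!) = \fg[\![x]\!]\oplus\fg(r)$ then forces $(\fg(r)\otimes\fg)[\![y]\!]\cap(\fg\otimes\fg)[\![x,y]\!] = 0$ inside $(\fg\otimes\fg)(\!(x)\!)[\![y]\!]$, reducing the problem to the congruence $\jmath^*\rho \equiv \lambda(y)r(x,y) \pmod{(\fg\otimes\fg)[\![x,y]\!]}$.

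Choosing an affine open neighbourhood $U\subseteq C$ of $p$ and a local parameter $u$ as in \cref{lem:geomstdform}, we write $\rho|_{U\times U} = (1\otimes\mu)(u-v)^{-1}\chi + t$ with $t$ regular on $U\times U$; the regular summand $\jmath^*t$ contributes only to $(\fg\otimes\fg)[\![x,y]\!]$ and can be discarded. Setting $\tilde u := c(u)\in z\Bbbk[\![z]\!]^\times$, \cref{lem:series_vanishing_at_diagonal} produces $\tilde u(x)-\tilde u(y) = (x-y)\phi(x,y)$ with $\phi(z,z) = \tilde u'(z)$, so that after expanding $(x-y)^{-1} = \sum_{k\ge 0}x^{-k-1}y^k$ we obtain $\jmath^*((u-v)^{-1}) = \bigl((x-y)\phi(x,y)\bigr)^{-1}$ in $\Bbbk(\!(x)\!)[\![y]\!]$. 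Combining $\eta = \mu^{-1}\mathrm{d}u$ with \cref{rem:c*_construction_for_1_forms} gives $c^*(\widehat{\eta}_p) = c(\mu)^{-1}\tilde u'(z)\mathrm{d}z$, and comparison with the hypothesis $c^*(\widehat{\eta}_p) = \lambda_1(z)\mathrm{d}z$ yields $\jmath^*(1\otimes\mu) = c(\mu)(y) = \tilde u'(y)/\lambda_1(y)$.

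The central step is to identify $\jmath^*\chi$ modulo the ideal generated by $x - y$. Since $\chi$ is a preimage of $\mathrm{id}_{\cA|_U}$ under composition with the Killing-form isomorphism $\widetilde K_U$, completing at $(p,p)$ and applying $\zeta$ on both factors reduces this to the standard identification $(\fg\otimes\fg)[\![z]\!]\cong\End(\fg)[\![z]\!]$ given by $a\otimes b\mapsto \kappa(b,\cdot)a$, under which the unique preimage of $\mathrm{id}_\fg$ is the Casimir element $\gamma$. This forces $(\jmath^*\chi)|_{x=y} = \gamma$ as a constant element of $(\fg\otimes\fg)[\![z]\!]$, and a further application of \cref{lem:series_vanishing_at_diagonal} gives $\jmath^*\chi = \gamma + (x-y)\psi(x,y)$ for some $\psi\in(\fg\otimes\fg)[\![x,y]\!]$.

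Assembling these computations, $\jmath^*\bigl((1\otimes\mu)(u-v)^{-1}\chi\bigr) = \tilde u'(y)(\gamma + (x-y)\psi(x,y))/(\lambda_1(y)(x-y)\phi(x,y))$; writing $\tilde u'(y)/\phi(x,y) = 1 + (x-y)h(x,y)$ with $h\in\Bbbk[\![x,y]\!]$ (again by \cref{lem:series_vanishing_at_diagonal}, using $\phi(y,y) = \tilde u'(y)$), the whole expression reduces modulo $(\fg\otimes\fg)[\![x,y]\!]$ to $\gamma/(\lambda_1(y)(x-y)) = r_\textnormal{Yang}(x,y)/\lambda_1(y)$. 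On the other side, the standard form $r = \lambda_2(y)r_\textnormal{Yang} + r_0$ together with $\lambda = (\lambda_1\lambda_2)^{-1}$ gives $\lambda(y)r(x,y) \equiv r_\textnormal{Yang}(x,y)/\lambda_1(y)$ modulo the same subspace, and the rigidity step of the first paragraph concludes $\jmath^*\rho = \lambda(y)r(x,y)$. The ``in particular'' assertion is then immediate: \cref{thm:geometry_of_formal_rmatrices} supplies $\lambda_1 = 1$ via $c^*(\widehat{\eta}_p) = \mathrm{d}z$, and normalisation of $r$ yields $\lambda_2 = 1$, so $\lambda = 1$. I expect the main obstacle to be the rigorous identification $(\jmath^*\chi)|_{x=y} = \gamma$, which requires carefully unwinding how the formal completion at $p$ intertwines $\widetilde K_U$ with the standard Killing-form pairing on $\fg$ via $\zeta$; all remaining computations are routine formal manipulations.
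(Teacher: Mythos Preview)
Your proposal is correct and follows essentially the same approach as the paper: both arguments use the local form of $\rho$ from \cref{lem:geomstdform}, compute the formal expansion at $(p,p)$ via \cref{lem:series_vanishing_at_diagonal} to identify the singular part as $\lambda_1(y)^{-1}r_{\textnormal{Yang}}(x,y)$, and conclude by the rigidity $(\fg(r)\otimes\fg)[\![y]\!]\cap(\fg\otimes\fg)[\![x,y]\!]=0$. The paper states the identification $\jmath^*\chi|_{x=y}=\gamma$ rather tersely, so your more explicit unwinding of how $\widetilde K_U$ completes to the standard Killing-form isomorphism via $\zeta$ is a useful elaboration rather than a departure.
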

\begin{proof}
\textbf{Step 1.} \emph{Setup. } Choose \(U, u, \mu,\chi\) and \(t\) as in \cref{lem:geomstdform}, i.e.\
\begin{equation}\label{pf:geometric_rmatrix_Taylor_series1}
    \rho|_{U\times U} = \frac{1\otimes \mu}{u-v}\chi + t.
\end{equation}
Let us write \(\widetilde{\mu} \coloneqq c(\mu), \widetilde{u} \coloneqq c(u)\in\Bbbk[\![z]\!]\). Then
\begin{equation}
    \jmath^\flat(1\otimes \mu) = \widetilde{\mu}(y), \jmath^\flat(u-v)=\widetilde{u}(x)-\widetilde{u}(y) \in \Bbbk[\![x,y]\!].
\end{equation}
Similarly, let \(\widetilde{t} \coloneqq \jmath^*(t),\widetilde{\chi}\coloneqq \jmath^*(\chi) \in (\fg \otimes \fg)[\![x,y]\!]\). \\
\textbf{Step 2.} \emph{The image of \(\rho\) under \cref{eq:geometric_taylor_series} is in standard form. }
The diagram 
\begin{gather}
\xymatrix{\Gamma(X\times C \setminus\Delta,\cA\boxtimes\cA) \ar[r]^-{\jmath^*}\ar[d]& (\fg(r)\otimes \fg)[\![y]\!]\ar[r] &(\fg\otimes \fg)(\!(x)\!)[\![y]\!]
\\\Gamma(U\times U\setminus\Delta,\cA \boxtimes\cA)\ar[r]_-{\jmath^*}& (\zeta(\Gamma(U\setminus\{p\},\cA))\otimes \fg)[\![y]\!] \ar[ur]&}
\end{gather}
commutes. Therefore, using the notations in Step 1 and \cref{pf:geometric_rmatrix_Taylor_series1}, the image of \(\rho\) under \cref{eq:geometric_taylor_series} is of the form
\begin{align*}
    \widetilde{r}(x,y) = \frac{\widetilde{\mu}(y)}{\widetilde{u}(x)-\widetilde{u}(y)}\widetilde{\chi}(x,y) + \widetilde{t}(x,y).
\end{align*}
\cref{lem:series_vanishing_at_diagonal} can be used to see that \((\widetilde{u}(x)-\widetilde{u}(y))^{-1}-(\widetilde{u}'(y)(x-y))^{-1} \in \Bbbk[\![x,y]\!]\) and \(\widetilde{\chi}(x,y) -\gamma \in (x-y)(\fg \otimes \fg)[\![x,y]\!]\). Moreover, \(\eta_p = \mu^{-1}\textnormal{d}u\) implies that \(\lambda_1 = \widetilde{u}'/\widetilde{\mu}\). Summarized, we obtain
\begin{align}
\widetilde{r}(x,y) = \frac{\gamma}{\lambda_1(y)(x-y)} + s(x,y)
\end{align}
for some \(s \in (\fg \otimes \fg)[\![x,y]\!]\). \\
\textbf{Step 3.} \emph{Concluding the proof. } The image \(\widetilde{r}\) of \(\rho\) under \cref{eq:geometric_taylor_series} is by definition in \((\fg(r) \otimes \fg)(\!(x)\!)[\![y]\!]\). But so is \(\lambda(y)r(x,y) \in \lambda_1(y)^{-1}r_{\textnormal{Yang}}(x,y) + (\fg \otimes \fg)[\![x,y]\!]\) and
\begin{align*}
    \widetilde{r}(x,y) - \lambda(y)r(x,y) \in (\fg(r) \otimes \fg)[\![y]\!] \cap (\fg \otimes \fg)[\![x,y]\!] = \{0\},
\end{align*}
concludes the proof.
\end{proof}

We conclude the section with discussing how the property of solving the CYBE resp. GCYBE can be formulated more globally using the geometric \(r\)-matrix. For this purpose we need to sheafify \cref{not:ij_notations}. In the following we assume that \(C\) is affine.

\begin{notation}
Let \(\mathcal{U}\) be the quasi-coherent sheaf on the affine scheme \(C\) associated to the universal enveloping sheaf of \(\tH^0(\cA|_C)\) as \(\tH^0(\cO_C)\)-Lie algebra and \(\iota \colon \tH^0(\cA|_C) \to \textnormal{H}^0(\mathcal{U})\) be the canonical map. For \(ij \in \{12,13,23\}\), let \(\pi_{ij} \colon C\times C \times C \to C \times C\) denote the natural projections defined through \((x_1,x_2,x_3) \mapsto (x_i,x_j)\) and note that there are natural maps 
\((\cdot)^{ij}\colon \cA|_C \boxtimes \cA|_C \to \pi_{ij,*}(\mathcal{U}\boxtimes \mathcal{U} \boxtimes \mathcal{U})\)
defined, under consideration of the K\"unneth formulas
\begin{align*}
    &\tH^0(\cA|_C \boxtimes \cA|_C) \cong \tH^0(\cA|_C) \otimes \tH^0(\cA|_C) \\ &\tH^0(\mathcal{U}\boxtimes \mathcal{U} \boxtimes \mathcal{U}) \cong \tH^0(\mathcal{U})\otimes\tH^0(\mathcal{U})\otimes\tH^0(\mathcal{U}),
\end{align*}
by \(t^{12} =\iota(a)\otimes \iota(b) \otimes 1,t^{13} = \iota(a) \otimes 1 \otimes \iota(b)\) and \(t^{23} = 1 \otimes \iota(a)\otimes \iota(b)\) for \(t = a \otimes b \in \tH^0(\cA|_C) \otimes \tH^0(\cA|_C)\). Furthermore, if \(\sigma \colon C\times C \to C\times C\) denotes the map \((x,y) \mapsto (y,x)\), let \(\overline{(\cdot)} \colon \cA|_C \boxtimes \cA|_C \to \sigma_*(\cA|_C \boxtimes \cA|_C)\) be the morphism defined on global sections by the \(\sigma\)-equivariant automorphism \(a\otimes b \mapsto -b\otimes a\).
\end{notation}

\begin{remark}
It can be shown that \(\iota\) is injective; see \cite[Lemma 1.6]{galinat_thesis}.
\end{remark}

The following result can be proven as \cite[Theorem 3.11 \& Theorem 4.3]{burban_galinat}. However, we will sketch a possible self-contained proof.
\begin{theorem}\label{thm:geometric_GCYBE}
The geometric \(r\)-matrix \(\rho\), treated as an element of \(\Gamma(C\times C\setminus\Delta,\cA\boxtimes\cA)\), solves the \emph{geometric GCYBE}
\begin{align}\label{eq:geomGCYBE}
    [\rho^{12},\rho^{13}]+[\rho^{12},\rho^{23}]+[\rho^{13},\overline{\rho}^{23}] = 0,
\end{align}
where the commutators on the left-hand side are understood in \(\Gamma(C\times C \times C\setminus \Sigma,\mathcal{U}\boxtimes \mathcal{U} \boxtimes \mathcal{U})\) for
\[\Sigma = \{(x_1,x_2,x_3) \in C\times C \times C\mid x_i\neq x_j,i\neq j\}.\]
Furthermore, if \(r\) is skew-symmetric then \(\overline{\rho} = \rho\).
\end{theorem}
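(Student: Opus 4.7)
The plan is to transfer the formal GCYBE satisfied by $r$ to the geometric setting via a three-variable analog of the Taylor expansion map $\jmath^{*}$ from \cref{rem:mathfrak_X}. First I would construct an iterated formal completion of $X\times X\times X$ along $X\times\{p\}\times\{p\}$ in the spirit of \cref{not:mathfrak_X}, completing successively in the third and then the second factor. Combined with the identification $\widehat{\mathcal{U}}_{p}\cong\textnormal{U}(\fg)[\![z]\!]$ arising from $\zeta$ (which holds because the universal enveloping functor commutes with the flat base change given by formal completion), this yields an injective morphism
\[
\Gamma\bigl(C\times C\times C\setminus\Sigma,\ \mathcal{U}\boxtimes\mathcal{U}\boxtimes\mathcal{U}\bigr)\longrightarrow \bigl(\textnormal{U}(\fg)^{\otimes 3}\bigr)(\!(x_{1})\!)(\!(x_{2})\!)[\![x_{3}]\!],
\]
which is the natural three-variable analogue of \cref{eq:geometric_taylor_series}. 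Injectivity rests on the fact that completion at a coherent ideal on a noetherian scheme is flat, so the analogue of the argument in \cref{rem:mathfrak_X} applies verbatim after restricting to the affine open complement of the appropriate diagonal loci.

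Next I would verify that this triple Taylor expansion is natural with respect to the projections $\pi_{ij}$ and compatible with the insertion maps $(\cdot)^{ij}$ and the involution $\overline{(\cdot)}$; this follows from the compatibility of $\jmath$ with the diagonal embeddings and the Künneth identifications built into the definitions. By \cref{thm:geometric_rmatrix_Taylor_series}, $\jmath^{*}\rho=\lambda(y)r(x,y)$ for a suitable $\lambda\in\Bbbk[\![z]\!]^{\times}$, so the images of $\rho^{ij}$ and $\overline{\rho}^{23}$ under the triple Taylor expansion are $(\lambda(x_{j})r(x_{i},x_{j}))^{ij}$ and $\overline{\lambda(x_{3})r(x_{2},x_{3})}^{23}$, respectively. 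Consequently the image of the left-hand side of \cref{eq:geomGCYBE} is exactly $\textnormal{GCYB}(\lambda(y)r)(x_{1},x_{2},x_{3})$.

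Now the rescaled series $\lambda(y)r(x,y)$ is equivalent to $r$ via the equivalence $(\lambda,z,\textnormal{id}_{\fg[\![z]\!]})$, so by \cref{lemm:formal_equivalence}.\emph{(1)} it is itself a formal generalized $r$-matrix and in particular satisfies the formal GCYBE. Injectivity of the triple Taylor expansion therefore forces the left-hand side of \cref{eq:geomGCYBE} to vanish. For the skew-symmetry statement, I would use \cref{thm:geometry_of_formal_rmatrices} together with \cref{lemm:formal_equivalence}.\emph{(3)} to reduce, after passing to an equivalent \(r\)-matrix and adjusting the choice of $\eta$, to the case in which $r$ is normalized and skew-symmetric with $\jmath^{*}\rho=r$. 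A direct comparison of the definition of $\overline{(\cdot)}$ with that of $\overline{r}$ yields $\jmath^{*}\overline{\rho}=\overline{\jmath^{*}\rho}=\overline{r}=r=\jmath^{*}\rho$, and injectivity of $\jmath^{*}$ gives $\overline{\rho}=\rho$.

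The main obstacle is the careful construction of the three-variable Taylor expansion together with the verification of its compatibility with the insertion operations $(\cdot)^{ij}$ and the noncommutative multiplication in $\mathcal{U}\boxtimes\mathcal{U}\boxtimes\mathcal{U}$; this is essentially a diagram-chasing exercise in formal geometry, but it must be set up with enough care that the pullbacks along the projections $\pi_{ij}$ genuinely commute with the iterated completions. Once this bookkeeping is settled, the geometric GCYBE collapses to the formal GCYBE for $\lambda(y)r(x,y)$, which holds by hypothesis.
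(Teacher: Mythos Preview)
Your proposal is essentially the same strategy as the paper's sketch: construct a three-variable Taylor expansion, check that it carries the left-hand side of \cref{eq:geomGCYBE} to $\textnormal{GCYB}(\lambda(y)r)$, and conclude by injectivity. One refinement the paper makes and you do not: it first observes (via the computation in \cref{rem:formal_GCYBE_domain}) that the left-hand side already lies in $\Gamma(C\times C\times C\setminus\Sigma,\cA\boxtimes\cA\boxtimes\cA)$, and builds the injective Taylor map on this \emph{coherent} sheaf rather than on $\mathcal{U}\boxtimes\mathcal{U}\boxtimes\mathcal{U}$. This sidesteps the issue that $\mathcal{U}$ is only quasi-coherent, so your appeal to flatness of completion for injectivity needs an extra word; passing to $\cA^{\boxtimes 3}$ first makes that step clean. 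For the skew-symmetry claim, the paper argues directly that $\overline{\rho}$ is sent to $\overline{r}$ under the two-variable map and then uses $r=\overline{r}$; your reduction to the normalized case via \cref{thm:geometry_of_formal_rmatrices} achieves the same end with slightly more scaffolding.
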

\begin{proof}[Sketch of proof]
Similar calculations as in \cref{rem:formal_GCYBE_domain} show that the left-hand side of the geometric GCYBE \cref{eq:geomGCYBE} is actually contained in \(\Gamma(C \times C \times C\setminus \Sigma,\cA \boxtimes \cA \boxtimes \cA)\).
It is possible to construct an injective morphism 
\begin{align}\label{eq:geometric_Taylor_series_in_three_variables}
\Gamma(C \times C \times C\setminus \Sigma,\cA \boxtimes \cA \boxtimes \cA) \longrightarrow (\fg \otimes \fg \otimes \fg)(\!(x_1)\!)(\!(x_2)\!)[\![x_3]\!],    
\end{align}
in the same vein as \cref{eq:geometric_taylor_series}, where the only additional observation necessary in this construction is that \(\Gamma(C\times C\setminus \Delta,\cA \boxtimes \cA)\to (\fg \otimes\fg)(\!(x)\!)[\![y]\!]\) extends to a morphism
\begin{equation}
    \Gamma((C\setminus\{p\})\times (C\setminus\{p\})\setminus \Delta,\cA \boxtimes \cA)\to (\fg \otimes\fg)(\!(x)\!)(\!(y)\!).
\end{equation}
Using \cref{thm:geometric_rmatrix_Taylor_series}, it can be shown that
the map \cref{eq:geometric_Taylor_series_in_three_variables} sends the left hand side of \cref{eq:geomGCYBE} to \(\textnormal{GCYB}(\widetilde{r})\), where \(\widetilde{r}(x,y) = \lambda(y)r(x,y)\) for some \(\lambda \in \Bbbk[\![z]\!]^\times\). This is 0 since \(r\) is a formal generalized \(r\)-matrix. The injectivity of \cref{eq:geometric_Taylor_series_in_three_variables} implies that \(\rho\) solves \cref{eq:geomGCYBE}. Furthermore, it can be shown that \(\overline{\rho}\) is mapped to \(\overline{r}\) via the injective map \(\Gamma(C\times C\setminus \Delta,\cA \boxtimes \cA)\to (\fg \otimes\fg)(\!(x)\!)[\![y]\!]\). Therefore \(r = \overline{r}\) implies \(\rho = \overline{\rho}\).
\end{proof}

\subsection{Global extension of formal generalized \(r\)-matrices}\label{subsec:rational_extension_of_formal_generalized_rmatrices} Let \(\fg\) be a finite-dimensional, central, simple Lie algebra over \(\Bbbk\), \(r \in (\fg\otimes\fg)(\!(x)\!)[\![y]\!]\) be a generalized \(r\)-matrix, \(O\subseteq \textnormal{Mult}(\fg(r))\) be an unital subalgebra of finite codimension and write \(\textnormal{GD}(O,\fg(r)) = ((X,\cA),(p,c,\zeta))\) for the associated geometric datum.

Chose a smooth open neighbourhood \(C\) of \(p\) such that \(\cA|_C\) is \'etale \(\fg\)-locally free and there exists a non-vanishing 1-form \(\eta\) on \(C\). Let \(\rho \in \Gamma(X\times C \setminus \Delta,\cA\boxtimes \cA)\) be the geometric \(r\)-matrix of \(((X,\cA),(C,\eta))\). If we fix an \'etale morphism \(f \colon Y \to X\) such that \(p\in f(Y)\subseteq C\) and there exists an isomorphism \(\psi \colon f^*\cA \to \fg \otimes \cO_Y\) of sheaves of Lie algebras, the morphism \(\psi\boxtimes \psi\) defines an isomorphism \((f\times f)^*(\cA \boxtimes \cA) \cong (\fg \otimes \fg)\otimes \cO_{Y\times Y}\). Therefore, we can consider 
\begin{equation}\label{eq:etale_trivialized_rmatrix}
    \varrho \coloneqq (\psi\boxtimes \psi)(f\times f)^*\rho \in (\fg \otimes \fg)\otimes \Gamma(Y \times Y \setminus \Delta_f,\cO_{Y\times Y}),
\end{equation}
where \(\Delta_f = (f\times f)^{-1}(\Delta) = \{(x,y)\in Y\times Y\mid f(x) = f(y)\}\).
Observe that for \(\Bbbk = \overline{\Bbbk}\) this section can be identified with a rational map \(\varrho \colon Y(\Bbbk)\times Y(\Bbbk) \to \fg \otimes \fg\), where \(Y(\Bbbk)\) is the algebraic variety of closed points of \(Y\).

Let \(q \in f^{-1}(p)\) be a closed point. Using e.g.\ \cite[Theorem 19.6.4]{egaIV_part_1} it is possible to chose a \(\Bbbk\)-algebra isomorphism \(c'\colon \compO_{Y,q} \to \Bbbk(q)[\![z]\!]\), where \(\Bbbk(q)\) is the residue field of \(q\).  Similar to the construction of \(\mathfrak{X}\) in \cref{not:mathfrak_X}, we have a locally ringed space \(\mathfrak{Y}\) equipped with a morphism of locally ringed spaces \(\jmath'\colon \mathfrak{Y} \to Y\times Y\) whose topological space coincides with the one of \(Y\) and for all \(U\subseteq Y\) open \(\Gamma(U,\cO_{\mathfrak{Y}}) = c'(\Gamma(U,\cO_Y))[\![y]\!]\), where \(c'\) is extended to an isomorphism \(\textnormal{Q}(\compO_{Y,q}) \to \Bbbk(q)(\!(z)\!)\). Clearly
\begin{equation}
    \Gamma(V,\jmath^{\prime,*}((\fg \otimes \fg)\otimes \cO_{Y\times Y})) = (\fg \otimes \fg)\otimes c'(\Gamma(U,\cO_Y))[\![y]\!] \subseteq  (\fg_{\Bbbk(q)}\otimes_{\Bbbk(q)} \fg_{\Bbbk(q)})(\!(x)\!)[\![y]\!]
\end{equation}
for \(V \subseteq Y\) open such that \(\jmath^{\prime, -1}(V) = U\). If \(\Bbbk = \overline{\Bbbk}\) and \(c'\) is induced by the choice of a local parameter \(u\) of \(\cO_{Y,q}\), we can understand \(\jmath^{\prime,\flat}\) as the Taylor expansion in the second coordinate in \(q\) with respect to the local parameter \(u\).

\begin{theorem}\label{thm:extension_of_generalized_rmatrices}
Let \(r_{\Bbbk(q)} \in (\fg_{\Bbbk(q)}\otimes_{\Bbbk(q)} \fg_{\Bbbk(q)})(\!(x)\!)[\![y]\!]\) be the formal generalized \(r\)-matrix constructed from \(r\) in \cref{rem:field_extensions}. Then \(\jmath^{\prime, *}\varrho\) is equivalent to \(r_{\Bbbk(q)}\), where \(\varrho\) is defined in \cref{eq:etale_trivialized_rmatrix}. In particular, if \(\Bbbk = \overline{\Bbbk}\), \(r\) is equivalent to the Taylor series of the rational map \(\varrho\colon Y(\Bbbk) \times Y(\Bbbk) \to \fg \otimes \fg\) in the second variable in \(q\) with respect to some local parameter. 
\end{theorem}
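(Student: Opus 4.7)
The plan is to reduce the claim to \cref{thm:geometric_rmatrix_Taylor_series} via base change along the inclusion $\Bbbk \hookrightarrow \Bbbk(q)$. First I apply \cref{rem:field_extension_geometry} to obtain the geometric datum $\textnormal{GD}(O_{\Bbbk(q)}, \fg(r_{\Bbbk(q)})) = ((X_{\Bbbk(q)}, \cA_{\Bbbk(q)}), (p_{\Bbbk(q)}, c_{\Bbbk(q)}, \zeta_{\Bbbk(q)}))$ associated to $r_{\Bbbk(q)}$ and observe that the geometric $r$-matrix construction from \cref{subsec:geometric_rmatrix} is compatible with flat base change: the pull-back of $\rho$ along $\pi \times \pi \colon X_{\Bbbk(q)} \times C_{\Bbbk(q)} \to X \times C$ coincides with the geometric $r$-matrix $\rho_{\Bbbk(q)}$ of the base-changed datum $((X_{\Bbbk(q)}, \cA_{\Bbbk(q)}), (C_{\Bbbk(q)}, \pi^*\eta))$. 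Applying \cref{thm:geometric_rmatrix_Taylor_series} to $\rho_{\Bbbk(q)}$ at $p_{\Bbbk(q)}$ yields $\jmath^*_{\Bbbk(q)}\rho_{\Bbbk(q)} = \lambda(y)\, r_{\Bbbk(q)}(x,y)$ for some $\lambda \in \Bbbk(q)[\![z]\!]^\times$, where $\jmath^*_{\Bbbk(q)}$ is the analogue of $\jmath^*$ (see \cref{not:mathfrak_X}) for $X_{\Bbbk(q)}$ at $p_{\Bbbk(q)}$.

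Next, I compare $\jmath'^*\varrho$ with $\jmath^*_{\Bbbk(q)}\rho_{\Bbbk(q)}$. The closed point $q\in Y$ together with the canonical embedding $\textnormal{Spec}(\Bbbk(q)) \hookrightarrow Y$ determines a $\Bbbk(q)$-rational point $q' \in Y_{\Bbbk(q)}$ lying over $p_{\Bbbk(q)}$, with $\widehat{\cO}_{Y_{\Bbbk(q)},q'} \cong \widehat{\cO}_{Y,q}$ canonically. Since $f$ is \'etale at $q$ and the residue field extension there is trivial, $f_{\Bbbk(q)}$ induces an isomorphism $\widehat{f}_{q'}^\sharp \colon \widehat{\cO}_{X_{\Bbbk(q)}, p_{\Bbbk(q)}} \to \widehat{\cO}_{Y_{\Bbbk(q)}, q'}$. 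Setting
\begin{equation*}
w(z) \coloneqq \bigl(c' \circ \widehat{f}_{q'}^\sharp \circ c_{\Bbbk(q)}^{-1}\bigr)(z) \in z\Bbbk(q)[\![z]\!]^\times
\end{equation*}
produces a coordinate transformation. Analogously, composing $\psi$ with $\zeta_{\Bbbk(q)}$ via the base-changed compatibility for the sheaf $\cA$ (that is, the isomorphism $\widehat{f^*\cA}_{q} \cong \widehat{\cA}_{p} \otimes_{\widehat{\cO}_{X,p}} \widehat{\cO}_{Y,q}$ after passage to $\Bbbk(q)$) yields a gauge transformation $\varphi \in \textnormal{Aut}_{\Bbbk(q)[\![z]\!]\textnormal{-alg}}(\fg_{\Bbbk(q)}[\![z]\!])$ and a rescaling $\mu \in \Bbbk(q)[\![z]\!]^\times$, the latter absorbing the scalar factor of $\psi|_q$ with respect to $\zeta_{\Bbbk(q)}|_{p_{\Bbbk(q)}}$.

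Unwinding the definition of $\jmath'^*$ and using that the pull-back along $f\times f$ combined with the trivialization $\psi\boxtimes\psi$ is compatible with formal completion at $q$, I obtain
\begin{equation*}
\jmath'^*\varrho(x,y) = \mu(y)(\varphi(x)\otimes \varphi(y))\bigl(\jmath^*_{\Bbbk(q)} \rho_{\Bbbk(q)}\bigr)(w(x), w(y)) = \mu(y)\lambda(w(y))(\varphi(x)\otimes \varphi(y))\, r_{\Bbbk(q)}(w(x), w(y)).
\end{equation*}
This is exactly the statement that $\jmath'^*\varrho$ is equivalent to $r_{\Bbbk(q)}$ via the equivalence datum $(\mu(\cdot)\lambda(w(\cdot)), w, \varphi)$. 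In the case $\Bbbk = \overline{\Bbbk}$, one has $\Bbbk(q) = \Bbbk$ and $\jmath'^*$ reduces to the Taylor expansion in the second variable at $q$ with respect to the local parameter corresponding to $c'$, yielding the final assertion.

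The main obstacle will be rigorously establishing the compatibility invoked in the last display: one needs to verify that pulling back $\rho$ along $f\times f$ and applying $\psi\boxtimes\psi$ produces a section whose ``formal Taylor expansion'' at $(\cdot,q)$ is precisely the coordinate-and-gauge transform of the expansion of $\rho_{\Bbbk(q)}$ at $(\cdot,p_{\Bbbk(q)})$. This amounts to identifying the formal completions along $Y\times\{q\}$ and $X_{\Bbbk(q)}\times\{p_{\Bbbk(q)}\}$ via $\widehat{f}_{q'}^\sharp$ after base change, and tracking how $\psi$ compares to $\zeta_{\Bbbk(q)}$ under this identification. Although this is routine in spirit, the interplay between the \'etale morphism $f$, the base change $\pi$, and the sheaf trivialization $\psi$ is the technical core of the argument.
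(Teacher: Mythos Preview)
Your approach is correct and follows essentially the same strategy as the paper: reduce to \cref{thm:geometric_rmatrix_Taylor_series} and construct the coordinate transformation $w$ and gauge transformation $\varphi$ from the comparison of formal completions. The paper is slightly more direct: rather than first base-changing the entire geometric datum to $\Bbbk(q)$ and invoking a base-change compatibility for the geometric $r$-matrix, it applies $\jmath^*$ over $\Bbbk$ (obtaining $\lambda(y)r(x,y)$), then simply extends scalars to $\Bbbk(q)$, and defines $w$ directly via $\Bbbk[\![z]\!]\xrightarrow{c^{-1}}\widehat{\cO}_{X,p}\xrightarrow{\widehat{f}^\sharp_p}\widehat{\cO}_{Y,q}\xrightarrow{c'}\Bbbk(q)[\![z]\!]$ without passing through $X_{\Bbbk(q)}$. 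This sidesteps the need to verify that $(\pi\times\pi)^*\rho = \rho_{\Bbbk(q)}$.

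One small correction: your rescaling $\mu$ is spurious. Since $\psi$ is an isomorphism of sheaves of \emph{Lie algebras}, the induced map on completions is a $\Bbbk(q)[\![z]\!]$-Lie algebra automorphism of $\fg_{\Bbbk(q)}[\![z]\!]$, with no extra scalar factor; the only rescaling present is the $\lambda$ coming from \cref{thm:geometric_rmatrix_Taylor_series}. The final equivalence is $(\lambda\circ w, w, \varphi)$.
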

\begin{proof}
Using \cref{thm:geometric_rmatrix_Taylor_series} and \cref{rem:field_extensions} we see that the image of \(\rho\) under  
\[\Gamma(X \times C \setminus\Delta,\cA \boxtimes \cA) \stackrel{\jmath^*}\longrightarrow (\fg \otimes \fg)(\!(x)\!)[\![y]\!]\longrightarrow (\fg_{\Bbbk(q)} \otimes_{\Bbbk(q)}\fg_{\Bbbk(q)})(\!(x)\!)[\![y]\!]\]
is equivalent to \(r_{\Bbbk(q)}\) by a rescaling \(\lambda \in \Bbbk[\![z]\!]^\times \subseteq \Bbbk(q)[\![z]\!]^\times\). The image of \(z\) under  
\[\Bbbk[\![z]\!]\stackrel{c^{-1}}\longrightarrow \widehat{\cO}_{X,p}\stackrel{\widehat{f}^\sharp_p}\longrightarrow\widehat{\cO}_{Y,q} \stackrel{c'}\longrightarrow \Bbbk(q)[\![z]\!]\]
defines a coordinate transform \(w \in z\Bbbk(q)[\![z]\!]^\times\). Similarly, the chain of maps
\[\fg[\![z]\!] \stackrel{\zeta^{-1}}\longrightarrow  \widehat{\cA}_p \longrightarrow \widehat{f^*\cA}_q \stackrel{\widehat{\psi}_q}\longrightarrow \fg \otimes \widehat{\cO}_{Y,q} \stackrel{\textnormal{id}_{\fg}\otimes c'}\longrightarrow \fg_{\Bbbk(q)}[\![z]\!],\]
where the middle arrow is the composition of \(\widehat{f}^*_p\) with the completion of the canonical map \((f_*f^*\cA)_p \to f^*\cA_q\), induces a gauge transformation \(\varphi \in \Aut_{\Bbbk(q)\textnormal{-alg}}(\fg_{\Bbbk(q)}[\![z]\!])\).
It is straight forward to show from the constructions of \(\mathfrak{X}\) and \(\mathfrak{Y}\) that the diagram
\[\xymatrix{\Gamma(X \times C \setminus\Delta,\cA \boxtimes \cA) \ar[r]^{\jmath^*}\ar[d]_{(f\times f)^*}& (\fg \otimes \fg)(\!(x)\!)[\![y]\!] \ar[r]&(\fg_{\Bbbk(q)} \otimes_{\Bbbk(q)}\fg_{\Bbbk(q)})(\!(x)\!)[\![y]\!] \\ \Gamma(Y\times Y,f^*\cA \boxtimes f^*\cA) \ar[r]_-{\psi \boxtimes \psi}&
(\fg \otimes \fg) \otimes \Gamma(Y\times Y\setminus \Delta_f,\cO_{Y\times Y}) \ar[ru]_{\jmath^{\prime, *}} &}\]
commutes, where the arrow in the upper right is defined by \(s(x,y) \mapsto (\varphi(x) \otimes \varphi(y))s(w(x),w(y))\) for all \(s\in (\fg \otimes\fg)(\!(x)\!)[\![y]\!]\). In particular, this implies that the series \(\jmath^{\prime, *}\varrho\) is equivalent to \(r_{\Bbbk(q)}\) via the equivalence \((\lambda,w,\varphi)\). If \(\Bbbk = \overline{\Bbbk}\), we can apply a coordinate transformation to assume that \(c'\) is induced by a local parameter of \(q\). This proves the second part of the statement.
\end{proof}

\begin{remark}
The section \(\varrho\) is the solution of yet another version of the GCYBE. 
For \(ij \in \{12,13,23\}\), let \(\pi'_{ij}\colon Y \times Y \times Y \to Y \times Y\) be the canonical projections \((y_1,y_2,y_3) \to (y_i,y_j)\) and \(\varrho^{ij}\) be the image of \(\varrho\) under
\begin{equation*}
    \xymatrix{(\fg \otimes \fg) \otimes \cO_{Y\times Y} \ar[r]^{\pi_{ij}^{\prime,*}}& (\fg \otimes \fg)\otimes \cO_{Y\times Y \times Y} \ar[rr]^-{(\cdot)^{ij}\otimes \textnormal{id}_{\cO_{Y\times Y \times Y}}}&& (\textnormal{U}(\fg)\otimes\textnormal{U}(\fg)\otimes \textnormal{U}(\fg))\otimes \pi_{ij,*}\cO_{Y\times Y \times Y}},
\end{equation*}
where \cref{not:ij_notations} was used. Furthermore, let \(\overline{\varrho} = -\tau'\sigma^{\prime,*}\varrho\), where \(\sigma' \colon Y\times Y \to Y\times Y\) is given by \(\sigma'(x,y) = (y,x)\) and \(\tau' \colon (\fg \otimes \fg)\otimes \cO_{Y\times Y}\to (\fg \otimes \fg)\otimes \cO_{Y\times Y}\) is the \(\cO_{Y\times Y}\)-linear extension of \(a\otimes b \mapsto b\otimes a\) for \(a,b\in \fg\). 
Then we have
\begin{equation}
    [\varrho^{12},\varrho^{13}]+[\varrho^{12},\varrho^{23}]+[\varrho^{13},\overline{\varrho}^{23}] = 0
\end{equation}
and if \(r\) is skew-symmetric, the identity \(\varrho = \overline{\varrho}\) holds. This can be proven exactly as was sketched for \cref{thm:geometric_GCYBE} using \cref{thm:extension_of_generalized_rmatrices}.
\end{remark}

\begin{remark}
Passing to the smooth completion \(Z\) of the normalization of \(Y\), we can consider \(\varrho\) as a rational section of \((\fg \otimes \fg) \otimes \cO_{Z\times Z}\), where now \(Z\) is an integral smooth projective curve over \(\Bbbk\). In particular, for \(\Bbbk = \bC\) the closed points of \(Z\) are form a compact Riemann surface \(R\) and \(\varrho\) is a rational map \(R \otimes R \to \fg \otimes \fg\).
\end{remark}

\begin{example}
Let \(\fg = \mathfrak{so}(n,\Bbbk)\) and  \(X_{ij}= E_{ij}-E_{ji}\), where \(E_{ij} = (\delta_{ik}\delta_{j\ell })_{k,\ell = 1}^n\).
In \cite{skrypnyk_new_integrable_gaudin_type_systems} the author examines 
\[r(x,y) = \frac{1}{x-y}\sum_{1 \le i < j \le n} \frac{\alpha_i(x)\alpha_j(x)}{\alpha_i(y)\alpha_j(y)}X_{ij}\otimes X_{ji},\]
where \(\alpha_i(z) \coloneqq (1-a_iz)^{1/2} = \sum_{k = 0}^\infty (-1)^k\binom{1/2}{k}(a_iz)^k \in \Bbbk[\![z]\!]\) for some non-zero \(a_i \in \Bbbk\), \(i \in \{1,\dots,n\}\), in the case that \(\Bbbk = \bC\) and considered as a meromorphic function. If we write \(A = \textnormal{diag}(a_1,\dots,a_n)\), it is easy to see that \(\fg(r)\) is the image of \(z^{-1}\fg[z^{-1}]\) under 
\[a(z) \longmapsto (1-Az)^{1/2}a(z)(1-Az)^{1/2}.\]
This is a homogeneous subalgebra of \(\fg(\!(z)\!)\) the sense of \cref{ex:homogeneous_Lie_subalgebras}. Therefore, \(r\) is a non-skew-symmetric normalized generalized \(r\)-matrix by \cref{prop:Lie_subalgebra_of_generalized_rmatrix}. The associated sheaf of Lie algebras \(\cA\) on \(X = \mathbb{P}^1_\Bbbk\) is obtained by gluing \(\fg(r)\) and \(\Gamma(U,\cA) \coloneqq (1-Az)^{1/2}\fg[z](1-Az)^{1/2}\), where \(\textnormal{Spec}(\Bbbk[z])\cong U \coloneqq \mathbb{P}^1_\Bbbk\setminus \{\infty\} \subset\mathbb{P}^1_\Bbbk\), along \(\textnormal{Spec}(\Bbbk[z,z^{-1}])\). Note that \(p = (z) \in U\).

Consider the canonical map 
\[\Bbbk[z]\longrightarrow R \coloneqq\Bbbk[z,u_1,\dots,u_n]/(u_1^2-(1-a_1z),\dots,u_n^2-(1-a_nz)).\]
Observe that \(u_i \mapsto \alpha_i\) defines an injective map \(R\to \Bbbk[\![z]\!]\) which identifies \(R\) with the subalgebra of \(\Bbbk[\![z]\!]\) generated by \(\{\alpha_1,\dots,\alpha_n\}\).
Let \(Y = \textnormal{Spec}(S)\), where \(S \coloneqq R_u\) for \(u\coloneqq {u_1\cdot} \dots {\cdot u_n}\), and \(f\colon Y \to U\) be the map induced by \(\Bbbk[z]\to R\to R_u = S\). Then \(f\) is \'etale by \cite[Corollary 3.16]{milne_etale_cohomology} and maps the \(\Bbbk\)-rational point \(\overline{(z,u_1-1,\dots,u_n-1)}_u \in Y\) to \(p = (z)\). Furthermore, the chain of maps
\[\Gamma(Y,f^*\cA) \cong \Gamma(U,\cA) \otimes_{\Bbbk[z]} S \to \fg \otimes S \cong \Gamma(Y,\fg \otimes \cO_Y),\] 
where the middle arrow is defined by \(\alpha_i\alpha_jX_{ij} \otimes 1 \mapsto X_{ij} \otimes u_iu_j\), is an isomorphism of Lie algebras over \(S\). Here we used that \(\{\alpha_i\alpha_jX_{ij}\}_{i \neq j}\) forms a \(\Bbbk[z]\)-basis of \(\Gamma(U,\cA)\) and \(u_iu_j \in S\) is a unit for all \(i,j\in \{1,\dots,n\}\). Thus, \(f^*\cA \cong \fg \otimes \cO_Y\) and \(r\) can be obtained from
\[\sum_{1 \le i < j \le n}  (X_{ij} \otimes X_{ji}) \otimes \frac{u_i u_j\otimes (u_i u_j)^{-1}}{z \otimes 1 - 1 \otimes z}\in (\fg \otimes \fg) \otimes (S\otimes S) \left[\frac{1}{z \otimes 1 - 1 \otimes z}\right]\]
in the way described in this section.
\end{example}

\section{Real and complex analytic generalized \(r\)-matrices}\label{sec:real_and_complex_generalized_rmatrices}

\subsection{Definition and relation to formal generalized \(r\)-matrices}\label{subsec:real_and_complex_generalized_rmatrices} In this subsection we define the classic complex analytic notion of generalized \(r\)-matrices and relate these to formal generalized \(r\)-matrices over \(\bC\). It is also possible to consider a real analytic context. Both cases work rather parallel and are developed simultaneously in the following. For this purpose, let \(\Bbbk \in \{\bR,\bC\}\) and \(\fg\) be a finite-dimensional Lie algebra over \(\Bbbk\).

\begin{definition}\label{def:real_meromorphic_maps}
For two real finite-dimensional vector spaces \(V,W\) and \(U \subseteq V\) open, a \emph{(real) meromorphic map} \(U \to W\) is a map defined on a dense open subset of \(U\) with values in \(W\) which is the restriction of a meromorphic map (in the usual complex analytic sense) \(\widetilde{U} \to W\otimes_\bR \bC\) to \(U\), where \(\widetilde{U} \subseteq V\otimes_\bR \bC\) is an open neighbourhood of \(U\). 
\end{definition}
\begin{remark}\label{rem:real_meromorphic_maps}
This definition could be done without reference to complex numbers by considering maps defined on a dense subset of \(U\) with values in \(V\) which are locally quotients of analytic maps.
\end{remark}
\begin{definition}\label{def:analytic_(generalized)_rmatrices}
Let \(U \subset \Bbbk\) be a connected open subset and \(r\colon U \times U \to \fg \otimes \fg\) be a meromorphic map.  Then \(r\) is called a \emph{generalized \(r\)-matrix} if it solves the \emph{generalized classical Yang-Baxter equation} (GCYBE)
\begin{align}\label{eq:GCYBE}
    [r^{12}(x_1,x_2),r^{13}(x_1,x_3)] + [r^{12}(x_1,x_2),r^{23}(x_2,x_3)] + [r^{32}(x_3,x_2),r^{13}(x_1,x_3)] = 0,
\end{align}
and \emph{\(r\)-matrix} if it solves the \emph{classical Yang-Baxter equation} (CYBE)
\begin{align}\label{eq:CYBE}
    [r^{12}(x_1,x_2),r^{13}(x_1,x_3)] + [r^{12}(x_1,x_2),r^{23}(x_2,x_3)] + [r^{13}(x_1,x_3),r^{23}(x_1,x_3)] = 0.
\end{align}
for all \(x_1,x_2,x_3 \in \Bbbk\) where these equations are defined respectively.
Here, \cref{not:ij_notations} as well as \(r^{ij}(x_i,x_j) = r(x_i,x_j)^{ij}, r^{32}(x_3,x_2)=\tau(r(x_2,x_3))^{23}\)  was used and the brackets on the left hand side of both equations are understood as the usual commutators in \(\textnormal{U}(\fg)\otimes \textnormal{U}(\fg) \otimes \textnormal{U}(\fg)\). If we want to emphasize that \(\Bbbk = \bC\) (resp. \(\Bbbk = \bR\)) we call \(r\) \emph{complex} (resp. \emph{real}) and we say that \(r\) is \emph{analytic} if we want to emphasize that we are not in the formalism of \cref{sec:formal_generalized_rmatrices}.
\end{definition}

\begin{definition}\label{def:non-degenerate}
A meromorphic map \(r\colon U \times U \to \fg \otimes \fg\), where \(U \subseteq \Bbbk\) is a connected open subset, is called 
\begin{itemize}
    \item \emph{non-degenerate} if for some \((x_0,y_0)\) in the domain of \(r\) the linear map \(\fg^* \to \fg\) defined by \(\alpha \mapsto (1 \otimes \alpha)r(x_0,y_0)\) is bijective and
    \item \emph{skew-symmetric} if \(r(x,y) = -\tau(r(x,y))\), where \(\tau(a\otimes b) = b\otimes a\) for \(a,b\in \fg\).
\end{itemize} 
\end{definition}

\begin{remark}\label{rem:non-degenerate}
If \(r\colon U \times U \to \fg \otimes \fg\) is a non-degenerate meromorphic map, where \(U \subseteq \Bbbk\) is a connected open subset, the map \(\fg^*\to \fg\) defined by \(\alpha \mapsto (1 \otimes \alpha)r(x,y)\) is bijective for all \((x,y)\) in a dense open subset of the domain of definition of \(r\).
\end{remark}

\noindent
Assume that \(\fg\) is semi-simple with Killing form \(\kappa\) and Casimir element \(\gamma \in \fg \otimes \fg\). Then \(\widetilde\kappa(\gamma) = \textnormal{id}_\fg\), where \(\widetilde\kappa \colon \fg\otimes\fg \to \textnormal{End}(\fg)\) is the isomorphism defined by \(a\otimes b \mapsto \kappa(b,\cdot)a\).
Clearly, the linear map \(\fg^*\to \fg\) defined by a tensor \(t \in \fg \otimes \fg\) is bijective if and only if \(\det(\widetilde{\kappa}(t)) \neq 0\). Consider a meromorphic map \(r\) of the form \begin{align}\label{eq:complex_rmatrix_standard_form}
    r(x,y) = \frac{\lambda(y)\gamma}{x-y} + r_0(x,y),
\end{align}
where \(\lambda \colon U \to \Bbbk^\times\) and \(r_0\colon U \times U \to \fg \otimes \fg\) are analytic for some connected open \(U \subseteq \Bbbk\). Then \((x-y)\widetilde{\kappa}(r(x,y))\) equals \(\lambda(y) \textnormal{id}_\fg\) for \(x = y\). Thus, its determinant is non-vanishing in an open neighbourhood of \(\{(x,y) \in U\times U\mid x = y\}\). In particular, \(r\) is seen to be non-degenerate. It turns out that every non-degenerate (generalized) \(r\)-matrix is locally of the form \cref{eq:complex_rmatrix_standard_form} if \(\fg\) is central simple. The proof in the case of a solution of the CYBE is mentioned in \cite{belavin_drinfeld_diffrence_depending} and the GCYBE case is proven similar. However, since the details are omitted in \cite{belavin_drinfeld_diffrence_depending} and the real case is not discussed, we will present a proof using the coordinate-free methods from \cite{kierans_kreussler}.

\begin{proposition}\label{prop:complex_rmatrix_standard_form}
Let \(\fg\) be a finite-dimensional central simple Lie algebra and \(r\colon U \times U \to \fg \otimes\fg\) be a non-degenerate generalized \(r\)-matrix (resp.\ \(r\)-matrix), where \(U \subseteq \Bbbk\) is connected and open. After probably shrinking \(U\) and shifting the origin, \(0 \in U\) and there exists analytic maps \(\lambda\colon U \to \Bbbk^\times\) and \(r_0\colon U \times U \to \fg \otimes \fg\) such that
\begin{align}\label{eq:analytic_standard_form}
    r(x,y) = \frac{\lambda(y)}{x-y}\gamma + r_0(x,y).
\end{align}
In particular, the Taylor series of \(r\) in \(0\) in the second parameter identifies \(r\) with a formal generalized \(r\)-matrix (resp.\ formal \(r\)-matrix).
\end{proposition}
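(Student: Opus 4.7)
The plan is to carry out a local analysis near a point of the diagonal, following the method outlined in \cite{belavin_drinfeld_diffrence_depending}, with the coordinate-free reworking of \cite{kierans_kreussler}, and to adapt the argument uniformly to the GCYBE and to \(\Bbbk = \bR\). By \cref{rem:non-degenerate}, the locus \(V \subseteq U \times U\) on which \(\widetilde{\kappa}(r(x,y))\colon\fg\to\fg\) is invertible is dense and open. After shifting coordinates we may assume \(0 \in U\), and the argument will permit us to shrink \(U\) to a small disk (respectively ball) around \(0\). Set \(\widetilde{r}(x,y) \coloneqq (x-y)r(x,y)\), viewed as a meromorphic map from \(U \times U\) to \(\fg \otimes \fg\).

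The central step, which I expect to be the main obstacle, is to show that \(\widetilde{r}\) extends analytically across the diagonal \(\Delta = \{(y,y) : y \in U\}\) and that \(\widetilde{r}(y,y) = \lambda(y)\gamma\) for some nowhere-vanishing analytic \(\lambda\colon U \to \Bbbk^\times\). For this one multiplies the GCYBE \eqref{eq:GCYBE} (respectively the CYBE \eqref{eq:CYBE}) by \((x_1-x_2)(x_1-x_3)(x_2-x_3)\) to obtain a polynomial identity in \(\widetilde{r}^{12},\widetilde{r}^{13},\widetilde{r}^{23}\), together with a twisted term \(\widetilde{s}^{23}(x_3,x_2) \coloneqq (x_2-x_3)r^{32}(x_3,x_2) = \tau(\widetilde{r}(x_2,x_3))^{23}\) in the GCYBE case. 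Specializing \(x_2 = x_3 = y\) and \(x_1 = x\) and dividing out the one remaining factor of \((x-y)\), this identity reduces (after invoking that the image of \(\alpha \mapsto (1 \otimes \alpha)r(x,y)\) spans \(\fg\) on the dense open set \(V\)) to the statement that \(\widetilde{r}(y,y) \in (\fg \otimes \fg)^{\fg}\) for the diagonal adjoint action. Since \(\fg\) is central simple, \eqref{eq:fact_about_Casimir_element} together with Schur's lemma shows that \((\fg \otimes \fg)^{\fg} = \Bbbk\gamma\), forcing \(\widetilde{r}(y,y) = \lambda(y)\gamma\) for some function \(\lambda\). The analyticity of \(\lambda\) follows from the analytic extension of \(\widetilde{r}\) across \(\Delta\) produced by the same functional identity, and shrinking \(U\) makes \(\lambda\) nowhere-vanishing, which is equivalent to non-degeneracy via \(\widetilde{\kappa}(\lambda(y)\gamma) = \lambda(y)\operatorname{id}_\fg\). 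A subtlety specific to the GCYBE is that the twisted tensor involves \(\tau\), but this causes no change in the invariance conclusion because \(\tau(\gamma) = \gamma\).

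With the form \(\widetilde{r}(y,y) = \lambda(y)\gamma\) established, set \(r_0(x,y) \coloneqq r(x,y) - \lambda(y)\gamma/(x-y)\); then \((x-y)r_0(x,y) = \widetilde{r}(x,y) - \lambda(y)\gamma\) vanishes on \(\Delta\), so an analytic analog of \cref{lem:series_vanishing_at_diagonal} shows that \(r_0\) is analytic on \(U \times U\). This yields \eqref{eq:analytic_standard_form}. For the ``in particular'' assertion, take the Taylor expansion in the second variable at \(0\): the expansion of \(r_0\) lies in \((\fg\otimes\fg)[\![x,y]\!]\), \(\lambda\) becomes an element of \(\Bbbk[\![y]\!]^\times\), and \((x-y)^{-1}\) is expanded as in \eqref{eq:Yangs_rmatrix}. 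Together these produce an element of \((\fg\otimes\fg)(\!(x)\!)[\![y]\!]\) in standard form per \cref{def:formal_standard_from}, which solves the formal GCYBE \eqref{eq:formal_GCYBE} (resp.\ the formal CYBE \eqref{eq:formal_CYBE}) because the analytic GCYBE \eqref{eq:GCYBE} (resp.\ CYBE \eqref{eq:CYBE}) holds identically on the open set \(V\). In the skew-symmetric case, analytic skew-symmetry directly translates to formal skew-symmetry via comparison of \(r\) and \(\overline{r}\) in the resulting series expansion.
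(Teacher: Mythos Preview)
Your outline has a genuine gap at its ``central step'': you assume that \(\widetilde{r}(x,y) = (x-y)r(x,y)\) extends analytically across the diagonal, but this is exactly what needs to be proved. A priori \(r\) could have a pole of order \(k > 1\) along \(\Delta\), in which case multiplying the equation by \((x_1-x_2)(x_1-x_3)(x_2-x_3)\) does not clear all singularities and your specialisation \(x_2 = x_3 = y\) is undefined. The paper handles this in two separate steps: first (Step 3) one multiplies by \((x-y)^k\) for the correct \(k\), shows via the centroid argument that the leading coefficient is \(\lambda(y)\gamma\), and then (Step 4) one rules out \(k > 1\) by a further limit computation that produces \([\gamma^{12},\gamma^{13}] = 0\), a contradiction. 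Your proposal collapses both of these into an unjustified assumption.

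There is a second, independent gap in your argument that \(\lambda\) is nowhere vanishing. You write that ``shrinking \(U\) makes \(\lambda\) nowhere-vanishing, which is equivalent to non-degeneracy via \(\widetilde{\kappa}(\lambda(y)\gamma) = \lambda(y)\operatorname{id}_\fg\)''. But non-degeneracy of \(r\) is a condition at a generic point \emph{off} the diagonal and says nothing directly about the diagonal value of \(\widetilde{r}\); a function analytic on \(U\times U\) can be non-degenerate everywhere and still have \(\widetilde{r}(y,y) = 0\) identically. In the paper this is Step 2, and the argument is substantive: assuming \(r\) is analytic at some diagonal point \((v_0,v_0)\), one uses non-degeneracy at a nearby point together with the equation to show \(T_{v_0} = \widetilde{\kappa}(r(v_0,v_0))\) is invertible and that \(T_{v_0}^{-1}\) is a derivation of \(\fg\), contradicting simplicity. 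In the CYBE case one additionally needs \(T_{v_0}^* = -T_{v_0}\), whose proof over \(\bR\) requires the centrality hypothesis. Your outline does not address any of this.
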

\begin{proof}
\textbf{Step 1.} \emph{Translating the CYBE and GCYBE into an endomorphism language. }
Consider the isomorphisms \(\tilde{\kappa}\colon \fg \otimes \fg \to \End(\fg)\) and \(\Tilde{\kappa}_3\colon\fg \otimes \fg \otimes \fg \to \textnormal{Hom}(\fg\otimes\fg,\fg)\) defined by
\begin{align*}
    &\tilde{\kappa}(a_1\otimes a_2)(b_1) = \kappa(a_2,b_1)a_1&\Tilde{\kappa}_3(a_1\otimes a_2\otimes a_3)(b_1 \otimes b_2) = \kappa(b_2,a_3)\kappa(b_1,a_2)a_1
\end{align*}
for all \(a_1,a_2,a_3,b_1,b_2 \in \fg\).
Assume first that \(r\) is a generalized \(r\)-matrix. Then, applying \(\Tilde{\kappa}_3\) to the GCYBE \cref{eq:GCYBE} and evaluating in an arbitrary \(t_1 \otimes t_2 \in \fg\otimes\fg\) yields
\begin{equation}\label{eq:kappaGCYBE}
    \begin{split}
        [\Tilde{\kappa} (r(x_1,x_2))t_1,\Tilde{\kappa} (r(x_1,x_3))t_2] &= \Tilde{\kappa} (r(x_1,x_2))[t_1,\Tilde{\kappa} (r(x_2,x_3))t_2] \\&+ \Tilde{\kappa} (r(x_1,x_3))[\Tilde{\kappa} (r(x_3,x_2))t_1,t_2],
    \end{split}
\end{equation}
where it was used that e.g.\ for any \(a,b,c,d \in \fg\) we have:
\begin{align*}
    &\Tilde{\kappa}_3([(a\otimes b)^{32},(c\otimes d)^{13}])(t_1 \otimes t_2) = \Tilde{\kappa}_3(c \otimes b \otimes [a,d])(t_1 \otimes t_2) = \kappa([a,d],t_2)\kappa(b,t_1)c \\&= \kappa([\kappa(b,t_1)a,d],t_2)c = -\kappa([\kappa(b,t_1)a,t_2],d)c = -\Tilde{\kappa}(c\otimes d)[\Tilde{\kappa}(a\otimes b)t_1,t_2].
\end{align*}
The other identities used can be derived similarly; see \cite[Proposition 2.14]{kierans_kreussler}.
If \(r\) is a solution of the CYBE \cref{eq:CYBE}, we find
\begin{equation}\label{eq:kappaCYBE}
    \begin{split}
        [\Tilde{\kappa} (r(x_1,x_2))t_1,\Tilde{\kappa} (r(x_1,x_3))t_2] &= \Tilde{\kappa} (r(x_1,x_2))[t_1,\Tilde{\kappa} (r(x_2,x_3))t_2] \\&- \Tilde{\kappa} (r(x_1,x_3))[\Tilde{\kappa} (r(x_2,x_3))^*t_1,t_2],
    \end{split}
\end{equation}
by applying \(\widetilde\kappa_3\). Here \((\cdot)^*\) denotes the adjoint with respect to \(\kappa\).\\
\textbf{Step 2.} \emph{\(r\) has poles along the diagonal. } 
By \cref{rem:non-degenerate}.(1) we can chose a point \((u_0,v_0)\) in the domain of \(r\) such that \(\Tilde{\kappa}(r(u_0,v_0))\) is an isomorphism and \(u \mapsto T_u \coloneqq \Tilde{\kappa}(r(u,v_0))\) is analytic along the line connecting \(u_0\) and \(v_0\) but excluding \(v_0\). We prove by contradiction that \(r\) has a pole at \((v_0,v_0)\). Assume that \(r\) is analytic in \((v_0,v_0)\), i.e.\ \(u \mapsto T_u\) is analytic in \(v_0\). The equations \cref{eq:kappaGCYBE} and \cref{eq:kappaCYBE} reduce to
	\begin{align}\label{eq:nonondegsol1a}
	&[T_ut_1,T_ut_2] = T_u([t_1,T_{v_0}t_2] + [T_{v_0}t_1,t_2])\\\label{eq:nonondegsol1b}
	&[T_ut_1,T_ut_2] = T_u([t_1,T_{v_0}t_2] - [T_{v_0}^*t_1,t_2]).
	\end{align}
respectively by setting \(x_1 = u, x_2,x_3 = v_0\).
Applying \(\psi_u \coloneqq T_u \circ T_{u_0}^{-1}\) to \cref{eq:nonondegsol1a} and \cref{eq:nonondegsol1b} evaluated at \(u = u_0\) results in
	\begin{align}
	&\psi_{u}[T_{u_0}t_1,T_{u_0}t_2] = T_u([t_1,T_{v_0}t_2] + [T_{v_0}t_1,t_2]),\\
	&\psi_u[T_{u_0}t_1,T_{u_0}t_2] = T_u([t_1,T_{v_0}t_2] - [T_{v_0}^*t_1,t_2]).
	\end{align}
Comparing these equations with \cref{eq:nonondegsol1a} and \cref{eq:nonondegsol1b} evaluated at \(u = u_0\) and using the fact that \(T_{u_0}\) is bijective, we see that \(\psi_u\) is a Lie algebra homomorphism in both cases. Therefore, the fact that \(\psi_u\) is orthogonal with respect to \(\kappa\) if it is invertible, implies that \( \textnormal{det}(\psi_u) \in \{0,\pm1\}\); see e.g.\ \cite[Lemma 2.3.]{kierans_kreussler} for details. A continuity argument and \(\psi_{u_0} = \textnormal{id}_\fg\) forces \(\psi_{v_0}\) and consequently \(T_{v_0} = \psi_{v_0}\circ T_{u_0}\) to be an isomorphism. 

Setting \(u = v_0\) in equation \cref{eq:nonondegsol1a}, we see that \(T_{v_0}^{-1}\) is an invertible derivation of \(\fg\), contradicting the simplicity of \(\fg\). Setting \(u = v_0\) in equation \cref{eq:nonondegsol1b} leads to the same contradiction, considering the fact that 
\begin{equation}\label{eq:T_v_0_skew_symmetric}
    \det(T_{v_0}) \neq 0 \implies T_{v_0}^* = -T_{v_0}.    
\end{equation}
The proof of \cref{eq:T_v_0_skew_symmetric} can be found in \cite[Lemma 3.2 and Lemma 3.4]{kierans_kreussler} for \(\Bbbk = \bC\) and uses Schur's Lemma as well as the fact that every automorphism of \(\fg\) has a fixed vector. Since \(\fg\) is assumed to be central, Schur's Lemma applies for \(\Bbbk = \bR\) and an automorphism of \(\fg\) without fixed vector defines one on the simple complex Lie algebra \(\fg\otimes_\bR \bC\) by extension of scalars. Thus, the proof in \cite[Lemma 3.2 and Lemma 3.4]{kierans_kreussler} also applies to the case \(\Bbbk = \bR\).  Summarized, the assumption that \(r\) is a solution of either the CYBE or GCYBE without a pole along the diagonal leads to a contradiction. We have shown that \(r\) has a pole along the diagonal.\\
\textbf{Step 3.} \emph{After shrinking \(U\), \(r(x,y) = \frac{\lambda(y)\gamma}{(x-y)^k} + \frac{f(x,y)}{(x-y)^{k-1}}\). }
Using \cref{lem:series_vanishing_at_diagonal} for \(V = \fg \otimes \fg\) as well as \(\bigcap_{k = 0}^\infty (x-y)^kV[\![x,y]\!] = \{0\}\), we can find a \(k \in \bN_0\) and shrink \(U\) in such a way that \(s(x,y) = (x-y)^kr(x,y)\) is analytic on \(U \times U\) and \(h(z) \coloneqq s(z,z)\) is an analytic function on $U$ which is not identically 0. After probably shrinking \(U\) further, we may assume that \(h\) is non-vanishing and \(s(x,y) - h(y) = (x-y)f(x,y)\) for an analytic function \(f\colon U \times U \to \fg \otimes \fg\). Multiplying either \cref{eq:nonondegsol1a} or \cref{eq:nonondegsol1b} with \((x_1-x_2)^k\) and setting \(x_1= x_2\) results in
    \begin{align}
	    [\Tilde{\kappa}(h(x_2))t_1,\Tilde{\kappa}(r(x_2,x_3))t_2] = \Tilde{\kappa}(h(x_2))[t_1,\Tilde{\kappa}(r(x_2,x_3))t_2]
    \end{align}
    in both cases.
    Choosing \(x_3\) in such a way that \(\Tilde{\kappa}(r(x_2,x_3))\) is an isomorphism, we see that \(\widetilde{\kappa}(h(x_2))\) is an equivariant endomorphism of \(\fg\) with respect to the adjoint representation. In other words, \(\kappa(h(x_2))\) is in the centroid of \(\fg\) (see \cref{def:central_algebra}). Hence, \(\widetilde{\kappa}(h(x_2)) = \lambda(x_2)\textnormal{id}_{\fg}\) since \(\fg\) is central, where \(\lambda \colon U \to\Bbbk^\times\) is an analytic function. This implies that \(h(x_2) = \lambda(x_2)\gamma\). Summarized, we obtain
    \begin{align*}
        r(x,y) = \frac{\lambda(y)\gamma}{(x-y)^k} + \frac{f(x,y)}{(x-y)^{k-1}}.
    \end{align*}
    \textbf{Step 4.} \emph{\(k = 1\). }
    Assume that \(k > 1\). Then 
    \[(x_1-x_2)^{k-1}[r^{32}(x_3,x_2),r^{13}(x_1,x_3)] \textnormal{ and } (x_1-x_2)^{k-1}[r^{13}(x_1,x_3),r^{23}(x_2,x_3)]\]
    vanish for \(x_1 = x_2\). Therefore, multiplying the CYBE or the GCYBE with \((x_1 - x_2)^{k-1}\), using \[[\gamma^{12},r(x_2,y_3)^{23}] = -[\gamma^{12},r(x_2,y_3)^{13}]\] 
    and taking the limit \(x_1 \to x_2\) results in
	\begin{align*}
	    0 &= [h(x_2)^{12},\partial_{x_2}r(x_2,x_3)^{13}] + [f(x_2,x_2)^{12},r(x_2,x_3)^{13} + r(x_2,x_3)^{23}]\\&= [h(x_2)^{12},(x_2 - x_3)^{-k}\partial_{x_2}s(x_2,x_3)^{13} - k(x_2 - x_3)^{-k-1}s(x_2,x_3)^{13}]\\&+ [f(x_2,x_2)^{12},r(x_2,x_3)^{13} + r(x_2,x_3)^{13}],
	\end{align*}
	where \(h,s\) and \(f\) are defined in Step 3 and the limit definition of \(\partial_{x_2}\) was used. 
	Multiplying this with \((x_2 - x_3)^{k+1}\) and taking the limit \(x_1 \to x_3\) under consideration of \(h(z) = \lambda(z)\gamma\) yields \(-k\lambda(x_3)^2[\gamma^{12},\gamma^{13}] = 0\). This contradicts the fact that \([\gamma^{12},\gamma^{13}]\neq 0\). Indeed, \([\gamma^{12},\gamma^{13}]\) maps to \(\gamma\) with respect to the linear map defined by \(a\otimes b \otimes c\mapsto [b,a] \otimes c\); see \cref{eq:fact_about_Casimir_element}. Therefore, the assumption \(k > 1\) leads to a contradiction and we can conclude that \(k = 1\).
\end{proof}

\begin{remark}
A formal generalized \(r\)-matrix associated to a non-degenerate analytic generalized \(r\)-matrix \(r\) is skew-symmetric if and only if  \(r\) is skew-symmetric. In particular, \cref{prop:formal_rmatrices_are_skew-symmetric} gives a new proof of the fact that non-degenerate analytic generalized \(r\)-matrices are exactly the non-degenerate analytic \(r\)-matrices.
\end{remark}

\begin{definition}
A meromorphic map \(s \colon U \times U \to \fg \otimes \fg\) is said to be in \emph{standard form} if \(U\subseteq \Bbbk\) is a connected open neighbourhood of \(0 \in \Bbbk\) such that 
\begin{equation}
    s(x,y) = \frac{\lambda(y) \gamma}{x-y} + s_0(x,y)
\end{equation}
for some analytic maps \(\lambda \colon U \to \Bbbk\) and \(s_0 \colon U \times U \to \fg \otimes \fg\)
\end{definition}

\begin{remark}
\Cref{prop:complex_rmatrix_standard_form} allows us to restrict our study of non-degenerate generalized \(r\)-matrices to those in standard form.
\end{remark}

\begin{definition}\label{def:analytic_equivalence}
Let \(r\colon U \times U \to \fg \otimes \fg\), \(\widetilde{r}\colon V \times V \to \fg \otimes \fg\) be meromorphic maps in standard form. Then \(\widetilde{r}\) is called \emph{analytically equivalent} to \(r\) if
\[\widetilde{r}(x,y) = \lambda(y)(\varphi(x) \otimes \varphi(y))r(w(x),w(y)),\]
where the triple \((\lambda,w,\varphi)\) is called an \emph{analytic equivalence} and consists of
\begin{itemize}
    \item an analytic embedding \(w \colon W \to U\), for some connected open neighbourhood \(W \subseteq V\) of \(0\) satisfying \(w(0) = 0\), called \emph{coordinate transformation},
    \item a non-zero analytic \(\lambda \colon W \to \Bbbk\) called \emph{rescaling} and
    \item an analytic \(\varphi \colon W \to \Aut_{\Bbbk\textnormal{-alg}}(\fg)\) called \emph{gauge transformation}.
\end{itemize} 
\end{definition}

\begin{remark}
A similar result to \cref{lemm:formal_equivalence} holds for analytic equivalences. More precisely, analytic equivalences preserve non-degeneracy and the property of solving the GCYBE \cref{eq:GCYBE}. Furthermore, analytic equivalences with constant rescaling part preserve skew-symmetry and the property of solving the CYBE \cref{eq:CYBE}. 
The proof of these statements uses a reduction to the complex case and the identity theorem (see e.g.\ \cite[Chapter I.A, Theorem 6]{gunning_rossi_1965}), under consideration that the domain of definition of a complex meromorphic function on a connected open set is connected.
\end{remark}

\begin{definition}\label{def:Theta}
For a meromorphic map \(s \colon U \times U \to \fg \otimes \fg\) such that \(z \mapsto s(z,0)\) is meromorphic, let \(\Theta s \in (\fg \otimes \fg)(\!(x)\!)[\![y]\!]\) denote the series
\begin{equation}
    \Theta s \coloneqq \sum_{k = 0}^\infty \frac{1}{k!} \frac{\partial s}{\partial y^k}(x,0)y^k,
\end{equation}
i.e.\ \(\Theta s\) is the Taylor expansion of \(s\) in the second variable in 0.
\end{definition}

\noindent
Clearly, \(\Theta r\) is a formal generalized \(r\)-matrix for every solution of the GCYBE \(r\) in standard form. The following result shows that \(\Theta\) essentially identifies the formal and analytic setting in this way.

\begin{proposition}\label{prop:analytic_and_formal_theories_are_equivalent}
Every formal generalized \(r\)-matrix over \(\bC\) (resp.\ \(\bR\)) is of the form \(\Theta r\) for a complex (resp.\ real) analytic generalized \(r\)-matrix in standard form. Furthermore, two complex (resp.\ real) analytic generalized \(r\)-matrices \(r_1\) and \(r_2\) in standard form are analytically equivalent if and only if \(\Theta r_1\) and \(\Theta r_2\) are equivalent.
In other words, \(\Theta\) defines a bijection between equivalence classes of formal generalized \(r\)-matrices over \(\bC\) (resp.\ \(\bR\)) and analytic equivalence classes of complex (resp.\ real) analytic generalized \(r\)-matrices in standard form. 
\end{proposition}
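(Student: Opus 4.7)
The proposition contains two nontrivial assertions—surjectivity of $\Theta$ on equivalence classes, and the implication that formal equivalence of $\Theta r_1$ with $\Theta r_2$ forces analytic equivalence of $r_1$ with $r_2$—since the reverse implication is routine: applying Taylor expansion at $0$ to the triple $(\lambda, w, \varphi)$ of an analytic equivalence between $r_1$ and $r_2$ immediately yields a formal equivalence between $\Theta r_1$ and $\Theta r_2$. Both nontrivial parts I plan to handle via the geometrization machinery, specifically \cref{mainthmC} and \cref{lem:geometry_of_equivalences}.

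For surjectivity, let $\hat r$ be a formal generalized $r$-matrix over $\Bbbk \in \{\bR, \bC\}$. \Cref{mainthmC} provides a smooth integral projective curve $Y$ over $\Bbbk$, a $\Bbbk$-rational smooth point $q$, and a rational section $\varrho$ of $(\fg \otimes \fg) \otimes \cO_{Y \times Y}$ whose Taylor expansion in the second variable at $q$, in some local coordinate, is equivalent to $\hat r$. I then analytify: for $\Bbbk = \bC$ the complex points $Y(\bC)$ form a compact Riemann surface, and $\varrho$ restricts to a meromorphic map $r\colon U \times U \to \fg \otimes \fg$ on a connected open neighborhood $U$ of $q$ equipped with an analytic local coordinate; for $\Bbbk = \bR$ everything is defined over $\bR$, so restriction to $Y(\bR)$-points yields a real meromorphic map in the sense of \cref{def:real_meromorphic_maps}. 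This $r$ inherits standard form from the fact that the pole of $\varrho$ along the diagonal is proportional to $\gamma$, and satisfies the GCYBE as a polynomial identity; by construction, $\Theta r$ is equivalent to $\hat r$.

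For the nontrivial injectivity direction, suppose $r_1, r_2$ are analytic generalized $r$-matrices in standard form with $\Theta r_1$ formally equivalent to $\Theta r_2$. Applying the surjectivity step, I realize each $r_i$ (up to analytic equivalence) as the analytification of a rational section $\varrho_i$ on a geometric datum $(Y_i, q_i)$. The formal equivalence between $\Theta r_1$ and $\Theta r_2$ then translates, via \cref{lem:geometry_of_equivalences} applied to the $\textnormal{Mult}$-geometric data, into an algebraic isomorphism $f\colon Y_2 \to Y_1$ mapping $q_2$ to $q_1$ together with an isomorphism of the associated sheaves of Lie algebras. Analytifying this algebraic datum produces an analytic biholomorphism of neighborhoods of $q_1$ and $q_2$ and an analytic isomorphism of the sheaves, which together yield an analytic equivalence triple $(\lambda, w, \varphi)$ implementing $r_1 \sim r_2$ analytically.

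The principal technical obstacle will be to arrange the formal data furnished by \cref{mainthmC}---in particular the coordinate isomorphism $c$ and the trivialization $\zeta$---to match genuinely analytic local structures at $q$. Because $q$ is smooth and $\Bbbk$-rational it admits an algebraic (hence analytic) local coordinate, and morphisms of smooth curves are analytic in such coordinates, so the algebraic isomorphism of geometric data analytifies cleanly to an analytic equivalence rather than merely to a formal one; this is what prevents the convergence of the formal series $(\lambda, w, \varphi)$ from becoming a separate issue. A secondary point in the real case is that $\varrho$, being the output of a $\Bbbk$-rational geometrization, automatically takes values in $\fg \otimes \fg$ on $Y(\bR)$-points, so the analytified $r$ is a real meromorphic map as required.
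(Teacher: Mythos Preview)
Your strategy matches the paper's: geometrize, analytify, and convert formal equivalences into algebraic isomorphisms via \cref{lem:geometry_of_equivalences} that then analytify. The paper does not, however, route through \cref{mainthmC}. Rather than passing to an \'etale cover $Y$, it works directly with the analytic manifold $C^{\textnormal{an}}$ of $\Bbbk$-rational points of a smooth neighbourhood $C\subseteq X$ of $p$, picks an analytic chart $\iota\colon U\to C^{\textnormal{an}}$ at $p$, trivializes the bundle $\iota^*\cA$ over the contractible $U$ via Grauert's theorem (and a real-analytic analogue), and sets $\varrho=(\psi\boxtimes\psi)(\iota\times\iota)^*\rho$. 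This interacts better with \cref{lem:geometry_of_equivalences}, which produces an isomorphism $f\colon X_2\to X_1$ of the $\textnormal{Mult}$-curves together with $\phi\colon\cA_1\to f_*\cA_2$, \emph{not} an isomorphism of the non-canonical \'etale covers $Y_i$; your asserted map $Y_2\to Y_1$ does not follow and would need a separate compatibility argument. Over $\Bbbk=\bR$ there is a further snag: the point $q\in Y$ in \cref{thm:extension_of_generalized_rmatrices} need not be $\bR$-rational, whereas $p\in X$ always is, so the paper's direct analytification of $X$ is the safer path for the real case.

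Your injectivity step contains a circularity. You ``realize each $r_i$ (up to analytic equivalence) as the analytification $\varrho_i$,'' but the surjectivity step only yields $\Theta\varrho_i$ \emph{formally} equivalent to $\Theta r_i$; upgrading this to analytic equivalence of $r_i$ with $\varrho_i$ is the very statement being proved. The paper phrases everything in terms of formal $r_i$, exhibiting a section $S\colon[\hat r]\mapsto[\varrho]$ with $\Theta S=\textnormal{id}$, and leaves the remaining point---that every analytic class lies in the image of $S$---implicit. That gap is fillable by observing that for analytic $s$ the generators of $\fg(\Theta s)$, and hence the multipliers built in the proof of \cref{thm:multipliers_are_klattice}, are convergent Laurent series; thus the formal coordinate $c$ agrees with the analytic coordinate on the domain of $s$, and $s$ itself is a $\varrho$ for a suitable choice of $\iota$ and $\psi$.
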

\begin{proof}
\textbf{Step 1.} \emph{Setup. } Let \(i\in\{1,2\}\) and \(r_i(x,y) \in (\fg \otimes \fg)(\!(x)\!)[\![y]\!]\) be a formal generalized \(r\)-matrix and \(\textnormal{GD}(\textnormal{Mult}(\fg(r_i)),\fg(r_i)) \coloneqq ((X_i,\cA_i),(p_i,c_i,\zeta_i))\) be the associated geometric datum. Pick a smooth open neighbourhood \(C_i\) of \(p_i\) such that \(\cA_i|_{C_i}\) is \'etale \(\fg\)-locally free and there exists a non-vanishing 1-form \(\eta_i\) on \(C_i\). The geometric \(r\)-matrix defined by \(((X_i,\cA_i),(C_i,\eta_i))\) will be denoted by \(\rho_i\).\\ 
\textbf{Step 2.} \emph{\(r_i\) is equivalent to a Taylor series of an analytic generalized \(r\)-matrix \(\varrho_i\). }
Let \(C_i^\textnormal{an} = (C_i^\textnormal{an},\cO_{C_i}^\textnormal{an})\) be the analytic manifold defined by the \(\Bbbk\)-rational points of \({C_i}\). Note that \(C_i\) is 1-dimensional, where in the real case this may be seen through the implicit function theorem and the fact that \(p_i\in C_i^\textnormal{an}\). Let \({U_i} \to C_i^\textnormal{an}\) be an analytic parameterization around \(p_i\), where \({U_i} = (U_i,\cO_{U_i}^\textnormal{an})\) is the locally ringed space associated to an open disc if \(\Bbbk = \bC\) (resp. an open interval if \(\Bbbk = \bR\)) around the origin. We write \(\iota_i \colon {U_i} \to C_i^\textnormal{an} \to X_i\) for the resulting morphism of locally ringed spaces such that \(\iota_i(0) = p_i\). The sheaf of Lie algebras
\(\iota_i^*\cA_i\) can be identified with an analytic fiber bundle on \({U_i}\) with fiber \(\fg\) and structure group \(\textnormal{Aut}_{\Bbbk\textnormal{-alg}}(\fg)\). Indeed, for \(\Bbbk = \bC\) this follows from \cref{thm:etale_trivial_sheaves_of_Lie_algebras} and the observation that \'etale \(\fg\)-local triviality implies local triviality in the complex topology, while for \(\Bbbk = \bR\) this is due to \cite[Lemma 2.1]{kirangi_semi_simple}. These fiber bundles are always trivial since \(U_i\) is contractible; see \cite[Satz 6]{grauert} for the complex and  \cite[Chapter VIII,Propositions 1.10 \& 1.19]{guaraldo_macri_Tancredi} for the real case. Thus, there exists an isomorphism \(\psi_i\colon \iota_i^*\cA_i \to \fg \otimes \cO_{U_i}^\textnormal{an}\) of sheaves of Lie algebras and \(\psi_i \boxtimes \psi_i\) defines an isomorphism \(\iota_i^*\cA_i \boxtimes \iota_i^*\cA_i \to (\fg \otimes \fg)\otimes \cO_{U_i\times U_i}^\textnormal{an}\). Consider the meromorphic map
\[\varrho_i \coloneqq (\psi_i \boxtimes \psi_i)(\iota_i \times \iota_i)^*\rho_i\colon U_i \times U_i \to \fg \otimes \fg.\]
The Taylor series of \(\varrho_i\) in the second variable in the preimage of \(p_i\) under \(\iota_i\) is equivalent to \(r_i\). This can be deduced with an argument similar to the proof of \cref{thm:extension_of_generalized_rmatrices}.\\
\textbf{Step 3.} \emph{An equivalence between \(r_1\) and \(r_2\) defines an analytical equivalence between \(\varrho_1\) and \(\varrho_2\). } The equivalence between \(r_1\) and \(r_2\) defines isomorphisms \(f\colon X_2 \to X_1\) and \(\phi \colon \cA_1 \to f_*\cA_2\) such that, after probably adjusting \(C_1,C_2,\eta_1\) and \(\eta_2\), we have \(f(p_2) = p_1\), \(f^{-1}(C_1) = C_2\), \(f^*\eta_1 = \eta_2\) and
\begin{equation}
    (f^*(\phi) \boxtimes f^*(\phi))(f\times f)^*\rho_1 = \rho_2;
\end{equation}
see \cref{lem:equivalence_of_geometric_rmatrices}. Application of \((\psi_2 \boxtimes \psi_2)(\iota_2\times \iota_2)^*\) results in 
\begin{equation}\label{eq:analytic_equivalence_from_geometry}
    \big(\psi_2\big((f\iota_2)^*\phi\big) \boxtimes \psi_2\big((f\iota_2)^*\phi\big)\big)(f\iota_2\times f\iota_2)^*\rho_1 = \varrho_2.
\end{equation}
After probably shrinking \(U_2\), there exists an analytic embedding \(w\colon U_2 \to U_1\) such that \(\iota_1w = f\iota_2\), since \(f\) is an isomorphism and maps \(p_2\) to \(p_1\). In particular, \(w(0) = 0\). We can rewrite \cref{eq:analytic_equivalence_from_geometry} as \((\varphi(x) \otimes \varphi(y))\varrho_1(w(x),w(y)) = \varrho_2(x,y)\), where \(\varphi \colon U_2 \to \textnormal{Aut}_{\Bbbk\textnormal{-alg}}(\fg)\) is the analytic map induced by the chain
\begin{equation}
    \xymatrix{\fg \otimes \cO_{U_2} \ar[rr]^-{w^*\big(\psi_1^{-1}\big)}&& w^*\iota_1^*\cA_1 = (f\iota_2)^*\cA_1 \ar[rr]^-{(f\iota_2)^*\phi}&& \iota_2^*\cA_2 \ar[r]^-{\psi_2}& \fg \otimes \cO_{U_2}}
\end{equation}
of isomorphisms of sheaves of Lie algebras.
\end{proof}

\begin{remark}
Let us point out that for a meromorphic map \(s\) in standard form, \(\Theta s\) is skew-symmetric if and only if \(s\) is. Therefore, \cref{prop:analytic_and_formal_theories_are_equivalent} induces a bijection between equivalence classes of formal \(r\)-matrices and analytic \(r\)-matrices as well.
\end{remark}

\subsection{Selected classification results for sheaves of Lie algebras}\label{subsec:classification_results_for_etale_locally_free_sheaves_of_Lie_algebras} In this section we present some classification results for sheaves of Lie algebras with simple fibers over an algebraically closed field. These will yield a new proof of the Belavin-Drinfeld trichotomy in the next section by refining \cref{thm:geometry_of_formal_rmatrices} using the local \'etale triviality and the standard form of the geometric \(r\)-matrix in \cref{lem:geomstdform}.

Isomorphism classes of \'etale locally trivial sheaves of algebras can be understood using the notion of torsors. Let us briefly recall said notion in the limited scope needed in the following, we refer to e.g.\  \cite[Section III.4]{milne_etale_cohomology} for more details. Let \(X\) be a \(\Bbbk\)-scheme of finite type and \(G\) be a group scheme over \(X\). Note that \'etale coverings of \(X\) are without loss of generality surjective \'etale morphisms \(Y \to X\); see \cref{rem:etale_morphisms_wlog_surjective}. For a surjective \'etale morphism \(Y \to X\), the set \(Z^1(Y/X,G)\) of \emph{1-cocycles} trivialized on \(Y\)  with values in \(G\) consists of morphisms \(g\colon Y\times_X Y \to G\) of \(X\)-schemes satisfying \(\textnormal{pr}_{31}^*(g) = \textnormal{pr}_{32}^*(g)\textnormal{pr}_{21}^*(g)\), where
\begin{align}
    &\textnormal{pr}_{ij}\colon Y\times_X Y \times_X Y \longrightarrow Y \times_X Y&(y_1,y_2,y_3) \longmapsto (y_i,y_j)
\end{align}
are the canonical projections for \(ij \in \{21,31,32\}\). Two 1-cocycles \(g,g' \in Z^1(Y/X,G)\)  are called \emph{cohomologous}, written in symbols as \(g\sim g'\), if there exists a morphism \(h \colon Y \to G\)
of \(X\)-schemes such that \(g' = \textnormal{pr}_2^*(h)g\textnormal{pr}_1^*(h)^{-1}\), where  \(\textnormal{pr}_1,\textnormal{pr}_2\colon Y\times_X Y \to Y\) are the canonical projections. We write \(\check{\textnormal{H}}{}^1(Y/X,G) \coloneqq Z^1(Y/X,G)/\sim\). If we have another surjective \'etale map \(Y' \to X\) factoring over \(Y\to X\), there exists a natural induced map \(\check{\textnormal{H}}(Y/X,G) \to \check{\textnormal{H}}(Y'/X,G)\). The set of \'etale \(G\)-torsors is given by \(\check{\textnormal{H}}{}^1(X_{\textnormal{\'et}},G) \coloneqq \varinjlim \check{\textnormal{H}}{}^1(Y/X,G)\), where the limit is taken over the directed set of surjective \'etale morphisms \(Y\to X\).

\begin{lemma}\label{lem:sheaves_of_Lie_algebras_and_torsors}
Let \(\Bbbk = \overline{\Bbbk}\), \(X\) be a reduced \(\Bbbk\)-scheme of finite type and \(A\) be a finite-dimensional \(\Bbbk\)-algebra. The isomorphism classes of \'etale \(A\)-locally free sheaves of algebras are in bijection with the set \( \check{\textnormal{H}}{}^1(X_{\textnormal{\'et}},\textnormal{Aut}_{\Bbbk\textnormal{-alg}}(A)_X)\), where
\(\textnormal{Aut}_{\Bbbk\textnormal{-alg}}(A)_X \coloneqq X\times G\) for the unique group scheme \(G\) over \(\textnormal{Spec}(\Bbbk)\) with closed points \(\textnormal{Aut}_{\Bbbk\textnormal{-alg}}(A)\).
\end{lemma}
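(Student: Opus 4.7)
The plan is to realize the lemma as a standard instance of étale descent for sheaves of algebras locally modeled on \(A \otimes \cO_X\). Throughout, write \(G \coloneqq \Aut_{\Bbbk\textnormal{-alg}}(A)_X\) and note that the functor of points of \(\Aut_{\Bbbk\textnormal{-alg}}(A)\) assigns to a \(\Bbbk\)-scheme \(S\) the group of \(\cO_S\)-linear algebra automorphisms of \(A \otimes \cO_S\); finite-dimensionality of \(A\) makes this functor representable by an affine group scheme of finite type, exhibited as a closed subscheme of \(\textnormal{GL}(A)\) cut out by the equations expressing compatibility with \(\mu_A\). Consequently, a morphism \(g\colon Y\times_X Y \to G\) of \(X\)-schemes is the same datum as an \(\cO_{Y\times_X Y}\)-algebra automorphism \(\varphi_g\) of \(A \otimes \cO_{Y\times_X Y}\), and the cocycle identity \(\textnormal{pr}_{31}^*g = \textnormal{pr}_{32}^*g \cdot \textnormal{pr}_{21}^*g\) translates verbatim into the standard descent-datum cocycle condition on \(Y\times_X Y\times_X Y\).

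Fix first a surjective étale morphism \(f\colon Y \to X\); by \cref{rem:etale_morphisms_wlog_surjective} these are cofinal among étale coverings, so it suffices to produce a natural bijection at finite level and pass to the colimit. I would construct a map from \(\check{\tH}^1(Y/X, G)\) to isomorphism classes of étale \(A\)-locally free sheaves of algebras trivialized by \(f\) as follows: given \(g \in Z^1(Y/X,G)\), view \(\varphi_g\) as an \(\cO_{Y\times_X Y}\)-linear isomorphism \(\textnormal{pr}_2^*(A \otimes \cO_Y) \to \textnormal{pr}_1^*(A \otimes \cO_Y)\) of sheaves of algebras. Since \(f\) is faithfully flat and quasi-compact, descent for quasi-coherent \(\cO_X\)-modules along \(f\) is effective (see, e.g., \cite[Stacks Project, Tag 023T]{stacks}), and the descent datum \(\varphi_g\) yields a quasi-coherent \(\cO_X\)-module \(\cA_g\) with a canonical isomorphism \(f^*\cA_g \cong A \otimes \cO_Y\). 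The fact that \(\varphi_g\) preserves the multiplication implies that the multiplication on \(A \otimes \cO_Y\) also descends, equipping \(\cA_g\) with the structure of a sheaf of algebras for which the trivialization is an isomorphism of such; in particular, \(\cA_g\) is étale \(A\)-locally free.

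Conversely, starting from an étale \(A\)-locally free sheaf \(\cA\), \cref{rem:etale_morphisms_wlog_surjective} provides a surjective étale \(f\colon Y \to X\) and an isomorphism \(\psi \colon f^*\cA \to A \otimes \cO_Y\) of sheaves of algebras. The two pullbacks of \(\psi\) to \(Y \times_X Y\) differ by an automorphism of \(A \otimes \cO_{Y\times_X Y}\) as a sheaf of algebras, which under the identification above is a morphism \(g_\psi \colon Y\times_X Y \to G\). The universal descent identity \(\textnormal{pr}_{31}^*\psi^{-1}\textnormal{pr}_{31}^*\psi = \textnormal{pr}_{32}^*\psi^{-1}\textnormal{pr}_{32}^*\psi \cdot \textnormal{pr}_{21}^*\psi^{-1}\textnormal{pr}_{21}^*\psi\) gives exactly the 1-cocycle condition. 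Replacing \(\psi\) by \(h\cdot \psi\) for \(h\colon Y \to G\) changes \(g_\psi\) to \(\textnormal{pr}_2^*(h)\,g_\psi\,\textnormal{pr}_1^*(h)^{-1}\), so the class in \(\check{\tH}^1(Y/X,G)\) is independent of the trivialization. Effectivity of descent implies the two constructions are mutually inverse on isomorphism classes, and compatibility with refinements of étale covers is immediate from pullback of descent data, so passing to the colimit over \(f\) yields the bijection with \(\check{\tH}^1(X_{\textnormal{\'et}}, G)\).

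The main obstacle is purely bookkeeping: one must fix conventions (which projection corresponds to source vs.\ target, left vs.\ right actions) once and for all so that the abstract cocycle condition appearing in \Cref{def:local_triviality}'s preceding discussion matches the descent cocycle condition, and one must verify that the representability of \(\Aut_{\Bbbk\textnormal{-alg}}(A)\) as an affine group scheme interacts correctly with the sheafification step, so that an algebra descent datum really is the same thing as a morphism to \(G\) over \(Y\times_X Y\). Once these identifications are made, the lemma reduces to faithfully flat descent together with the observation that the descended \(\cO_X\)-module inherits the algebra structure uniquely.
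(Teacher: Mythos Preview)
Your argument is correct and follows essentially the same route as the paper: both directions are constructed via faithfully flat descent for quasi-coherent sheaves, with the algebra structure descending because the cocycle is an automorphism of sheaves of algebras. The only cosmetic difference is that the paper exploits the hypothesis \(\Bbbk = \overline{\Bbbk}\) to pass through regular maps on closed points when identifying an algebra automorphism of \(A \otimes \cO_{Y\times_X Y}\) with a morphism \(Y\times_X Y \to G\), whereas you use the functor-of-points description of \(\Aut_{\Bbbk\textnormal{-alg}}(A)\) directly; your formulation is slightly cleaner and in fact does not require \(\Bbbk\) to be algebraically closed.
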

\begin{proof}
Let \(E\) be the set of isomorphism classes of \'etale \(A\)-locally free sheaves of algebras.\\
\textbf{Step 1.} \emph{Construction of \(E \to \check{\textnormal{H}}{}^1(X_{\textnormal{\'et}},\textnormal{Aut}_{\Bbbk\textnormal{-alg}}(A)_X)\). }
Let \(\cA\) be an \'etale \(A\)-locally free sheaf over \(X\). Under consideration of \cref{rem:etale_morphisms_wlog_surjective}, there exists a surjective \'etale morphism \(f \colon Y \to X\) of \(\Bbbk\)-schemes such that we may choose an isomorphism \(f^*\cA \cong A \otimes \cO_{Y}\) of sheaves of algebras. Let \(\textnormal{pr}_1,\textnormal{pr}_2\colon Y\times_X Y \to Y\) be the canonical projections. The chain of isomorphisms 
\[A \otimes \cO_{Y\times_X Y} \cong \textnormal{pr}_1^*f^*\cA \cong \textnormal{pr}_2^*f^* \cA \cong A \otimes \cO_{Y\times_X Y}\]
determines a regular map from the algebraic prevariety of closed points of  \(Y\times_X Y\) to \(\textnormal{Aut}_{\Bbbk\textnormal{-alg}}(A)\). This induces an unique morphism \(Y\times_X Y \to G\) of \(\Bbbk\)-schemes, which in turn defines a morphism \(g \colon Y \times_X Y \to X \times G\) of \(X\)-schemes. It is straight forward to see that \(\textnormal{pr}_{31}^*(g) = \textnormal{pr}_{32}^*(g)\textnormal{pr}_{21}^*(g)\) for the canonical projections \(\textnormal{pr}_{ij}\colon Y\times_X Y \times_X Y \to Y \times_X Y\), where \(ij \in \{21,31,32\}\), and that a sheaf of algebras isomorphic to \(\cA\) defines a 1-cocycle cohomologous to \(g\). Thus, we have constructed a map \(E \to \check{\textnormal{H}}{}^1(X_{\textnormal{\'et}},\textnormal{Aut}_{\Bbbk\textnormal{-alg}}(A)_X)\).\\
\textbf{Step 2.} \emph{Construction of the inverse of \(E \to \check{\textnormal{H}}{}^1(X_{\textnormal{\'et}},\textnormal{Aut}_{\Bbbk\textnormal{-alg}}(A)_X)\). }
Let \(f \colon Y \to X\) be a surjective \'etale morphism and \(g \colon Y\times_X Y \to X \times G\) be a 1-cocycle. Then \(g\) defines an isomorphism \(\psi\colon A \otimes \cO_{Y\times_X Y} \to A\otimes \cO_{Y\times_XY}\) of sheaves of algebras such that 
\(\textnormal{pr}_{31}^*(\psi) = \textnormal{pr}_{32}^*(\psi)\textnormal{pr}_{21}^*(\psi)\) for the canonical projections \(\textnormal{pr}_{ij}\colon Y\times_X Y \times_X Y \to Y \times_X Y\), where \(ij \in \{21,31,32\}\). Therefore, we obtain a quasi-coherent sheaf \(\cA\) on \(X\) equipped with an isomorphism \(f^*\cA \cong A \otimes \cO_Y\) using faithfully flat descent; see e.g.\ \cite[Proposition 2.22]{milne_etale_cohomology}. If \(X\) and \(Y\) are affine, \(\cA\) is simply the direct image via \(Y \to X\) of the kernel of \(\psi \textnormal{pr}^*_1 - \textnormal{pr}^*_2 \colon A \otimes \cO_Y \to A \otimes \cO_{Y\times Y}\). This can be used to see that \(\cA\) is a subsheaf of algebras of \(A\otimes \cO_Y\), since \(\psi\) is an automorphism of sheaves of algebras, and that a 1-cocycle cohomologous to \(g\) defines a sheaf of algebras isomorphic to \(\cA\). The resulting map \(\check{\textnormal{H}}{}^1(X_{\textnormal{\'et}},\textnormal{Aut}_{\Bbbk\textnormal{-alg}}(A)_X)\to E\) is clearly inverse to \(E \to \check{\textnormal{H}}{}^1(X_{\textnormal{\'et}},\textnormal{Aut}_{\Bbbk\textnormal{-alg}}(A)_X)\).\\
\end{proof}

\begin{theorem}\label{thm:weakly_g_locally_free_Lie_algebra}
Let \(\fg\) be a finite-dimensional simple Lie algebra over \(\Bbbk\),  \(\Bbbk = \overline{\Bbbk}\) and \(\cA\) be a weakly \(\fg\)-locally free sheaf of Lie algebras on a \(\Bbbk\)-scheme \(X\) of finite type.
\begin{enumerate}
\item If \(X = \textnormal{Spec}(\Bbbk[u,u^{-1}])\), there exists \(\sigma \in \Aut_{\Bbbk\textnormal{-alg}}(\fg)\) of order \(m\in\bN\) and a primitive \(m\)-th root of unity \(\varepsilon \in \Bbbk\) such that \(\cA\) is isomorphic to the sheaf of Lie algebras associated to the twisted loop algebra 
\[\mathfrak{L}(\fg,\sigma) \coloneqq \{f(\tilde{u}) \in \fg[\tilde{u},\tilde{u}^{-1}]\mid f(\varepsilon\tilde{u}) = \sigma(f(\tilde{u}))\}\] 
on \(X\), where the module structure of \(\mathfrak{L}(\fg,\sigma)\) is defined by \(u = \tilde{u}^m\).
\item If \(X = \textnormal{Spec}(\Bbbk[u])\), \(\cA\) is isomorphic to the sheaf of Lie algebras associated to \( \fg[u]\) on \(X\).
\end{enumerate}
\end{theorem}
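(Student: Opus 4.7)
By \cref{lem:fiber_of_sheaf_Killing_form}, the fibers $\cA|_p$ are simple Lie algebras, and since they have the same dimension as $\fg$ they are isomorphic to $\fg$. In particular they are semisimple, so \cref{thm:etale_trivial_sheaves_of_Lie_algebras} shows that $\cA$ is \'etale $\fg$-locally free, and \cref{lem:sheaves_of_Lie_algebras_and_torsors} identifies its isomorphism class with a class in the pointed set $\check{\textnormal{H}}^1(X_{\textnormal{\'et}}, G_X)$, where $G \coloneqq \Aut_{\Bbbk\textnormal{-alg}}(\fg)$. The algebraic group $G$ fits into a split short exact sequence $1 \to G^\circ \to G \to F \to 1$ whose connected component $G^\circ$ is the adjoint semisimple group of $\fg$ and whose quotient $F$ is the finite group of diagram automorphisms of $\fg$.

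For part \emph{(2)}, \'etale $F$-torsors over $\mathbb{A}^1_{\Bbbk}$ are classified by $\textnormal{Hom}(\pi_1^{\textnormal{\'et}}(\mathbb{A}^1_{\Bbbk}), F)$ up to conjugation, which is trivial since $\mathbb{A}^1_{\Bbbk}$ is \'etale simply connected over the algebraically closed field $\Bbbk$ of characteristic $0$. The induced long exact sequence of pointed cohomology sets then reduces the statement to the triviality of $\check{\textnormal{H}}^1(\mathbb{A}^1_{\Bbbk,\textnormal{\'et}}, G^\circ)$, which is the content of the theorem of Raghunathan--Ramanathan: every torsor under a reductive algebraic group over the affine line over an algebraically closed field of characteristic $0$ is trivial. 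Therefore $\cA \cong \fg \otimes \cO_X$ as sheaves of Lie algebras.

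For part \emph{(1)}, we invoke the classification of torsors over $\mathbb{G}_{m,\Bbbk}$ established in \cite{pianzola}: every $G$-torsor on $X = \textnormal{Spec}(\Bbbk[u,u^{-1}])$ is a \emph{loop torsor} arising from some $\sigma \in G(\Bbbk) = \Aut_{\Bbbk\textnormal{-alg}}(\fg)$ of finite order $m$, via the \'etale Galois covering $\tilde X = \textnormal{Spec}(\Bbbk[\tilde u,\tilde u^{-1}]) \to X$ given by $u \mapsto \tilde u^m$, whose Galois group is identified with $\bZ/m\bZ$ after choosing a primitive $m$-th root of unity $\varepsilon \in \Bbbk$. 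Unravelling the faithfully flat descent datum associated to the loop torsor defined by $\sigma$ identifies the corresponding twisted sheaf of Lie algebras on $X$ with the invariants of $\fg \otimes \Bbbk[\tilde u,\tilde u^{-1}]$ under the twisted action $a(\tilde u) \mapsto \sigma(a(\varepsilon\tilde u))$, which is by definition the twisted loop algebra $\mathfrak{L}(\fg,\sigma)$, considered as a $\Bbbk[u,u^{-1}]$-module via $u = \tilde u^m$. The main technical obstacle is the classification of loop torsors from \cite{pianzola}, which we treat as a black box; its proof proceeds by Galois descent along the tower of cyclic $\mu_n$-coverings exhausting the universal cover of $\mathbb{G}_m$, combined with the same triviality statement for reductive torsors on $\mathbb{A}^1$ that was used in part \emph{(2)}.
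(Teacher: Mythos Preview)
Your proof is correct and follows essentially the same route as the paper: both pass to \(\textnormal{Aut}_{\Bbbk\textnormal{-alg}}(\fg)\)-torsors via \cref{lem:sheaves_of_Lie_algebras_and_torsors} and then invoke \cite{pianzola}, the only cosmetic difference being that for part \emph{(2)} you spell out the exact sequence and Raghunathan--Ramanathan explicitly, whereas the paper packages this through the injection \(\check{\textnormal{H}}^1(X_{\textnormal{\'et}},\textnormal{Aut}_{\Bbbk\textnormal{-alg}}(\fg)_X) \hookrightarrow \check{\textnormal{H}}^1(X_{\textnormal{\'et}},\textnormal{Out}(\fg)_X)\) established in \cite{pianzola}. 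One quibble: your opening sentence is confused, since \(\cA|_p \cong \fg\) is the \emph{definition} of weak \(\fg\)-local freeness and requires no appeal to \cref{lem:fiber_of_sheaf_Killing_form}.
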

\begin{proof}
\textbf{Step 1.} \emph{Setup. } 
In both cases \cref{thm:etale_trivial_sheaves_of_Lie_algebras} states that \(\cA\) is automatically \'etale \(\fg\)-locally free on \(X\) and for this reason, up to isomorphism, determined by an element of \(\check{\textnormal{H}}^1(X_{\textnormal{\'et}},\textnormal{Aut}_{\Bbbk\textnormal{-alg}}(\fg)_X)\) by virtue of \cref{lem:sheaves_of_Lie_algebras_and_torsors}.
The arguments in \cite{pianzola} imply that, since the Lie group of \(\fg\) is reductive, there is a canonical injection \(\check{\textnormal{H}}^1(X_{\textnormal{\'et}},\textnormal{Aut}_{\Bbbk\textnormal{-alg}}(\fg)_X) \to \check{\textnormal{H}}
^1(X_{\textnormal{\'et}},\textnormal{Out}(\fg)_X)\), where \(\textnormal{Out}(\fg)\) is the automorphism group of the Dynkin diagram of \(\fg\) and \(\textnormal{Out}(\fg)_X \coloneqq X\times \textnormal{Out}(\fg)\). The case of \(X = \textnormal{Spec}(\Bbbk[u,u^{-1}])\) is thereby considered explicitly in \cite{pianzola} while the case of \(X = \textnormal{Spec}(\Bbbk[u])\) works analogous. Since \(\textnormal{Out}(\fg)\) is finite, we have a bijection of  \(\check{\textnormal{H}}
^1(X_{\textnormal{\'et}},\textnormal{Out}(\fg)_X)\) and the non-abelian continuous cohomology group \(\textnormal{H}^1(\pi_1(X,x),\textnormal{Out}(\fg))\), where \(x \in X\) is a closed point, \(\pi_1(X,x)\) is the associated \'etale fundamental group and the action of \(\pi_1(X,x)\) on \(\textnormal{Out}(\fg)\) is trivial.\\
\textbf{Step 2.} \emph{Proof of (1). } If \(X = \textnormal{Spec}(\Bbbk[u,u^{-1}])\), we can choose \(x = (u-1)\) and then it is explained in \cite{pianzola} that \(\textnormal{H}^1(\pi_1(X,x),\textnormal{Out}(\fg))\) is in bijection with the conjugacy classes in \(\textnormal{Out}(\fg)\) and the \(\textnormal{Aut}_{\Bbbk\textnormal{-alg}}(\fg)_X\)-torsor of \(\mathfrak{L}(\fg,\sigma)\) is mapped to the conjugacy class of the class of \(\sigma^{-1}\) in \(\textnormal{Out}(\fg)\). In particular, every \(\textnormal{Aut}_{\Bbbk\textnormal{-alg}}(\fg)_X\)-torsor is represented by some \(\mathfrak{L}(\fg,\sigma)\) for an appropriate \(\sigma\).\\
\textbf{Step 3.} \emph{Proof of (2). } If \(X = \textnormal{Spec}(\Bbbk[u])\) and \(x = (u)\), the group \(\pi_1(X,x)\) is trivial. Therefore, \(\check{\textnormal{H}}^1(X_{\textnormal{\'et}},\textnormal{Aut}_{\Bbbk\textnormal{-alg}}(\fg)_X)\) has only one element, consisting of the trivial \(\textnormal{Aut}_{\Bbbk\textnormal{-alg}}(\fg)_X\)-torsor on \(X\), i.e.\ the one represented by \(\fg[u]\).
\end{proof}

Let us now consider an elliptic curve \(X\) over \(\Bbbk = \bC\) and write \(X^\textnormal{an} = (X^\textnormal{an},\cO^\textnormal{an}_X)\) for the locally ringed space obtained by considering this curve as a complex manifold. Then \(X^\textnormal{an}\) is isomorphic to \(\bC/\Lambda\) for some lattice \(\Lambda = \bZ\lambda_1 + \bZ\lambda_2\subset \bC\) of rank two. Let \(\iota \colon X^\textnormal{an} \to X\) denote the morphism of locally ringed spaces which identifies points of \(X^\textnormal{an}\) with closed points of \(X\) on the level of topological spaces, while \(\iota^\flat \colon \cO_X \to \iota_* \cO^\textnormal{an}_X\) recognizes regular functions as holomorphic ones. Then the results of Serre's GAGA \cite{GAGA}, tell us that \(\mathcal{F} \mapsto \mathcal{F}^\textnormal{an} \coloneqq \iota^*\mathcal{F}\) defines an equivalence between the category of locally free sheaves on \(X\) and the category of sheaves of sections of holomorphic vector bundles over \(X^\textnormal{an}\). Furthermore, the field of global meromorphic functions on \(X^\textnormal{an}\) coincides with the field of rational functions of \(X\); see e.g.\ \cite[Section 3.1]{griffith_harris}. Consequently, the pull-back \(\iota^*\) identifies \(\Gamma(U,\mathcal{F})\), for \(U\subseteq X\) open, with the global meromorphic sections of \(\mathcal{F}^\textnormal{an}\) that are holomorphic on \(\iota^{-1}(U)\). 

Let \(\pi \colon \bC \to \bC/\Lambda \cong X^\textnormal{an}\) be the canonical map, \(\cA\) be a locally free sheaf of rank \(d\) on \(X\) and \(E \to X^\textnormal{an}\) be the vector bundle with sheaf of holomorphic sections \(\cA^\textnormal{an}\).
In e.g.\ \cite{iena} it is explained that, since \(\pi^*E\) is trivial, \(E\) is determined by some holomorphic map
\(\phi \colon \Lambda \times \bC \to \textnormal{GL}(d,\bC)\) satisfying 
\begin{align}\label{eq:factorofautomorphy}
    &\Phi(\lambda + \lambda',z) = \Phi(\lambda,z+\lambda')\Phi(\lambda',z)&\lambda,\lambda'\in\Lambda, z\in\bC,
\end{align}
called \emph{factor of automorphy}, in the sense that 
\begin{align}
    &E = \bC \times \bC^d/\sim & (z,a) \sim (z + \lambda,\Phi(\lambda,z)a), \quad \forall \lambda \in\Lambda.
\end{align}
Assume that \(\cA\) is an \'etale \(\fg\)-locally free sheaf of Lie algebras for some finite-dimensional complex Lie algebra \(\fg\). Then it is easy to see that \(E\) is a holomorphic fiber bundle with fiber \(\fg\) and structure group \(\textnormal{Aut}_{\bC\textnormal{-alg}}(\fg)\). Therefore, \cite[Satz 6]{grauert} implies that \(\pi^*E \cong \bC \times \fg\) as holomorphic fiber bundles. This implies that \(\Phi\) takes values in  \(\textnormal{Aut}_{\bC\textnormal{-alg}}(\fg)\).

\begin{theorem}\label{thm:acyclic_sheaves_of_Lie_algebras_on_elliptic_curve}
Let \(\cA\) be an acyclic (i.e.\ \(\textnormal{h}^1(\cA) = 0\)) sheaf of Lie algebras on \(X\), which is  weakly \(\fg\)-locally free in all closed points of \(X\) for some simple, finite-dimensional, complex Lie algebra \(\fg\). There exists \(n \in \mathbb{N}_0\) and \(0 < m < n\) such that \(\textnormal{gcd}(n,m)= 1\), \(\fg \cong \mathfrak{sl}(n,\bC)\) and \(\cA^\textnormal{an}\) is isomorphic to the sheaf of holomorphic sections of \(\bC \times \mathfrak{sl}(n,\bC)/\sim\), where
for \(\varepsilon \coloneqq \exp(2\pi i m/n)\) and
\begin{align*}
    &T_1 \coloneqq \begin{pmatrix}1 & 0 &\dots & 0\\
    0 & \varepsilon & \dots & 0\\
    \vdots & \vdots & \ddots & \vdots\\
    0 & 0 & \dots & \varepsilon^{n-1}
    \end{pmatrix},&T_2 \coloneqq \begin{pmatrix}0 & \dots & 0 & 1\\
    1 & \dots & 0 & 0\\
    \vdots & \ddots & \vdots & \vdots\\
    0 & \dots & 1 & 0
    \end{pmatrix}.
\end{align*}
the relation \(\sim\) is defined by 
\(
    (z,a)\sim (z+\lambda_1,T_1aT_1^{-1})\sim(z+\lambda_2,T_2aT_2^{-1}).
\) 
\end{theorem}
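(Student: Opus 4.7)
My plan is to represent $\cA^{\textnormal{an}}$ by a holomorphic factor of automorphy on the universal cover $\bC \to X^{\textnormal{an}} \cong \bC/\Lambda$, normalise that factor by gauge transformations, and then use the cohomological vanishing to simultaneously force $\fg \cong \mathfrak{sl}(n,\bC)$ and pin down the factor.

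The first step is to upgrade $\textnormal{h}^1(\cA) = 0$ to $\textnormal{h}^0(\cA) = 0$. Because the dualizing sheaf of an elliptic curve is trivial, Serre duality gives $\textnormal{h}^0(\cA^*) = \textnormal{h}^1(\cA) = 0$. By \cref{def:sheaf_Killing_form} and \cref{lem:fiber_of_sheaf_Killing_form}, the Killing form of $\cA$ is fibrewise the Killing form of $\fg$, which is non-degenerate since $\fg$ is simple; hence it induces a global isomorphism $\cA \cong \cA^*$ and $\textnormal{h}^0(\cA) = 0$. Next, \cref{thm:etale_triviality_due_to_semisimple_fiber} promotes the weak $\fg$-local freeness to \'etale $\fg$-local freeness on all of $X$, and then the analytic argument preceding the statement (Grauert's theorem on the contractible cover $\bC$) identifies $\cA^{\textnormal{an}}$ with the bundle $\bC\times \fg/\!\sim$ defined by a holomorphic factor $\Phi \colon \Lambda \times \bC \to G \coloneqq \Aut_{\bC\textnormal{-alg}}(\fg)$ satisfying \cref{eq:factorofautomorphy}.

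The second step is the normalisation of $\Phi$. Setting $\Phi_i(z) \coloneqq \Phi(\lambda_i,z)$, the cocycle condition becomes $\Phi_1(z+\lambda_2)\Phi_2(z) = \Phi_2(z+\lambda_1)\Phi_1(z)$, and isomorphic bundles arise from $\Phi_i(z) \mapsto \psi(z+\lambda_i)\Phi_i(z)\psi(z)^{-1}$ for $\psi \colon \bC \to G$ holomorphic. Since $\textnormal{Out}(\fg) = G/G^0$ is finite and the composition $\Lambda \to G \to \textnormal{Out}(\fg)$ depends holomorphically on $z$, it is constant, giving commuting outer classes $\bar\tau_1,\bar\tau_2$. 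Lifting to a suitable Chevalley involution / diagram automorphism and using that the centraliser in $G^0$ of a finite-order element acts transitively on its maximal tori, I would write $\Phi_i(z) = \tau_i\exp(\xi_i(z))$ with $\tau_i$ of finite order and $\xi_i(z)$ valued in a common $\tau_i$-stable Cartan subalgebra of $\fg$. A further gauge by an exponential of a periodic function then reduces $\xi_i$ to an affine-linear function of $z$, in direct analogy with Atiyah's abelianisation of bundles on elliptic curves.

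The third and hardest step is to use $\textnormal{h}^0(\cA) = \textnormal{h}^1(\cA) = 0$ to force $\fg \cong \mathfrak{sl}(n,\bC)$ together with the stated explicit factor. After the normalisation above, $\cA^{\textnormal{an}}$ splits, possibly over a finite \'etale cover of $X$, into a direct sum of degree-zero holomorphic line bundles indexed by the joint weights of the commuting pair $(\tau_1,\tau_2)$ and the Cartan data. Each such line bundle is characterised by an explicit lattice character of $\Lambda$, and acyclicity is equivalent to the statement that \emph{no} joint weight character is trivial; equivalently, the pair $(\Phi_1,\Phi_2)$, regarded as a commuting pair in $G$, has no non-zero fixed vector in $\fg$. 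The case-by-case analysis of the simple complex types shows that such a fixed-point-free commuting pair exists only in type $A_{n-1}$, with the pair realised by conjugation by the Heisenberg matrices $(T_1,T_2)$, and that freeness holds exactly when the commutator scalar $\varepsilon = \exp(2\pi i m/n)$ has $\gcd(m,n) = 1$; in all other simple types one obtains a non-trivial invariant subspace from either a root of the Dynkin diagram fixed by $\bar\tau_1,\bar\tau_2$ or a joint weight summing to zero, contradicting the vanishing. This case-by-case analysis, supported by the classification of finite abelian subgroups of simple complex Lie groups acting freely on the adjoint representation, is the main obstacle I anticipate. Once $\fg \cong \mathfrak{sl}(n,\bC)$ and the Heisenberg pair are identified, a final affine coordinate change on $\bC$ absorbs the linear part $\xi_i(z)$ into the lattice translation, producing the equivalence relation stated in the theorem.
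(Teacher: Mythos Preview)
Your Step 1 matches the paper exactly. The key divergence is in how you make $\Phi$ constant. The paper does \emph{not} attempt a direct gauge-theoretic normalisation of the factor of automorphy. Instead, it feeds the datum back into the formal $r$-matrix machinery: from $\textnormal{h}^0(\cA)=\textnormal{h}^1(\cA)=0$ and \cref{eq:adelicsequence} one gets $\fg(\!(z)\!)=\fg[\![z]\!]\oplus\zeta(\Gamma(X\setminus\{p\},\cA))$, hence a normalized formal generalized $r$-matrix $r$ by \cref{prop:Lie_subalgebra_of_generalized_rmatrix}; the residue theorem shows $\fg(r)^\bot=\fg(r)$, so $r$ is skew-symmetric; then \cref{prop: formal_rmatrix_depends_on_difference} produces a gauge in which $\fg(r)$ is closed under $\textnormal{d}/\textnormal{d}z$. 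Differentiating the automorphy relation $a(z+\lambda)=\Phi(\lambda,z)a(z)$ against this closure forces $\partial_z\Phi(\lambda,z)=0$ directly. After that, the paper argues finiteness of the abelian group $P=\{\Phi_\lambda\}$ by a short Zariski-closure argument (an infinite $P$ would have nontrivial Lie algebra on which $P$ acts trivially, giving a global section), and then invokes \cite[Theorem 9.3]{belavin_dirnfeld_triangle} for the identification with $(T_1,T_2)$.

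Your route is not wrong in spirit, but Step 2 as written is a genuine gap. Reducing a holomorphic $\Aut_{\bC\textnormal{-alg}}(\fg)$-valued cocycle to $\tau_i\exp(\xi_i(z))$ with $\xi_i$ Cartan-valued and affine-linear is essentially the statement that the underlying $G$-bundle admits a reduction to a maximal torus (after a finite isogeny), which for elliptic curves is a theorem of Friedman--Morgan--Witten/Laszlo type requiring semistability input you have not supplied; the transitivity of centralisers on tori does not give you a \emph{global holomorphic} gauge. Moreover, ``absorbing the linear part $\xi_i(z)$ by an affine coordinate change on $\bC$'' cannot work as stated: an affine change $z\mapsto az+b$ rescales $\Lambda$ and conjugates $\Phi$ uniformly, it does not kill a genuine $z$-linear term in $\xi_i$; you would instead need a further gauge by $\exp$ of a suitable linear map into the Cartan, and must check this is compatible with both periods simultaneously. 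The paper's $r$-matrix argument sidesteps all of this. Finally, your Step 3 and the paper's Step 5 agree: both defer the fixed-point-free commuting-pair classification to Belavin--Drinfeld rather than redoing the case analysis.
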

\begin{proof}
\textbf{Step 1.} \emph{\(\textnormal{h}^0(\cA) = 0 = \textnormal{h}^1(\cA)\). }
The morphism \(\cA \to \cA^*\) induced by the Killing form of \(\cA\) is an isomorphism, since  \(\cA|_p \cong \fg\) is simple for all \(p \in X\) closed; see \cref{lem:fiber_of_sheaf_Killing_form}. Therefore, using Serre duality, we see that \(\textnormal{h}^0(\cA) = \textnormal{h}^1(\cA^*) = \textnormal{h}^1(\cA) = 0\).\\ 
\textbf{Step 2.} \emph{Description of \(\Gamma(X\setminus\{p\},\cA)\), where \(p\) is the image of \(\overline{0}\) under \(\bC/\Lambda \to X^\textnormal{an}\to X\). } 
\Cref{thm:etale_trivial_sheaves_of_Lie_algebras} states that \(\cA\) is \'etale \(\fg\)-locally trivial. As argued above, this implies that the holomorphic vector bundle \(E \to X^\textnormal{an}\) with holomorphic sheaf of sections \(\cA^\textnormal{an}\) is determined by a factor of automorphy \(\Phi \colon \bC \times \Lambda \to \textnormal{Aut}_{\bC\textnormal{-alg}}(\fg)\). In particular,  
\(\Gamma(X\setminus\{p\},\cA)\) can be identified with the algebra of meromorphic functions \(a \colon \bC \to \fg\) holomorphic on \(\bC \setminus \Lambda\) and satisfying \(a(z+\lambda) = \Phi(\lambda,z)a(z)\) for all \(\lambda \in \Lambda,z \in \bC \setminus\Lambda\). \\
\textbf{Step 3.} \emph{\(\Phi\) is locally constant up to isomorphism. } The Taylor series of elements of \(\cA^\textnormal{an}\) with respect to a chosen holomorphic coordinate \(z\) on \(\bC\) combined with the canonical isomorphism \(\compA_p \cong \compA^\textnormal{an}_p\) results in an isomorphism \(\zeta \colon \compA_p \to \fg[\![z]\!]\). Using Step 1 and \cref{eq:adelicsequence} yields \(\fg(\!(z)\!) = \fg[\![z]\!] \oplus \zeta(\Gamma(X\setminus\{p\},\cA))\), where we note that \(\zeta\) identifies sections of \(\cA\), viewed as meromorphic functions \(\bC \to \fg\) with their Laurent series in \(z = 0\). Therefore, \cref{prop:Lie_subalgebra_of_generalized_rmatrix} implies that there exists a normalized formal generalized \(r\)-matrix \(r\) such that \(\fg(r) = \zeta(\Gamma(X\setminus\{p\},\cA))\). We can chose a global 1-form \(\eta\) on \(X\) such that \(\pi^*\iota^*\eta = \textnormal{d}z\) as a holomorphic 1-form on \(\bC\). The residue theorem implies that
\[\textnormal{res}_0 \kappa(\zeta(a),\zeta(b))\textnormal{d}z = \textnormal{res}_p K(a,b) \eta = 0\]
for all \(a,b\in \Gamma(X\setminus\{p\},\cA)\), where \(K\) is the Killing form of \(\cA\). Therefore,
\(\fg(r)^\bot = \fg(r)\) and \cref{prop:formal_rmatrices_are_skew-symmetric}  forces \(r\) to be skew-symmetric. Combining \cref{lem:geometry_of_equivalences} and \cref{prop: formal_rmatrix_depends_on_difference}, we may assume that \(\fg(r)\), and therefore \(\zeta(\Gamma(X\setminus \{p\},\cA))\), is closed under the derivation with respect to \(z\), after probably replacing \(\cA\) with an isomorphic sheaf of Lie algebras. In particular, we have
\begin{align}
    \Phi(\lambda,z)\frac{da}{dz}(z) = \frac{da}{dz}(z+\lambda) =  \frac{\partial \Phi}{\partial z}(\lambda,z)a(z) + \Phi(\lambda,z)\frac{da}{dz}(z).
\end{align}
for every \(a \in \Gamma(X\setminus \{p\},\cA)\).
Therefore, \(\frac{\partial\Phi}{\partial z}(\lambda,z)a(z) = 0\) for all \(z \in \bC \setminus\Lambda\), \(a \in \Gamma(X\setminus \{p\},\cA)\). Since \(\zeta(\Gamma(X\setminus \{p\},\cA)) \otimes \bC(\!(z)\!) = \fg(\!(z)\!)\), we see that \( \frac{\partial\Phi}{\partial z}(\lambda,z) = 0\), and thus \(\Phi_\lambda \coloneqq \Phi(\lambda,z) \in \textnormal{Aut}_{\bC\textnormal{-alg}}(\fg)\) is independent of \(z\). \\
\textbf{Step 4.} \emph{\( P \coloneqq \{\Phi_\lambda\}_{\lambda \in \Lambda}\) is a finite abelian group. } Equation \cref{eq:factorofautomorphy} gives \(\Phi_\lambda \Phi_{\lambda'} = \Phi_{\lambda + \lambda'}\) for all \(\lambda,\lambda'\in\Lambda\), so \(\Phi\) is a commutative subgroup of \(\textnormal{Aut}_{\bC\textnormal{-alg}}(\fg)\) generated by \(\Phi_1 \coloneqq \Phi_{\lambda_1}, \Phi_2 \coloneqq \Phi_{\lambda_2}\). A non-zero element in \(\fg\) which is fixed by all elements in \(P\) would define a global section of \(\cA\). Hence such an element does not exist by Step 1. Assume that \(P\) has infinite order and let \(\mathfrak{s}\) be the Lie algebra of the smallest algebraic subgroup \(S\) of  \(\textnormal{Aut}_{\bC\textnormal{-alg}}(\fg)\) containing \(P\). Since \(P\) is infinite \(\mathfrak{s}\) can be identified with a non-zero subalgebra of \(\fg\) and since \(P\) is abelian and dense (with respect to the Zariski topology) in \(S\), it can be shown that \(S\) is abelian. Therefore, the action of \(S\) on \(\mathfrak{s}\) is trivial and each non-zero element of \(\mathfrak{s}\) is fixed by all elements in \(P\). This is a contradiction. We can conclude that \(P\) has finite order.\\
\textbf{Step 5.} \emph{Concluding the proof. }
The commuting automorphisms \(\Phi_1\) and \(\Phi_2\) have finite order and no non-zero common fixed vector. Thus,  \cite[Theorem 9.3]{belavin_dirnfeld_triangle} implies that \(\fg\) is isomorphic to \(\mathfrak{sl}(n,\bC)\) via an isomorphism which identifies \(\Phi_1\) and \(\Phi_2\) with the conjugations by \(T_1\) and \(T_2\) respectively, for an appropriate choice of \(m\).
\end{proof}

\begin{remark}
For our purposes, it is actually sufficient that \(\cA^\textnormal{an}\) is isomorphic to the sheaf of sections of \(\bC \times \fg/\sim\), where 
\begin{align*}
    (z,a)\sim (z+\lambda_1,\Phi_1 a)\sim(z+\lambda_2,\Phi_2 a),
\end{align*}
for some \(\Phi_1,\Phi_2 \in \textnormal{Aut}_{\bC\textnormal{-alg}}(\fg)\) of finite order. Therefore, we could adjust the statement of \cref{thm:acyclic_sheaves_of_Lie_algebras_on_elliptic_curve} accordingly in order to drop the reference to \cite{belavin_dirnfeld_triangle} in Step 5 of its proof and stay self-contained. Nevertheless, we chose this presentation to show a conclusive result to the given classification problem. 
\end{remark}

\subsection{The Belavin-Drinfeld trichotomy of complex \(r\)-matrices} We are now able to proof the following version of the Belavin-Drinfeld trichotomy \cite[Theorem 1.1]{belavin_drinfeld_solutions_of_CYBE_paper} in two spectral parameters. The proof presented in \cref{sec:proof_of_BD} below completely relies on the algebro-geometric methods established in this paper and is hence independent of \cite{belavin_drinfeld_solutions_of_CYBE_paper}. 

\begin{theorem}\label{thm:Belavin-Drinfeld_trichotomy}
Let \(\fg\) be a finite-dimensional simple complex Lie algebra, \(r
\in (\fg \otimes \fg)(\!(x)\!)[\![y]\!]\) be a normalized formal \(r\)-matrix and \(X\) be the irreducible cubic plane curve associated to \(r\) in \cref{thm:geometry_of_formal_rmatrices} (see also \cref{rem:curves_of_arithmetic_genus_one}). The following results are true.
\begin{enumerate}
    \item \(X\) is elliptic if and only if \(r\) is gauge equivalent to \(\Theta \varrho\) for a meromorphic map \(\varrho \colon \bC \times \bC \to \fg \otimes \fg\) satisfying   \(\varrho(x+\lambda,y+\lambda') = \varrho(x,y)\) for some \(n\in\bN\) and all \(\lambda,\lambda'\in n\Lambda\). Here, \(\Theta\) was defined in \cref{def:Theta}.
    \item \(X\) is nodal if and only if \(r\) is gauge equivalent to \(\Theta \varrho\), where
    \begin{align}\label{eq:sigma_trigonometric_form}
         \varrho(x,y) = \frac{1}{\exp(x-y)-1}\sum_{k =  0}^{m-1}\exp\left(\frac{k(x-y)}{m}\right)\gamma_k + t\left(\exp\left(\frac{x}{m}\right),\exp\left(\frac{y}{m}\right)\right)
    \end{align}
    for some \(\sigma \in\Aut_{\bC\textnormal{-alg}}(\fg)\) of order \(m\) and \(t \in \mathfrak{L}(\fg,\sigma)\otimes \mathfrak{L}(\fg,\sigma)\). Here, \(\mathfrak{L}(\fg,\sigma)\) is defined in \cref{thm:weakly_g_locally_free_Lie_algebra} and \(\gamma = \sum_{k = 0}^{m-1}\gamma_k\) is the unique decomposition such that \((\sigma \otimes 1)\gamma_k = \exp(2\pi ik/m)\gamma_k\).
    \item \(X\) is cuspidal if and only if \(r\) is gauge equivalent to \(\Theta \varrho\), where \(\varrho(x,y) = \frac{\gamma}{x-y} + t(x,y)\)
    for some \(t \in (\fg \otimes \fg)[x,y]\).
\end{enumerate}
\end{theorem}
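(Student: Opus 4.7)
The plan is to combine \cref{mainthmA} and \cref{mainthmB} with the classification results of \cref{subsec:classification_results_for_etale_locally_free_sheaves_of_Lie_algebras}, then pass between formal and analytic $r$-matrices via \cref{prop:analytic_and_formal_theories_are_equivalent}. First I attach to $r$ the geometric datum $((X,\cA),(\breve{X},\eta))$ with $X$ of arithmetic genus one and $\eta$ such that $c^*(\widehat{\eta}_p) = \textnormal{d}z$, as in \cref{thm:geometry_of_formal_rmatrices}; here $\breve{X}$ is the smooth locus, on which $\cA$ is \'etale $\fg$-locally free and acyclic, with perfect Killing form. Since $p$ is a $\bC$-rational smooth point on an integral projective curve of arithmetic genus one, \cref{rem:curves_of_arithmetic_genus_one} identifies $X$ with a plane cubic, which is either elliptic, nodal or cuspidal. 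The associated geometric $r$-matrix $\rho \in \tH^0(\cA\boxtimes \cA|_{\breve X}(\Delta))$ then satisfies $\jmath^*\rho = r$ by \cref{mainthmB}(2).

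In the cuspidal case, $\nu$ restricts to an isomorphism $\mathbb{A}^1_\bC \cong \breve{X}$, and \cref{thm:weakly_g_locally_free_Lie_algebra}(2) gives $\cA|_{\breve X} \cong \fg \otimes \cO_{\breve X}$. Under this trivialization, \cref{lem:geomstdform} and the fact that $\rho$ has at most a simple pole along $\Delta$ (with residue $\gamma$ after normalization via $\eta$) force $\rho$ to correspond to $\varrho(x,y) = \gamma/(x-y) + t(x,y)$ with $t \in (\fg \otimes \fg)[x,y]$. In the nodal case, $\breve{X} \cong \textnormal{Spec}(\bC[u,u^{-1}])$ and \cref{thm:weakly_g_locally_free_Lie_algebra}(1) identifies $\cA|_{\breve X}$ with the sheaf associated to $\mathfrak{L}(\fg,\sigma)$ for some $\sigma$ of order $m$ under $u = \tilde{u}^m$. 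Pulling back along the exponential cover $\bC \to \mathbb{G}_m^\textnormal{an}$, $x\mapsto \exp(x/m)$, the geometric $r$-matrix becomes the desired meromorphic map: the eigenspace decomposition $\gamma = \sum_k \gamma_k$ and the resulting pole cancellations on $\Delta$ reproduce $\sum_k \exp(k(x-y)/m)\gamma_k/(\exp(x-y)-1)$ as the principal part, with regular remainder in $\mathfrak{L}(\fg,\sigma)\otimes \mathfrak{L}(\fg,\sigma)$.

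In the elliptic case, $X = \breve X$ and $\cA$ is an acyclic \'etale $\fg$-locally free sheaf of Lie algebras with simple fibers. \Cref{thm:acyclic_sheaves_of_Lie_algebras_on_elliptic_curve} then forces $\fg \cong \mathfrak{sl}(n,\bC)$ and produces a locally constant factor of automorphy $\Phi_\lambda \in \Aut_{\bC\textnormal{-alg}}(\fg)$ with values in a finite abelian subgroup. Trivializing $\pi^*\cA^\textnormal{an}$ on the universal cover $\pi\colon \bC \to X^\textnormal{an}$ and pulling $\rho$ back gives a meromorphic $\varrho\colon \bC \times \bC \to \fg \otimes \fg$ satisfying $\varrho(x+\lambda, y+\lambda') = (\Phi_\lambda \otimes \Phi_{\lambda'})\varrho(x,y)$; choosing $n$ so that $\Phi^n \equiv \textnormal{id}$ yields outright periodicity under $n\Lambda \times n\Lambda$. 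In each case, \cref{prop:analytic_and_formal_theories_are_equivalent} converts the equality $\jmath^*\rho = r$ into a gauge equivalence between $\Theta \varrho$ and $r$, using that the rescaling factor is trivial because of the normalization $c^*(\widehat{\eta}_p) = \textnormal{d}z$ in \cref{thm:geometry_of_formal_rmatrices}.

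The converses follow by running the geometrization machinery of \cref{subsec:geometrization_of_generalized_rmatrices} on any $r$ of the prescribed form: the periodicity, exponential, or rational nature of $\varrho$ determines $\textnormal{Mult}(\fg(r))$ up to integral closure, hence identifies the normalization of $X$ as $\bC/n\Lambda$, $\mathbb{G}_m$, or $\mathbb{A}^1$, respectively, pinpointing the type of cubic. The hardest step will be the elliptic case: one has to verify that the abstract section $\rho$ of $\cA \boxtimes \cA(\Delta)$ really pulls back on the universal cover to a meromorphic map with the correct diagonal pole and the claimed quasi-periodicity, which requires combining the factor-of-automorphy presentation from \cref{thm:acyclic_sheaves_of_Lie_algebras_on_elliptic_curve} with Serre's GAGA and a careful comparison of the fibrewise Killing form of $\cA$ with $\kappa$ on $\fg$ through the trivialization.
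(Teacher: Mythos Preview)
Your outline matches the paper's proof almost exactly: attach the genus-one datum from \cref{thm:geometry_of_formal_rmatrices}, classify $\cA|_{\breve X}$ case by case via \cref{thm:weakly_g_locally_free_Lie_algebra} and \cref{thm:acyclic_sheaves_of_Lie_algebras_on_elliptic_curve}, trivialize on the appropriate analytic cover $\pi\colon\bC\to\breve X^{\textnormal{an}}$, pull back $\rho$ to obtain $\varrho$, and for the converses trace the type of $X$ through $\textnormal{Mult}(\fg(r))$.

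There is one step where your citation does not do the work you need. You write that \cref{prop:analytic_and_formal_theories_are_equivalent} ``converts the equality $\jmath^*\rho=r$ into a gauge equivalence between $\Theta\varrho$ and $r$''. That proposition only matches \emph{equivalence classes} of formal and analytic $r$-matrices; it does not tell you that the specific series $\Theta\varrho$ (Taylor expansion in the analytic coordinate $\tilde z$ at $0\in\bC$) agrees with $\jmath^*\rho=r$ (expansion in the formal coordinate $z$ at $p\in X$) up to a \emph{pure gauge}. The paper fills this with an explicit coordinate-matching lemma: after rescaling $r$ so that $\pi^*\eta=\textnormal d\tilde z$, one checks that the square
\[
\xymatrix{\compO_{X,p}\ar[r]^{\widehat\pi^\sharp_0}\ar[d]_c & \widehat\cO^{\textnormal{an}}_{\bC,0}\ar[d]^{\theta}\\ \bC[\![z]\!]\ar[r]_{z\mapsto\tilde z}&\bC[\![\tilde z]\!]}
\]
commutes, so the two expansions differ only by the automorphism $\varphi\in\Aut_{\bC[\![z]\!]\textnormal{-alg}}(\fg[\![z]\!])$ induced by the trivialization $\psi$, with no residual coordinate change. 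This is also what drives the ``if'' direction: once you know rational functions on $X$ pull back to elliptic\,/\,exponential-rational\,/\,rational functions on $\bC$, you can read the type of $X$ off the multipliers built from $\fg(r)$ via the Amitsur-polynomial construction in the proof of \cref{thm:multipliers_are_klattice}. Your sketch of the converse (``determines $\textnormal{Mult}(\fg(r))$ up to integral closure'') is the right idea but needs this lemma to be made precise.
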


\begin{remark}
The sheaves of Lie algebras coming from complex \(r\)-matrices have been studied in a case by case fashion using the Belavin-Drinfeld trichotomy \cite[Theorem 1.1]{belavin_drinfeld_solutions_of_CYBE_paper}. In the elliptic case it is shown in \cite{burban_heinrich} that these sheaves, which are described in \cref{thm:acyclic_sheaves_of_Lie_algebras_on_elliptic_curve}, are all of the form \(\textnormal{Ker}(\textnormal{Tr}_\cF\colon \sheafEnd_{\cO_X}(\cF)\to \cO_X)\) for some simple vector bundle \(\cF\) on a complex elliptic curve \(X\). In the rational case the corresponding sheaves on the cuspidal curve were constructed in \cite{burban_galinat}, using the structure theory of rational \(r\)-matrices from \cite{stolin_sln,stolin_maximal_orders}.

The sheaves of Lie algebras on the nodal curve corresponding to trigonometric \(r\)-matrices were recently constructed independently in \cite{polishchuck_trigonometric_geometrisation} and \cite{abedin_burban}, completing the geometrization of non-degenerate solutions of the CYBE \cref{eq:intro_one_parameter_CYBE}. In \cite{polishchuck_trigonometric_geometrisation}, the author calculates the set of multipliers of the subalgebras associated to trigonometric \(r\)-matrices using the classification of said \(r\)-matrices from \cite{belavin_drinfeld_solutions_of_CYBE_paper} and then applies the geometrization procedure presented in \cref{subsec:geometry_of_lattices}. The construction in \cite{abedin_burban} uses a different approach based on twisting the standard Lie bialgebra structure of loop algebras and a result in \cite{kac_wang}, which can be seen as an analog of the theory of maximal orders from \cite{stolin_sln,stolin_maximal_orders} for subalgebras of Kac-Moody algebras.  

The derivation of \cref{thm:Belavin-Drinfeld_trichotomy} simultaneously yields a new proof of the Belavin-Drinfeld trichotomy and an alternative way to execute the geometrization program for non-degenerate solutions of the CYBE \cref{eq:intro_one_parameter_CYBE}.
\end{remark}

\subsection{Proof of \cref{thm:Belavin-Drinfeld_trichotomy}.}\label{sec:proof_of_BD}
Let \(((X,\cA),(p,c,\zeta))\) as well as \(\eta \in \textnormal{H}^0(\omega_X)\) be the geometric datum associated to \(r\) in \cref{thm:geometry_of_formal_rmatrices}.
Furthermore, let \(X^\textnormal{an}=(X^\textnormal{an},\cO_X^\textnormal{an})\) be the complex analytic space associated to \(X\), \(\iota \colon X^\textnormal{an} \to X\) be the canonical morphism of locally ringed spaces and write \(\breve{X} \subseteq X\), \(\iota^{-1}(\breve{X}) = \breve{X}^\textnormal{an}\) for the respective smooth loci.
If \(X = \breve{X}\) is smooth, it is a complex elliptic curve, so there exists a lattice \(\Lambda \subseteq \bC\) of rank two as well as a biholomorphic map \(\widetilde\nu \colon \bC/\Lambda \to X^\textnormal{an}\) such that \(\widetilde{\nu}(\Lambda) = p\).  Otherwise, \(X\) has a unique singular closed point \(s\) and the normalization \(\nu \colon \mathbb{P}^1_\bC \to X\), where one of the following cases occurs:
\begin{itemize}
    \item \(s\) is nodal and we can choose coordinates \((u_0:u_1)\) on \(\mathbb{P}^1_\bC\) such that \(\nu^{-1}(s) = \{(1:0),(0:1)\}\) and \(\nu(1:1) = p\).  In particular, \(\nu\) restricts to an isomorphism \(\textnormal{Spec}(\bC[u,u^{-1}]) \to \breve{X}\) for \(u = u_1/u_0\) and induces a biholomorphic map \(\widetilde\nu \colon\bC^\times \to \breve{X}^\textnormal{an}\).
    \item \(s\) is a cuspidal and we can choose coordinates \((u_0:u_1)\) on \(\mathbb{P}^1_\bC\) such that \(\nu^{-1}(s) = \{(0:1)\}\) and \(\nu(1:0) = p\). In particular, \(\nu\) restricts to an isomorphism \(\textnormal{Spec}(\bC[u]) \to \breve{X}\) for \(u = u_1/u_0\) and induces a biholomorphic map \(\widetilde\nu \colon\bC  \to \breve{X}^\textnormal{an}\).
\end{itemize}
Summarized, we have a holomorphic covering \(\widetilde\pi \colon \bC = (\bC,\cO_\bC^\textnormal{an}) \to \breve{X}^\textnormal{an}\) satisfying \(\iota\widetilde{\pi}(0) = p\), defined by
\begin{align}\label{eq:holomorphic_covering}
    \widetilde\pi(\tilde{z}) = \begin{cases} \widetilde\nu(\tilde{z} + \Lambda)& \textnormal{if }X \textnormal{ is elliptic}\\
    \widetilde\nu\left(\exp\left({\tilde{z}}\right)\right)&\textnormal{if } X \textnormal{ is nodal}\\
    \widetilde\nu(\tilde{z})&\textnormal{if } X \textnormal{ is cuspidal}
    \end{cases},
\end{align}
in some holomorphic coordinates \(\tilde{z}\) on \(\bC\). The invertible sheaf \(\Omega_{\breve{X}}^\textnormal{an} = \iota^*\Omega_{\breve{X}}\) can be identified with the sheaf of holomorphic 1-forms on \(\breve{X}^\textnormal{an}\), so \(\iota^*\eta\) can be viewed as holomorphic 1-form. Let us write \(\pi \coloneqq \iota \widetilde{\pi}\). We can assume that \(\pi^*\eta = \textnormal{d}\tilde{z}\). Indeed, it is well-known that there exists a \(\lambda \in \Bbbk^\times\) such that \(\pi^*\eta = \lambda\textnormal{d}\tilde{z}\) if \(X\) is elliptic and
\begin{equation}
    \nu^*(\eta) = \begin{cases} \lambda{\textnormal{d}u}/{u}& \textnormal{if } X \textnormal{ is nodal}\\
    \lambda\textnormal{d}u&\textnormal{if } X \textnormal{ is cuspidal}
    \end{cases}.
\end{equation}
We can achieve that \(\lambda = 1\) by replacing \(r\) with \(\lambda r(\lambda x, \lambda y)\).

\begin{lemma}\label{lem:multipliers_are_rational}
Let \(\theta\colon \widehat{\cO}^\textnormal{an}_{\bC,0} \to \bC[\![\tilde{z}]\!]\) be the isomorphism defined by the Taylor series in 0. Then the diagram
\begin{equation}\label{eq:z_is_holomorphic_coordinate}
    \xymatrix{\compO_{X,p}\ar[r]^{\widehat{\pi}^\sharp_0}\ar[d]_c & \widehat{\cO}^\textnormal{an}_{\bC,0}\ar[d]^{\theta}\\\bC[\![z]\!]\ar[r]_{z\mapsto \tilde{z}}&\bC[\![\tilde{z}]\!]}
\end{equation}
commutes. Furthermore, any series in \(\bC(\!(z)\!)\) representing a rational function on \(X\) coincides with the Laurent series of a  meromorphic function on \(\bC\) in 0. This meromorphic function is elliptic (resp.\ a rational function of exponentials, resp.\ rational) if and only if \(X\) is elliptic (resp.\ nodal, resp.\ cuspidal).
\end{lemma}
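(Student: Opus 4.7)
First, I would verify the commutativity of diagram \eqref{eq:z_is_holomorphic_coordinate}. Since $p$ is a smooth $\bC$-rational point, $\widehat{\omega}_{X,p}$ is freely generated by $\widehat{\eta}_p$, so there is a unique $u$ in the maximal ideal of $\widehat{\cO}_{X,p}$ with $du = \widehat{\eta}_p$. By \cref{rem:c*_construction_for_1_forms} together with $c^*(\widehat{\eta}_p) = dz$, the identity $c(u)' dz = dz$ forces $c(u) = z$ (the constant term vanishes because $u$ lies in the maximal ideal, which $c$ respects). On the analytic side, set $\tilde u \coloneqq \widehat{\pi}^\sharp_0(u) \in \widehat{\cO}^{\textnormal{an}}_{\bC,0}$. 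Then $\tilde u$ lies in the maximal ideal, and $d\tilde u = \widehat{\pi}^\sharp_0(\widehat{\eta}_p) = \widehat{\pi^*\eta}_0 = \widehat{d\tilde z}_0$ by the normalization $\pi^*\eta = d\tilde z$ established before the lemma. Hence $\theta(\tilde u) = \tilde z$ in $\bC[\![\tilde z]\!]$. Both routes around \eqref{eq:z_is_holomorphic_coordinate} thus send $u$ to $\tilde z$; since $u$ topologically generates $\widehat{\cO}_{X,p} \cong \bC[\![u]\!]$ and every arrow is a continuous $\bC$-algebra homomorphism, the diagram commutes.

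Second, I would upgrade this to rational functions and their Laurent tails. Given a rational function $f$ on $X$, write $f = g u^{-k}$ in $\textnormal{Q}(\widehat{\cO}_{X,p})$ with $g \in \widehat{\cO}_{X,p}$ and $k \in \bZ$; the image of $f$ in $\bC(\!(z)\!)$ under the extension of $c$ is $c(g)\, z^{-k}$. The pullback $F \coloneqq \widetilde{\pi}^*(\iota^*f)$ is a meromorphic function on $\bC$ (globally defined since $\widetilde{\pi}$ is) whose Laurent expansion at $0$, read off via $\theta$, equals $\theta(\widehat{\pi}^\sharp_0(g))\, \tilde z^{-k}$. By the commutativity just established, this coincides with the substitution $z \mapsto \tilde z$ applied to $c(g)\, z^{-k}$. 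In other words, the series in $\bC(\!(z)\!)$ representing $f$ is, after the identification $z = \tilde z$, exactly the Laurent series at $0$ of $F$.

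Finally, I would read off the transcendental type of $F$ from the three cases of the uniformization \eqref{eq:holomorphic_covering}. If $X$ is elliptic, $\widetilde{\pi}$ factors through $\bC \to \bC/\Lambda$, so $F$ is $\Lambda$-periodic and hence elliptic. If $X$ is nodal, $\widetilde{\pi}(\tilde z) = \widetilde{\nu}(\exp(\tilde z))$; the restriction of $f$ to $\breve{X} \cong \textnormal{Spec}(\bC[u,u^{-1}])$ is some $R(u) \in \bC(u)$, so $F(\tilde z) = R(\exp(\tilde z))$ is a rational function of $\exp(\tilde z)$. If $X$ is cuspidal, $\widetilde{\pi}(\tilde z) = \widetilde{\nu}(\tilde z)$ and $f$ restricted to $\breve{X} \cong \textnormal{Spec}(\bC[u])$ gives $F(\tilde z) = R(\tilde z)$ rational. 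Since the three geometric types of $X$ are pairwise exclusive, these forward implications yield the ``if and only if'' assertions. The only genuinely technical step is the matching of trivializations carried out in the first paragraph; once this is in place, the rest follows immediately from the explicit form of $\widetilde{\pi}$.
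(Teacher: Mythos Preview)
Your proof is correct and follows essentially the same approach as the paper: both arguments identify the element \(u = c^{-1}(z) \in \widehat{\cO}_{X,p}\) via the relation \(\mathrm{d}u = \widehat{\eta}_p\), use \(\pi^*\eta = \mathrm{d}\tilde z\) to conclude \(\widehat{\pi}^\sharp_0(u) = \tilde z\) under \(\theta\), and then read off the transcendental type of the pulled-back function directly from the explicit description \eqref{eq:holomorphic_covering} of \(\widetilde{\pi}\). Your presentation is slightly more explicit about why commutativity on the single generator \(u\) suffices, but the substance is identical.
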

\begin{proof}
Similar to \cref{rem:c*_construction_for_1_forms}, the canonical derivations \(\cO_{X,p} \to \omega_{X,p}\) and \(\cO^\textnormal{an}_{\bC,0} \to \Omega_{\bC,0}^\textnormal{an}\) induce continuous derivations  \(\compO_{X,p} \to \widehat\omega_{X,p}\) and \(\compO^\textnormal{an}_{\bC,0} \to \widehat\Omega_{\bC,0}^\textnormal{an}\) whose images generate the respective modules. These derivations will both be denoted by \(\textnormal{d}\), since it will be clear from the context which one is in use. The completion \(\widehat\omega_{X,p} \to \widehat\Omega_{\bC,0}^\textnormal{an}\) of 
\begin{equation}
    \omega_{X,p}\stackrel{\pi^*_p}\longrightarrow (\pi_*\pi^*\Omega_{\bC}^\textnormal{an})_p \longrightarrow \Omega_{\bC,0}^\textnormal{an}
\end{equation}
is described by \(\textnormal{d}f \mapsto \textnormal{d}\widehat{\pi}^\sharp_0(f)\) for all \(f \in \compO_{X,p}\). The identity \(c^*(\widehat{\eta}_p) = \textnormal{d}z\) implies that \(\widehat{\eta}_p = \textnormal{d}c^{-1}(z)\) (see \cref{rem:c*_construction_for_1_forms}) and \(\pi^*\eta = \textnormal{d}\tilde{z}\) implies that \(\textnormal{d}\widehat{\pi}^\sharp_0(c^{-1}(z)) = \textnormal{d}\tilde{z}\). This yields \(\widehat{\pi}^\sharp_0(c^{-1}(z)) = \tilde{z}\), i.e.\ \cref{eq:z_is_holomorphic_coordinate} is commutative, since \(\theta(\tilde{z}) = \tilde{z}\). 

Let \(f\) be a rational function of \(X\). Then \(\pi^\flat(f)(\tilde{z}) = f(\pi(\widetilde{z}))\) is a meromorphic function on \(\bC\) and its Laurent series in \(0\) coincides with its image of the extension of \(\theta\) to the respective quotient fields. The commutativity of \cref{eq:z_is_holomorphic_coordinate} implies that this Laurent series evaluated in \(z\) coincides with \(c(f) \in \bC(\!(z)\!)\). Looking at \cref{eq:holomorphic_covering}, we can see that \(f(\pi(\widetilde{z}))\) is elliptic if and only if \(X\) is elliptic, a rational function of exponentials if and only if \(X\) is nodal and a rational function if and only if \(X\) is cuspidal. Here we used \(f\iota \widetilde{\nu} = f \nu \iota\) and the fact that \(f\nu\) is a rational function on \(\mathbb{P}^1_\bC\) if \(X\) is singular, i.e.\ simply a quotient of two polynomials.
\end{proof}

\begin{lemma}
The ``if'' holds in all three parts of \cref{thm:Belavin-Drinfeld_trichotomy}.
\end{lemma}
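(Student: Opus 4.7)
The plan is, in each of the three cases, to construct directly a geometric datum $((X',\cA'),(C',\eta'))$ of the kind considered in \cref{subsec:geometric_rmatrix}, with $X'$ a cubic curve of the claimed geometric type (elliptic, nodal, or cuspidal), such that its geometric $r$-matrix $\rho'$ corresponds to the prescribed $\varrho$ under the holomorphic covering $\pi'\colon\bC\to\breve{X}^{\prime,\mathrm{an}}$ of \cref{eq:holomorphic_covering}, with $(\pi')^*\eta'=\mathrm{d}\tilde z$. Granted this, \cref{thm:geometric_rmatrix_Taylor_series} identifies $\Theta\varrho$ with $\lambda(y)r'(x,y)$ for a normalized formal $r$-matrix $r'$ attached to $((X',\cA'),(p',c',\zeta'))$ and some $\lambda\in\bC[\![z]\!]^\times$. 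Thus $r'$ is equivalent to $\Theta\varrho$, and the hypothesis that $r$ is gauge equivalent to $\Theta\varrho$ yields $r\sim r'$. Applying \cref{lem:geometry_of_equivalences} then identifies the cubic $X$ attached to $r$ by \cref{thm:geometry_of_formal_rmatrices} with $X'$ up to isomorphism, forcing $X$ to be of the claimed type.

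The three constructions are as follows. For the cuspidal case, I would take $X'$ to be the cuspidal plane cubic with normalization $\nu'\colon\mathbb{P}^1_\bC\to X'$ and $p'=\nu'(\infty)$; by \cref{thm:weakly_g_locally_free_Lie_algebra}(2), $\cA'|_{\breve{X}'}\cong\fg\otimes\cO_{\breve{X}'}$, and one extends this coherently across the cusp so that the Taylor expansion of $\rho'$ at $p'$ matches the prescribed polynomial tail $t(x,y)$. For the elliptic case, I would take $X'=\bC/n\Lambda$ and construct $\cA'$ via a factor of automorphy with values in a finite subgroup of $\mathrm{Aut}_{\bC\textnormal{-alg}}(\fg)$, as in \cref{thm:acyclic_sheaves_of_Lie_algebras_on_elliptic_curve}; the $n\Lambda$-periodicity of $\varrho$ in both variables then lets $\varrho$ descend to the sought meromorphic section of $\cA'\boxtimes\cA'$ on $X'\times X'$. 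For the nodal case, I would take $X'$ to be the nodal rational cubic whose smooth locus is $\mathrm{Spec}(\bC[u,u^{-1}])$; by \cref{thm:weakly_g_locally_free_Lie_algebra}(1), $\cA'|_{\breve{X}'}$ is the sheaf associated to the twisted loop algebra $\mathfrak{L}(\fg,\sigma)$, and one extends across the node using $t\in\mathfrak{L}(\fg,\sigma)\otimes\mathfrak{L}(\fg,\sigma)$ to prescribe the gluing data of the two branches.

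The final verification is that $(\pi'\times\pi')^*\rho'=\varrho$ in each case. Both sides are meromorphic $\fg\otimes\fg$-valued sections on $\bC\times\bC$ with a simple pole along the diagonal of residue $\gamma$, as dictated by the residue sequence \cref{eq:Resseq} and the choice $(\pi')^*\eta'=\mathrm{d}\tilde z$; their difference is a global holomorphic section with the periodicity (elliptic), quasi-periodicity (nodal), or polynomial growth (cuspidal) dictated by the descent data of $\cA'$, which must vanish by the K\"unneth formula together with $\mathrm{h}^0(\cA')=0$ from \cref{mainthmA}. The main obstacle I anticipate is the nodal case, where matching the extension of $\cA'$ across the node with a prescribed tensor $t\in\mathfrak{L}(\fg,\sigma)\otimes\mathfrak{L}(\fg,\sigma)$ requires a delicate analysis of the gluing at both branches, in the spirit of \cite{abedin_burban, polishchuck_trigonometric_geometrisation}; by comparison, the elliptic case proceeds cleanly via \cref{thm:acyclic_sheaves_of_Lie_algebras_on_elliptic_curve}, and the cuspidal case ultimately reduces to a polynomial matching problem at the cusp.
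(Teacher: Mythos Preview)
Your approach is genuinely different from the paper's, and it has real gaps.

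The paper's proof is much shorter and purely algebraic: writing $\widetilde r=\Theta\varrho$, it observes via \cref{lem:generators_of_g(r)} that $\fg(\widetilde r)$ is generated by the coefficients $(1\otimes\alpha)\varrho(z,0)$, which are Laurent expansions of elliptic (resp.\ rational-in-exponentials, resp.\ rational) meromorphic functions in the three cases. It then invokes the construction from Step~3 of the proof of \cref{thm:multipliers_are_klattice}: there is a finite-codimension subalgebra $O\subseteq\textnormal{Mult}(\fg(\widetilde r))=\textnormal{Mult}(\fg(r))$ whose elements are of the form $f\cdot\textnormal{id}_\fg=P(\textnormal{ad}(a_1),\dots,\textnormal{ad}(a_q))$ with $a_i\in\fg(\widetilde r)$, so every $f\in O$ is itself elliptic (resp.\ rational-in-exponentials, resp.\ rational). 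Since $\textnormal{Q}(O)$ equals the rational function field of $X$, \cref{lem:multipliers_are_rational} forces $X$ to be of the claimed type. No geometric datum needs to be built by hand.

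Your plan, by contrast, tries to manufacture $((X',\cA'),(C',\eta'))$ so that $\varrho$ is literally the pullback of its geometric $r$-matrix, and then to transport this to $r$ via \cref{lem:geometry_of_equivalences}. Two concrete problems:
\begin{itemize}
\item In the elliptic case the hypothesis only gives $n\Lambda$-periodicity of $\varrho$; there is no factor of automorphy in sight. The natural descent is to the \emph{trivial} sheaf $\fg\otimes\cO_{X'}$ on $X'=\bC/n\Lambda$, but this has $\textnormal{h}^0(\cA')=\dim\fg\neq 0$, so the residue-sequence construction of a geometric $r$-matrix does not apply and your argument that the difference of sections vanishes via $\textnormal{h}^0(\cA')=0$ fails. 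Producing a nontrivial factor of automorphy from the bare periodicity assumption would already require knowing the structure proved in the ``only if'' direction.
\item Even granting a datum $((X',\cA'),\dots)$ with $\jmath'^*\rho'=\lambda(y)r'(x,y)$, \cref{lem:geometry_of_equivalences} compares the curves coming from $\textnormal{Mult}(\fg(r_i))$, whereas the $X$ in \cref{thm:Belavin-Drinfeld_trichotomy} is the arithmetic-genus-one curve of \cref{thm:geometry_of_formal_rmatrices}, built from a specific $O\subseteq\textnormal{Mult}(\fg(r))$. You would still owe the reader a check that your hand-built $X'$ agrees with that curve, i.e.\ that $c'(\Gamma(X'\setminus\{p'\},\cO_{X'}))$ is exactly the $O$ produced in Step~2 of the proof of \cref{thm:geometry_of_formal_rmatrices}. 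This is not addressed.
\end{itemize}
In short, the paper sidesteps all of this by reading the curve type directly off the function field of $X$ via multipliers; your reconstruction program is both harder and, as written, incomplete.
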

\begin{proof}
Assume that \(\varrho\) is of the given form and let \(\widetilde{r} \coloneqq \Theta \varrho \in (\fg\otimes \fg)(\!(x)\!)[\![y]\!]\). By \cref{lem:generators_of_g(r)}, \(\fg(\widetilde{r})\) is generated by \(\{(1\otimes\alpha)\varrho(z,0)\mid \alpha \in \fg^*\}\), so it can be identified with a subalgebra of meromorphic maps \(\bC \to \fg\)
which are elliptic in case \emph{(1)}, rational functions of \(\textnormal{exp}(z/m)\) in case \emph{(2)} and rational in case \emph{(3)}. Since \(r\) and \(\widetilde{r}\) are gauge equivalent, it holds that \[\textnormal{Mult}(\fg(r)) = \textnormal{Mult}(\fg(\widetilde{r})) = \{f\in \fg(\!(z)\!)\mid f \fg(\widetilde{r}) \subseteq \fg(\widetilde{r})\}.\]
We have seen in Step 3 of the proof of \cref{thm:multipliers_are_klattice} that there exists a subalgebra \(O \subseteq \textnormal{Mult}(\fg(\widetilde{r}))\) of finite codimension with the property: for every \(f\in O\) exists a non-commutative polynomial \(P = P(x_1,\dots,x_q)\) and elements \(a_1,\dots,a_q \in \fg(\widetilde{r})\) satisfying \(f \textnormal{id}_\fg = P(\textnormal{ad}(a_1),\dots,\textnormal{ad}(a_q))\). In particular, \(O\) consists of elliptic functions in case \emph{(1)}, rational functions of exponentials in case \emph{(2)} and rational functions in case \emph{(3)}. Since the quotient field of \(O\) coincides with the rational functions on \(X\), this observation combined with \cref{lem:multipliers_are_rational} proves all "if" directions.
\end{proof}

\begin{lemma}
Let \(\rho\) be the geometric \(r\)-matrix of \(((X,\cA),(C,\eta))\).
There exists an isomorphism \(\psi \colon \pi^*\cA \to \fg \otimes \cO_\bC\) such that the analytic \(r\)-matrix \(\varrho \colon \bC \times \bC \to \fg \otimes \fg\) defined by 
\begin{equation}
    \varrho = (\psi \boxtimes \psi)(\pi \times \pi)^*\rho|_{C\times C}    
\end{equation}
has the following property depending on \(X\):
\begin{enumerate}
    \item If \(X\) is elliptic, \(\varrho(x+\lambda,y+\lambda') = \varrho(x,y)\) for some \(n\in\bN\) and all \(\lambda,\lambda'\in n\Lambda\).
    \item If \(X\) is nodal, \(\varrho\) is of the form \cref{eq:sigma_trigonometric_form}
    for some \(t \in \mathfrak{L}(\fg,\sigma)\otimes \mathfrak{L}(\fg,\sigma)\), where \(\sigma\) is some automorphism of \(\fg\) of order \(m\) and \(\gamma = \sum_{k = 0}^{m-1}\gamma_k\) is the unique decomposition such that \((\sigma \otimes 1)\gamma_k = \exp(2\pi ik/m)\gamma_k\).
    \item If \(X\) is cuspidal, \(\varrho(x,y) = \frac{\gamma}{x-y} + t(x,y)\)
    for some \(t \in (\fg \otimes \fg)[x,y]\).
\end{enumerate}
\end{lemma}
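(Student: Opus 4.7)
The plan is a case-by-case analysis depending on whether $X$ is elliptic, nodal, or cuspidal. In all three cases one may take $C = \breve X$: the global section $\eta$ generates the trivial $\omega_X$ and is nowhere zero on the smooth locus. Once $\psi$ is constructed, the general local form of $\varrho$ near any point of the diagonal follows from \cref{lem:geomstdform} together with the identification $\widetilde K(\gamma)=\textnormal{id}_\fg$ (see \cref{eq:fact_about_Casimir_element}) and the normalization $\pi^*\eta = \textnormal{d}\tilde{z}$: the principal part is $\gamma/(\tilde z_1-\tilde z_2)$.

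For the cuspidal case, \cref{thm:weakly_g_locally_free_Lie_algebra}.\emph{(2)} yields an algebraic isomorphism $\cA|_{\breve X} \cong \fg \otimes \cO_{\breve X}$, which we analytify and pull back along the biholomorphism $\widetilde\nu \colon \bC \to \breve X^{\textnormal{an}}$ to obtain $\psi$. Under $\psi$, the section $\rho|_{\breve X \times \breve X}$ becomes a rational $(\fg \otimes \fg)$-valued function on $\mathbb{A}^1 \times \mathbb{A}^1$ with a simple pole of residue $\gamma$ along $\Delta$ and regular elsewhere, so $\varrho(x,y) = \gamma/(x-y) + t(x,y)$ with $t \in (\fg \otimes \fg)[x,y]$. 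For the nodal case, \cref{thm:weakly_g_locally_free_Lie_algebra}.\emph{(1)} identifies $\cA|_{\breve X}$ with the sheaf associated to $\mathfrak{L}(\fg,\sigma)$ for some $\sigma \in \Aut_{\bC\textnormal{-alg}}(\fg)$ of order $m$. The degree-$m$ étale cover $g\colon \textnormal{Spec}(\bC[\tilde u,\tilde u^{-1}]) \to \breve X$, $u \mapsto \tilde u^m$, algebraically trivializes $g^*\cA$ via the $\sigma$-eigenspace decomposition $\fg = \bigoplus_{k=0}^{m-1} \fg_k$; composing with the analytic covering $\tilde u = \exp(\tilde z/m)$ produces $\psi$. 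Applying \cref{thm:extension_of_generalized_rmatrices}, $(\psi\boxtimes\psi)(\pi\times\pi)^*\rho$ is rational with simple poles along the $m$ branches $\{\tilde u_1 = \varepsilon^k \tilde u_2\}$ of $(g \times g)^{-1}(\Delta)$. A residue computation via \cref{lem:geomstdform}, combined with the observation that the two factor trivializations of $g^*\cA$ at $(\varepsilon^k \tilde u_0,\tilde u_0)$ differ by the action of $\sigma^k$, identifies the residue on the $k$-th branch with $\gamma_k$; the factorization $\tilde u_1^m - \tilde u_2^m = \prod_j(\tilde u_1 - \varepsilon^j \tilde u_2)$ then packages these into the closed form \cref{eq:sigma_trigonometric_form}.

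For the elliptic case, $\pi$ is the universal analytic cover of $X^{\textnormal{an}}$ with deck group $\Lambda$, and any $\psi$ exists by Grauert's theorem applied to $\bC$, which is Stein and contractible. It corresponds to a factor of automorphy $\Phi\colon \Lambda \times \bC \to \Aut_{\bC\textnormal{-alg}}(\fg)$. Using \cref{prop: formal_rmatrix_depends_on_difference} together with \cref{lem:geometry_of_equivalences}, we may replace $\cA$ by an isomorphic sheaf of Lie algebras whose associated subalgebra of sections is closed under $\textnormal{d}/\textnormal{d}z$; the argument of Step 3 in the proof of \cref{thm:acyclic_sheaves_of_Lie_algebras_on_elliptic_curve} then shows $\Phi_\lambda$ is independent of $\tilde z$, and Step 4 shows that $P \coloneqq \{\Phi_\lambda\}_{\lambda \in \Lambda}$ is a finite abelian subgroup of $\Aut_{\bC\textnormal{-alg}}(\fg)$. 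Any $n \in \bN$ annihilating $P$ then gives $\Phi_\lambda = \textnormal{id}$ for $\lambda \in n\Lambda$, whence $\varrho(x + \lambda, y + \lambda') = \varrho(x, y)$ for $\lambda, \lambda' \in n\Lambda$. The main technical obstacle is the residue computation in the nodal case: it requires carefully tracking the branch-dependent twist by $\sigma^k$ and combining the $m$ simple-pole contributions into the single meromorphic expression \cref{eq:sigma_trigonometric_form}.
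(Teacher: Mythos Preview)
Your handling of the cuspidal and elliptic cases is sound and follows the paper closely; in the elliptic case you inline the parts of the proof of \cref{thm:acyclic_sheaves_of_Lie_algebras_on_elliptic_curve} that matter, which the paper itself notes in the remark following that theorem is all that is needed.

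The nodal case, however, contains a genuine error and misses the key simplification. Your claim that the residue of the pulled-back section along the branch \(\{\tilde u_1 = \varepsilon^k \tilde u_2\}\) equals \(\gamma_k\) is incorrect: the twist by \(\sigma^k\) between the two trivializations over the same base point means the residue there is proportional to \((\sigma^k\otimes 1)\gamma = \sum_{j=0}^{m-1}\varepsilon^{kj}\gamma_j\), not the single eigencomponent \(\gamma_k\). The \(\gamma_k\) only emerge after a partial-fractions recombination of all \(m\) of these residues, which you do not carry out. Furthermore, having passed to the full cover, you would still have to argue separately that the regular remainder \(t\) satisfies the \(\sigma\)-equivariance in each variable needed to place it in \(\mathfrak{L}(\fg,\sigma)\otimes\mathfrak{L}(\fg,\sigma)\); this is not addressed. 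The citation of \cref{thm:extension_of_generalized_rmatrices} is also misplaced, as that result concerns the formal Taylor expansion rather than the explicit rational form.

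The paper avoids all of this by never passing to the full cover. One works directly with the isomorphism \(\psi_1\colon\nu^*(\cA|_{\breve X})\to\mathcal L\) and applies \cref{lem:geomstdform} on \(U=\breve X\) with the specific choice
\[
\widetilde\chi \coloneqq \sum_{j=0}^{m-1}(\tilde u/\tilde v)^j\gamma_j \in \mathfrak{L}(\fg,\sigma)\otimes\mathfrak{L}(\fg,\sigma),
\]
which restricts to \(\sum_j\gamma_j=\gamma\) on the diagonal (using \((\sigma\otimes\sigma)\gamma=\gamma\)) and hence is a valid preimage of \(\textnormal{id}_{\cA|_U}\) under \cref{eq:preimage_of_identity}. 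With the local parameter \(u-1\) of \(p\) and \(\eta=\textnormal{d}u/u\), \cref{lem:geomstdform} gives
\[
(\psi_1\boxtimes\psi_1)(\nu\times\nu)^*\rho|_{\breve X\times\breve X} = \frac{v\,\widetilde\chi}{u-v}+t,
\]
and now \(t\in\Gamma(\nu^{-1}(\breve X)\times\nu^{-1}(\breve X),\mathcal L\boxtimes\mathcal L)=\mathfrak{L}(\fg,\sigma)\otimes\mathfrak{L}(\fg,\sigma)\) automatically. Substituting \(\tilde u=\exp(x/m)\), \(\tilde v=\exp(y/m)\) yields \cref{eq:sigma_trigonometric_form} directly, with no residue bookkeeping at the off-diagonal branches.
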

\begin{proof}
\emph{Construction of \(\varrho\) in (1): } Since \(\cA\) is weakly \(\fg\)-locally free, \Cref{thm:acyclic_sheaves_of_Lie_algebras_on_elliptic_curve} provides an isomorphism  \(\psi_1 \colon \widetilde{\nu}^*\iota^*\cA\to\mathcal{S}\), where \(\mathcal{S}\) is the sheaf on \(\bC/\Lambda\) of holomorphic sections of
\begin{align*}
    &\bC \times \mathfrak{sl}(n,\bC)/\sim&(z,a)\sim (z+\lambda_1,T_1aT_1^{-1})\sim(z+\lambda_2,T_2aT_2^{-1}),
\end{align*}
for \(T_1\) and \(T_2\) as defined in \cref{thm:acyclic_sheaves_of_Lie_algebras_on_elliptic_curve}.
Let \(\psi_2 \colon \textnormal{pr}^*\mathcal{S} \to \fg \otimes \cO_\bC^\textnormal{an}\) denote the canonical isomorphism, where \(\textnormal{pr}\colon\bC \to \bC/\Lambda\) is the canonical projection. Then \(\psi_2\textnormal{pr}^*a \colon \bC \to \fg\) is an \(n\Lambda\)-periodic meromorphic function for any rational section \(a\) of \(\mathcal{S}\), since \(T_1^n = T_2^n = \textnormal{id}_\fg\).
Therefore,
\[{\varrho}\coloneqq (\psi \boxtimes \psi)(\pi\times \pi)^*\rho \colon \bC\times \bC \to \fg \otimes \fg\]
is a meromorphic map satisfying the desired periodicity, where \(\psi \coloneqq \psi_2 (\textnormal{pr}^*\psi_1)\) and \(\pi = \iota\widetilde{\nu}\textnormal{pr}\) was used.

\emph{Construction of \(\varrho\) in (2): } Since \(\cA|_{\breve{X}}\) is weakly \(\fg\)-locally free, \Cref{thm:weakly_g_locally_free_Lie_algebra}.\emph{(1)} provides an isomorphism \({\psi}_1 \colon \nu^*(\cA|_{\breve{X}}) \to \mathcal{L}\), where \(\mathcal{L}\) is the sheaf associated to \(\mathfrak{L}(\fg,\sigma) \subseteq \fg[\tilde{u},\tilde{u}^{-1}]\) on \(\textnormal{Spec}(\bC[u,u^{-1}]) = \nu^{-1}(\breve{X})\), for some \(\sigma \in \textnormal{Aut}_{\bC\textnormal{-alg}}(\fg)\) of finite order \(m\). Using \cref{lem:geomstdform} for \(U = \breve{X}\), the local parameter \(\nu^{\flat,-1}(u-1)\) of \(p\), and \(\chi = ({\psi}_1\boxtimes {\psi}_1)^{-1}(\nu^{-1} \times \nu^{-1})^*\widetilde{\chi}\), where 
\begin{equation}
    \widetilde{\chi} \coloneqq \sum_{j= 0}^{m-1}(\tilde{u}/\tilde{v})^j\gamma_j\in \mathfrak{L}(\fg,\sigma)\otimes \mathfrak{L}(\fg,\sigma) \subseteq  \fg[\tilde{u},\tilde{u}^{-1}]\otimes  \fg[\tilde{u},\tilde{u}^{-1}] \cong (\fg \otimes \fg)[\tilde{u},\tilde{u}^{-1},\tilde{v},\tilde{v}^{-1}],
\end{equation}
results in 
\begin{equation}
    \widetilde{\rho}\coloneqq ({\psi}_1 \boxtimes {\psi}_1)(\nu \times \nu)^*\rho|_{\breve{X}\times \breve{X}} = \frac{v\widetilde{\chi}}{u-v} + t,
\end{equation}
for some \(t \in \mathfrak{L}(\fg,\sigma)\otimes \mathfrak{L}(\fg,\sigma)\). Here we used \(\tilde{u}^m = u\) and \(\eta = \textnormal{d}u/u\). Moreover, it was used that \((\sigma \otimes \sigma)\gamma = \gamma\) gives \(\gamma = \sum_{j = 0}^{m-1}\gamma_j\) and this implies that \(\chi\) is indeed a preimage of \(\textnormal{id}_{\cA|_U}\) under \cref{eq:preimage_of_identity}. The mapping \(a(\tilde{u}) \mapsto a(\textnormal{exp}(z/m)))\) induces an isomorphism \({\psi}_2 \colon \textnormal{exp}^*\iota^*\mathcal{L} \to \fg\otimes\cO_\bC^\textnormal{an}\) such that
\begin{equation}
    \varrho \coloneqq ({\psi}_2 \boxtimes {\psi}_2)(\iota\exp \times \iota \exp)^*\widetilde\rho|_{\breve{X}\times \breve{X}} = (\psi \boxtimes \psi)(\pi \times \pi)^*\rho|_{\breve{X}\times \breve{X}}\colon \bC \times \bC \to \fg \otimes \fg
\end{equation}
is of the form \cref{eq:sigma_trigonometric_form}, where \(\psi \coloneqq \psi_2\left( \exp^*\iota^*\psi_1\right)\) and \(\nu\iota\textnormal{exp} =\iota  \widetilde{\nu}\textnormal{exp} = \pi\) was used.

\emph{Construction of \(\varrho\) in (3): } Since \(\cA|_{\breve{X}}\) is weakly \(\fg\)-locally free, \Cref{thm:weakly_g_locally_free_Lie_algebra}.\emph{(2)} provides an isomorphism \({\psi} \colon \nu^*(\cA|_{\breve{X}}) \to \fg \otimes \cO_{\textnormal{Spec}(\bC[u])}\). Using \cref{lem:geomstdform} for \(U = \breve{X}\), the local parameter \(\nu^{\flat,-1}(u)\) of \(p\), and \(\chi = ({\psi}\boxtimes {\psi})^{-1}(\nu^{-1} \times \nu^{-1})^*\gamma\), where \(\gamma \in \fg \otimes \fg\) is considered as a global section \((\fg \otimes \cO_{\textnormal{Spec}(\bC[u])}) \boxtimes (\fg \otimes \cO_{\textnormal{Spec}(\bC[u])}) \cong (\fg \otimes \fg) \otimes \cO_{\textnormal{Spec}(\bC[u,v])}\), results in the desired
\begin{equation}
    \varrho \coloneqq ({\psi} \boxtimes {\psi})(\nu \times \nu)^*\rho|_{\breve{X}\times \breve{X}} = \frac{\gamma}{u-v} + t,
\end{equation}
for some \(t \in (\fg \otimes \fg)[u,v]\). Here we used \(\eta = \textnormal{d}u\) and the fact that \(\chi\) is obviously a preimage of \(\textnormal{id}_{\cA|_U}\) under \cref{eq:preimage_of_identity}. 
\end{proof}

\begin{lemma}
The formal \(r\)-matrix \(\Theta \varrho\) is gauge equivalent to \(r\), where \(\Theta\) was defined in \cref{def:Theta}.
\end{lemma}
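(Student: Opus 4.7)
The plan is to express $\Theta \varrho$ as an explicit gauge twist of $r$. Since $r$ is a normalized formal $r$-matrix and $(X, \eta)$ was chosen as in \cref{thm:geometry_of_formal_rmatrices} (so that $c^*(\widehat{\eta}_p) = \textnormal{d}z$), the normalization hypotheses $\lambda_1 = \lambda_2 = 1$ in \cref{thm:geometric_rmatrix_Taylor_series} are satisfied, and hence $\jmath^*\rho = r$ as elements of $(\fg \otimes \fg)(\!(x)\!)[\![y]\!]$. Therefore it suffices to produce a gauge $\varphi \in \Aut_{\bC[\![z]\!]\textnormal{-alg}}(\fg[\![z]\!])$ with $\Theta \varrho = (\varphi(x) \otimes \varphi(y))\,\jmath^*\rho$.

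First I would compare the two notions of Taylor expansion at $p$. By \cref{lem:multipliers_are_rational}, the map $\widehat{\pi}^\sharp_0 \colon \compO_{X,p} \to \widehat{\cO}^\textnormal{an}_{\bC,0}$ is intertwined by $c$ and $\theta$ with the map $\bC[\![z]\!] \to \bC[\![\tilde{z}]\!]$, $z \mapsto \tilde{z}$. Consequently, the pullback $(\pi \times \pi)^*$ followed by the analytic Taylor expansion in the second variable $\Theta$ (using $\tilde{z}$) factors, up to the identification $\tilde{z} = z$, through the same formal completion of $\cA \boxtimes \cA$ along $X \times \{p\}$ that was used to define $\jmath^*$ in \cref{rem:mathfrak_X}. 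Unwinding the definitions, this produces a commutative diagram whose upper and lower routes send $\rho$ to $\Theta\varrho$ and to $\jmath^*\rho$ respectively, and which differ only by the formal completion of the trivialization $\psi$ in place of $\zeta$.

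Next, I would extract the gauge. The formal completion $\widehat{\psi}_0 \colon \widehat{(\pi^*\cA)}_0 \to \fg \otimes \widehat{\cO}^\textnormal{an}_{\bC,0} \cong \fg[\![\tilde{z}]\!]$ of the trivialization composed with the canonical isomorphism $\widehat{\pi}^*_p \colon \compA_p \to \widehat{(\pi^*\cA)}_0$ and the identification $\tilde{z} = z$ yields a $\bC[\![z]\!]$-algebra isomorphism $\fg[\![z]\!] \to \fg[\![z]\!]$; comparing with $\zeta$ defines $\varphi \in \Aut_{\bC[\![z]\!]\textnormal{-alg}}(\fg[\![z]\!])$. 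Chasing $\rho$ around the diagram set up in the previous step then yields $\Theta\varrho = (\varphi(x) \otimes \varphi(y))\,\jmath^*\rho = (\varphi(x) \otimes \varphi(y))\,r(x,y)$, which is exactly the statement that $\Theta\varrho$ is gauge equivalent to $r$. Note that there is no nontrivial coordinate transformation involved because $\pi^*\eta = \textnormal{d}\tilde{z}$ matches $c^*(\widehat{\eta}_p) = \textnormal{d}z$, and no rescaling because $r$ is normalized and skew-symmetric.

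The main obstacle is making the commutativity in the second step precise: one must check that restricting a section of $\cA \boxtimes \cA$ over $X \times C \setminus \Delta$ along $\pi \times \pi$ and then analytically Taylor-expanding in the second variable gives literally the same formal series as first completing $\cA \boxtimes \cA$ along $X \times \{p\}$ in the sense of \cref{not:mathfrak_X} and then transporting via $\widehat{\pi}^\sharp_0$ and $\widehat{\psi}_0$. Once this identification is verified, the fact that $\varphi$ is a $\bC[\![z]\!]$-algebra automorphism of $\fg[\![z]\!]$ is automatic because both $\zeta$ and $\widehat{\psi}_0 \widehat{\pi}^*_p$ are $\bC[\![z]\!]$-algebra isomorphisms onto $\fg[\![z]\!]$, and the remainder of the argument is formal.
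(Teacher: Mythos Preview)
Your proposal is correct and follows essentially the same approach as the paper: both construct the gauge $\varphi$ as the composition $\zeta^{-1}$ followed by the completion of $\pi^*$ and $\widehat{\psi}_0$ (using \cref{lem:multipliers_are_rational} to identify $\tilde z$ with $z$, so no coordinate transformation appears), set up the commutative diagram relating $\jmath^*$ to $\Theta\circ(\psi\boxtimes\psi)(\pi\times\pi)^*$, and then invoke \cref{thm:geometric_rmatrix_Taylor_series} to get $\jmath^*\rho = r$. The paper likewise declares the commutativity of that diagram ``straight forward to show'' rather than spelling it out, so your identification of this as the main verification step is apt.
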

\begin{proof}
In all three cases, \(\varrho = (\psi \times \psi)(\pi \times \pi)^*\rho|_{\breve{X}\times \breve{X}}\) for an isomorphism \(\psi \colon \pi^*\cA \to \fg \otimes \cO_\bC\). Using \cref{lem:multipliers_are_rational}, we can see that the composition
\begin{equation}
    \fg[\![z]\!] \stackrel{\zeta^{-1}} \longrightarrow \compA_p  \longrightarrow \widehat{\pi^*\cA}_0 \stackrel{\widehat{\psi}_0}\longrightarrow \fg \otimes \compO^\textnormal{an}_{\bC,0} \stackrel{\textnormal{id}_\fg \otimes \theta}\longrightarrow \fg[\![z]\!]
\end{equation}
defines a \(\bC[\![z]\!]\)-linear Lie algebra automorphism \(\varphi\) of \(\fg[\![z]\!]\). Here, the second arrow is the isomorphism obtained by completing \(\cA_p \to (\pi_*\pi^*\cA)_p \to \cA_p \otimes_{\cO_{X,p}} \cO^\textnormal{an}_{\bC,0} \cong (\pi^*\cA)_0\) and \(\theta\) is the map defined by the Taylor expansion in 0. It is straight forward to show that the diagram
\begin{equation}
    \xymatrix{\Gamma(\breve{X}\times \breve{X}\setminus\Delta,\cA \boxtimes \cA) \ar[r]^{\jmath^*}\ar[d]_{(\pi\times \pi)^*} & (\fg \otimes \fg)(\!(x)\!)[\![y]\!] \ar[r]^-{\varphi(x) \otimes \varphi(y)}& (\fg \otimes \fg)(\!(x)\!)[\![y]\!]\\
    \Gamma(\bC \times \bC\setminus\Delta_\pi,\pi^*\cA \boxtimes \pi^*\cA) \ar[r]_-{\psi \boxtimes \psi}& (\fg \otimes \fg ) \otimes \Gamma(\bC \times \bC\setminus\Delta_\pi,\cO_{\bC\times\bC}^\textnormal{an})\ar[ur]_-\Theta}
\end{equation}
commutes, where \(\Delta_\pi \coloneqq \{(x,y)\in \bC\times \bC\mid \pi(x) = \pi(y)\}\) and \(\jmath\) was defined in \cref{not:mathfrak_X}. The upper row maps \(\rho|_{\breve{X}\times \breve{X}}\) to \((\varphi(x) \otimes \varphi(y))r(x,y)\) by virtue of \cref{thm:geometric_rmatrix_Taylor_series}, so we can conclude that \((\varphi(x) \otimes \varphi(y))r(x,y) = \Theta \varrho(x,y)\).
\end{proof}

\printbibliography

@article{karolinsky_pianzola_stolin,
    year = {2020},
    volume = {377},
    pages = {1099--1129},
    author = {Karolinsky, E. and Pianzola, A. and Stolin, A.},
    title = {Classification of Quantum Groups via Galois Cohomology},
    journal = {Communications in Mathematical Physics},
}

@book{kunz_kaehler_differentials,
  title={K{\"a}hler Differentials},
  author={Kunz, E.},
  series={Advanced Lectures in Mathematics},
  year={2013},
  publisher={Vieweg+Teubner Verlag}
}

@article{tate_residue_paper,
     author = {Tate, J.},
     title = {Residues of differentials on curves},
     journal = {Annales scientifiques de l'{\'E}cole Normale Sup{\'e}rieure},
     pages = {149--159},
     publisher = {Elsevier},
     volume = {S{\'e}ries 4, Tome 1},
     number = {1},
     year = {1968},
}

@article{skrypnyk_spin_chains,
title = {Integrable quantum spin chains, non-skew symmetric r-matrices and quasigraded Lie algebras},
journal = {Journal of Geometry and Physics},
volume = {57},
number = {1},
pages = {53-67},
year = {2006},
author = {Skrypnyk, T.},
}

@book{adler_moerbeke_vanhaecke,
  title={Algebraic Integrability, Painlev{\'e} Geometry and Lie Algebras},
  author={Adler, M. and van Moerbeke, P. and Vanhaecke, P.},
  series={A Series of Modern Surveys in Mathematics},
  year={2004},
  publisher={Springer Berlin Heidelberg}
}

@book{faddeev_takhtajan,
  title={Hamiltonian Methods in the Theory of Solitons},
  author={Faddeev, L. and Takhtajan, L.},
  series={Classics in Mathematics},
  year={2007},
  publisher={Springer Berlin Heidelberg}
}

@article{abedin_stepan_classical_twists,
title = {Classification of classical twists of the standard Lie bialgebra structure on a loop algebra},
journal = {Journal of Geometry and Physics},
year = {2021},
volume = {164},
pages = {104149},
author = {Abedin, R. and Maximov, S.}
}

@book{guaraldo_macri_Tancredi,
  title={Topics on Real Analytic Spaces},
  author={Guaraldo, F. and Macr{\=\i}, P. and Tancredi, A.},
  publisher={Vieweg+Teubner Verlag},
  series={Advanced Lectures in Mathematics},
  year={1986}
}

@article{GAGA,
     author = {Serre, J.-P.},
     title = {G\'eom\'etrie alg\'ebrique et g\'eom\'etrie analytique},
     volume = "6",
     pages = "1-42",
     journal = {Annales de l'Institut Fourier},
     year = {1956}
}

@article{abedin_burban,
    title= {Algebraic geometry of Lie bialgebras  defined by  solutions of the classical Yang--Baxter equation},
    author={Abedin, R. and Burban, I.},
    note="prerpint",
    archivePrefix={arXiv},
    eprint = "2012.05678",
    year={2020}
}

@book{conrad,
  title={Grothendieck Duality and Base Change},
  author={Conrad, B.},
  year={2000},
  publisher={Springer}
}

@article{golubchik_sokolov_compatible_Lie_brackets,
    author={Golubchik, I. Z. and Sokolov, V. V.},
    title={Compatible Lie Brackets and Integrable Equations of the Principal Chiral Model Type},
    year={2002},
    volume = "36",
    pages = "172--181",
    journal={Functional Analysis and Its Applications}
}

@article{skrypnyk_new_integrable_gaudin_type_systems,
author = {Skrypnyk, T.},
year = {2005},
title = {New integrable Gaudin-type systems, classical r-matrices and quasigraded Lie algebras},
journal = {Physics Letters A},
volume = "334",
number = "5--6",
pages = "390--399"
}

@book{jacobson,
  title={Lie Algebras},
  author={Jacobson, N.},
  year={1979},
  series={Dover books on advanced mathematics},
  publisher={Dover}
}

@article{pianzola,
title = "Vanishing of H1 for Dedekind rings and applications to loop algebras",
journal = "Comptes Rendus Mathematique",
year = "2005",
author = "Pianzola, A.",
volume = "340",
number = "9",
pages = "633-638"
}

@article{amitsur,
    author = {Amitsur, S. A.},
    title = {On a Central Identity for Matrix Rings},
    volume = {s2-14},
    number = {1},
    pages = {1-6},
    journal = {Journal of the London Mathematical Society},
    year = {1976}
}

@article{grauert,
    author = {Grauert, H.},
    year = {1958},
    title = {Analytische Faserungen \"uber holomorph-vollst\"andigen R\"aumen},
    journal = {Mathematische Annalen},
    volume = "135",
    pages = "263-273",
}

@article{kirangi_semi_simple,
    author={Kiranagi, B. S.},
    title={Semi-simple Lie algebra bundles}, 
    journal = {Bull. Math. de la Sci. Math. de la R. S. de Roumanie},
    number = {3},
    pages = {253--257},
    volume = {27(75)},
    year = {1983},
}

@article{kirangi_lie_algebra_bundles, 
    author={Kiranagi, B. S.},
    title={Lie Algebra Bundles}, 
    journal={Bull. Sci. Math.}, 
    volume="102",
    number="2",
    pages="57-62",
    year = {1978}
}

@book{babelon_bernard_talon,
     title={Introduction to Classical Integrable Systems}, 
     publisher={Cambridge University Press}, 
     author={Babelon, O. and Bernard, D. and Talon, M.}, 
     year={2003}
}

@book{milne_etale_cohomology,
    author = {Milne, J. S.},
    publisher = {Princeton University Press},
    year = {1980},
    title = {\'Etale Cohomology}
}

@book{milne_algebraic_groups,
    author = {Milne, J. S.},
    publisher = {Cambridge University Press},
    year = {2017},
    title = {Algebraic Groups:
    The Theory of Group Schemes of Finite Type over a Field}
}

@article{nijenhuis_richardson,
 author = {Nijenhuis, A. and Richardson, R. W.},
 journal = {Journal of Mathematics and Mechanics},
 title = {Deformations of Lie Algebra Structures},
 number = {1},
 volume = {17},
 pages = {89--105},
 year = {1967}
}

@phdthesis{galinat_thesis,
          school = {Universit{\"a}t zu K{\"o}ln},
           title = {Algebro-Geometric Aspects of the Classical Yang-Baxter Equation},
            year = {2015},
          author = {Galinat, L.},
}

@book{gunning_rossi_1965, 
title={Analytic functions of several complex variables}, 
publisher={Prentice-Hall}, 
author={Gunning, R. and Rossi, H.}, 
year={1965}
}

@book{hazewinkel_gerstenhaber,
	author = {Hazewinkel, M. and Gerstenhaber, M.},
	title = {Deformation Theory of Algebras and Structures and Applications},
	year = {1988},
	publisher = {Springer Netherlands},
}

@article{parshin,
author = {Parshin, A.},
title = {Integrable systems and local fields},
journal = {Communications in Algebra},
volume = {29},
number = {9},
pages = {4157-4181},
year  = {2001},
}

@inproceedings{mumford,
author = {Mumford, D.},
year = {1978},
title = {An algebro-geometric construction of commuting operators and of solutions to the Toda lattice equation, Korteweg deVries equation and related nonlinear equation},
journal = {Proceedings of the International Symposium on Algebraic Geometry},
pages = {115-153},
}

@article{polishchuck_trigonometric_geometrisation,
    title={Geometrization of trigonometric solutions of the associative and classical Yang-Baxter equations},
    author={Polishchuk, A.},
    note = {preprint},
    year={2020},
    eprint={2006.06101},
    archivePrefix={arXiv},
}

@article{cherednik_becklund_darboux,
  title={B{\"a}cklund-Darboux transformation for classical Yang—Baxter bundles},
  author={Cherednik, I.},
  journal={Functional Analysis and Its Applications},
  year={1983},
  volume="17",
  number="2",
  pages="155--157",
}

@article{cherednik_definition_of_tau,
  title={Definition of \(\tau\)-functions for generalized affine Lie algebras},
  author={Cherednik, I.},
  journal={Functional Analysis and Its Applications},
  year={1983},
  volume="17",
  number="3",
  pages="93-95",
}

@article{drinfeld_quantum_groups,
    title={Quantum groups},
    author={Drinfeld, V.},
    year={1988},
    journal={Journal of Soviet Mathematics},
    volume="41",
    number="2",
    pages="898--915",
}

@book{etingof_schiffmann,
    title = {Lectures on Quantum Groups (Second Edition)},
    author = {Etingof, P. and Schiffmann, O.},
    year = {2002},
    publisher = {Lectures in Mathematical Physics}
}

@article{mulase,
  title={Category of vector bundles on algebraic curves and infinte dimensional Grassmannians},
  author={Mulase, M.},
  journal="International Journal of Mathematics",
  volume="1",
  number="3",
  pages="293--342",
  year={1990},
}

@article{kac_wang,
    title="On Automorphisms of Kac-Moody Algebras and Groups",
    author="Kac, V. and Wang, S.",
    journal="Advances in Mathematics 92, 129-195",
    year="1992",
    volume="92",
    pages="129--195",
}

@article{ostapenko,
  	title="Endomorphisms of lattices of a Lie algebra over formal power series field",
  	volume="315",
  	number="I",
  	pages="669--673",
  	author="Ostapenko, V.",
	journal = "C. R. Acad. Sci. Paris",
	year="1992",
}

@ARTICLE{KPSST,
       author = {Khoroshkin, S. and Pop, I. and Samsonov, M. and
         Stolin, A. and Tolstoy, V.},
        title = "{On Some Lie Bialgebra Structures on Polynomial Algebras and their Quantization}",
      journal = {Communications in Mathematical Physics},
      volume="282",
      pages="625--662",
         year = "2008",
}

@article{belavin_drinfeld_solutions_of_CYBE_paper,
  	title="Solutions of the classical Yang-Baxter equation for simple Lie algebras",
  	author="Belavin, A. and Drinfeld, V.",
	journal = "Funct. Anal. Appl.",
	year="1983",
    volume="16",
    number="3",
}

@article{belavin_drinfeld_diffrence_depending,
  	title="The classical Yang-Baxter equation for simple Lie algebras",
  	author="Belavin, A. and Drinfeld, V.",
	journal = "Funct. Anal. Appl.",
	volume = "17",
	number = "3",
	year="1983",
}

@ARTICLE{burban_heinrich,
   author = "Burban, I. and Henrich, T.",
    title = "Vector bundles on plane cubic curves and the classical Yang-Baxter equation",
    journal = "Journal of the European Mathematical Society",
    volume = "17",
    number="3",
    pages="591--644",
     year = "2015",
}

@article{stolin_maximal_orders,
	author = "Stolin, A.",
	journal = "Communications in Mathematical Physics",
	title = "On rational solutions of Yang-Baxter equations. Maximal orders in loop algebra",
	volume = "141",
	pages= "533--548",
	year = "1991"
}

@article{stolin_sln,
 	author = "Stolin, A.",
	journal = "Mathematica Scandinavica",
	title = "On rational solutions of Yang-Baxter equation for $\mathfrak{sl}(n)$",
	year = "1991",
	volume ="69",
	pages = "57-80",
}

@article{burban_galinat,
	author = {Burban, I. and Galinat, L.},
	title = {{Torsion Free Sheaves on Weierstrass Cubic Curves and the Classical Yang{\textendash}Baxter Equation}},
	journal = {Communications in Mathematical Physics},
	volume = "364",
	pages = "123-169",
	year = {2018},
}

@article{kierans_kreussler,
	Author = "Kierans, L. and Kreussler, B",
	Title = "The Belavin-Drinfeld theorem on non-degenerate
	solutions of the classical Yang-Baxter equation",
	Year = "2011",
	volume = "346",
	number = "012011",
	pages = "1-16",
	journal="Journal of Physics: Conference Series",
}

@book{hartshorne,
	title="Algebraic Geometry",
	author="Hartshorne, R.",
	series="Graduate Texts in Mathematics",
	year="1977",
	publisher="Springer",

}

@book{belavin_dirnfeld_triangle,
  	title="Triangle Equations and Simple Lie Algebras",
  	author="Belavin, A. and Drinfeld, V.",
	series="Soviet scientific reviews: Mathematical physics reviews",
	year="1984",
	publisher="Harwood Academic Publishers"
}

@book{liu,
	title="Algebraic Geometry and Arithmetic Curves",
	author="Liu, Q.",
	series="Oxford Graduate Texts in Mathematics",
	year="2002",
	publisher="Oxford University Press",
}

@article{egaIV_part_4,
	title = "{\'E}l{\'e}ments de g{\'e}om{\'e}trie alg{\'e}brique IV, Quatri{\`e}me partie",
	author = "Dieudonn{\'e}, J. and Grothendieck, A.",
	year = "1967",
	volume = "32",
	pages = "5--361",
	journal = "Publications math{\'e}matiques l'IH{\'E}S",
}

@article{egaIV_part_1,
	title = "{\'E}l{\'e}ments de g{\'e}om{\'e}trie alg{\'e}brique IV, Premi{\`e}re partie",
	author = "Dieudonn{\'e}, J. and Grothendieck, A.",
	year = "1964",
	volume="20",
	pages="5--259",
	journal = "Publications math{\'e}matiques l'IH{\'E}S",
}

@book{goertz_wedhorn,
  	title="Algebraic Geometry I",
  	author="G{\"o}rtz, U. and Wedhorn, T.",
  	series="Advanced Lectures in Mathematics",
  	year="2010",
	publisher="Springer"
}

@book{chari_pressley,
	title="A Guide to Quantum Groups",
	author="Chari, V. and Pressley, A.",
	year="1995",
	publisher="Cambridge University Press"
}

@book{griffith_harris,
	title="Principles of Algebraic Geometry",
	author="Griffiths, P. and Harris, J.",
	series="Wiley Classics Library",
	year="2014",
	publisher="Wiley"
}

@article{iena,
	title = "Vector Bundles over an Elliptic Curve and Factors of Automotphy",
	author = "Iena, O.",
	year = "2011",
	journal = "Rend. Istit. Mat. Univ. Trieste",
	volume = "43",
	pages= "61--94",
}

@article{skrypnyk_infinite_dimensional_Lie_algebras,
author = {Skrypnyk, T.},
title = {Infinite-dimensional Lie algebras, classical r-matrices, and Lax operators: Two approaches},
journal = {Journal of Mathematical Physics},
number = {10},
volume = {54},
pages = {103507},
year = {2013},
}
\end{document}